\let\OLDthebibliography\thebibliography
\renewcommand\thebibliography[1]{
	\OLDthebibliography{#1}
	\setlength{\parskip}{0pt}
	\setlength{\itemsep}{2pt} 
}
\theoremstyle{definition}
\newtheorem{df}{Definition}[section]
\newtheorem{eg}[df]{Example}
\newtheorem{rem}[df]{Remark}
\newtheorem{ass}[df]{Assumption}
\theoremstyle{plain}
\newtheorem{thm}[df]{Theorem}
\newtheorem{pp}[df]{Proposition}
\newtheorem{co}[df]{Corollary}
\newtheorem{lm}[df]{Lemma}
\newcommand{\fk}{\mathfrak}
\newcommand{\mc}{\mathcal}
\newcommand{\wtd}{\widetilde}
\newcommand{\wht}{\widehat}
\newcommand{\wch}{\widecheck}
\newcommand{\ovl}{\overline}
\newcommand{\Lbf}{\mathbf{L}}
\newcommand{\Sbb}{\mathbb{S}}
\newcommand{\Rbf}{\mathbf{R}}
\newcommand{\tr}{\mathrm{t}} 
\newcommand{\Tr}{\mathrm{Tr}}
\newcommand{\End}{\mathrm{End}} 
\newcommand{\idt}{\mathbf{1}}
\newcommand{\Hom}{\mathrm{Hom}}
\newcommand{\Conf}{\mathrm{Conf}}
\newcommand{\Res}{\mathrm{Res}}
\newcommand{\ML}{\mathcal{L}}
\newcommand{\SV}{\mathscr{V}}
\newcommand{\Span}{\mathrm{Span}}
\newcommand{\scr}{\mathscr}
\newcommand{\gk}{\mathfrak g}
\newcommand{\xk}{\mathfrak x}
\newcommand{\yk}{\mathfrak y}
\newcommand{\zk}{\mathfrak z}
\newcommand{\im}{\mathbf{i}}
\newcommand{\sgm}{\varsigma}
\newcommand{\SX}{{S_{\fk X}}}
\newcommand{\mbb}{\mathbb}
\newcommand{\mbf}{\mathbf}
\newcommand{\blt}{\bullet}
\newcommand{\Vbb}{\mathbb V}
\newcommand{\Xbb}{\mathbb X}
\newcommand{\Wbb}{\mathbb W}
\newcommand{\Mbb}{\mathbb M}
\newcommand{\Gbb}{\mathbb G}
\newcommand{\Cbb}{\mathbb C}
\newcommand{\Nbb}{\mathbb N}
\newcommand{\Zbb}{\mathbb Z}
\newcommand{\Pbb}{\mathbb P}
\newcommand{\cbf}{\mathbf c}
\newcommand{\btl}{\blacktriangleleft}
\newcommand{\btr}{\blacktriangleright}
\newcommand{\Sbf}{\mathbf{S}}
\newcommand{\pr}{\mathrm {pr}}
\newcommand{\ibf}{\mathbf 1}
\newcommand{\wbf}{\mathbf w}
\newcommand{\Ljss}{{L_j(0)_\mathrm{s}}}
\newcommand{\Ljni}{{L_j(0)_\mathrm{n}}}
\newcommand{\<}{\left\langle}
\renewcommand{\>}{\right\rangle}
\newcommand{\MO}{\mathcal{O}}
\newcommand{\MU}{\mathcal{U}}
\newcommand{\ME}{\mathcal{E}}
\newcommand{\MC}{\mathcal{C}}
\newcommand{\MB}{\mathcal{B}}
\newcommand{\fx}{\mathfrak{X}}
\newcommand{\ST}{\mathscr{T}}
\newcommand{\SJ}{\mathscr{J}}
\newcommand{\SW}{\mathscr{W}}
\newcommand{\MV}{\mathcal{V}}
\newcommand{\MD}{\mathcal{D}}
\newcommand{\MS}{\mathcal{S}}
\newcommand{\FA}{\mathfrak{A}}
\newcommand{\mk}{\mathfrak m}
\newcommand{\pre}{\mathrm{pre}}
\newcommand{\bk}[1]{\langle {#1}\rangle}
\newcommand{\bbs}{\boxbackslash}
\newcommand{\fq}{{\mathfrak Q}}
\newcommand{\id}{\mathrm{id}}
\newcommand{\obf}{\mathbf{0}}
\newcommand{\eps}{\varepsilon}
\newcommand{\dps}{\displaystyle}
\newcommand{\Lini}{{L_i(0)_\mathrm{n}}}
\newcommand{\Liss}{{L_i(0)_\mathrm{s}}}
\newcommand{\Lbni}{{L_\bullet(0)_\mathrm{n}}}
\newcommand{\Lbss}{{L_\bullet(0)_\mathrm{s}}}
\newcommand{\fn}{\mathfrak{N}}
\newcommand{\fy}{\mathfrak{Y}}
\DeclareMathOperator{\shom}{\mathscr{H}\text{\kern -3pt {\calligra\large om}}\,}
\DeclareMathOperator{\sext}{\mathscr{E}\text{\kern -3pt {\calligra\large xt}}\,}
\DeclareMathOperator{\Rel}{\mathscr{R}\text{\kern -3pt {\calligra\large el}~}\,}
\DeclareMathOperator{\sann}{\mathscr{A}\text{\kern -3pt {\calligra\large nn}}\,}
\DeclareMathOperator{\send}{\mathscr{E}\text{\kern -3pt {\calligra\large nd}}\,}
\DeclareMathOperator{\stor}{\mathscr{T}\text{\kern -3pt {\calligra\large or}}\,}
\DeclareMathOperator{\VVir}{\text{\Fontlukas V}\text{\kern -0pt {\Fontlukas\large ir}}\,}
\numberwithin{equation}{section}
\title{Analytic Conformal Blocks of $C_2$-cofinite Vertex Operator Algebras II: Convergence of Sewing and Higher Genus Pseudo-$q$-traces}
\author{{\sc Bin Gui, Hao Zhang}
}
\date{}
\begin{document}\sloppy 
	\pagenumbering{arabic}
	\setcounter{section}{-1}

	\maketitle

\newcommand\blfootnote[1]{%
	\begingroup
	\renewcommand\thefootnote{}\footnote{#1}%
	\addtocounter{footnote}{-1}%
	\endgroup
}



\begin{abstract}
Let $\Vbb=\bigoplus_{n\in\Nbb}\Vbb(n)$ be a $C_2$-cofinite vertex operator algebra. We prove the convergence of Segal's sewing of conformal blocks associated to analytic families of pointed compact Riemann surfaces and grading-restricted generalized $\Vbb^{\otimes N}$-modules (where $N=1,2,\dots$) that are not necessarily tensor products of $\Vbb$-modules, generalizing significantly the results on convergence in \cite{Gui-sewingconvergence}.

We show that ``higher genus pseudo-$q$-traces" (called \textbf{pseudo-sewing} in this article) can be recovered from the above generalization of Segal's sewing to $\Vbb^{\otimes N}$-modules. Therefore, our result on the convergence of the generalized Segal sewing implies the convergence of pseudo-sewing, and hence covers both the convergence of genus-$0$ sewing in \cite{Hua-differential-genus-0,HLZ7} and the convergence of pseudo-$q$-traces in \cite{Miy-modular-invariance} and \cite{Fio-genus-1}.

Using a similar method, we also prove the convergence of Virasoro uniformization, i.e., the convergence of conformal blocks deformed by non-autonomous meromorphic vector fields near the marked points. The local freeness of the \textit{analytic} sheaves of conformal blocks is a consequence of this convergence. It will be used in the third paper of this series to prove the sewing-factorization theorem.
\end{abstract}

\tableofcontents





	
	

	

\section{Introduction: towards the geometry of pseudo-$q$-traces}

\nocite{HLZ1,HLZ2,HLZ3,HLZ4,HLZ5,HLZ6,HLZ7,HLZ8}

This paper is the second part of the series on conformal blocks associated to a $C_2$-cofinite $\Nbb$-graded vertex operator algebra (VOA) $\Vbb=\bigoplus_{n\in\Nbb}\Vbb(n)$ and analytic families of compact Riemann surfaces. The final goal of the series is to prove a \textbf{sewing-factorization theorem} (\textbf{SF theorem}). 

An outline of this theorem was presented in the Introduction of \cite{GZ1}. In genus $0$, this theorem encompasses the associativity of intertwining operators as proved by Huang-Lepowsky-Zhang in \cite{HLZ1,HLZ2}-\cite{HLZ8} and by Huang in \cite{Hua-projectivecover}. In genus $1$, we will show in a future paper that the SF theorem implies the modular invariance property studied in \cite{Zhu-modular-invariance,Miy-modular-invariance,Hua-differential-genus-1,AN-pseudo-trace,Hua-modular-C2}. In higher (i.e. arbitrary) genus, and when $\Vbb$ is also rational, the theorem specializes to the analytic version of the factorization theorem and the (formal) sewing theorem proved by \cite{DGT2}, namely, Thm. 12.1 of \cite{Gui-sewingconvergence}.

\subsection{Convergence of sewing conformal blocks}\label{lbb62}

In this paper, we prove the convergence of sewing conformal blocks, which constitutes half of the SF theorem. (The other half is that the sewing construction implements an isomorphism of two spaces of conformal blocks: the space of conformal blocks before sewing and the space of those after sewing.) The results proved in this paper are interesting in their own right. Therefore, we have structured this paper in a way that it can be read independently of the other parts of the series.

The sewing construction is the most fundamental operation in 2d conformal field theory, as highlighted in Segal's seminal works \cite{Segal-CFT1,Segal-CFT2}. For a concise (and necessarily incomplete) overview of the history behind the proof of convergence of sewing for rational VOAs, we refer readers to the Introduction of \cite{Gui-sewingconvergence}.

In fact, the proof of convergence of sewing already played an important role in the early history of VOAs. A notable feature of rational VOAs is that their characters $\Tr_\Mbb q^{L_0-\frac c{24}}$ (where $\Mbb$ runs through all irreducibles of $\Vbb$) form a vector-valued modular form. More generally, Zhu proved in \cite{Zhu-modular-invariance} the remarkable result that all $v\mapsto \Tr_\Mbb Y_\Mbb(v,z)q^{L_0-\frac c{24}}$ form an $\mathrm{SL}(2,\Zbb)$-invariant space. This modular invariance property can be reformulated in the following way:

\begin{enumerate}[label=(\arabic*)]
\item \textbf{Convergence of sewing}. For $0<|q|<1$, the sewing construction $v\mapsto \Tr_\Mbb Y_\Mbb(\mc U(\alpha)v,1)q^{L(0)}$ converges absolutely to a vacuum torus conformal block $\upphi_q$. (Here, $\mc U(\alpha)$ is a change of coordinate operator expressed in terms of the Virasoro operators.)
\item \textbf{Factorization}. For each fixed $q$, every vacuum torus conformal block arises from the sewing construction in (1). Moreover, this ``factorization" process can be holomorphic with respect to $q$.
\item \textbf{Flat connections}. The spaces of vacuum torus conformal blocks form a holomorphic vector bundle together with a canonical modular-invariant (flat) connection $\nabla$. 
\item \textbf{Parallelism}. The section $q\mapsto \upphi_q$ in (1) is parallel up to the projective term $\frac c{24}$ where $c$ is the central charge. Namely, $q\nabla_{\partial_q}\upphi_q=\frac c{24}\upphi_q$.
\end{enumerate}
A primary purpose of (3) and (4) is to ensure that the $S$ and $T$ matrices obtained by modular transforming $q^{-\frac c{24}}\upphi_q$ are ``independent of $q$".

The associativity of intertwining operators in \cite{HLZ1,HLZ2}-\cite{HLZ8} and \cite{Hua-projectivecover} can be phrased in a similar fashion, where $\Vbb$ is not assumed to be rational:

\begin{enumerate}[label=(\alph*)]
\item \textbf{Convergence of sewing}. The product $\mc Y_1(w_1,z_1)\mc Y_2(w_2,z_2)$ resp. the iterate $\mc Y_1(\mc Y_2(w_2,z_2-z_1)w_1,z_1)$ of intertwining operators ($\approx$ genus-$0$ three-pointed conformal blocks) converge absolutely on suitable domains to a genus-$0$ four-pointed conformal block $\upphi_{z_1,z_2}$ (with marked points $0,z_1,z_2,\infty$).
\item \textbf{Factorization}. For each fixed (suitable) $z_1,z_2$, every genus-$0$ conformal block with marked points $0,z_1,z_2,\infty$ can be factored into a product resp. an iterate of intertwining operators. Moreover, this factorization process can be holomorphic with respect to $z_1,z_2$.
\item \textbf{Flat connections}. The space of  genus-$0$ four-pointed conformal blocks form a holomorphic vector bundle together with a canonical M\"obius-invariant flat connection $\nabla$. 
\item \textbf{Parallelism}. The section $(z_1,z_2)\mapsto \upphi_{z_1,z_2}$ is parallel under $\nabla$.
\end{enumerate}
(c) and (d) ensure that the braiding and fusion matrices are ``independent of $z_1,z_2$". 

We emphasize that \textit{factorization is the inverse process of sewing}. Our SF theorem specializes to (1)+(2) in genus $1$ and (a)+(b) in genus $0$. The general theory related to (3)+(4) and (c)+(d) will also be given in this paper.

\subsection{Why do pseudo-$q$-traces appear in genus-$1$ but not in genus-$0$ ?}\label{lbb60}

One of the most puzzling phenomena for irrational $C_2$-cofinite VOAs is that (a)-(d) continue to hold, as indicated by the works of Huang-Lepowsky-Zhang and \cite{Hua-projectivecover}; however, (2) does not hold. Indeed, Miyamoto showed in \cite{Miy-modular-invariance} that to achieve a factorization as in (2), it is necessary to consider in (1) not only the standard $q$-traces, but also pseudo-$q$-traces $\Tr_\Mbb^\omega Y_\Mbb(\mc U(\alpha)v,1)q^{L(0)}$. We aim to give a \textit{conceptual} explanation of the following question: \textit{Why are pseudo-$q$-traces needed only in genus-$1$ but not in genus-$0$?}

The permutation-twisted/untwisted correspondence obtained in \cite{Gui-permutation} suggests that sewing \textit{and factorization} could be applied to higher genus Riemann surfaces even without (explicitly) appealing to pseudo-$q$-traces. 
That correspondence says roughly that if $G$ is a subgroup of $S_N$ acting by permutation on $\Vbb^{\otimes N}$, then the genus-$0$ conformal blocks for $G$-twisted  $\Vbb^{\otimes N}$-modules correspond to conformal blocks for (untwisted) $\Vbb^{\otimes K}$-modules (where $K\in\Zbb_+$ is possibly different from $N$) and branched coverings of $\Pbb^1$. Moreover, the sewing of the LHS corresponds to the sewing of the RHS. Therefore:
\begin{itemize}
\item The convergence of products/iterates and the associativity of intertwining operators among  $G$-twisted $\Vbb^{\otimes N}$-modules\footnote{This holds at least when $\Vbb$ is $C_2$-cofinite and $G$ is solvable. In that case, $(\Vbb^N)^G$ is $C_2$-cofinite \cite{Miy-C2-orbifold}. So one can combine Huang-Lepowski-Zhang with \cite{Hua-projectivecover} and \cite{McR-equiv} to prove the associativity for intertwining operators of $\Vbb^N$ among $G$-twisted modules. } can be translated to the sewing-factorization theorem for certain higher genus conformal blocks of untwisted $\Vbb^{\otimes K}$-modules via branched coverings.
\end{itemize}
In fact, \cite[Sec. 0.2]{Gui-permutation} explains how the associativity of $\Zbb_2$-twisted intertwining operators corresponds to the untwisted modular invariance. Although \cite{Gui-permutation} focuses on $\Vbb^{\otimes K}$-modules that are direct sums of those of the form $\Mbb_1\otimes\cdots\otimes\Mbb_K$ with each $\Mbb_i$ a $\Vbb$-module, most results in \cite{Gui-permutation} can be easily generalized to arbitrary $\Vbb^{\otimes K}$-modules.

\subsection{Segal's sewing for modules of tensor products of $\Vbb$}\label{lbb65}

In the Introduction of \cite{GZ1}, we have already presented the sewing construction in arbitrary genus that ensures the factorization holds. (In fact, our SF theorem is deeply inspired by the observation in Sec. \ref{lbb60}.) Let us briefly recall its formulation. For simplicity, instead of a family of Riemann surfaces, we consider a single fiber
\begin{align*}
\wtd\fx=(\wtd C|x_\blt\Vert x_\blt',x_\blt'')=(\wtd C|x_1,\dots,x_N\Vert x_1',\dots,x_R',x_1'',\dots,x_R'')
\end{align*}
where $\wtd C$ is a (possibly disconnected) compact Riemann surface, and $x_\blt,x_\blt',x_\blt''$ are distinct marked points on $\wtd C$. We assume that each connected component of $\wtd C$ contains at least one of $x_1,\dots,x_N$. At each $x_i,x_j',x_j''$ we associate an (analytic) local coordinate $\eta_i,\xi_j,\varpi_j$. After choosing suitable $\eps_j>0$, one can sew $\wtd\fx$ along each pair of points $(x_j',x_j'')$ via the relation $\xi_j\varpi_j=q_j$. The result of sewing is a family of nodal curves with marked points (and local coordinates)
\begin{align*}
\fx=(\pi:\MC\rightarrow\MB|\sgm_1,\dots,\sgm_N)
\end{align*}
Here, $\MB=\MD_{\eps_\blt}:=\MD_{\eps_1}\times\cdots\MD_{\eps_R}$ and $\MD_{\eps_j}=\{q_j\in\Cbb:|q_j|<\eps_j\}$. The marked points $x_j',x_j''$ disappear after sewing, and the constant extension of $x_i$ gives the section $\sgm_i$.Let $\MD_{\eps_j}^\times=\MD_{\eps_j}-\{0\}$. Then $\fx$ is a smooth family on $\MD^\times_{\eps_\blt}:=\MD^\times_{\eps_1}\times\cdots\times\MD^\times_{\eps_R}$.

Now, we choose a (grading-restricted) $\Vbb^{\otimes N}$-module $\Wbb$ associated simultaneously to the ordered marked points $x_\blt$ of $\wtd\fx$. Choose an $\Vbb^{\otimes R}$-module $\Mbb$ associated to $x_\blt'$, and associate the contragredient module $\Mbb'$ to $x_\blt''$.

We let $\scr T^*_{\wtd\fx}(\Wbb\otimes\Mbb\otimes\Mbb')$ be the space of conformal blocks associated to $\wtd\fx$ and $\Wbb\otimes\Mbb\otimes\Mbb'$. Its elements are linear functionals
\begin{align*}
\uppsi:\Wbb\otimes\Mbb\otimes\Mbb'\rightarrow\Cbb
\end{align*}
satisfying certain invariance property under the action of sheaf of VOAs $\scr V_{\wtd C}$. Let $L_j(n)$ be the Virasoro operator for the $j$-th component (cf. \eqref{eqb93}). Let $\Mbb=\bigoplus_{\lambda_\blt\in\Cbb^R}\Mbb_{[\lambda_\blt]}$ be the decomposition of $\Mbb$ with respect to the joint generalized eigenspaces of $L_1(0),\dots,L_R(0)$. Let $q_\blt^{L_\blt(0)}=q_1^{L_1(0)}\cdots q_R^{L_R(0)}$. Then the \textbf{sewing} \pmb{$\MS\uppsi$} of $\uppsi$ 
is defined by taking contraction in the usual sense (\textit{but not under a pseudo-trace}), i.e.,
\begin{gather}\label{eq2}
\begin{gathered}
\MS\uppsi:\Wbb\rightarrow \Cbb\{q_\blt\}[\log q_\blt]=\Cbb\{q_1,\dots,q_R\}[\log q_1,\dots,\log q_R]\\
\MS\uppsi(w)=\sum_{\lambda_\blt\in\Cbb^R}\sum_{\alpha\in\fk A_{\lambda_\blt}} \uppsi\big(w_\blt\otimes q_\blt^{L_\blt(0)}m_{(\lambda_\blt,\alpha)}\otimes \wch m_{(\lambda_\blt,\alpha)}\big)
\end{gathered}
\end{gather}
where $(m_{(\lambda_\blt,\alpha)})_{\alpha\in\fk A_{\lambda_\blt}}$ is a (finite) basis of $\Mbb_{[\lambda_\blt]}$ with dual basis $(\wch m_{(\lambda_\blt,\alpha)})_{\alpha\in\fk A_{\lambda_\blt}}$. (Note that $L_j(0)$ preserves each $\Mbb_{[\lambda_\blt]}$.) See Subsec. \ref{lbb61} for the precise definition of $\MS\uppsi$. Note that in the non-semisimple case, the sewing $\MS \uppsi$ is more general than the sewing 
formulated in \cite{Segal-CFT1,Segal-CFT2}. But we will still call the sewing $\MS\uppsi$ a Segal's sewing.

Associate $\Wbb$ also to the ordered marked points $\sgm_\blt(\MB)$ of $\fx$. Let $\scr T^*_\fx(\Wbb)$ be the $\MB$-sheaf of conformal blocks associated to $\fx$ and $\Wbb$. Thus, on $\MD_{\eps_\blt}^\times$, a section of this sheaf is precisely a linear map $\Wbb\rightarrow\MO(\MD_{\eps_\blt}^\times)$ whose restriction to each $q_\blt\in \MD_{\eps_\blt}^\times$ is a conformal block associated to $\Wbb$ and the fiber $\fx_{q_\blt}$. (See Rem. \ref{lbb27} for details.) A major result of this paper (Thm. \ref{lbb49}) is the generalization of the following theorem to the case that $\wtd\fx$ is a family of pointed compact Riemann surfaces.

\begin{thm}\label{lbb64}
Let $\uppsi\in\scr T^*_{\wtd\fx}(\Wbb\otimes\Mbb\otimes\Mbb')$. Then $\MS\uppsi$ converges absolutely and locally uniformly (\textbf{a.l.u.}) on $\MD^\times_{\eps_\blt}$ to an element of $H^0(\wht\MD_{\eps_\blt}^\times,\scr T^*_\fx(\Wbb))$ (i.e., a section of $\scr T^*_\fx(\Wbb)$ on $\wht\MD_{\eps_\blt}^\times$), where $\wht\MD_{\eps_\blt}^\times$ is the universal cover of $\MD_{\eps_\blt}^\times$.
\end{thm}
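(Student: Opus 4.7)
The plan is to realize $\MS\uppsi$ as the unique Frobenius-style solution of a finite-rank linear ODE system in $q_1,\dots,q_R$ with regular singularities at $q_j=0$, and then invoke the classical Fuchsian theory to deduce absolute local uniform convergence on $\wht\MD^\times_{\eps_\blt}$. First I would reinterpret the sum in \eqref{eq2}. Using the Jordan decomposition $L_j(0)=\Ljss+\Ljni$ on each finite-dimensional generalized joint eigenspace $\Mbb_{[\lambda_\blt]}$, the factor $q_\blt^{L_\blt(0)}$ expands into a finite sum of monomials $q_1^{\lambda_1}\cdots q_R^{\lambda_R}(\log q_1)^{k_1}\cdots(\log q_R)^{k_R}$, with each $k_j$ bounded by the nilpotency index of $\Ljni$ and each coefficient a finite pairing $\uppsi(w\otimes m\otimes\wch m)$. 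Grading-restriction on $\Mbb$ guarantees that the $\lambda_\blt$ contributing to each prescribed bound on $\sum_j\mathrm{Re}(\lambda_j)$ are finite in number, so $\MS\uppsi(w)$ is a priori a well-defined formal series on $\wht\MD^\times_{\eps_\blt}$.

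To obtain convergence I would derive a Fuchsian system. The invariance axiom for $\uppsi$ with respect to meromorphic sections of $\scr V_{\wtd C}$ whose residues are supported at $x_\blt',x_\blt''$ converts insertion of $L_j(0)$ on the $\Mbb$-slot into the differential operator $q_j\partial_{q_j}$ acting on $\MS\uppsi$, and more generally converts higher Virasoro modes $L_j(n)$ at $x_j'$ into operators on the $\Wbb$-slot at $x_\blt$. Applied to a cofinal family of meromorphic sections with prescribed poles, and combined with the $C_2$-cofiniteness of $\Vbb$ (hence of $\Vbb^{\otimes N}$) together with a Noetherianity argument for sections of $\scr V_{\wtd C}$ on a compact Riemann surface with controlled pole orders, this yields a finite-dimensional quotient $\ovl\Wbb$ of $\Wbb$ (defined via an auxiliary weight filtration) on which
\begin{align*}
q_j\partial_{q_j}\MS\uppsi(w)=\sum_\alpha A^{(j)}_\alpha(q_\blt)\,\MS\uppsi(w_\alpha),\qquad j=1,\dots,R,
\end{align*}
holds with coefficient matrices $A^{(j)}_\alpha$ holomorphic on the full polydisk $\MB=\MD_{\eps_\blt}$ and $w_\alpha$ ranging over a finite subset of $\ovl\Wbb$. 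This is the higher-genus, higher-rank generalization of the differential equations of \cite{Hua-differential-genus-0} and of Section~12 of \cite{Gui-sewingconvergence}.

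Given such a regular-singular system, standard Fuchsian theory yields a genuine a.l.u.\ convergent Frobenius solution on $\wht\MD^\times_{\eps_\blt}$, and uniqueness with respect to the indicial data identifies it termwise with $\MS\uppsi(w)$. To promote this from $\ovl\Wbb$ to all of $\Wbb$ I would exhaust $\Wbb$ along its weight grading; the filtration degree is compatible with the system, and the Fuchsian bounds remain uniform on compact subsets. Finally, that the limit is a section of $\scr T^*_\fx(\Wbb)$ follows by testing the conformal block axiom on each smooth fiber $\fx_{q_\blt}$: a meromorphic section of $\scr V$ on $\fx_{q_\blt}$ regular at the nodes lifts to a pair of meromorphic sections of $\scr V_{\wtd C}$ whose germs at $x_j',x_j''$ match under the sewing relation $\xi_j\varpi_j=q_j$, and the pairing $\Mbb\otimes\Mbb'\to\Cbb$ in \eqref{eq2} is designed precisely to transfer the invariance of $\uppsi$ to the required invariance of the limit.

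The main obstacle is the derivation of the finite-rank Fuchsian system in arbitrary genus with non-semisimple, non-tensor-product modules. In genus $0$, meromorphic Virasoro sections can be replaced by rational functions with prescribed poles and $C_2$-cofiniteness directly supplies the finiteness. In higher genus one must globally control sections of $\scr V_{\wtd C}$ with poles at $x_\blt,x_\blt',x_\blt''$, show that modulo the conformal-block relations the quotient is finite-dimensional, and ensure this finiteness is uniform as the base parameters of $\wtd\fx$ vary (so that the coefficient matrices $A^{(j)}$ are holomorphic in all parameters, not just in $q_\blt$). The non-semisimplicity of $\Mbb$ contributes only logarithmic indicial data, already absorbed by Frobenius theory; the fact that $\Wbb$ need not be a tensor product is accommodated by phrasing the Noetherianity argument at the level of $\scr V_{\wtd C}$-modules rather than individual factors.
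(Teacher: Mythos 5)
Your proposal follows essentially the same route as the paper's proof of Thm.~\ref{lbb49}: expand $q_\blt^{L_\blt(0)}$ via the Jordan decomposition $L_j(0)=\Ljss+\Ljni$, use the invariance of $\uppsi$ under global meromorphic Virasoro insertions (in the paper, those coming from the $q_\blt$-expansion of a lift of $q_k\partial_{q_k}$) to convert $q_k\partial_{q_k}\MS\uppsi$ into $\MS\uppsi(\nabla_{q_k\partial_{q_k}}\cdot)$ plus a scalar term, invoke the finite generation of $\scr W_\fx(\Wbb)(\MB)/\SJ^\pre_\fx(\MB)$ (Thm.~\ref{lbb21}, resting on $C_2$-cofiniteness and cohomology vanishing on the nodal fibers) to obtain a Fuchsian system with coefficients holomorphic across $q_\blt=0$, and verify the conformal-block property of the limit by expanding sections of $\scr V_\fx\otimes\omega_{\MC/\MB}$ at the nodes (Prop.~\ref{lbb48}). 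The only ingredient you leave implicit is the projective anomaly: the vertical components $\wtd\yk_{n_\blt}^\perp$ act by scalars $\#(\wtd\yk_{n_\blt}^\perp)$ determined by a projective structure, and one must separately check that $\sum_{n_\blt}\#(\wtd\yk_{n_\blt}^\perp)q_\blt^{n_\blt}$ converges to an element of $\MO(\MB)$ (Step~5 of the paper's proof), but this is absorbed into your diagonal coefficient matrices once established.
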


Now, when does the factorization hold in this setting? In other words, in which situation can every conformal block associated to $\fx_{q_\blt}$ and $\Wbb$ be obtained by sewing conformal blocks associated to $\wtd\fx$ and $\Wbb\otimes\Mbb\otimes\Mbb'$? An answer has been provided in the Introduction of \cite{GZ1} and will be proved rigorously in the third part of the series:
\begin{itemize}
\item The factorization holds if the sewing of $\wtd\fx$ is a \textbf{disjoint sewing}, namely, if $\wtd C$ is a disjoint union of open subsets $\wtd C'\sqcup\wtd C''$ such that $x_1',\dots,x_R'$ belong to $\wtd C'$ and $x_1'',\dots,x_R''$ belong to $\wtd C''$.
\end{itemize}
For example, in Sec. \ref{lbb62}, the sewing in (a) is disjoint, any ``branched covering" of (a) is disjoint, but (1) is not a disjoint sewing. 

Thus, we can address the question posed in Sec. \ref{lbb60}: The reason that the sewing construction $v\mapsto \Tr_\Mbb Y_\Mbb(v,1)q^{L(0)}$ does not provide all vacuum torus conformal blocks is that this type of sewing is a ``self-sewing" rather than a disjoint sewing. 

There are two ways to achieve factorization for non-disjoint sewing. The first approach retains the sewing of conformal blocks but modifies the way of sewing compact Riemann surfaces. The second approach does the opposite. More precisely:
\begin{enumerate}
\item[($\alpha$)] Transform the non-disjoint sewing of \textit{Riemann surfaces} to a disjoint one, as shown e.g. in Fig. \ref{fig1}. Then the Segal sewing of \textit{conformal blocks} (in the sense of \eqref{eq2}) is sufficient.
\item[($\beta$)] Keep the sewing of \textit{Riemann surfaces}. However, replace Segal's sewing of \textit{conformal blocks} (in the sense of \eqref{eq2}) with pseudo-sewing (i.e. pseudo-$q$-trace).
\end{enumerate}

\begin{figure}[h]
	\centering
\scalebox{0.5}{

\tikzset{every picture/.style={line width=0.75pt}} 

\begin{tikzpicture}[x=0.75pt,y=0.75pt,yscale=-1,xscale=1]

\draw   (230.35,107.87) .. controls (233.06,126.69) and (205.12,113.86) .. (205.57,131.39) .. controls (206.02,148.93) and (235.83,134.45) .. (233.51,152.77) .. controls (231.19,171.1) and (192.1,153.19) .. (162.92,151.53) .. controls (133.73,149.87) and (103.29,182.28) .. (77.61,161.76) .. controls (51.93,141.23) and (67.25,96.32) .. (96.54,87.77) .. controls (125.82,79.22) and (138.23,113.9) .. (164.36,111.16) .. controls (190.49,108.42) and (227.65,89.05) .. (230.35,107.87) -- cycle ;
\draw    (87.77,130.02) .. controls (100.52,116.13) and (110.59,119.83) .. (119.32,131.87) ;
\draw    (91.8,127.24) .. controls (99.18,134.65) and (107.24,133.72) .. (114.62,127.24) ;
\draw    (140.77,133.56) .. controls (151.15,124.29) and (159.35,126.76) .. (166.45,134.79) ;
\draw    (144.05,131.7) .. controls (150.06,136.65) and (156.62,136.03) .. (162.63,131.7) ;
\draw  [fill={rgb, 255:red, 0; green, 0; blue, 0 }  ,fill opacity=1 ] (223.4,109.03) .. controls (223.4,108.13) and (224.13,107.39) .. (225.03,107.39) .. controls (225.94,107.39) and (226.67,108.13) .. (226.67,109.03) .. controls (226.67,109.93) and (225.94,110.67) .. (225.03,110.67) .. controls (224.13,110.67) and (223.4,109.93) .. (223.4,109.03) -- cycle ;
\draw  [fill={rgb, 255:red, 0; green, 0; blue, 0 }  ,fill opacity=1 ] (226.4,152.53) .. controls (226.4,151.63) and (227.13,150.89) .. (228.03,150.89) .. controls (228.94,150.89) and (229.67,151.63) .. (229.67,152.53) .. controls (229.67,153.43) and (228.94,154.17) .. (228.03,154.17) .. controls (227.13,154.17) and (226.4,153.43) .. (226.4,152.53) -- cycle ;
\draw   (499.35,108.87) .. controls (502.06,127.69) and (474.12,114.86) .. (474.57,132.39) .. controls (475.02,149.93) and (504.83,135.45) .. (502.51,153.77) .. controls (500.19,172.1) and (461.1,154.19) .. (431.92,152.53) .. controls (402.73,150.87) and (372.29,183.28) .. (346.61,162.76) .. controls (320.93,142.23) and (336.25,97.32) .. (365.54,88.77) .. controls (394.82,80.22) and (407.23,114.9) .. (433.36,112.16) .. controls (459.49,109.42) and (496.65,90.05) .. (499.35,108.87) -- cycle ;
\draw    (356.77,131.02) .. controls (369.52,117.13) and (379.59,120.83) .. (388.32,132.87) ;
\draw    (360.8,128.24) .. controls (368.18,135.65) and (376.24,134.72) .. (383.62,128.24) ;
\draw    (409.77,134.56) .. controls (420.15,125.29) and (428.35,127.76) .. (435.45,135.79) ;
\draw    (413.05,132.7) .. controls (419.06,137.65) and (425.62,137.03) .. (431.63,132.7) ;
\draw  [fill={rgb, 255:red, 0; green, 0; blue, 0 }  ,fill opacity=1 ] (492.4,110.03) .. controls (492.4,109.13) and (493.13,108.39) .. (494.03,108.39) .. controls (494.94,108.39) and (495.67,109.13) .. (495.67,110.03) .. controls (495.67,110.93) and (494.94,111.67) .. (494.03,111.67) .. controls (493.13,111.67) and (492.4,110.93) .. (492.4,110.03) -- cycle ;
\draw  [fill={rgb, 255:red, 0; green, 0; blue, 0 }  ,fill opacity=1 ] (495.4,153.53) .. controls (495.4,152.63) and (496.13,151.89) .. (497.03,151.89) .. controls (497.94,151.89) and (498.67,152.63) .. (498.67,153.53) .. controls (498.67,154.43) and (497.94,155.17) .. (497.03,155.17) .. controls (496.13,155.17) and (495.4,154.43) .. (495.4,153.53) -- cycle ;
\draw [color={rgb, 255:red, 74; green, 144; blue, 226 }  ,draw opacity=1 ] [dash pattern={on 1.5pt off 1.5pt}]  (236.25,111.65) .. controls (245.26,120.95) and (248.01,135.17) .. (238.63,151.19) ;
\draw [shift={(237.73,152.68)}, rotate = 302.15] [color={rgb, 255:red, 74; green, 144; blue, 226 }  ,draw opacity=1 ][line width=0.75]    (6.56,-1.97) .. controls (4.17,-0.84) and (1.99,-0.18) .. (0,0) .. controls (1.99,0.18) and (4.17,0.84) .. (6.56,1.97)   ;
\draw [shift={(234.73,110.18)}, rotate = 42.14] [color={rgb, 255:red, 74; green, 144; blue, 226 }  ,draw opacity=1 ][line width=0.75]    (6.56,-1.97) .. controls (4.17,-0.84) and (1.99,-0.18) .. (0,0) .. controls (1.99,0.18) and (4.17,0.84) .. (6.56,1.97)   ;
\draw   (535.38,112.21) .. controls (534.55,98.64) and (571.55,97.14) .. (578.05,108.64) .. controls (584.55,120.14) and (589.67,132.45) .. (581.61,147.29) .. controls (573.55,162.14) and (538.05,165.14) .. (537.55,150.64) .. controls (537.05,136.14) and (561.03,151.19) .. (561.03,131.38) .. controls (561.03,111.56) and (536.21,125.78) .. (535.38,112.21) -- cycle ;
\draw  [fill={rgb, 255:red, 0; green, 0; blue, 0 }  ,fill opacity=1 ] (540.4,111.03) .. controls (540.4,110.13) and (541.13,109.39) .. (542.03,109.39) .. controls (542.94,109.39) and (543.67,110.13) .. (543.67,111.03) .. controls (543.67,111.93) and (542.94,112.67) .. (542.03,112.67) .. controls (541.13,112.67) and (540.4,111.93) .. (540.4,111.03) -- cycle ;
\draw  [fill={rgb, 255:red, 0; green, 0; blue, 0 }  ,fill opacity=1 ] (540.9,151.03) .. controls (540.9,150.13) and (541.63,149.39) .. (542.53,149.39) .. controls (543.44,149.39) and (544.17,150.13) .. (544.17,151.03) .. controls (544.17,151.93) and (543.44,152.67) .. (542.53,152.67) .. controls (541.63,152.67) and (540.9,151.93) .. (540.9,151.03) -- cycle ;
\draw [color={rgb, 255:red, 74; green, 144; blue, 226 }  ,draw opacity=1 ] [dash pattern={on 1.5pt off 1.5pt}]  (505.15,110.71) -- (530.55,110.68) ;
\draw [shift={(532.55,110.68)}, rotate = 179.95] [color={rgb, 255:red, 74; green, 144; blue, 226 }  ,draw opacity=1 ][line width=0.75]    (6.56,-1.97) .. controls (4.17,-0.84) and (1.99,-0.18) .. (0,0) .. controls (1.99,0.18) and (4.17,0.84) .. (6.56,1.97)   ;
\draw [shift={(503.15,110.71)}, rotate = 359.95] [color={rgb, 255:red, 74; green, 144; blue, 226 }  ,draw opacity=1 ][line width=0.75]    (6.56,-1.97) .. controls (4.17,-0.84) and (1.99,-0.18) .. (0,0) .. controls (1.99,0.18) and (4.17,0.84) .. (6.56,1.97)   ;
\draw [color={rgb, 255:red, 74; green, 144; blue, 226 }  ,draw opacity=1 ] [dash pattern={on 1.5pt off 1.5pt}]  (507.65,152.71) -- (533.05,152.68) ;
\draw [shift={(535.05,152.68)}, rotate = 179.95] [color={rgb, 255:red, 74; green, 144; blue, 226 }  ,draw opacity=1 ][line width=0.75]    (6.56,-1.97) .. controls (4.17,-0.84) and (1.99,-0.18) .. (0,0) .. controls (1.99,0.18) and (4.17,0.84) .. (6.56,1.97)   ;
\draw [shift={(505.65,152.71)}, rotate = 359.95] [color={rgb, 255:red, 74; green, 144; blue, 226 }  ,draw opacity=1 ][line width=0.75]    (6.56,-1.97) .. controls (4.17,-0.84) and (1.99,-0.18) .. (0,0) .. controls (1.99,0.18) and (4.17,0.84) .. (6.56,1.97)   ;

\draw (277.95,127.54) node [anchor=north west][inner sep=0.75pt]  [font=\LARGE]  {$=$};

\end{tikzpicture}

}
	\caption{~~Transforming self-sewing to disjoint sewing}
	\label{fig1}
\end{figure}

In Sec. \ref{lbb63}, we will show that ($\beta$) arises naturally as a special case of ($\alpha$). In future work, we will show that in most typical cases, ($\alpha$) and ($\beta$) are roughly equivalent. For now, let me explain why the sewing in \cite{Gui-sewingconvergence} is insufficient to achieve the factorization: In that work, the $\Vbb^{\otimes R}$-module $\Mbb$ is a tensor product of $\Vbb$-modules. As we will see in the following section:
\begin{itemize}
\item The consideration of $\Vbb^{\otimes R}$-modules that are not tensor products of $\Vbb$-modules (or their direct sums) is closely related to the pseudo-trace construction.
\end{itemize}

\subsection{A pseudo-trace $\End^0_A(\Mbb)\rightarrow\Cbb$ is a two-pointed genus-0 conformal block associated to the $\Vbb^{\otimes 2}$-submodule $\End^0_A(\Mbb)$ of $\Mbb\otimes_\Cbb\Mbb'$}

To illustrate the main idea behind the embedding $(\beta)\hookrightarrow(\alpha)$ mentioned above, let's examine the simplest case: Let $\fk Q$ and $\fn$ be a $3$-pointed resp. $2$-pointed sphere with local coordinates:
\begin{align*}
\fk Q=(\Pbb^1|1, \infty,0;z-1,1/z,z)\qquad \fk N=(\Pbb^1|\infty,0;1/z,z)
\end{align*}   
Here, $z$ denotes the standard coordinate of $\Cbb$. Associate the vacuum module $\Vbb$ to the marked point $1\in\fk Q$. Let $\Xbb$ be a $\Vbb^{\otimes 2}$-module associated to the marked points $(\infty,0)$ of $\fn$. Its contragredient $\Xbb'$ is associated to the marked points $(\infty,0)$ of $\fk Q$. Choose conformal blocks
\begin{align*}
\upphi:\Vbb\otimes\Xbb'\rightarrow\Cbb\qquad \uptau:\Xbb\rightarrow \Cbb
\end{align*}
associated to $\fk Q$ and $\fn$ respectively.

Let $\fx$ be the sewing of $\fq$ and $\fn$ along two pairs of points: The first pair is $(\infty_\fq,\infty_\fn)$, and the second pair is $(0_\fq,0_\fn)$.\footnote{Technically, this sewing does not satisfy our assumption in Sec. \ref{lbb65} that each component of $\fq\sqcup\fn$ contains a marked point not intended for sewing. (Look at $\fn$.) However, this minor issue can be easily circumvented by propagating the conformal blocks. See Rem. \ref{lbb56}.} See Fig. \ref{fig2}. Then $\fx$ has base manifold $\MB=\MD_1\times\MD_1$ where $\MD_1=\{q\in\Cbb:|q|<1\}$. By Thm. \ref{lbb64}, for each $0<|q_1|,|q_2|<1$,  $\MS_{q_\blt}(\upphi\otimes\uptau)$ converges to a conformal block associated to $\Vbb$ and the fiber $\fx_{q_\blt}=\fx_{q_1,q_2}$.

\begin{figure}[h]
	\centering
\scalebox{0.8}{

\tikzset{every picture/.style={line width=0.75pt}} 

\begin{tikzpicture}[x=0.75pt,y=0.75pt,yscale=-1,xscale=1]

\draw   (96,156.36) .. controls (96,133.77) and (114.32,115.45) .. (136.91,115.45) .. controls (159.5,115.45) and (177.82,133.77) .. (177.82,156.36) .. controls (177.82,178.96) and (159.5,197.27) .. (136.91,197.27) .. controls (114.32,197.27) and (96,178.96) .. (96,156.36) -- cycle ;
\draw   (210,157.36) .. controls (210,134.77) and (228.32,116.45) .. (250.91,116.45) .. controls (273.5,116.45) and (291.82,134.77) .. (291.82,157.36) .. controls (291.82,179.96) and (273.5,198.27) .. (250.91,198.27) .. controls (228.32,198.27) and (210,179.96) .. (210,157.36) -- cycle ;
\draw  [color={rgb, 255:red, 65; green, 117; blue, 5 }  ,draw opacity=1 ][fill={rgb, 255:red, 0; green, 120; blue, 120 }  ,fill opacity=1 ] (103.86,156.39) .. controls (103.86,155.49) and (104.6,154.76) .. (105.5,154.76) .. controls (106.4,154.76) and (107.14,155.49) .. (107.14,156.39) .. controls (107.14,157.3) and (106.4,158.03) .. (105.5,158.03) .. controls (104.6,158.03) and (103.86,157.3) .. (103.86,156.39) -- cycle ;
\draw  [color={rgb, 255:red, 255; green, 0; blue, 0 }  ,draw opacity=1 ][fill={rgb, 255:red, 255; green, 0; blue, 0 }  ,fill opacity=1 ] (158.86,179.39) .. controls (158.86,178.49) and (159.6,177.76) .. (160.5,177.76) .. controls (161.4,177.76) and (162.14,178.49) .. (162.14,179.39) .. controls (162.14,180.3) and (161.4,181.03) .. (160.5,181.03) .. controls (159.6,181.03) and (158.86,180.3) .. (158.86,179.39) -- cycle ;
\draw  [color={rgb, 255:red, 255; green, 0; blue, 0 }  ,draw opacity=1 ][fill={rgb, 255:red, 255; green, 0; blue, 0 }  ,fill opacity=1 ] (156.36,130.89) .. controls (156.36,129.99) and (157.1,129.26) .. (158,129.26) .. controls (158.9,129.26) and (159.64,129.99) .. (159.64,130.89) .. controls (159.64,131.8) and (158.9,132.53) .. (158,132.53) .. controls (157.1,132.53) and (156.36,131.8) .. (156.36,130.89) -- cycle ;
\draw  [color={rgb, 255:red, 74; green, 144; blue, 226 }  ,draw opacity=1 ][fill={rgb, 255:red, 74; green, 144; blue, 226 }  ,fill opacity=1 ] (229.86,129.89) .. controls (229.86,128.99) and (230.6,128.26) .. (231.5,128.26) .. controls (232.4,128.26) and (233.14,128.99) .. (233.14,129.89) .. controls (233.14,130.8) and (232.4,131.53) .. (231.5,131.53) .. controls (230.6,131.53) and (229.86,130.8) .. (229.86,129.89) -- cycle ;
\draw  [color={rgb, 255:red, 74; green, 144; blue, 226 }  ,draw opacity=1 ][fill={rgb, 255:red, 74; green, 144; blue, 226 }  ,fill opacity=1 ] (230.36,179.89) .. controls (230.36,178.99) and (231.1,178.26) .. (232,178.26) .. controls (232.9,178.26) and (233.64,178.99) .. (233.64,179.89) .. controls (233.64,180.8) and (232.9,181.53) .. (232,181.53) .. controls (231.1,181.53) and (230.36,180.8) .. (230.36,179.89) -- cycle ;
\draw  [dash pattern={on 1.5pt off 1.5pt}]  (180.05,130.59) -- (206.05,130.59) ;
\draw [shift={(208.05,130.59)}, rotate = 180] [color={rgb, 255:red, 0; green, 0; blue, 0 }  ][line width=0.75]    (6.56,-1.97) .. controls (4.17,-0.84) and (1.99,-0.18) .. (0,0) .. controls (1.99,0.18) and (4.17,0.84) .. (6.56,1.97)   ;
\draw [shift={(178.05,130.59)}, rotate = 0] [color={rgb, 255:red, 0; green, 0; blue, 0 }  ][line width=0.75]    (6.56,-1.97) .. controls (4.17,-0.84) and (1.99,-0.18) .. (0,0) .. controls (1.99,0.18) and (4.17,0.84) .. (6.56,1.97)   ;
\draw  [dash pattern={on 1.5pt off 1.5pt}]  (182.05,180.59) -- (208.05,180.59) ;
\draw [shift={(210.05,180.59)}, rotate = 180] [color={rgb, 255:red, 0; green, 0; blue, 0 }  ][line width=0.75]    (6.56,-1.97) .. controls (4.17,-0.84) and (1.99,-0.18) .. (0,0) .. controls (1.99,0.18) and (4.17,0.84) .. (6.56,1.97)   ;
\draw [shift={(180.05,180.59)}, rotate = 0] [color={rgb, 255:red, 0; green, 0; blue, 0 }  ][line width=0.75]    (6.56,-1.97) .. controls (4.17,-0.84) and (1.99,-0.18) .. (0,0) .. controls (1.99,0.18) and (4.17,0.84) .. (6.56,1.97)   ;
\draw   (371.14,156.89) .. controls (371.14,137.14) and (395.37,121.14) .. (425.27,121.14) .. controls (455.17,121.14) and (479.41,137.14) .. (479.41,156.89) .. controls (479.41,176.63) and (455.17,192.64) .. (425.27,192.64) .. controls (395.37,192.64) and (371.14,176.63) .. (371.14,156.89) -- cycle ;
\draw    (396.91,159.96) .. controls (420.56,142.89) and (439.23,147.44) .. (455.41,162.24) ;
\draw    (404.38,156.55) .. controls (418.07,165.65) and (433,164.51) .. (446.7,156.55) ;
\draw  [color={rgb, 255:red, 65; green, 117; blue, 5 }  ,draw opacity=1 ][fill={rgb, 255:red, 0; green, 120; blue, 120 }  ,fill opacity=1 ] (377.86,158.39) .. controls (377.86,157.49) and (378.6,156.76) .. (379.5,156.76) .. controls (380.4,156.76) and (381.14,157.49) .. (381.14,158.39) .. controls (381.14,159.3) and (380.4,160.03) .. (379.5,160.03) .. controls (378.6,160.03) and (377.86,159.3) .. (377.86,158.39) -- cycle ;

\draw (110.5,141.45) node [anchor=north west][inner sep=0.75pt]    {$1$};
\draw (161.5,109.45) node [anchor=north west][inner sep=0.75pt]    {$\infty $};
\draw (211,108.45) node [anchor=north west][inner sep=0.75pt]    {$\infty $};
\draw (161,197.45) node [anchor=north west][inner sep=0.75pt]    {$0$};
\draw (218,198.95) node [anchor=north west][inner sep=0.75pt]    {$0$};
\draw (58.5,153.95) node [anchor=north west][inner sep=0.75pt]  [color={rgb, 255:red, 65; green, 117; blue, 5 }  ,opacity=1 ] [align=left] {$ $};
\draw (53,158) node [anchor=north west][inner sep=0.75pt]  [color={rgb, 255:red, 65; green, 117; blue, 5 }  ,opacity=1 ] [align=left] { \ \ };
\draw (81.5,148.9) node [anchor=north west][inner sep=0.75pt]  [color={rgb, 255:red, 65; green, 117; blue, 5 }  ,opacity=1 ]  {$\mathbb V$};
\draw (153.5,148.9) node [anchor=north west][inner sep=0.75pt]  [color={rgb, 255:red, 255; green, 0; blue, 0 }  ,opacity=1 ]  {$\mathbb X'$};
\draw (227,148.4) node [anchor=north west][inner sep=0.75pt]  [color={rgb, 255:red, 74; green, 144; blue, 226 }  ,opacity=1 ]  {$\mathbb X$};
\draw (385.5,144.95) node [anchor=north west][inner sep=0.75pt]    {};
\draw (356,150.9) node [anchor=north west][inner sep=0.75pt]  [color={rgb, 255:red, 65; green, 117; blue, 5 }  ,opacity=1 ]  {$\mathbb V$};
\draw (318.64,151.31) node [anchor=north west][inner sep=0.75pt]  [font=\Large]  {$=$};

\end{tikzpicture}

}
	\caption{~~Sewing $\fq\sqcup\fn$ to get the torus $\fx_{q_1,q_2}$.}
	\label{fig2}
\end{figure}

The factorization implies that any vacuum torus conformal block can be written as $\MS_{q_\blt}(\upphi\otimes\uptau)$. We now explain how the pseudo-trace construction fits into our setting.

Let $\Mbb$ be a $\Vbb$-module. If we let $\Xbb$ be $\Mbb\otimes\Mbb'$ and let $\uptau:\Xbb\otimes\Xbb'\rightarrow\Cbb$ be the standard pairing, then $\MS_{q_\blt}(\upphi\otimes\uptau)$ is just the usual $q$-trace (where $q=q_1q_2$). However, we can let $\Xbb$ be a $\Vbb^{\otimes 2}$-submodule of $\Mbb\otimes\Mbb'$. 

The following procedure provides a major way to get $\Vbb^{\otimes 2}$-submodules of $\Mbb\otimes\Mbb'$. Let
\begin{align*}
\End^0(\Mbb)=\bigoplus_{\lambda,\mu\in\Cbb}\Hom_\Cbb(\Mbb_{[\mu]},\Mbb_{[\lambda]})
\end{align*}
where $\Mbb_{[\lambda]}$ is the weight-$\lambda$ generalized eigenspace. The canonical linear equivalence
\begin{align*}
\End^0(\Mbb)\simeq\Mbb\otimes\Mbb'
\end{align*}
makes $\End^0(\Mbb)$ a $\Vbb^{\otimes 2}$-module. Choose a unital subalgebra $A$ of $\End_\Vbb(\Mbb)^{\mathrm{op}}$. Then $\End^0(\Mbb)$ has a $\Vbb^{\otimes 2}$-submodule
\begin{align*}
\End_A^0(\Mbb):=\{T\in\End^0(\Mbb):(Tm)a=T(m a)\text{ for all }m\in\Mbb,a\in A\}
\end{align*}

We now assume that $\Mbb$ is projective as a right $A$-module. Fix a symmetric linear functional $\omega:A\rightarrow\Cbb$. (Namely, $\omega$ is linear and $\omega(ab)=\omega(ba)$ for all $a,b\in A$.) Through the pseudo-trace construction in \cite{Ari10}, we obtain a symmetric linear functional
\begin{align*}
\Tr^\omega:\End^0_A(\Mbb)\rightarrow\Cbb
\end{align*}
The fact that $\uptau:=\Tr^\omega$ is symmetric implies that it is a conformal block associated to $\fn$ and the $\Vbb^{\otimes 2}$-module
\begin{align*}
\Xbb:=\End^0_A(\Mbb)
\end{align*}

Finally, the linear functional
\begin{align*}
\upphi:\Vbb\otimes \Mbb'\otimes\Mbb\rightarrow\Cbb\qquad v\otimes m'\otimes m\mapsto\bk{Y_\Mbb(v,1)m,m'}
\end{align*}
is a conformal block associated to $\fq$ and $\Vbb\otimes\Mbb'\otimes\Mbb$. Using the fact that the vertex operators of $\Mbb$ commute with $A$, we see that $\upphi$ descends to a linear map
\begin{align*}
\upphi:\Vbb\otimes\End^0_A(\Mbb)'\rightarrow\Cbb
\end{align*}
where $\End^0_A(\Mbb)'=\bigoplus_{\mu,\lambda\in\Cbb}\Hom_A(\Mbb_{[\mu]},\Mbb_{[\lambda]})^*$ is the contragredient $\Vbb^{\otimes2}$-module of $\End^0_A(\Mbb)$. 
Thus one can define the sewing $\MS(\upphi\otimes\Tr^\omega):\Vbb\rightarrow\Cbb\{q_1,q_2\}[\log q_1,\log q_2]$, and by direct computation one sees that it has range in $\Cbb\{q_1q_2\}[\log(q_1q_2)]$. If we let $v\mapsto\Tr^\omega(Y_\Mbb(v,1)q^{L(0)})$ denote the pseudo-$q$-trace construction as in \cite{Miy-modular-invariance,AN-pseudo-trace,Fio-genus-1,Hua-modular-C2}, then as linear maps $\Vbb\mapsto\Cbb\{q\}[\log q]$ we have
\begin{align}
\tcboxmath{\Tr^\omega \big(Y_\Mbb(-,1)q^{L(0)}\big)=\MS(\upphi\otimes\Tr^\omega)\big|_{q_1q_2=q}}
\end{align}
Thus, the pseudo-$q$-trace on the LHS is realized by the sewing of conformal blocks on the RHS. In particular, the convergence of Segal's sewing (in the sense of \eqref{eq2}) implies the convergence of pseudo-$q$-traces. See Sec. \ref{lbb63} for the generalization of the above discussions to arbitrary genus.

\begin{itemize}
\item Conclusion: The pseudo-$q$-traces can be recovered from Segal's sewing of conformal blocks for $\Vbb^{\otimes2}$-modules.
\end{itemize}

\subsection{Connections, local freeness, and the convergence of Virasoro uniformization}

Besides Thm. \ref{lbb49}, in this paper, we also prove several results related to the connections on the sheaves. These connections were already used in \cite{Gui-sewingconvergence} to derive the differential equations ensuring the convergence of sewing conformal blocks. In this paper, we adopt the same method. The idea behind this method is to transform the sewing to the uniformization of Riemann surfaces. Huang has already used this idea in \cite{Hua97} to study the convergence of sewing conformal blocks associated to spheres with non-standard local coordinates.

Unlike \cite{Gui-sewingconvergence}, which takes a pragmatic view of these connections, this paper offers a more thorough exploration of the topic for several reasons.

\subsubsection*{The parallelism of sewing up to a projective term}

As suggested in Sec. \ref{lbb62}, in genus-1, the SF theorem (including the convergence of sewing) is insufficient to imply modular invariance. One also needs the flat connections on the locally free sheaf of conformal blocks, and one needs to show that the sewing is parallel under this connection up to a projective term. The general formula for the projective term in arbitrary genus is given in Thm. \ref{lbb52}. As we will show in a future work, this formula accounts for the extra factor $-\frac c{24}$ appearing in the modular invariance theorems.

\subsubsection*{Misunderstanding of the proof of analytic local freeness}

Let us from now on assume that $\fx=(\pi:\MC\rightarrow\MB|\sgm_1,\dots,\sgm_N)$ is a family of $N$-pointed compact Riemann surfaces with local coordinate $\eta_i$ at each $\sgm_i(\MB)$. Associate a $\Vbb^{\otimes N}$-module $\Mbb$ simultaneously to the marked points $\sgm_\blt(\MB)$. A standard application of the connections is to show that the sheaf of conformal blocks $\scr T^*_\fx(\Wbb)$, as an $\MO_\MB$-module, is locally free. (Namely, it is a (finite-rank) vector bundle on $\MB$.) As an application, the dimensions of spaces of conformal blocks are topological invariants. See \cite{TUY,Ueno97,Ueno08,NT-P1_conformal_blocks,DGT2} for instance.

We believe that the following aspect has not received sufficient attention in the literature: \textit{Almost all proofs of the local freeness of sheaves of conformal blocks only apply in the algebraic setting but not in the complex analytic setting}, the setting we take in our series of papers. The algebraic local freeness is proved in the following way: First, it is proved that the sheaf of coinvariants $\scr T_\fx(\Wbb)$ (whose dual sheaf is $\scr T_\fx^*(\Wbb)$) is coherent. Then, since any coherent sheaf with a connection is locally free, it follows that $\scr T_\fx(\Wbb)$ is locally free, and hence $\scr T_\fx^*(\Wbb)$ is locally free. 

However, the above proof that $\scr T_\fx(\Wbb)$ is coherent relies heavily on the fact that $\MB$ is (locally) Noetherian. (See e.g. the proof of \cite[Thm. 6.2.1]{NT-P1_conformal_blocks}.) Without the Noetherian property for $\MO_\MB$, one can only show that $\scr T_\fx(\Wbb)$ is of finite type---that is, locally it has an epimorphism from a free sheaf---but one cannot show that the sheaves of relations in $\scr T_\fx(\Wbb)$ are of finite type, which is essential in defining coherent sheaves. Unfortunately, complex manifolds rarely exhibit the Noetherian property, making it challenging to establish coherence as in the algebraic case. Consequently, the algebraic proof of local freeness fails in the analytic setting. 

Therefore, it is essential to give a correct proof of the local freeness for analytic families, which we achieve in Thm. \ref{lbb35}.

\subsubsection*{Convergence of Virasoro uniformization}

A key step in proving the (analytic) local freeness of $\scr T_\fx(\Wbb)$ (and $\scr T_\fx^*(\Wbb)$) is to prove the convergence of Virasoro uniformization. The algebraic Virasoro uniformization is explicitly stated in \cite[Ch. 17]{FB04}. Roughly speaking, suppose that we have an $N$-pointed compact Riemann surface $\fy=(C|x_1,\dots,x_N)$ with local coordinates $\eta_1,\dots,\eta_N$, where each $\eta_i$ is defined on a disk $U_i$ centered at $x_i$, and choose a meromorphic vector field $\xk_i=h_i\partial_{\eta_i}$ where $h_i$ is a holomorphic function on $U_i\setminus\{x_i\}$ with finite poles at $x_i$. In the algebraic setting, these vector fields $\xk_1,\dots,\xk_N$ generate the first order infinitesimal deformation of $\fy$. Translating $\xk_i$ to the Virasoro operators, we obtain the first order infinitesimal parallel transport of conformal block.

In our analytic setting, we can say more about the Virasoro uniformization. We can even assume that $h_i$ relies holomorphically on another complex variable $q$. So each $\xk_i$ is a non-autonomous vector field, which generates a non-autonomous holomorphic flow $q\mapsto\beta^i_q$ on any compact subset of $U_i-\{x_i\}$. Assume that $U_i$ is large enough and $r>0$ is small enough such that, whenever $|q|<r$, $U_i$ contains $\beta^i_q\circ\eta_i^{-1}(\ovl\MD_1)$. Then we can remove  $\beta^i_q\circ\eta_i^{-1}(\MD_1)$ from $C$, use the map $\eta_i\circ(\beta_q^i)^{-1}$ to glue $\beta_q^i\circ\eta_i^{-1}(\Sbb^1)$ with the boundary $\Sbb^1$ of the standard closed disk $\ovl\MD_1$, and replace $x_i$ and its local coordinates $\eta_i$ with the new marked point $0\in\ovl\MD_1$ and its standard coordinate $z$. Then this gives a new $N$-pointed compact Riemann surface $\fy_q$ with local coordinates.\footnote{In the language of compact Riemann surfaces with boundaries and boundary parametrizations, this process is as follows: At $q$, the Riemann surface with boundary is $C-\bigcup_i\beta^i_q\circ\eta_i^{-1}(\MD_1)$. The $i$-th boundary parametrization, i.e, the analytic diffeomorphism $\beta^i_q\circ\eta_i^{-1}(\Sbb^1)\rightarrow\Sbb^1$, is chosen to be $\eta_i\circ(\beta^i_q)^{-1}$.} See Sec. \ref{lbb67} for more details.

With respect to the analytic deformation $\fy\mapsto \fy_q$, a given conformal block $\uppsi$ (associated to $\fy$ and a $\Vbb^{\otimes N}$-module $\Wbb$) is transformed to a new one $\uppsi_q$. This transformation is described by a differential equation involving Virasoro operators. See \eqref{eqb53} (and also \eqref{eqb56}). Initially, $\uppsi_q$ is only a formal power series of $q$. Our theorem on the \textbf{convergence of Virasoro uniformization} says that for $|q|<r$, $\uppsi_q$ converges absolutely to a conformal block associated to $\fy_q$ and $\Wbb$.

In the special case that $\xk_i$ is autonomous (i.e. independent of $q$), $\beta^i$ is then an autonomous flow, and $\uppsi_q$ can be expressed as an exponential: Write
\begin{align}
\xk_i=\sum_{k=-\infty}^{+\infty}h_{i,k}\eta_i^k
\end{align}
where $h_{i,k}\in\Cbb$ is zero when $k\ll 0$. Then for each $w\in\Wbb$ we have
\begin{align}
\upphi(w)=\upphi_0\big(e^{-qA}w\big)\qquad\text{ where } A=\sum_{i=1}^N \sum_{k=-\infty}^{+\infty}h_{i,k}L_i(k-1)
\end{align}
(See Exp. \ref{lbb32}.) In this case, the convergence of Virasoro uniformization can also be deduced from \cite{Hua97}, so our result provides an alternative proof. Interestingly, our proof of the convergence of Virasoro uniformization shares many similarities with the proof of convergence for sewing conformal blocks.

\subsubsection*{Lie derivatives in sheaves of VOAs}

The most challenging part of constructing the connection  is to show that the connection $\nabla$, originally defined on $\scr W_\fx(\Wbb)=\Wbb\otimes\MO_\MB$, descends to $\scr T_\fx(\Wbb)$. (Note that $\scr T_\fx(\Wbb)$ is the quotient of $\scr W_\fx(\Wbb)$ by a suitable $\MO_\MB$-submodule $\SJ_\fx$.) The purely algebraic approach in \cite{FB04} does not suit our analytic framework optimally, so in this paper, we provide a differential-geometric proof.

Let $\yk$ be a vector field on $\MB$. To show that $\nabla_\yk$ preserves $\SJ_\fx$, we need to calculate the commutator of $\nabla_\yk$ and the action of the sheaf of VOA $\scr V_\fx$ (more precisely, the action of $\scr V_\fx\otimes_{\MO_\MC}\omega_{\MC/\MB}$) on $\scr W_\fx(\Wbb)$. As we will see in Subsec. \ref{lbb66} (especially in \eqref{eqb34}), this commutator is computed using the \textbf{Lie derivatives in \pmb{$\scr V_\fx\otimes\omega_{\MC/\MB}$}}, generalizing the usual Lie derivatives of tensor fields.

It is noteworthy that in the case of $\Vbb$ being a unitary affine VOA associated to a simple Lie algebra $\gk$, this Lie derivative structure is obscured. In fact, as shown in \cite{TUY}, to study the conformal blocks for such VOAs it suffices to use the subsheaf of $\scr V_\fx$ generated by weight one vectors. This subsheaf is canonically isomorphic to $\gk\otimes_\Cbb\Theta_{\MC/\MB}$, and hence its tensor with $\omega_{\MC/\MB}$ is canonically isomorphic to $\gk\otimes_\Cbb\MO_\MC$. Therefore, the usual (Lie) derivative on $\MO_\MC$ is adequate for studying connections. For this reason, extending the treatment of connections in \cite{TUY} to general $C_2$-cofinite VOAs is far from straightforward.

\subsection*{Acknowledgment}

We would like to thank David Ben-Zvi, Yi-Zhi Huang, and Baojun Wu for their valuable discussions. The authors were supported by BMSTC and ACZSP (Grant no. Z221100002722017). The first author would like to thank Zhipeng Yang for the warm hospitality during his visit to Yunnan Key Laboratory of Modern Analytical Mathematics and Applications (No. 202302AN360007).


\section{Preliminaries}

\subsection{Notations}

If $X$ is a complex manifold, then we denote the holomorphic tangent (resp. cotangent) bundle as $\Theta_X$ (resp. $\omega_X$). We denote by $\mc O_X$ the sheaf of (germs of) holomorphic functions on $X$. Then $\mc O_X(X)=\mc O(X)$ is the space of holomorphic functions $X\rightarrow\Cbb$. Suppose that $\scr E$ is an $\mc O_X$-module. For each $x\in X$, the \textbf{stalk} of $\scr E$ at $x$ is denoted by $\scr E_x$. If $s\in\scr E(X)$, its \textbf{germ} at $x$ is denoted by $s_x$. The \textbf{fiber} of $\scr E$ at $x$ is the $\Cbb$-vector space
\begin{align}
\scr E|_x:=\scr E_x/\mk_{X,x}\scr E_x
\end{align}  
where $\mk_{X,x}=\{f\in\mc O_{X,x}:f(x)=0\}$. The equivalence class of $s_x$ in $\scr E|_x$ is denoted by $s|_x$.

Suppose that $\pi:X\rightarrow Y$ is a holomorphic map of complex manifolds, and $\scr E$ is a sheaf on $X$. Then $\pi_*\scr E$ is the direct image sheaf, i.e.,
\begin{align*}
(\pi_*\scr E)(V)=H^0(\pi^{-1}(V),\scr E)\equiv \scr E(\pi^{-1}(V))
\end{align*}
for any open $V\subset Y$. If $\scr E$ is an $\scr O_X$-module, then $\pi_*\scr E$ is an $\scr O_Y$-module.

In this article, if $X$ is a complex manifold, and if $\scr E$ and $\scr F$ are $\mc O_X$-modules, then $\scr E\otimes\scr F$ means $\scr E\otimes_{\mc O_X}\scr F$. In other words, $\otimes$ means $\otimes_{\mc O_X}$ but not (say) $\otimes_\Cbb$.

If $s_\blt=(s_i)_{i\in I}$ is a collection in $\scr E(X)$, we say that $s_\blt$ \textbf{generates} $\scr E$ if for each $x\in X$, every element of $\scr E_x$ is a (finite) $\mc O_{X,x}$-linear combination of elements of $s_\blt$. This is equivalent to saying that the $\mc O_X$-module morphism $\bigoplus_{i\in I}\mc O_X\rightarrow \scr E$ sending $\oplus_{i\in I}f_i$ to $\sum_i f_is_i$ is an epimorphism.

We say that $\scr E$ is of \textbf{finite type} if each $x\in X$ is contained in a neighborhood $U$ such that $\scr E|_U$ is generated by a finite subset of $\scr E(U)$. Clearly, if $\scr E$ is of finite type, then each $\scr E|_x$ is finite dimensional.

Suppose that $S$ is a closed submanifold of $X$ with codimension $1$. Then for each $k\in \Zbb$, $\mc O_X(kS)$ is the $\mc O_X$-submodule of $\MO\vert_{X- S}$ consisting of sections of $\MO\vert_{X- S}$ with poles of order $\leq k$ at $S$. Denote 
    \begin{align*}
        \MO_X(\blt S):=\varinjlim_{k\in \Nbb}\MO_X(k S).
    \end{align*}
If $\ME$ is an $\MO_X$-module, we set \index{ES@$\mc E(kS),\mc E(\blt S)$}
    \begin{gather*}
\mc E(kS)=\mc E\otimes\mc O_X(kS)\qquad \mc E(\blt S)=\varinjlim_{k\in \Nbb}\ME(k S)
    \end{gather*}
    If $\mc E$ is locally free (i.e. a vector bundle), then the sections of $\mc E(\blt S)$ (resp. $\mc E(k S)$) can be viewed as sections of $\mc E|_{X- S}$ with finite poles (resp. with poles of orders at most $k$) at $S$.

\begin{itemize}
\item We let $\Nbb=\{0,1,2,3,\dots\}$ and $\Zbb_+=\{1,2,3,\dots\}$.
\item For each $0<r\leq+\infty$ and $r_\blt=(r_1,\dots,r_N)$, let
\begin{gather*}
\mc D_r=\{z\in\Cbb:|z|<r\}\qquad \mc D_r^\times=\{z\in\Cbb:0<|z|<r\}\qquad\ovl{\mc D}_r=\{z\in\Cbb:|z|\leq r\}\\
\MD_{r_\blt}=\MD_{r_1}\times\cdots\times\MD_{r_N}\qquad \MD^\times_{r_\blt}=\MD^\times_{r_1}\times\cdots\times\MD^\times_{r_N}
\end{gather*}

    \item Suppose $\pi:\MC\rightarrow \MB$ is a holomorphic map of complex manifolds. For each $E\subset\MB$, 
    \begin{align*}
 \quad \MC_E:=\pi^{-1}(E)
    \end{align*}

\item We fix an $\Nbb$-graded VOA $\Vbb=\bigoplus_{n\in\Nbb}\Vbb(n)$ with conformal vector $\cbf\in\Vbb(2)$  and central charge $c$. We set $\Vbb^{\leq n}=\bigoplus_{k\leq n}\Vbb(k)$. 

\item On any weak $\Vbb$-module $\Wbb$, the vertex operator $Y_\Wbb(v,z)$ (often abbreviated to $Y$) is written as $Y(v,z)=\sum_{n\in\Zbb}Y(v)_nz^{-n-1}$. More generally, if $\Wbb$ is a weak $\Vbb^{\otimes N}$-module and $v\in\Vbb$, we write 
\begin{gather}
Y_i(v,z)=\sum_n Y_i(v)_nz^{-n-1}=Y(\idt\otimes\cdots\otimes 
\underset{
\begin{subarray}{c}
\uparrow\\
i\text{-th}
\end{subarray}
}{v}
\otimes\cdots\otimes\idt,z)\\
L_i(n)=Y_i(\cbf)_{n-1} \label{eqb93}
\end{gather}
Then $L(n)=\sum_i L_i(n)$ are the Virasoro operators of $\Vbb^{\otimes N}$ on $\Wbb$.
\item Consider the case that $\Wbb$ is a grading-restricted (generalized) $\Vbb^{\otimes N}$-module (cf. \cite{Hua-projectivecover}). Then each generalized eigenspace of $L(0)$ is finite-dimensional and $L_j(0)$-invariant for all $j$. So we can write
\begin{align}\label{eqb68}
\Wbb=\bigoplus_{\lambda_\blt\in\Cbb^N}\Wbb_{[\lambda_\blt]}
\end{align}
where $\Wbb_{[\lambda_\blt]}=\Wbb_{[\lambda_1,\dots,\lambda_N]}$ is the (finite-dimensional) subspace of all $w\in\Wbb$ that is a generalized $\lambda_i$-eigenvector of $L_i(0)$ for each $i$. Define the \textbf{algebraic completion}
\begin{align*}
\ovl\Wbb=\prod_{\lambda_\blt\in\Cbb^N}\Wbb_{[\lambda_\blt]}
\end{align*}
For each $\mu_\blt,\lambda_\blt\in\Cbb^N$, we say $\mu_\blt\leq\lambda_\blt$ whenever $\Re(\mu_i)\leq \Re(\lambda_i)$ for all $1\leq i\leq N$. Let
\begin{gather*}
\Wbb_{[\leq\lambda_\blt]}=\bigoplus_{\mu_\blt\in\Cbb^N,\mu_\blt\leq\lambda_\blt}\Wbb_{[\mu_\blt]}\\
P_{\lambda_\blt}=\text{ the projection }\ovl\Wbb\rightarrow\Wbb_{[\lambda_\blt]}\qquad P_{\leq\lambda_\blt}=\text{ the projection }\ovl\Wbb\rightarrow\Wbb_{[\leq\lambda_\blt]}\label{eqb66}
\end{gather*}
\item Let $W$ be a $\Cbb$-vector space. Let $z_\blt=(z_1,\dots,z_N)$ be (mutually commuting) variables. 
\begin{gather*}
W[[z_\blt]]=\Big\{\sum_{n_\blt\in\Nbb^N}a_{n_\blt}z_\blt^{n_\blt}\Big\}\\
W[z_\blt]=\Big\{\sum_{n_\blt\in\Nbb^N}a_{n_\blt}z_\blt^{n_\blt}:a_{n_\blt}=0\text{ for sufficiently large $n_1,\dots,n_N$}\}\\
W((z_\blt))=\Big\{\sum_{n_\blt\in\Zbb^N}a_{n_\blt}z_\blt^{n_\blt}:a_{n_\blt}=0\text{ for sufficiently negative $n_1,\dots,n_N$}\Big\}\\
W\{z_\blt\}=\Big\{\sum_{n_\blt\in\Cbb^N}a_{n_\blt}z_\blt^{n_\blt}:a_{n_\blt}=0\text{ for sufficiently negative $\Re(n_1),\dots,\Re(n_N)$}\Big\}
\end{gather*}
where all $a_{n_\blt}$ are in $\Wbb$. Note that $W\{z_\blt\}$ is a $\Cbb((z_\blt))$-module.
\end{itemize}

Let $X$ be a locally compact Hausdorff space. Let $(f_i)_{i\in I}$ be a countable collection of functions on $X$. We say that $\sum_i f_i$ \textbf{converges absolutely and locally uniformly (a.l.u.) on $X$} if each $x\in X$ is contained in a neighborhood $U$ such that $\sup_{y\in U}\sum_{i\in I}|f_i(y)|<+\infty$. This is equivalent to that for each compact $K\subset X$ we have $\sup_{y\in K}\sum_{i\in I}|f_i(y)|<+\infty$.

\subsection{Sewing a family of compact Riemann surfaces}\label{lbb41}

\subsubsection{The sewing construction}\label{lbb44}

We revisit the construction of sewing families of compact Riemann surfaces. The setting established in this subsection will be used throughout this paper. Our presentation follows and generalizes the one presented in \cite[Sec. 2]{Gui-sewingconvergence}. In \cite{Gui-sewingconvergence}, the sewing was restricted to a single pair of points, whereas here, we extend it to multiple pairs of points. Much of the geometric construction in this section is well known. We provide detailed explanations to establish the notations that will be used later.

Throughout this article, we fix a family of $(N+2R)$-pointed compact Riemann surfaces 
\begin{align}\label{eqb1}
    \wtd \fx=(\wtd \pi:\wtd \MC\rightarrow \wtd \MB\big|\sgm_\blt\big\Vert \sgm_\blt',\sgm_\blt'')
    =(\wtd \pi:\wtd \MC\rightarrow \wtd \MB\big|\sgm_1,\cdots,\sgm_N\big\Vert \sgm_1',\cdots,\sgm_R',\sgm_1'',\cdots,\sgm_R'').
\end{align}
where $N\geq1$ and $R\geq0$. So $\wtd\pi$ is a proper surjective holomorphic submersion, and each fiber $\wtd\MC_b=\wtd\pi^{-1}(b)$ (where $b\in\wtd\MB$) has complex dimension $1$.  $\sgm_\blt,\sgm_\blt',\sgm'_\blt$ are sections of $\wtd\pi:\wtd\MC\rightarrow\wtd\MB$ \footnote{Namely, they are holomorphic maps $\wtd\MB\rightarrow\wtd\MC$ whose compositions with $\wtd\pi$ are the identity map of $\wtd\MB$.} with mutually disjoint ranges. We always assume that
\begin{gather}\label{eqb52}
\text{For each $b\in\wtd\MB$, each connected component of $\wtd\MC_b$ contains one of $\sgm_1(b),\dots,\sgm_N(b)$}
\end{gather}
which is stronger than (2-b) of \cite[Def. 1.19]{GZ1}. (The reason for assuming this is due to \eqref{eqb51}.) We fix local coordinates $\xi_\blt,\varpi_\blt$ of $\wtd \fx$ at $\sgm_\blt',\sgm_\blt''$. \footnote{Suppose that $\sgm:\wtd\MB\rightarrow\wtd\MC$ is a section. Then a local coordinate at $\sgm(\wtd\MB)$ means a ($\Cbb$-valued) holomorphic function $\eta$ on a neighborhood $\wtd U$ of $\sgm(\wtd\MB)$ whose restriction to each fiber $\wtd U_b=\wtd\pi^{-1}(b)$ (where $b\in\wtd\MB$) is univalent (i.e. injective), and whose restriction to $\sgm(\wtd\MB)$ is zero.}

Choose $r_1,\cdots,r_R,\rho_1,\cdots,\rho_R>0$ and neighborhoods $V_1',\cdots,V_R',V_1'',\cdots,V_R''$ of $\sgm_1'(\wtd \MB),\cdots,\sgm_R'(\wtd \MB),\sgm_1''(\wtd \MB),\cdots,\sgm_R''(\wtd \MB)$, on which $\xi_1,\cdots,\xi_R,\varpi_1,\cdots,\varpi_R$ are defined, such that 
\begin{align}\label{eqb2}
    (\xi_i,\wtd \pi):V_i'\xrightarrow{\simeq} \MD_{r_i}\times \wtd \MB,\quad (\varpi_i,\wtd \pi ):V_i''\xrightarrow{\simeq} \MD_{\rho_i}\times \wtd \MB
\end{align}
are biholomorphic maps for each $1\leq i\leq R$. Moreover, we assume $\sgm_1(\wtd \MB),\cdots,\sgm_N(\wtd\MB),V_1',\cdots,V_R',V_1'',\cdots,V_R''$ are disjoint. Identify
\begin{align}\label{eqb4}
    V_i'=\MD_{r_i}\times \wtd \MB,\qquad V_i''=\MD_{\rho_i}\times \wtd \MB
\end{align}
via \eqref{eqb2}. Then $\xi_i$ (resp. $\varpi_i$) becomes the projection onto the $\MD_{r_i}$-component (resp. $\MD_{\rho_i}$-component), and $\wtd \pi$ restricts to the projection onto $\wtd \MB$. 
We call $r_i,\rho_i$ the \textbf{sewing radii} for $\sgm_i',\sgm_i''$.

We \textbf{sew \pmb{$\wtd \fx$} along pairs of points} $\sgm_\blt',\sgm_\blt''$ using $\xi_\blt,\varpi_\blt$ to get a family
\begin{align}\label{eqb3}
    \fx=(\pi:\MC\rightarrow \MB\big| \sgm_\blt)=(\pi:\MC\rightarrow \MB\big|\sgm_1,\cdots,\sgm_N)
\end{align}
$\fx$ is described as follows. 

We shall freely switch the orders of Cartesian products. Set 
\begin{align}
    \MB=\MD_{r_\blt \rho_\blt}\times \wtd \MB=\MD_{r_1 \rho_1}\times \cdots \times \MD_{r_R \rho_R}\times \wtd \MB.
\end{align}
Define also $W_i$ and its open subsets $W_i',W_i''$ by
\begin{subequations}\label{eqb5}
\begin{gather}
\label{eqWi1} W_i=\MD_{r_i}\times\MD_{\rho_i}\times \prod_{j\ne i} \MD_{r_j\rho_j}\times \wtd \MB\\ 
\label{eqWi2} W_i'=\MD_{r_i}^\times \times \MD_{\rho_i}\times \prod_{j\ne i} \MD_{r_j\rho_j}\times \wtd \MB\\  
\label{eqWi3} W_i''=\MD_{r_i} \times \MD_{\rho_i}^\times\times \prod_{j\ne i} \MD_{r_j\rho_j}\times \wtd \MB
\end{gather}
\end{subequations}
Then we can extend $\xi_i,\varpi_i$ and $q_i=\xi_i\varpi_i$ constantly to 
\begin{subequations}\label{eqb6}
\begin{gather}
    \xi_i:W_i\rightarrow \MD_{r_i}  \qquad (z,w,*)\mapsto z\\
    \varpi_i:W_i\rightarrow \MD_{\rho_i}\qquad (z,w,*)\mapsto w\\
q_i:W_i\rightarrow \MD_{r_i\rho_i} \qquad (z,w,*)\mapsto zw
\end{gather}
\end{subequations}
so that $q_i=\xi_i\varpi_i$ still holds. Then we have open holomorphic embeddings
\begin{subequations}\label{eqb94}
\begin{gather}
(\xi_i,\varpi_i,\pr):W_i\xrightarrow{=} \MD_{r_i}\times \MD_{\rho_i}\times \prod_{j\ne i} \MD_{r_j\rho_j}\times \wtd \MB  \label{eqb7}\\
(\xi_i,q_i,\pr):W_i'\rightarrow \MD_{r_i}\times \MD_{r_i\rho_i}\times \prod_{j\ne i} \MD_{r_j\rho_j}\times \wtd \MB \simeq \MD_{r_i}\times \wtd\MB \times \MD_{r_\blt \rho_\blt}   \label{eqb8}\\
(\varpi_i,q_i,\pr):W_i''\rightarrow \MD_{\rho_i}\times \MD_{r_i\rho_i}\times \prod_{j\ne i} \MD_{r_j\rho_j}\times \wtd \MB \simeq \MD_{\rho_i}\times \wtd\MB \times \MD_{r_\blt \rho_\blt}     \label{eqb9}
\end{gather}
\end{subequations}
where $\pr$ denotes the projection onto the $\prod_{j\ne i} \MD_{r_j\rho_j}\times \wtd \MB $-component. The image of \eqref{eqb8} resp. \eqref{eqb9} is precisely the subset of all $(z_i,b,p_1,\dots,p_R)\in \MD_{r_i}\times \wtd\MB\times \MD_{r_\blt\rho_\blt}$ resp. $(w_i,b,p_1,\dots,p_R)\in \MD_{\rho_i}\times \wtd\MB\times \MD_{r_\blt\rho_\blt}$ satisfying 
\begin{align}\label{eqb84}
\frac{\vert p_i\vert }{\rho_i}<\vert z_i\vert <r_i \qquad \text{resp.}\qquad \frac{\vert p_i\vert }{r_i}<\vert w_i\vert <\rho_i.
\end{align}
So closed subsets $F_i'\subset \MD_{r_i}\times \wtd\MB\times \MD_{r_\blt\rho_\blt}$ and $F_i''\subset \MD_{\rho_i}\times \wtd\MB\times \MD_{r_\blt\rho_\blt}$ can be chosen such that we have biholomorphisms
\begin{subequations}
\begin{gather}\label{geosewb1}
(\xi_i,q_i,\pr):W_i'\xrightarrow{\simeq} \MD_{r_i}\times \wtd\MB\times \MD_{r_\blt\rho_\blt}-F_i'\\
 (\varpi_i,q_i,\pr):W_i''\xrightarrow{\simeq} \MD_{\rho_i}\times \wtd\MB\times \MD_{r_\blt\rho_\blt}-F_i''
\end{gather}
\end{subequations}
By the identifications \eqref{eqb4}, we can write the above maps as
\begin{subequations}\label{eqb10}
\begin{gather}
(\xi_i,q_i,\pr):W_i'\xrightarrow{\simeq} V_i'\times \MD_{r_\blt\rho_\blt}-F_i'\qquad \subset\wtd \MC\times\mc D_{r_\blt\rho_\blt}\\
 (\varpi_i,q_i,\pr):W_i''\xrightarrow{\simeq} V_i''\times \MD_{r_\blt\rho_\blt}-F_i''\qquad \subset \wtd \MC\times\mc D_{r_\blt\rho_\blt}
\end{gather}
\end{subequations}
In particular, we view $F_i'$ and $F_i''$ as closed subsets of $\wtd \MC\times \mc D_{r_\blt\rho_\blt}$.

The complex manifold $\MC$ is defined by 
\begin{gather}
\MC=\big(W_1\sqcup\cdots\sqcup W_R\big)\bigsqcup \big(\wtd \MC\times \MD_{r_\blt \rho_\blt}-\bigcup_{i=1}^R( F_i'\cup F_i'')\big)\Big/\sim
\end{gather}
Here, the equivalence $\sim$ is defined by identifying each subsets $W_i',W_i''$ of $W_i$ with the corresponding open subsets of $\wtd \MC\times \MD_{r_\blt \rho_\blt}-\bigcup_{i=1}^R( F_i'\cup F_i'')$ via the biholomorphisms \eqref{eqb10}.

$\pi:\MC\rightarrow \MB$ is defined as follows. The map
\begin{align*}
    \wtd \pi\times \id_{\MD_{r_\blt\rho_\blt}}:\wtd \MC\times \MD_{r_\blt \rho_\blt}\rightarrow \wtd \MB\times \MD_{r_\blt \rho_\blt}=\MB
\end{align*}
agrees with
\begin{align*}
q_i:W_i=\MD_{r_i}\times \MD_{\rho_i}\times \prod_{j\ne i} \MD_{r_j\rho_j}\times \wtd \MB\rightarrow \MD_{r_\blt \rho_\blt}\times \wtd \MB=\MB
\end{align*}
when restricted to $W_i^\prime$ and $W_i''$. These two maps give a well-defined surjective holomorphic map $\pi:\MC\rightarrow \MB$.

Extend $\sgm_i$ constantly to $\MB=\wtd \MB\times \MD_{r_\blt \rho_\blt}\rightarrow \wtd \MC\times \MD_{r_\blt \rho_\blt}$, whose image is disjoint from $F_i^\prime$ and $F_i''$ for $1\leq i\leq R$. So $\sgm_i$ can be extended to sections of $\pi:\MC\rightarrow \MB$, also denoted by $\sgm_i$. This together with $\pi:\MC\rightarrow \MB$ gives \eqref{eqb3}. 

\begin{rem}\label{lbb9}
If we choose local coordinates $\eta_i$ defined on a neighborhood $\wtd U_i$ of $\sgm_i(\wtd \MB)$ disjoint from $V_1',\cdots,V_R',V_1'',\cdots,V_R''$, then $\eta_i$ can be extended constantly to neighborhoods $U_i:=\wtd U_i\times \MD_{r_\blt \rho_\blt}$ of $\sgm_i(\MB)$, also denoted by $\eta_i$. 
\end{rem}

\begin{df}
The set
\begin{align*}
\Sigma=\{x\in \mc C:\pi\text{ is not a submersion at }x\}
\end{align*}
is called the \textbf{critical locus} of $\fk X$. Write
\begin{align}
W=\bigsqcup_{i=1}^R W_i \qquad   W'=\bigsqcup_{i=1}^R W_i'\qquad W''=\bigsqcup_{i=1}^R W_i''
\end{align}
It is not hard to see that $\pi$ is a submersion outside $W$, and for each $i$ we have
\begin{align}
W_i\cap\Sigma= \big(\{0\}\times\{0\}\big)\times\prod_{j\ne i} \MD_{r_j\rho_j}\times \wtd \MB\qquad \subset\MD_{r_i}\times \MD_{\rho_i}\times \prod_{j\ne i} \MD_{r_j\rho_j}\times \wtd\MB
\end{align}
Thus, we have
\begin{align}
\Sigma=W-(W'\cup W'')=\bigsqcup_{i=1}^R(W_i-(W_i'\cup W_i''))  \label{eqx}
\end{align}
It is clear that the \textbf{discriminant locus} $\Delta=\pi(\Sigma)$\index{00@Discriminant locus $\Delta=\pi(\Sigma)$} satisfies
\begin{align}\label{eqb45}
\Delta\xlongequal{\mathrm{def}}\pi(\Sigma)=\{(p_1,\dots,p_R,\wtd b)\in\mc D_{r_\blt\rho_\blt}\times \wtd\MB:p_1\cdots p_R=0\}=(\mc D_{r_\blt\rho_\blt}-\mc D_{r_\blt\rho_\blt}^\times)\times \wtd \MB
\end{align}
\end{df}

So $\Delta$ is the set of all $b\in \MB$ such that $\MC_b$ is not smooth. If $b\in\MB-\Delta$, we set
\begin{align*}
    \fx_b:=(\MC_b\big|\sgm_1(b),\cdots,\sgm_N(b))
\end{align*}
If the local coordinates $\eta_\blt$ are chosen as in Rem. \ref{lbb9}, we set
\begin{align*}
    \fx_b:=(\MC_b\big|\sgm_1(b),\cdots,\sgm_N(b);\eta_1\vert_{\MC_b},\cdots,\eta_N \vert_{\MC_b}).
\end{align*}
More generally, if $U$ is an open or a closed complex submanifold of $\MB$, we let
\begin{align*}
\fx_U=\text{the restriction of $\fx$ to $\MC_U\rightarrow U$}
\end{align*}

\begin{ass}\label{lbb1}
Throughout this article, unless otherwise stated, \textit{we always assume that $\fx=\eqref{eqb3}$ is the family obtained by sewing the smooth family $\wtd \fx=\eqref{eqb1}$ along pairs of points $\sgm_\blt',\sgm_\blt''$ using $\xi_\blt,\varpi_\blt$}. 
Define $1$-codimensional closed submanifolds
\begin{gather*}
    S_\fx=\bigcup_{i=1}^N\sgm_i(\MB)\qquad
    S_{\wtd \fx}=\bigcup_{i=1}^N\sgm_i(\wtd\MB)\cup\bigcup_{j=1}^R\sgm_j'(\wtd\MB)\cup\bigcup_{j=1}^R\sgm_j''(\wtd \MB)
\end{gather*}
of $\MC$ and $\wtd\MC$ respectively. Note that $\fk X$ is \textbf{smooth} if and only if $R=0$, if and only if $\fk X=\wtd {\fk X}$. If $E\subset\MB$, we let $\fx_E$ denote the restriction of $\fx$ to $E$, i.e.
\begin{align*}
\fx_E=\big(\pi:\MC_E\rightarrow\MB_E\big|\sgm_1|_E,\dots,\sgm_N|_E\big)
\end{align*}
If local coordinates $\eta_1,\dots,\eta_N$ of $\fx$ at $\sgm_1(\MB),\dots,\sgm_N(\MB)$ are chosen, we set
\begin{align*}
\fx_E=\big(\pi:\MC_E\rightarrow\MB_E\big|\sgm_1|_E,\dots,\sgm_N|_E;\eta_1|_{\MC_E},\dots,\eta_N|_{\MC_E}\big)
\end{align*}
\end{ass}

Let $\Delta,\Sigma$ be the critical locus and discriminant locus of $\fx$. Then the union of non-smooth fibers is therefore
\begin{align*}
    \MC_\Delta=\pi^{-1}(\Delta)
\end{align*}

Recall that if $\scr E$ is a locally free $\mc O_\MC$-module (i.e., a holomorphic vector bundle on $\MC$), then $\scr E(\blt\SX)$ denotes the sheaf of meromorphic sections of $\scr E$ with only possible poles at $\SX$. If $k\in\Nbb$, then $\scr E(k\SX)$ denotes the sheaf of sections of $\scr E(\blt\SX)$ whose orders of poles at $\SX$ are at most $k$.

\subsubsection{The relative tangent and dualizing sheaf}\label{lbb46}
Recall that $\Theta_\MB$ and $\Theta_\MC$ are the sheaves of holomorphic tangent fields of $\MB$ resp. $\MC$. In this subsection, we recall the construction of $\Theta_\MB(-\log \Delta)$ and $\Theta_\MC(-\log \MC_\Delta)$. These are the sheaves of sections of $\Theta_\MB$ resp. $\Theta_\MC$ tangent to $\Delta$ resp. $\MC_\Delta$. Then the differential $d\pi:\Theta_\MC\rightarrow \pi^*\Theta_\MB$ restricts to an $\MO_\MC$-module epimorphism
\begin{align}\label{eqb13}
    d\pi:\Theta_\MC(-\log \MC_\Delta)\rightarrow \pi^* \Theta_\MB(-\log \Delta).
\end{align}
Moreover, we will use \eqref{eqb13} to construct a short exact sequence, which is crucial to the definition of connections.

Assume $\wtd \MB$ is a Stein manifold and is small enough to admit a coordinate $\tau_\blt=(\tau_1,\cdots,\tau_m):\wtd \MB\rightarrow \Cbb^m$. Let $q_i$ be the standard coordinate of $\MD_{r_i \rho_i}$ for $1\leq i\leq R$. Then $(q_\blt,\tau_\blt)$ becomes a coordinate of $\MB$.
\begin{df}\label{lbb11}
    $\Theta_\MB(-\log \Delta)$\index{zz@$\Theta_\MB(-\log \Delta)$} is defined as the $\MO_\MB$-submodule of $\Theta_\MB$ generated freely by 
    \begin{align}\label{eqb14}
        q_1\partial_{q_1},\cdots,q_R \partial_{q_R},\partial_{\tau_1},\cdots,\partial_{\tau_m}.
    \end{align}
\end{df}
For simplicity, the pullback of \eqref{eqb14} under $\pi$ in $\pi^* \Theta_\MB$ will also be denoted as $q_1\partial_{q_1},\cdots,q_R \partial_{q_R},\partial_{\tau_1},\cdots,\partial_{\tau_m}$.

We now describe $\Theta_\MC(-\log \MC_\Delta)$\index{zz@$\Theta_\MC(-\log \MC_\Delta)$} and the morphism \eqref{eqb13} locally. Choose $x\in \MC$.
\begin{enumerate}[align=left]
    \item[\textbf{Case 1.}] When $x\notin \Sigma$, $x$ can be regarded as a point $(\wtd x,p)$ of $\wtd \MC\times \MD_{r_\blt \rho_\blt}$ disjoint from $F_i',F_i''$ for all $1\leq i\leq R$. Choose a neighborhood $\wtd U\subset \wtd \MC$ of $\wtd x$ together with $\eta\in \MO(\wtd U)$ univalent on each fiber of $\wtd U$. Choose a neighborhood $V$ of $p\in \MD_{r_\blt \rho_\blt}$ such that $U:=\wtd U\times V$ is disjoint from $F_i',F_i''$ for all $1\leq i\leq R$. Write $\tau_\blt \circ \wtd \pi$ as $\tau_\blt$ for short. Extend $\eta$ constantly to $U$ so that $\eta\in\mc O(U)$ is univalent on each fiber. Then $(\eta,\tau_\blt,q_\blt)$ becomes a set of coordinates of $U$. $\Theta_\MC(-\log \MC_\Delta)\vert_{U}$ is then the $\MO_U$-submodule of $\Theta_\MC\vert_U$ generated (automatically freely) by 
    \begin{align}\label{eqbb15}
        \partial_\eta,\partial_{\tau_1},\cdots,\partial_{\tau_m},q_1\partial_{q_1},\cdots,q_R \partial_{q_R}.
    \end{align}
    The restriction of \eqref{eqb13} to $U$ (which is also a restriction of $d\pi:\Theta_\MC\rightarrow\pi^*\Theta_\MB$) sends $\partial_\eta$ to $0$ and keeps the other elements of \eqref{eqbb15}. 
    
    \item[\textbf{Case 2.}] When $x\in \Sigma$, by \eqref{eqx}, we can find $1\leq i\leq R$, such that $W_i$ is a neighborhood of $x$. By \eqref{eqWi1}, $(\xi_i,\varpi_i,q_1,\cdots,\wht {q_i},\cdots,q_R,\tau_\blt)$ is a set of coordinates of $W_i$. $\Theta_\MC(-\log \MC_\Delta)\vert_{W_i}$ is then the $\MO_{W_i}$-submodule of $\Theta_\MC \vert_{W_i}$ generated (automatically freely) by 
    \begin{align}\label{eqbb16}
        \xi_i \partial_{\xi_i},\varpi_i \partial_{\varpi_i},q_1\partial_{q_1},\cdots,\widehat{ q_i\partial_{q_i}},\cdots,q_R\partial_{q_R},\partial_{\tau_1},\cdots,\partial_{\tau_m}.
    \end{align}
    As one can check, the restriction of \eqref{eqb13} (which is also a restriction of $d\pi:\Theta_\MC\rightarrow\pi^*\Theta_\MB$) to $W_i$ satisfies
    \begin{align}\label{eq174}
        d\pi(\xi_i\partial_{\xi_i})=d\pi (\varpi_i \partial_{\varpi_i})=q_i\partial_{q_i}
    \end{align}
and keeps the other elements of \eqref{eqbb16}.
\end{enumerate}
\begin{rem}
To see \eqref{eq174}, note that $\pi$ pulls the cotangent vector $dq_i$ back  to $d(\xi_i\varpi_i)=\xi_id\varpi_i+\varpi_id\xi_i$. So the dual element $q_i^{-1}dq_i$ of $q_i\partial_{ q_i}$ is pulled back to $\varpi_i^{-1}d\varpi_i+\xi_i^{-1}d\xi_i$. The latter is the summation of the dual elements of $\varpi_i\partial_{\varpi_i},\xi_i\partial_{\xi_i}$.
\end{rem}

Combining Case 1 and Case 2 together, we complete the description of $\Theta_\MC(-\log \MC_\Delta)$ and the morphism \eqref{eqb13}.
\begin{df}\label{defb1}
    The kernel of \eqref{eqb13} is called the \textbf{relative tangent sheaf} and is denoted by $\Theta_{\MC/\MB}$.\index{zz@$\Theta_{\MC/\MB}$, the relative tangent sheaf} It is a line bundle, since in Case 1 resp. Case 2 above, $\Theta_{\MC/\MB}|_U$ resp. $\Theta_{\MC/\MB}|_{W_i}$ is generated freely by the section
\begin{align*}
\partial_\eta\qquad\text{resp.}\qquad \xi_i\partial_{\xi_i}-\varpi_i\partial_{\varpi_i}
\end{align*}
The dual $\MO_\MC$-module of $\Theta_{\MC/\MB}$ is called the \textbf{relative dualizing sheaf} and is denoted by $\omega_{\MC/\MB}$.\index{zz@$\omega_{\MC/\MB}$, relative dualizing sheaf} When $b\in \mc B-\Delta$, we have canonical isomorphisms
\begin{align*}
\Theta_{\MC/\MB}|_{\MC_b}\simeq \Theta_{\MC_b}\qquad \omega_{\MC/\MB}|_{\MC_b}\simeq \omega_{\MC_b}
\end{align*}
where $ \Theta_{\MC_b}$ (resp. $\omega_{\MC_b}$) is the holomorphic tangent (resp. cotangent) line bundle of the Riemann surface $\MC_b$.
\end{df}

\begin{rem}\label{lbb4}
Similar to $\Theta_{\MC/\MB}$, we have a local description of the line bundle $\omega_{\MC/\MB}$. If $U\subset\mc C-\Sigma$ is open and $\eta\in\mc O(U)$ is univalent on each fiber, then
\begin{align*}
\text{$\omega_{\MC/\MB}|_U$ is freely generated by $d\eta$}
\end{align*}
where $d\eta$ is the dual element of the tangent field $\partial_\eta$ (orthogonal to $d\pi$). If $\mu\in\mc O(\wtd U)$ is also univalent on each fiber, the transition function is
\begin{align}\label{eqb74}
d\eta=(\partial_\mu\eta)\cdot d\mu
\end{align} 
This describes $\omega_{\MC/\MB}|_{\MC-\Sigma}$. One the other hand, $\omega_{\MC/\MB}|_{W_i}$ is generated freely by the element of $H^0(W_i-\Sigma,\omega_{\MC/\MB}|_{\MC-\Sigma})$ whose restrictions to $W_i'$ and $W_i''$ are
\begin{align}\label{eqb75}
\xi_i^{-1}d\xi_i\qquad\text{resp.}\qquad-\varpi_i^{-1}d\varpi_i
\end{align}
This section is well-defined since the two of \eqref{eqb75} agree on $W_i'\cap W_i''$ due to \eqref{eqb74}.
\end{rem}

\begin{rem}\label{lbb19}
By Def. \ref{defb1} and the surjectivity of \eqref{eqb13}, we have a short exact sequence of $\MO_\MC$-module
\begin{align}\label{shortexact}
    0\rightarrow \Theta_{\MC/\MB}\rightarrow \Theta_\MC(-\log \MC_\Delta)\xrightarrow{d\pi}\pi^* \Theta_\MB(-\log \Delta)\rightarrow 0.
\end{align}
When $\fx$ is a smooth family, \eqref{shortexact} becomes
\begin{align}
    0\rightarrow \Theta_{\MC/\MB}\rightarrow \Theta_\MC\xrightarrow{d\pi}\pi^* \Theta_\MB\rightarrow 0. \label{eqb48}
\end{align}
\end{rem}

\subsection{Sheaves of conformal blocks and coinvariants for the sewn family}\label{lbb70}

Recall \cite[Sec. 1.2]{GZ1} for the definitions of admissible $\Vbb^{\times N}$-modules and finitely-admissible $\Vbb^{\times N}$-modules. Roughly speaking, an admissible $\Vbb^{\times N}$-module $\Wbb$ is one having mutually commuting actions of $N$ pieces of $\Vbb$, together with simultaneously diagonalizable operators $\wtd L_1(0),\dots,\wtd L_N(0)$ compatible with the actions of the $N$ pieces of $\Vbb$. Moreover, $\wtd L_\blt(0)$ gives an $\Nbb^N$-grading $\Wbb=\bigoplus_{n_\blt\in\Nbb^N}\Wbb(n_\blt)$. If each $\Wbb(n_\blt)$ is finite dimensional, then $\Wbb$ is called finitely-admissible.

\begin{rem}
It is important to keep in mind that if $\Vbb$ is $C_2$-cofinite, then a finitely-admissible $\Vbb^{\times N}$-module is equivalently a grading-restricted (generalized) $\Vbb^{\otimes N}$-module, cf. \cite[Thm. A.14]{GZ1}. We will talk about grading-restricted $\Vbb^{\otimes N}$-modules instead of finitely-admissible $\Vbb^{\times N}$ when the choice of $\wtd L_\blt(0)$ is irrelevant. See e.g. Rem. \ref{lbb20}.
\end{rem}

Fix an admissible $\Vbb^{\times N}$-module $\Wbb$. Associate $\Wbb$ to $\sgm_\blt$. In \cite[Sec. 2.2]{GZ1}, we have defined sheaves of partial conformal blocks for families of $(M,N)$-pointed compact Riemann surfaces. Specializing to $M=0$, we obtain sheaves of conformal blocks as in \cite[Sec. 1.7]{GZ1}. This notion can be generalized in a straightforward way to the family $\fk X$ obtained by sewing $\wtd\fx$. The key ingredient is the generalization of sheaves of VOAs for smooth families, as considered in \cite[Sec. 1.7]{GZ1}, to the family $\fx$. Such sheaves of VOAs have already been constructed in \cite{DGT2} (in the algebraic setting) and \cite[Sec. 5]{Gui-sewingconvergence}. Let us briefly recall its definition following \cite{Gui-sewingconvergence} and \cite{GZ1}.

\subsubsection{Sheaves of VOAs}\label{mc2}
Recall that $\Gbb$ is the automorphism group of the set of local coordinates of $\Cbb$ at $0$. Namely, the elements of $\Gbb$ are $f(z)=\sum_{n>0}a_nz^n$ where $a_n\in\Cbb$, $a_1\neq0$, and $f(z)$ converges absolutely for sufficiently small $z$. The group multiplication is defined by the composition of functions. If $X$ is a complex manifold, a map $\rho:X\rightarrow\Gbb$ is called a \textbf{holomorphic family of transformations} if $\rho$ can be viewed as a holomorphic function on a neighborhood of $X\times\{0\}$ in $X\times\Cbb$.  Define
\begin{align*}
\scr V_{\fk X}=\varinjlim_{n\in\Nbb}\scr V_{\fk X}^{\leq n}
\end{align*}
where each $\scr V_{\fk X}^{\leq n}$ is an $\mc O_{\mc C}$-module  defined as follows. Since $\pi:\mc C-\Sigma\rightarrow\mc B$ is a submersion, the sheaf $\scr V_{\fk X-\Sigma}^{\leq n}$ is defined as in \cite[Sec. 5]{Gui-sewingconvergence} and \cite[Sec. 1.6]{GZ1}. 
More precisely, if $U\subset\MC-\Sigma$ is open and $\eta\in\mc O(U)$ is univalent on each fiber $U_b=\pi^{-1}(b)\cap U$ (where $b\in\MB$), we have a trivialization (cf. \cite[Eq. (2.5)]{Gui-sewingconvergence} or \cite[Sec. 1.6]{GZ1}):
\begin{gather}\label{eq175}
\mc U_\varrho(\eta):\scr V^{\leq n}_\fx|_U\xlongrightarrow{\simeq}\Vbb^{\leq n}\otimes_\Cbb\mc O_U
\end{gather}
If $\mu\in\mc O(U)$ is also univalent on each fiber, the transition function is
\begin{align}\label{eqb22}
\mc U_\varrho(\eta)\mc U_\varrho(\mu)^{-1}=\mc U(\varrho(\eta|\mu)):\Vbb^{\leq n}\otimes_\Cbb\mc O_U\xlongrightarrow{\simeq}\Vbb^{\leq n}\otimes_\Cbb\mc O_U
\end{align}
Here, $\varrho(\eta|\mu):U\rightarrow\Gbb$ is a holomorphic family such that for each $p\in U$,  
\begin{align}\label{eqb39}
\text{$\varrho(\eta|\mu)_p$ transforms the local coordinate $(\mu-\mu(p))|_{\mc C_{\pi(p)}}$ to $(\eta-\eta(p))|_{\mc C_{\pi(p)}}$}
\end{align}
In other words,
\begin{align}\label{eqb40}
\varrho(\eta|\mu)_p(z)=\eta\circ\big(\mu,\pi)^{-1}(z+\mu(p),\pi(p)\big)-\eta(p)
\end{align}
And $\mc U(\varrho(\eta|\mu))$ associates to each $p$ the change of coordinate operator $\mc U(\varrho(\eta|\mu)_p)\in\mathrm{Aut}(\Vbb)$ (restricting to $\mathrm{Aut}(\Vbb^{\leq n})$); more precisely, if we set $\alpha=\varrho(\eta|\mu)_p$, and let $c_1,c_2,\dots\in\Cbb$ be the (necessarily unique) scalars such that $\alpha(z)=\alpha'(0)\cdot\exp\Big(\sum_{n>0}c_nz^{n+1}\partial_z\Big)z$, then on $\Vbb$ resp. $\Vbb^{\leq n}$ we have
\begin{align}
\mc U(\alpha)=\alpha'(0)^{L(0)}\cdot\exp\Big(\sum_{n>0}c_nL(n)\Big)
\end{align}
The operator $\mc U(\alpha)$ is due to Huang \cite{Hua97}; $\mc U$ is a group homomorphism.

\begin{rem}
With abuse of notations, we also denote the tensor product of \eqref{eq175} and the identity map of $\omega_{\mc C/\mc B}$ by
\begin{align}\label{eqb43}
\mc U_\varrho(\eta):\scr V^{\leq n}_\fx\otimes \omega_{\mc C/\mc B}|_U\xlongrightarrow{\simeq}\Vbb^{\leq n}\otimes_\Cbb\mc O_U\otimes_\Cbb d\eta
\end{align}
Namely, it sends $v\otimes d\eta$ to $\mc U_\varrho(\eta)v\otimes_\Cbb d\eta$.
\end{rem}

To define $\scr V_{\fk X}^{\leq n}$ near $\Sigma$, it suffices to describe its restriction to each $W_i$. Recall $W_i-\Sigma=W_i'\cup W_i''$ by \eqref{eqx}. Recall from \cite[Sec. 1.3]{GZ1} the change of coordinates
\begin{gather}\label{eqb87}
\upgamma_z(t)=\frac 1{z+t}-\frac 1z\qquad \mc U(\upgamma_z)=e^{zL(1)}(-z^{-2})^{L(0)}
\end{gather}

\begin{df}\label{lbb45}
$\scr V_{\fk X}^{\leq n}|_{W_i}$ is the (automatically free) $\MO_{W_i}$-submodule of $\SV_{\fx-\Sigma}^{\leq n}\vert_{W_i-\Sigma}$ generated by the sections whose restrictions to $W_i^\prime$ and $W_i''$ are 
\begin{align}\label{geosewb7}
    \MU_\varrho(\xi_i)^{-1}(\xi_i^{L(0)}v) \qquad \text{resp.}\qquad \MU_\varrho(\varpi_i)^{-1}(\varpi_i^{L(0)}\MU(\upgamma_1)v)
\end{align}
where $\xi_i,\varpi_i$ are defined by \eqref{eqb6} and $v\in \Vbb^{\leq n}$. This is well-defined (i.e. the two expressions in \eqref{geosewb7} agrees on $W_i'\cap W_i''$). See \cite{Gui-sewingconvergence} Sec. 5, especially Lemma 5.2. 
\end{df}

\subsubsection{Sheaves of conformal blocks and coinvariants}\label{lbb10}

Similar to  \cite[Def. 2.11]{GZ1}, we have a locally free $\MO_\MB$-module $\SW_\fx(\Wbb)$ defined by the trivialization
\begin{align*}
\mathcal U(\eta_\blt):\scr W_\fx(\Wbb)|_V\xlongrightarrow{\simeq}\Wbb\otimes_\Cbb\mc O_V
\end{align*}
associated $\eta_\blt$ if $\eta_\blt$ are local coordinates of $\fx_V$ at $\sgm_\blt(V)$ where $V\subset\MB$ is open. Readers unfamiliar with this trivialization can assume throughout the paper that $\fx$ admits local coordinates $\eta_\blt$ at $\sgm_\blt(\MB)$, allowing $\scr W_\fx(\Wbb)$ to be identified with $\Wbb\otimes_\Cbb\mc O_\MB$. In fact, in this paper, except for a very few places (such as Sec. \ref{lbb69}), we assume that $\fx$ has local coordinates.

Similar to \cite[Def. 2.15]{GZ1}, we also have the residue action of $\pi_*\big(\SV_{\fx}\otimes \omega_{\MC/\MB}(\blt S_\fx)\big)$ on $\SW_\fx(\Wbb)$ which is independent of the choice of local coordinates. Let us recall its definition in the case that the local coordinates $\eta_\blt$ have been chose.

Choose any connected open $V\subset\MB$. Let $U_i\subset\MC_V$ be a neighborhood of $\sgm_i(V)$ on which $\eta_i$ is defined. For each $n\in\Nbb$ and each $\sigma\in H^0(U_i,\scr V_\fx^{\leq n}\otimes\omega_{\MC/\MB}(\blt\SX))$ and $w\in\Wbb\otimes_\Cbb\MO(V)$, we define the \textbf{$i$-th residue action}
\begin{align}
\sigma*_i w=\Res_{\eta_i=0}Y_i(\MU_\varrho(\eta_i)\sigma,\eta_i)w
\end{align}
which is in $\Wbb\otimes_\Cbb\MO(V)$. More precisely, note that
the element $\mc U(\eta_i)\sigma\in H^0(U_i,\Vbb^{\leq n}\otimes_\Cbb\omega_{\MC/\MB}(\blt\SX))$ is a finite sum $\sum_l v_l\otimes f_ld\eta_i$ where $v_l\in\Vbb^{\leq n}$ and $f_l\in H^0(U_i,\MO_{U_i}(\blt\SX))$. Then for each $b\in V$, noting that $f_l|_{U_i\cap\pi^{-1}(b)}$ can be viewed as an element of $\Cbb((\eta_i))$, we have
\begin{align*}
\sigma*_iw\big|_b=\sum_l\Res_{\eta_i=0} Y_i(v_l,\eta_i)w(b)\cdot f_l|_{U_i\cap\pi^{-1}(b)}\cdot d\eta_i
\end{align*}
where the RHS is the residue of an element of $\Wbb((\eta_i))d\eta_i$.

Now, we define the \textbf{residue action} of each $\sigma\in H^0\big(V,\pi_*\big(\SV_{\fx}\otimes \omega_{\MC/\MB}(\blt S_\fx)\big)\big)=H^0(\MC_V,\SV_{\fx}\otimes \omega_{\MC/\MB}(\blt S_\fx))$ on each $H^0(V,\scr W_\fx(\Wbb))\simeq \Wbb\otimes\MO(V)$ to be
\begin{align}
\sigma\cdot w=\sum_{i=1}^N\sigma*_i w
\end{align}

\begin{df}\label{lbb14}
    Define the \textbf{sheaf of coinvariants}\index{TT@$\ST_{\fx}(\Wbb)$, sheaves of coinvariants} associated to $\fx$ and $\Wbb$:
    \begin{align*}
        \ST_\fx(\Wbb)=\frac{\SW_\fx(\Wbb)}{\pi_*\big(\SV_{\fx}\otimes \omega_{\MC/\MB}(\blt S_\fx)\big)\cdot \SW_\fx(\Wbb)}.
    \end{align*}
The denominator on the RHS is the sheaf associated to the presheaf $\SJ_\fx^\pre$ on $\MB$ defined by
\begin{align}\label{eqb58}
\SJ_\fx^\pre(V)=H^0\big(\MC_V,\scr V_\fx\otimes\omega_{\MC/\MB}(\blt S_\fx)\big)\cdot H^0(V,\scr W_\fx(\Wbb))
\end{align}
for all open $V\subset\MB$, where $\Span_\Cbb$ has been suppressed on the RHS. The dual sheaf of $\ST_\fx(\Wbb)$ is denoted by 
    \begin{align*}
        \ST_\fx^*(\Wbb)=\big(\ST_\fx(\Wbb)\big)^*
    \end{align*}
    and called the \textbf{sheaf of conformal blocks}\index{TT@$\ST_{\fx}^*(\Wbb)$, sheaves of conformal blocks} associated to $\fx$ and $\Wbb$. An element of $H^0(\MB,\scr T_\fx^*(\Wbb))$ is called a \textbf{conformal block} associated to $\fk X$ and $\Wbb$. 
\end{df}

\begin{rem}\label{lbb20}
Both $\scr W_\fx(\Wbb)$ and $\scr T_\fx(\Wbb)$ depend on the choice of $\wtd L_\blt(0)$ on $\Wbb$. However, if $\fx$ admits local coordinates $\eta_\blt$ at $\sgm_\blt(\MB)$ and if the identification $\scr W_\fx(\Wbb)=\Wbb\otimes_\Cbb\mc O_\MB$ via $\mc U(\eta_\blt)$ is assumed, then $\scr T_\fx(\Wbb)$ does not rely on the choice of $\wtd L_\blt(0)$. In that case, when $\Vbb$ is $C_2$-cofinite and $\Wbb$ is a grading-restricted generalized $\Vbb^{\otimes N}$-module, there is no ambiguity in talking about $\scr W_\fx(\Wbb),\scr T_\fx(\Wbb),\scr T^*_\fx(\Wbb)$.
\end{rem}

\begin{thm}\label{lbb28}
Assume that $\fx$ admits local coordinates $\eta_\blt$ at $\sgm_\blt(\MB)$. Let $V\subset\MB$ be open, and let $\upphi:\scr W_{\fx_V}(\Wbb)\rightarrow\mc O_V$ be an $\mc O_V$-module morphism. Assume that $\MB$ is Stein. Then $\upphi$ belongs to $H^0(V,\scr T_{\fx_V}^*(\Wbb))$ if and only if $\upphi$ vanishes on the elements of $\SJ^\pre_\fx(\MB)$ (restricted to $V$).
\end{thm}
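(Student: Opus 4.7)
The ``only if'' direction is immediate: a conformal block $\upphi \in H^0(V, \scr T^*_{\fx_V}(\Wbb))$ factors through the quotient sheaf $\scr T_{\fx_V}(\Wbb)$, so it kills the sheafification of $\SJ^\pre_{\fx_V}$, and in particular annihilates the restriction to $V$ of any global section in $\SJ^\pre_\fx(\MB)$.

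For the ``if'' direction, the plan is a localization argument powered by Cartan's Theorem A on the Stein manifold $\MB$. It suffices to show that for every open $V' \subset V$ and every $\sigma \in H^0(\MC_{V'}, \scr V_\fx \otimes \omega_{\MC/\MB}(\blt \SX))$ and $w \in H^0(V', \scr W_\fx(\Wbb))$, one has $\upphi(\sigma \cdot w) = 0$ on $V'$. Since holomorphic vanishing is a local property, it is enough to check this in a neighborhood of each $b \in V'$, by expressing $\sigma$ and $w$ locally as $\mc O$-linear combinations of globally defined sections on $\MC$ (respectively $\MB$).

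The key coherence input is as follows. For each $n,k \in \Nbb$, the sheaf $\scr V_\fx^{\leq n} \otimes \omega_{\MC/\MB}(k \SX)$ is locally free of finite rank, hence coherent on $\MC$. Since $\pi$ is proper (inherited from $\wtd\pi$ through the sewing construction), Grauert's direct image theorem gives coherence of $\scr F_{n,k} := \pi_*(\scr V_\fx^{\leq n} \otimes \omega_{\MC/\MB}(k \SX))$ on the Stein manifold $\MB$, and Cartan's Theorem A then implies that the stalk $(\scr F_{n,k})_b$ is generated by germs of sections in $H^0(\MC, \scr V_\fx^{\leq n} \otimes \omega_{\MC/\MB}(k \SX))$. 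To invoke this, I first shrink $V'$ to a relatively compact neighborhood of $b$ so that $\MC_{\ovl{V'}}$ is compact; a standard covering argument on the increasing filtration then factors $\sigma|_{\MC_{V'}}$ through some $\scr V_\fx^{\leq n} \otimes \omega_{\MC/\MB}(k \SX)$, and a further shrinking of $V'$ yields a presentation $\sigma|_{V'} = \sum_i f_i \sigma_i|_{V'}$ with $\sigma_i \in H^0(\MC, \scr V_\fx \otimes \omega_{\MC/\MB}(\blt \SX))$ and $f_i \in \mc O(V')$. The analogous and much simpler reduction for $w$ uses the global trivialization $\mc U(\eta_\blt): \scr W_\fx(\Wbb) \xlongrightarrow{\simeq} \Wbb \otimes_\Cbb \mc O_\MB$ to write $w|_{V'} = \sum_j g_j (w_j \otimes 1)|_{V'}$ with $w_j \in \Wbb$ and $g_j \in \mc O(V')$.

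With these presentations in hand, $\mc O_V$-linearity of both the residue action and $\upphi$ gives
\begin{align*}
\upphi(\sigma \cdot w)\big|_{V'} = \sum_{i,j} f_i g_j \cdot \upphi\bigl((\sigma_i \cdot (w_j \otimes 1))\big|_{V'}\bigr),
\end{align*}
and each $\sigma_i \cdot (w_j \otimes 1)$ is an element of $\SJ^\pre_\fx(\MB)$, so the hypothesis kills every term on the right. The main technical care required is the coherence reduction, namely verifying that a section of the colimit sheaf $\scr V_\fx \otimes \omega_{\MC/\MB}(\blt \SX)$ restricted to the compact preimage $\MC_{\ovl{V'}}$ lies in some finite stage $\scr V_\fx^{\leq n} \otimes \omega_{\MC/\MB}(k \SX)$; this follows from compactness together with the nested structure of the filtration. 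The Stein hypothesis on $\MB$ is essential for applying Cartan A, and without it the statement would genuinely fail.
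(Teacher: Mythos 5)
Your proposal is correct and follows essentially the same route as the paper: both directions are handled identically, and the key step — using Grauert's direct image theorem to get coherence of $\pi_*\big(\SV_\fx^{\leq n}\otimes\omega_{\MC/\MB}(k\SX)\big)$ on the Stein base and then Cartan's Theorem A to generate its stalks by global sections, combined with the trivialization $\mc U(\eta_\blt)$ to generate $\SW_\fx(\Wbb)_b$ by global sections — is exactly the paper's argument. The only cosmetic difference is that the paper reduces to a finite stage of the filtration by writing the stalk of the colimit as the union of the stalks, whereas you do it via compactness of $\MC_{\ovl{V'}}$; both work.
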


\begin{proof}
The direction ``$\Rightarrow$" is obvious. Let us prove ``$\Leftarrow$". Note that $\upphi$ is a conformal block iff for each $b\in V$, $\upphi$ vanishes on the stalk
\begin{align*}
(\SJ^\pre_\fx)_b= \pi_*\big(\SV_{\fx}\otimes \omega_{\MC/\MB}(\blt S_\fx)\big)_b\cdot \SW_\fx(\Wbb)_b=\bigcup_{k,l\in\Nbb}\pi_*\big(\SV_{\fx}^{\leq k}\otimes \omega_{\MC/\MB}(l S_\fx)\big)_b\cdot \SW_\fx(\Wbb)_b
\end{align*}
where $\Span_\Cbb$ has been suppressed as usual. Since $\fx$ admits local coordinates, we have $\scr W_\fx(\Wbb)\simeq\Wbb\otimes_\Cbb\mc O_\MB$. Therefore $\scr W_\fx(\Wbb)_b$ is generated by elements of $H^0(\MB,\scr W_\fx(\Wbb))$. Since $\SV_{\fx}^{\leq k}\otimes \omega_{\MC/\MB}(l S_\fx)$ is a locally free $\mc O_\MC$-module (of finite rank), by Grauert's direct image theorem, $\pi_*\big(\SV_{\fx}^{\leq k}\otimes \omega_{\MC/\MB}(l S_\fx)\big)$ is a coherent $\mc O_\MB$-module. Thus, by Cartan's theorem A, elements of $H^0\big(\MB,\pi_*\big(\SV_{\fx}^{\leq k}\otimes \omega_{\MC/\MB}(l S_\fx)\big)\big)$ generate $\big(\SV_{\fx}^{\leq k}\otimes \omega_{\MC/\MB}(l S_\fx)\big)_b$. Therefore, if $\upphi$ vanishes on $\SJ_\fx^\pre(\MB)$, it must vanish on $(\SJ_\fx^\pre)_b$. This proves ``$\Leftarrow$".
\end{proof}


\begin{rem}
When $\Wbb$ is a tensor product of grading-restricted $\Vbb$-modules, Thm. \ref{lbb28} is simply \cite[Thm. 6.3]{Gui-sewingconvergence}. Note that in Thm. \ref{lbb28} we assumed the existence of $\eta_\blt$. This condition is missing from \cite[Thm. 6.3]{Gui-sewingconvergence} and should be added. Therefore, we have presented the proof of Thm. \ref{lbb28} above to illustrate how to use the existence of $\eta_\blt$.
\end{rem}

The following theorem generalizes \cite[Thm. 7.4]{Gui-sewingconvergence}. Note that it was always assumed in \cite{Gui-sewingconvergence} that the base manifold has finitely many components. In particular, \cite[Thm. 7.4]{Gui-sewingconvergence} assumes this condition.

\begin{thm}\label{lbb21}
Assume that $\fx$ is equipped with local coordinates $\eta_\blt$ at $\sgm_\blt(\MB)$. \footnote{Here, we do not assume that $\eta_\blt$ is obtained by constantly extending local coordinates of $\fx$ as in Rem. \ref{lbb9}.} Assume that $\wtd\MB$ (and hence $\MB$) has finitely many connected components. Assume that $\Vbb$ is $C_2$-cofinite. Associate a grading restricted $\Vbb^{\otimes N}$-module $\Wbb$ to $\sgm_\blt(\MB)$. Then for each Stein open subset $V\subset \MB$, 
    \begin{align}
        \frac{\SW_\fx(\Wbb)(V)}{\SJ_\fx^\pre(V)}
    \end{align}
is a finitely generated $\mc O(V)$-module.  
\end{thm}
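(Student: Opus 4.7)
The plan is to extend the strategy of \cite[Thm. 7.4]{Gui-sewingconvergence}, which handles the smooth case and modules of tensor-product form $\Mbb_1\otimes\cdots\otimes\Mbb_N$, to the present setting of nodal families and arbitrary grading-restricted generalized $\Vbb^{\otimes N}$-modules. The central input is the $C_2$-cofiniteness of $\Vbb$: this property transfers to $\Vbb^{\otimes N}$, so that $\Wbb$ admits a finite-dimensional $\Cbb$-subspace $\Wbb_0\subset\Wbb$ such that every element of $\Wbb$ can be written as an element of $\Wbb_0$ plus a finite sum $\sum_{i,v,n\leq -2} Y_i(v)_n w'$ with $v\in\Vbb$ and $w'\in\Wbb$ strictly of lower conformal weight than $w$ (the standard $C_2$-cofiniteness reduction; compare Lemma 7.1 of \cite{Gui-sewingconvergence}).

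Using the trivialization $\SW_\fx(\Wbb)(V)\simeq\Wbb\otimes_\Cbb\MO(V)$ afforded by $\eta_\blt$, the task reduces to showing that the $\MO(V)$-module $\big(\Wbb\otimes_\Cbb\MO(V)\big)/\SJ_\fx^\pre(V)$ is generated by the images of $\{w_\alpha\otimes 1\}$ for $(w_\alpha)$ a basis of $\Wbb_0$. I would prove this by an induction on the weight of $w$: given $w\otimes f$ with $w$ of weight $\leq\lambda_\blt$, use the $C_2$-reduction above to rewrite $w\otimes f$ as $(w_0\otimes f)+\sum_j (Y_{i_j}(v_j)_{n_j}w_j')\otimes f$ with $w_0\in\Wbb_0$ and $w_j'$ of strictly lower weight, and then realize each negative mode $Y_i(v)_{n}$ (with $n\leq -2$) as the leading residue-action contribution of a section $\sigma\in H^0(\MC_V,\scr V_\fx\otimes\omega_{\MC/\MB}(\blt S_\fx))$ whose expansion $\MU_\varrho(\eta_i)\sigma$ at $\sgm_i(V)$ has $v\otimes\eta_i^n d\eta_i$ as its leading singular term, while its expansions at $\sgm_j(V)$ for $j\neq i$ have zero of sufficiently high order to produce no residue action on the given weight range. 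Subtracting $\sigma\cdot w$ from $Y_i(v)_n w\otimes f$ then shifts everything to strictly lower-weight vectors, on which the inductive hypothesis applies.

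The main obstacle is the construction of enough such sections $\sigma$ over $V$. Fiberwise, on each $\MC_b$ (smooth or nodal), Riemann--Roch applied to the locally free sheaf $\scr V_\fx^{\leq k}|_{\MC_b}\otimes\omega_{\MC/\MB}(lS_\fx)|_{\MC_b}$---whose gluing at each node is automatically encoded in the definition of $\scr V_\fx|_{W_i}$ (Def. \ref{lbb45})---guarantees, for sufficiently large $k$ and $l$, the existence of sections with arbitrary prescribed principal parts at $\sgm_\blt(b)$. Grauert's direct image theorem then yields that $\pi_*\big(\scr V_\fx^{\leq k}\otimes\omega_{\MC/\MB}(lS_\fx)\big)$ is $\MO_\MB$-coherent, and Cartan's theorem A on the Stein open $V$ (exactly as in the proof of Thm. \ref{lbb28}) lifts fiberwise existence to the existence of a finite family of global sections on $V$ that simultaneously realize all residues needed for the inductive step. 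The hypothesis that $\wtd\MB$ has finitely many connected components is precisely what is needed to obtain uniform bounds on $k$ and $l$ across $\MB$, so that finitely many generating sections $\sigma_1,\dots,\sigma_M$ and the finite set $\{w_\alpha\otimes 1\}$ together suffice to generate the quotient $\SW_\fx(\Wbb)(V)/\SJ_\fx^\pre(V)$ over $\MO(V)$.
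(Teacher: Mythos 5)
Your proposal follows essentially the same route as the paper's proof, which itself defers to \cite[Thm.~7.4]{Gui-sewingconvergence}: a $C_2$-cofiniteness reduction combined with induction on weight, realization of negative modes as residue actions of sections with prescribed principal parts (obtained from the uniform $H^1$-vanishing $H^1(\MC_b,\scr V^{\leq E}_{\MC_b}\otimes\omega_{\MC_b}(k\SX))=0$, with uniformity in $b$ coming exactly from the finitely-many-components hypothesis), and Grauert plus Cartan's Theorem A to pass from fibers to the Stein open set $V$. The only point you leave implicit is the one the paper also only gestures at, namely how the finiteness property of a general grading-restricted $\Vbb^{\otimes N}$-module (as opposed to a tensor product of $\Vbb$-modules) is established, for which the paper points to \cite[Thm.~3.25]{GZ1}.
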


\begin{proof}
This was proved in \cite[Thm. 7.4]{Gui-sewingconvergence} in the special case that $R=1$, that $V=\MB$ is Stein, and that $\Mbb$ is a tensor product of grading-restricted $\Vbb$-modules. However, it is not hard to generalize the proof of \cite[Thm. 7.4]{Gui-sewingconvergence} to the present case. Instead of reproducing the proof, we highlight some key points of the proof and the adaptions that should be made.

Similar to (7.2) of \cite{Gui-sewingconvergence}, we must show that for each $E\in\Nbb$ there exists $k_0\in\Nbb$ such that for each $k\geq k_0$ and each $b\in V$, we have
\begin{align}\label{eqb51}
H^1(\MC_b,\scr V_{\MC_b}^{\leq E}\otimes\omega_{\MC_b}(k\SX))=0
\end{align}
This follows from \cite[Thm. 2.3]{Gui-sewingconvergence}, which shows that a $b$-dependent $k_0$ can be chosen to rely only on the topology of the fiber $\fx_b$, namely, the topology of $\MC_b$ and the number of elements of $\sgm_1(b),\dots,\sgm_N(b)$ in each irreducible component of $\MC_b$. Although \cite[Thm. 2.3]{Gui-sewingconvergence} only deals with the case that the nodal curve $\MC_b$ has at most one node (i.e. $R=1$), the proof clearly also applies to nodal curves with arbitrary numbers of nodes (i.e. arbitrary $R$). Moreover, \cite[Thm. 2.3]{Gui-sewingconvergence} assumes that each irreducible component of $\MC_b$ contains at least one of  $\sgm_1(b),\dots,\sgm_N(b)$. This is ensured by our assumption \eqref{eqb52}. Now, the assumption that $\wtd\MB$ has finitely many components ensures that the fibers of $\fx$ have only finitely many different topologies. Thus, a uniform $k_0$ for all $b\in V$ can be found.

The generalization of \cite[Thm. 7.4]{Gui-sewingconvergence} to arbitrary $R$ and Stein open $V\subset\MB$ has been addressed. The generalization of the proof of \cite[Thm. 7.4]{Gui-sewingconvergence} to the case that $\Wbb$ is not necessarily a tensor product $\Mbb_1\otimes\cdots\otimes\Mbb_N$ is straightforward, except that the third paragraph of that proof (using a finiteness property of $\Mbb_i$ due to \cite[Cor. 7.3]{Gui-sewingconvergence}) should be modified. The correct way to modify can be found in the proof of \cite[Thm. 3.25]{GZ1}, a variant of Thm. \ref{lbb21} which deals with partial conformal blocks but concerns a fiber $\fx_b$ instead of a family $\fx$.
\end{proof}

\begin{co}\label{locallyfree2}
Assume that $\Vbb$ is $C_2$-cofinite. Associate a grading restricted $\Vbb^{\otimes N}$-module $\Wbb$ to $\sgm_\blt(\MB)$.  Then  $\ST_\fx(\Wbb)$ is a finite type $\MO_\MB$-module. In particular, for each $b\in\MB$ we have $\dim \scr T_\fx(\Wbb)|_b<+\infty$, and the function $b\in\MB\mapsto\dim\scr T_\fx(\Wbb)|_b$ is lower semicontinuous.
\end{co}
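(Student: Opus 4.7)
Since finite type is a local property, the plan is to fix an arbitrary $b_0 \in \MB$ and produce a Stein neighborhood $V$ of $b_0$ on which $\scr T_\fx(\Wbb)|_V$ admits finitely many global generators. First I would shrink $\wtd\MB$ to a connected Stein open neighborhood $\wtd V$ of the $\wtd\MB$-projection of $b_0$, and take $V$ to be a polydisk neighborhood of $b_0$ inside $\mc D_{r_\blt\rho_\blt} \times \wtd V$. The restricted family $\fx_V$ is again obtained by sewing from $\wtd\fx|_{\wtd V}$; its base is Stein, and $\wtd V$ is connected so the finite-components hypothesis of Theorem \ref{lbb21} is satisfied. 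By Remark \ref{lbb9} I extend local coordinates at $\sgm_\blt(\wtd V)$ constantly to local coordinates $\eta_\blt$ at $\sgm_\blt(V)$, completing the hypotheses of Theorem \ref{lbb21}.

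Theorem \ref{lbb21} then supplies finitely many $w_1,\dots,w_m \in \scr W_\fx(\Wbb)(V)$ whose classes generate $\scr W_\fx(\Wbb)(V)/\SJ_\fx^\pre(V)$ as an $\mc O(V)$-module. Let $\bar w_1,\dots,\bar w_m \in \scr T_\fx(\Wbb)(V)$ be their images under the canonical presheaf-to-sheafification map. The key step is to verify that the germs $\bar w_{1,b},\dots,\bar w_{m,b}$ generate $\scr T_\fx(\Wbb)_b$ as an $\mc O_{\MB,b}$-module for every $b \in V$. Given a germ $t_b \in \scr T_\fx(\Wbb)_b = \scr W_\fx(\Wbb)_b/(\SJ_\fx^\pre)_b$, I would lift to $s_b \in \scr W_\fx(\Wbb)_b$. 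Using $\eta_\blt$, the sheaf $\scr W_\fx(\Wbb)|_V$ decomposes as the direct sum $\bigoplus_{n_\blt} \Wbb(n_\blt) \otimes_\Cbb \mc O_V$ of free coherent $\mc O_V$-modules, so its stalks are $\mc O_{\MB,b}$-generated by global sections on $V$; I can therefore write $s_b = \sum_j g_j (s_j)_b$ with $s_j \in \scr W_\fx(\Wbb)(V)$ and $g_j \in \mc O_{\MB,b}$. For each $j$, finite generation yields $[s_j] = \sum_i f_{ji}[w_i]$ in $\scr W_\fx(\Wbb)(V)/\SJ_\fx^\pre(V)$ for some $f_{ji} \in \mc O(V)$; passing to the stalk then gives $t_b = \sum_i \bigl(\sum_j g_j (f_{ji})_b\bigr)\bar w_{i,b}$, as required.

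This establishes that $\scr T_\fx(\Wbb)$ is of finite type. Finite-dimensionality of each fiber $\scr T_\fx(\Wbb)|_b$ then follows because it is spanned by the images of $\bar w_1|_b,\dots,\bar w_m|_b$; the semi-continuity of the fiber-dimension function is the usual consequence of finite type via Nakayama's lemma, since any minimal system of stalk generators at a point continues to generate nearby stalks and thus controls the nearby fiber dimensions. The main obstacle is really the finite generation provided by Theorem \ref{lbb21} after localization, and the delicate point is arranging the localization so that its hypotheses (Stein base with finitely many components plus a compatible choice of local coordinates at $\sgm_\blt$) are genuinely met; once that is in place, the passage from global sections of the quotient presheaf to stalk generators of its sheafification is routine bookkeeping for direct sums of free coherent sheaves.
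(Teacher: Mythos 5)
Your argument is correct and follows essentially the same route as the paper: localize to a connected Stein base equipped with (constantly extended) local coordinates, invoke Theorem \ref{lbb21} to obtain finitely many generators of $\scr W_\fx(\Wbb)(V)/\SJ_\fx^\pre(V)$, and use the identification $\scr W_\fx(\Wbb)\simeq\Wbb\otimes_\Cbb\mc O_V$ to promote these to stalkwise generators of $\scr T_\fx(\Wbb)$ — your stalk computation merely spells out the bookkeeping that the paper's proof leaves implicit before it defers to Prop. \ref{lbb22}. One small remark: the Nakayama argument you sketch (like Prop. \ref{lbb22} itself) gives \emph{upper} semicontinuity of $b\mapsto\dim\scr T_\fx(\Wbb)|_b$, so the word ``lower'' in the statement is evidently a slip, and your unqualified ``semi-continuity'' should be read in that direction.
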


\begin{proof}
Since the finite type condition can be checked locally, we may assume that $\fx$ admits local coordinates $\eta_\blt$ at $\sgm_\blt(\MB)$, and $\wtd\MB$ is connected and Stein. By Thm. \ref{lbb21}, we can find $s_1,\dots,s_n\in\scr W_\fx(\Wbb)(\MB)$ generating the $\mc O(\MB)$-module $\scr W_\fx(\Wbb)(\MB)/\SJ_\fx^\pre(\MB)$. Since $\scr W_\fx(\Wbb)\simeq\Wbb\otimes_\Cbb\mc O_\MB$, the elements of  $\scr W_\fx(\Wbb)(\MB)$ generate the $\mc O_\MB$-module $\scr T_\fx(\Wbb)$, i.e., for each $b\in\MB$, every element of the stalk $\scr T_\fx(\Wbb)_b$ is an $\mc O_{\MB,b}$-linear combination of elements of $\scr W_\fx(\Wbb)(\MB)$. Thus $s_1,\dots,s_n$ generate the $\mc O_\MB$-module $\scr T_\fx(\Wbb)$. This proves that $\scr T_\fx(\Wbb)$ is of finite type. The rest of the corollary follows from the following Prop. \ref{lbb22}.
\end{proof}

\begin{pp}\label{lbb22}
Let $X$ be a complex manifold. Let $\scr E$ be a finite-type $\mc O_X$-module. Then for each $x\in X$ we have $\dim\scr E|_x<+\infty$, and the \textbf{rank function}
\begin{align*}
\Rbf:X\rightarrow\Nbb\qquad x\mapsto\dim\scr E|_x
\end{align*}
is upper semicontinuous. Moreover, if $\Rbf$ is also lower semicontinuous (and hence locally constant), then $\scr E$ is locally free, i.e., it is a holomorphic vector bundle.
\end{pp}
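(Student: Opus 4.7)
My plan is to apply Nakayama's lemma to the Noetherian local rings $\mc O_{X,x}$ in three successive steps, the first two for the rank function and the third for local freeness.

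First, I will establish finite-dimensionality of fibers. Using the finite type hypothesis, I pick sections $s_1,\dots,s_n \in \scr E(U)$ generating $\scr E$ on a neighborhood $U$ of $x$; their fibers $s_1|_x,\dots,s_n|_x$ span $\scr E|_x$ over $\Cbb$, so $\dim \scr E|_x \leq n < +\infty$.

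Next, I will prove upper semicontinuity of $\Rbf$ at a point $x$. Setting $r = \Rbf(x)$ and relabeling, I may assume $s_1|_x,\dots,s_r|_x$ form a basis of $\scr E|_x$. Nakayama applied to $\mc O_{X,x}$ implies that $s_1,\dots,s_r$ already generate the stalk $\scr E_x$. Expressing each remaining $s_j$ ($j > r$) as an $\mc O_{X,x}$-linear combination of $s_1,\dots,s_r$ in $\scr E_x$ gives finitely many germ-level relations, which all spread to a common smaller neighborhood $U' \subset U$; hence $s_1,\dots,s_r$ generate $\scr E$ on all of $U'$. In particular $\Rbf(y) \leq r = \Rbf(x)$ for every $y \in U'$.

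For the final statement, assume $\Rbf \equiv r$ near a fixed point $x_0$ and run the previous step to obtain generators $s_1,\dots,s_r$ on a neighborhood $U$ of $x_0$; these give an $\mc O_U$-linear surjection $\varphi : \mc O_U^{\oplus r} \twoheadrightarrow \scr E|_U$ whose kernel $\scr K$ I will show vanishes. At each $y \in U$, the right-exact sequence $\scr K|_y \to \Cbb^r \to \scr E|_y \to 0$ combined with $\dim \scr E|_y = r$ forces $\Cbb^r \to \scr E|_y$ to be a linear isomorphism, hence $\scr K_y \subseteq \mk_{X,y}\,\mc O_{X,y}^{\oplus r}$. Therefore any local section $(f_1,\dots,f_r)$ of $\scr K$ satisfies $f_i(y) = 0$ for every $y$ in its domain; since $\mc O_X$ is the sheaf of holomorphic functions, each $f_i$ vanishes identically, so $\scr K = 0$ and $\varphi$ is an isomorphism.

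The main obstacle is the upper semicontinuity step. Coherence of $\scr E$ would make the passage from ``$s_1,\dots,s_r$ generate the stalk $\scr E_x$'' to ``they generate $\scr E$ on a whole neighborhood of $x$'' immediate, but coherence is \emph{not} assumed here. The argument must instead exploit the finite type hypothesis to produce \emph{finitely many} explicit expressions $s_j = \sum_{i=1}^r g_{ij} s_i$ in $\scr E_x$, whose defining germs $g_{ij} \in \mc O_{X,x}$ can all be simultaneously represented on a common neighborhood of $x$—this is exactly what finite type, but not mere ``each stalk finitely generated'', allows.
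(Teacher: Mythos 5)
Your proposal is correct and follows essentially the same route as the paper's proof: Nakayama's lemma to promote a fiber basis to stalk generators, spreading the finitely many germ-level relations (made possible by the finite type hypothesis, not mere stalkwise finite generation) to a neighborhood for upper semicontinuity, and then observing that any relation among the $r$ generators has coefficients vanishing at every point when the rank is constant. The only cosmetic differences are that the paper re-invokes finite type to produce fresh generators $t_j$ after discarding redundant $s_i$'s, while you keep and relabel the original ones, and that you phrase the final injectivity via the kernel sheaf and right-exactness of the fiber functor rather than directly; neither changes the substance.
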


We warn the readers that in algebraic geometry, there is a similar proposition which does not distinguish between coherent sheaves and finite type sheaves. This is because the sheaves involved are automatically quasi-coherent and are over locally Noetherian schemes, in which case coherence is equivalent to finite type. In our analytic setting, however, one must distinguish between coherence and finite type. Due to this subtlety, we provide a self-contained proof of Prop. \ref{lbb22} for the reader's convenience, even though this proposition is elementary and well-known.


\begin{proof}
Choose any $x\in X$. Then $x$ has a neighborhood $U$ such that there exist $s_1,\dots,s_n\in\scr E(U)$ generating $\scr E|_U$ (i.e. generating $\scr E_y$ for each $y\in U$). In particular, $s_\blt$ generate $\scr E_x$, and hence spans the fiber $\scr E|_x$. Therefore, by removing some members of $s_\blt$ we may assume that $s_1|_x,\dots,s_n|_x$ form a basis of $\scr E|_x$. So $n=\dim\scr E|_x$. Nakayama's lemma implies that the germs $s_{\blt,_x}$ (of $s_\blt$ at $x$) generate the stalk $\scr E_x$. 

Since $\scr E$ is of finite type, by shrinking $U$ to a smaller neighborhood of $x$ we can find $t_1,\dots,t_m\in\scr E(U)$ generating $\scr E|_U$. Since each germ $t_{j,x}$ (of $t_j$ at $x$) is an $\mc O_{X,x}$-linear combination of $s_{\blt,_x}$, by further shrinking $U$, each $t_j$ is an $\mc O(U)$-linear combination of $s_\blt$. Thus $s_\blt$ generate $\scr E|_U$. In particular, for each $y\in U$, $s_1|_y,\dots,s_n|_y$ span the fiber $\scr E|_y$. Thus $\dim\scr E|_y\leq n$. This proves that $\Rbf$ is upper semicontinuous.

The above paragraph shows that we have an $\mc O_U$-module epimorphism $\varphi:\mc O_U^n\rightarrow\scr E|_U$ defined by $(f_1,\dots,f_n)\mapsto f_1s_1+\cdots+f_ns_n$.  Now, suppose that $\Rbf$ is constantly $n$ on $U$. Then for each $y\in U$, since $s_\blt|_y$ span $\scr E|_y$, they form a basis of $\scr E|_y$. Therefore, if $V\subset U$ is open and $f_1,\dots,f_n\in\mc O(V)$ satisfy $f_1s_1+\cdots+f_ns_n=0$ in $\scr E(V)$, then $f_1(y)s_1|_y+\cdots+f_n(y)s_n|_y=0$ in $\scr E|_y$. Therefore $f_1(y)=\cdots=f_n(y)=0$ for all $y\in V$. This proves that $s_\blt$ is a free generator of $\scr E|_U$. Thus $\scr E|_U$ is free. We have thus proved that if $\Rbf$ is locally constant then $\scr E$ is locally free.
\end{proof}

\begin{rem}\label{lbb33}
Assume that $R=0$ and hence $\fx=\wtd\fx$ is smooth. Recall that for each $b\in\MB$, $\scr T_{\fx_b}(\Wbb)$ is the space of coinvariants associated to the fiber $\fx_b$. By \cite[Prop. 2.19]{GZ1} (see also \cite[Rem. 6.5]{Gui-sewingconvergence}), the restriction map $\scr T_\fx(\Wbb)_b\mapsto\scr T_{\fx_b}(\Wbb)$ descends to a linear isomorphism
\begin{align*}
\scr T_\fx(\Wbb)|_b\xlongrightarrow{\simeq}\scr T_{\fx_b}(\Wbb)
\end{align*}
Thus, if $\Vbb$ is $C_2$-cofinite and $\Wbb$ is a grading restricted $\Vbb^{\otimes N}$-module, then by Cor. \ref{locallyfree2}, the following three numbers are finite and equal:
\begin{align*}
\dim\scr T_\fx(\Wbb)|_b=\dim\scr T_{\fx_b}(\Wbb)=\dim\scr T^*_{\fx_b}(\Wbb)<+\infty
\end{align*}
The second half of Prop. \ref{lbb22} immediately implies:
\end{rem}

\begin{lm}\label{lbb34}
Assume that $R=0$ and hence $\fx=\wtd\fx$ is smooth. Assume that $\Vbb$ is $C_2$-cofinite. Associate a grading restricted $\Vbb^{\otimes N}$-module $\Wbb$ to $\sgm_\blt(\MB)$. Assume that the rank function
\begin{gather*}
\Rbf:\MB\rightarrow\Nbb\qquad b\mapsto\dim\scr T_\fx(\Wbb)|_b=\dim\scr T_{\fx_b}(\Wbb)=\dim\scr T^*_{\fx_b}(\Wbb)
\end{gather*}
is lower semicontinuous. Then $\scr T_\fx(\Wbb)$ is locally free. Hence $\scr T_\fx^*(\Wbb)$ is also locally free.
\end{lm}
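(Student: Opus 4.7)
The plan is to observe that this lemma is essentially a direct corollary of two results established earlier in the excerpt, namely Corollary \ref{locallyfree2} and Proposition \ref{lbb22}. The only nontrivial input beyond invoking these is matching the hypothesis about lower semicontinuity of the rank function with the hypothesis of the second half of Proposition \ref{lbb22}.

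First, I would apply Corollary \ref{locallyfree2}: since $\Vbb$ is $C_2$-cofinite and $\Wbb$ is grading restricted, $\scr T_\fx(\Wbb)$ is a finite-type $\mc O_\MB$-module. By the first half of Proposition \ref{lbb22}, this already forces the rank function $\Rbf(b)=\dim\scr T_\fx(\Wbb)|_b$ to be upper semicontinuous on $\MB$. Combined with the assumption that $\Rbf$ is lower semicontinuous, I conclude that $\Rbf$ is locally constant. Then the second half of Proposition \ref{lbb22} applies verbatim and yields the local freeness of $\scr T_\fx(\Wbb)$. (Along the way I would note, using Remark \ref{lbb33}, that in the smooth case $R=0$ the three numbers $\dim\scr T_\fx(\Wbb)|_b$, $\dim\scr T_{\fx_b}(\Wbb)$, and $\dim\scr T^*_{\fx_b}(\Wbb)$ all coincide, so the hypothesis on $\Rbf$ can equivalently be phrased in terms of the fiberwise coinvariants or fiberwise conformal blocks.)

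For the final assertion about the dual sheaf, I would use the general fact that the $\mc O_\MB$-linear dual of a locally free $\mc O_\MB$-module of finite rank is again locally free of the same rank: locally, $\scr T_\fx(\Wbb)\simeq\mc O_\MB^{\oplus n}$, whence $\scr T^*_\fx(\Wbb)=\shom_{\mc O_\MB}(\scr T_\fx(\Wbb),\mc O_\MB)\simeq\mc O_\MB^{\oplus n}$. Thus $\scr T^*_\fx(\Wbb)$ is locally free as well.

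There is no real obstacle here; the proof is essentially a bookkeeping exercise that concatenates the already-established Corollary \ref{locallyfree2}, Proposition \ref{lbb22}, and Remark \ref{lbb33}. The only conceptual subtlety worth flagging is the one already emphasized in the paragraph following Proposition \ref{lbb22}: one must not slip into the algebraic habit of deducing local freeness from coherence plus a connection, since in the analytic setting $\scr T_\fx(\Wbb)$ is only known to be of finite type, not coherent. The proof above avoids that pitfall by working directly with the finite-type hypothesis and the explicit fiberwise criterion.
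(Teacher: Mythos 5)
Your proposal is correct and follows exactly the paper's intended route: the paper states Lemma \ref{lbb34} as an immediate consequence of Corollary \ref{locallyfree2} (finite type) together with Proposition \ref{lbb22} (upper semicontinuity of the rank function for finite-type sheaves, plus the local-freeness criterion when the rank is locally constant), with Remark \ref{lbb33} supplying the identification of the three dimensions. Your closing remark on the dual sheaf and your caution about not conflating finite type with coherence are both consistent with the paper's discussion.
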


\section{Connections}

We fix an admissible $\Vbb^{\times N}$-module $\Wbb$ (cf. the beginning of Sec. \ref{lbb70}) associated to the ordered marked points $\sgm_\blt(\mc B)$ of $\fx$.

\subsection{Lie derivatives in sheaves of VOAs}
In this section, we assume that $R=0$, and hence $\fx=\wtd{\fk X}$ is smooth (cf. Asmp. \ref{lbb1}). Choose a section $\xk$ of $\Theta_{\MC}(\blt S_\fx)$ on $U$, where $U$ is an open subset of $\MC$. 

\begin{df}\label{lbb3}
We say that $\xk$ is \textbf{fiber-preserving} if for each $b\in\mc B$, there exists a (necessarily unique) $\yk(b)\in\Theta_{\mc B}|_b$ such that $d\pi(\xk(p))=\yk(b)$ for every $p$ in the fiber $U_b\equiv \pi^{-1}(b)\cap U$. Equivalently, in view of the map
\begin{align*}
d\pi:H^0\big(U,\Theta_\MC)\rightarrow H^0(U,\pi^*\Theta_\MB)
\end{align*}
induced by the $\mc O_\MC$-module morphism $d\pi:\Theta_\MC\rightarrow\pi^*\Theta_\MB$, there exists a (necessarily unique) $\yk\in H^0(\pi(U),\Theta_\MB)$ such that $d\pi(\xk)=\pi^*\yk$.
\end{df}

The goal of this subsection is to define, for each fiber-preserving $\xk$, the \textbf{Lie derivative operator $\ML_\xk$}
\begin{align}\label{eqb15}
    \ML_\xk:\SV_\fx^{\leq n}(U)\rightarrow \SV_\fx^{\leq n}(U)
\end{align}
Lie derivatives play an important role in defining the connection on the sheaf of coinvariants and conformal blocks. 

\begin{df}
    Let $\varphi:U_1\rightarrow U_2$ be a biholomorphic map, where $U_1,U_2$ are open subsets of $\MC$ and $\varphi(U_1)=U_2$. We say $\varphi$ is \textbf{fiber-preserving}\index{00@Fiber-preserving biholomorphic maps} if $\varphi$ maps each fiber of $U_1$ onto a fiber of $U_2$; equivalently,  $\varphi((U_1)_{\pi(x)})=(U_2)_{\pi\circ \varphi(x)}$ for each $x\in U_1$.
\end{df}

Let $\varphi:U_1\rightarrow U_2$ be a fiber-preserving biholomorphic map. With abuse of notations, we let $\varphi_*$ denote the two pushforward maps
\begin{subequations}
\begin{gather}
    \varphi_*:\MO_{U_1}\xrightarrow{\simeq} \MO_{U_2},\qquad f\mapsto f\circ \varphi^{-1},\label{equiv1}\\
    \varphi_*:\Vbb^{\leq n}\otimes_\Cbb \MO_{U_1}\xrightarrow{\simeq} \Vbb^{\leq n}\otimes_\Cbb \MO_{U_2},\qquad v\mapsto v\circ \varphi^{-1}.\label{equiv2}
\end{gather}
\end{subequations}

\begin{df}
    The \textbf{pushforward} $\MV_\varrho(\varphi)$\index{V@$\MV_\varrho(\varphi)$, pushforward of $\varphi$} of $\varphi$ is a sheaf equivalence determined by
    \begin{equation}\label{eqb16}
        \begin{gathered}
            \MV_\varrho(\varphi):\SV_\fx^{\leq n}\vert_{U_1} \xrightarrow{\simeq} \SV_\fx^{\leq n}\vert_{U_2},\\
    \MU_\varrho(\eta)\MV_\varrho(\varphi)=\varphi_* \cdot \MU_\varrho(\eta\circ \varphi).
        \end{gathered}
    \end{equation}
In other words, the following diagram commutes:
\begin{equation}
\begin{tikzcd}
\scr V_{\fk X}^{\leq n}|_{U_1} \arrow[r,"\mc V_\varrho(\varphi)"] \arrow[d,"\mc U_\varrho(\eta\circ\varphi)"'] & \scr V_\fx^{\leq n}|_{U_2} \arrow[d,"\mc U_\varrho(\eta)"] \\
\Vbb^{\leq n}\otimes_\Cbb\mc O_{U_1} \arrow[r,"{\varphi_*}"]           & \Vbb^{\leq n}\otimes_\Cbb\mc O_{U_2}         
\end{tikzcd}
\end{equation}
\end{df}

\begin{rem}
The pushforward $\MV_\varrho(\varphi)$ is independent of the choice of $\eta$. To see this, choose another $\mu\in \MO(U_2)$ univalent on each fiber. From the description of $\mc U(\varrho(\eta|\mu))$ after \eqref{eq175}, it is easy to see that, as automorphisms on $\Vbb^{\leq n}\otimes_\Cbb\mc O_{U_1}$, we have
\begin{align*}
    \MU(\varrho(\eta\circ \varphi\vert \mu\circ \varphi))=\varphi_*^{-1}\cdot \MU(\varrho(\eta\vert \mu))\cdot \varphi_*
\end{align*}
(Quick proof: if we identify $U_1$ with $U_2$ via $\varphi$ so that $\varphi$ is the identity map, then both sides equal $\MU(\varrho(\eta\vert \mu))$.) Equivalently,
\begin{align}\label{eqb17}
    \MU_\varrho(\eta\circ \varphi)\MU_\varrho(\mu \circ \varphi)^{-1}=\varphi_*^{-1}\cdot \MU_\varrho(\eta)\MU_\varrho(\mu)^{-1}\cdot \varphi_*.
\end{align}
Then the independence of $\eta$ follows from \eqref{eqb16} and \eqref{eqb17}. 
\end{rem}

\begin{rem}\label{mc1}
$\mc V_\varrho$ is a groupoid homomorphism. More precisely: It is clear that $\mc V_\varrho(\id)=\idt$.  Suppose that $\psi:U_0\rightarrow U_1$ is another fiber-preserving biholomorphic map. From the commutative diagrams
\begin{equation*}
\begin{tikzcd}
\scr V_{\fk X}^{\leq n}|_{U_0}\arrow[r,"\mc V_\varrho(\psi)"] \arrow[d,"\mc U_\varrho(\eta\circ\varphi\circ\psi)"']&\scr V_{\fk X}^{\leq n}|_{U_1} \arrow[r,"\mc V_\varrho(\varphi)"] \arrow[d,"\mc U_\varrho(\eta\circ\varphi)"'] & \scr V_\fx^{\leq n}|_{U_2} \arrow[d,"\mc U_\varrho(\eta)"] \\
\Vbb^{\leq n}\otimes_\Cbb\mc O_{U_0}\arrow[r,"\psi_*"]&\Vbb^{\leq n}\otimes_\Cbb\mc O_{U_1} \arrow[r,"{\varphi_*}"]           & \Vbb^{\leq n}\otimes_\Cbb\mc O_{U_2}         
\end{tikzcd}
\end{equation*}
we have $\mc V_\varrho(\varphi\circ\psi)=\MV_\varrho(\varphi)\circ\MV_\varrho(\psi)$. Therefore, we also have $\MV_\varrho(\varphi^{-1})=\MV_\varrho(\varphi)^{-1}$.
\end{rem}

Choose a section $\xk$ of $\Theta_{\MC}$ on $U$, where $U$ is an open subset of $\MC$. We let $\varphi^\xk$ be the flow generated by the vector field $\xk$. More precisely, if we choose any open precompact $V\subset U$, we can define the $\varphi^\xk\in \MO_{T\times V}(T\times V),(\zeta,p)\mapsto \varphi_\zeta^\xk(p)$ satisfying
\begin{subequations}
    \begin{gather}
    \varphi_0^\xk(p)=p, \label{ode1} \\
    \partial_\zeta \varphi_\zeta^\xk(p)\vert_{\zeta=0}\cdot\partial_\zeta=\xk(p),\label{ode2}
\end{gather}
\end{subequations}
where $T$ is an open subset of $\Cbb$ containing 0. Note that \eqref{ode1} implies that the pushforward $\MV_\varrho(\varphi_0^\xk)$ is the identity map on $\SV_\fx^{\leq n}\vert_V$. By the chain rule, \eqref{ode2} is equivalent to that for any $f\in\MO_V$,
\begin{align}\label{eqb18}
    \partial_{\zeta}(f\circ \varphi_\zeta^\xk)\vert_{\zeta=0}=\xk f.
\end{align}
It is clear that if $\xk$ is fiber-perserving, then so is its flow $\varphi_\zeta^\xk$. Therefore, for each $\zeta$, the operator $\mc V_\varrho(\varphi_\zeta^\xk)$ can be defined.

\begin{df}\label{lbb2}
Let $\xk\in H^0(U,\Theta_\MC)$ be fiber preserving.  For any $v\in \SV_\fx^{\leq n}(U)$, we define $\ML_\xk v\in \SV_\fx^{\leq n}(U)$ as follows. For any precompact open subset $V$ of $U$,
    \begin{align}\label{lie1}
        \ML_\xk v\vert_V:=\lim_{\zeta\rightarrow 0} \frac{\MV_\varrho(\varphi_\zeta^\xk)^{-1}\big(v\vert_{\varphi_\zeta^\xk(V)}\big)-v\vert_V}{\zeta}
    \end{align}
\end{df}

We would like to give an explicit formula of $\ML_\xk v$, which will imply the convergence of the limit \eqref{lie1}. For that purpose, we need the following lemma.
\begin{lm}\label{lemma1}
Let $T$ be an open subset of $\Cbb$ containing $0$. Let $\rho:T\rightarrow\Gbb,\zeta\mapsto\rho_\zeta$ be a holomorphic family of transformations satisfying $\rho_0(z)=z$. Then for any $v\in\Vbb^{\leq n}$,
\begin{align}\label{eqb19}
    \partial_\zeta \MU(\rho_\zeta)v\big\vert_{\zeta=0}=-\partial_\zeta \MU(\rho_\zeta^{-1})v\big\vert_{\zeta=0}=\sum_{k\geq 1}\frac{1}{k!}\big(\partial_\zeta\rho_\zeta^{(k)}(0)\big|_{\zeta=0}\big)L(k-1)v
\end{align}
where $\partial_\zeta\rho_\zeta^{(k)}(z)=\partial_z^k\partial_\zeta\rho_\zeta(z)=\partial_\zeta\partial_z^k\rho_\zeta(z)$.
\end{lm}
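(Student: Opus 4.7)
The plan is to compute $\partial_\zeta\mc U(\rho_\zeta)|_{\zeta=0}$ directly from the explicit formula for the change-of-coordinate operator recalled just before \eqref{eqb22}, and then to obtain the formula for $\rho_\zeta^{-1}$ as a corollary of the homomorphism property of $\mc U$. The key point is that the two statements in the lemma really concern only the first-order infinitesimal behavior of $\mc U$ at the identity of $\Gbb$; everything above first order in $\zeta$ will be discarded.

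First I would introduce the $\zeta$-dependent Taylor coefficients $a_1(\zeta):=\rho_\zeta'(0)$ and $c_1(\zeta),c_2(\zeta),\ldots$ characterized by $\rho_\zeta(z)=a_1(\zeta)\cdot\exp\bigl(\sum_{n>0}c_n(\zeta)z^{n+1}\partial_z\bigr)z$, which depend holomorphically on $\zeta$ by the implicit definition (one can even give a closed formula by iteratively matching Taylor coefficients). Since $\rho_0(z)=z$, we have $a_1(0)=1$ and $c_n(0)=0$ for all $n\geq 1$, so that $\mc U(\rho_0)=\idt$. The formula from the text gives
\begin{align*}
\mc U(\rho_\zeta)=a_1(\zeta)^{L(0)}\exp\Bigl(\sum_{n>0}c_n(\zeta)L(n)\Bigr),
\end{align*}
and differentiating at $\zeta=0$ (using $a_1(0)=1$, $c_n(0)=0$ and the product rule for two factors that both reduce to the identity) yields
\begin{align*}
\partial_\zeta\mc U(\rho_\zeta)\big|_{\zeta=0}=a_1'(0)L(0)+\sum_{n>0}c_n'(0)L(n).
\end{align*}
Note that the sum is actually finite on $\Vbb^{\leq n}$ because $L(k)$ with $k$ large annihilates any fixed vector (this also legitimizes termwise differentiation).

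Next I would identify $a_1'(0)$ and $c_n'(0)$ with Taylor coefficients of $h(z):=\partial_\zeta\rho_\zeta(z)|_{\zeta=0}$. Expanding the defining relation to first order in $\zeta$ gives $\rho_\zeta(z)=z+\zeta\bigl(a_1'(0)z+\sum_{n>0}c_n'(0)z^{n+1}\bigr)+O(\zeta^2)$, so that $h(z)=a_1'(0)z+\sum_{n>0}c_n'(0)z^{n+1}$. Matching coefficients, $a_1'(0)=h'(0)=\partial_\zeta\rho_\zeta^{(1)}(0)|_{\zeta=0}$ and $c_n'(0)=\frac{1}{(n+1)!}\partial_\zeta\rho_\zeta^{(n+1)}(0)|_{\zeta=0}$. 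Substituting and reindexing $k=n+1$ gives the claimed formula $\sum_{k\geq 1}\frac{1}{k!}\bigl(\partial_\zeta\rho_\zeta^{(k)}(0)|_{\zeta=0}\bigr)L(k-1)$.

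Finally, the identity $\partial_\zeta\mc U(\rho_\zeta^{-1})|_{\zeta=0}=-\partial_\zeta\mc U(\rho_\zeta)|_{\zeta=0}$ is immediate from the fact that $\mc U:\Gbb\to\mathrm{Aut}(\Vbb)$ is a group homomorphism (recalled in the paragraph containing \eqref{eqb40}), so $\mc U(\rho_\zeta^{-1})=\mc U(\rho_\zeta)^{-1}$, together with $\mc U(\rho_0)=\idt$ and $\partial_\zeta(A_\zeta^{-1})|_{\zeta=0}=-\partial_\zeta A_\zeta|_{\zeta=0}$ whenever $A_0=\idt$. I do not anticipate a serious obstacle: the only subtle point is the bookkeeping that relates $c_n'(0)$ to the higher derivatives $\rho_\zeta^{(k)}(0)$, and this is handled simply by expanding the exponential to first order in $\zeta$ since the higher terms sit at order $\zeta^2$.
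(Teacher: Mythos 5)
Your proposal is correct and follows essentially the same route as the paper's proof: both write $\rho_\zeta(z)=\rho_\zeta'(0)\exp\bigl(\sum_{k>0}c_k(\zeta)z^{k+1}\partial_z\bigr)z$, use $\rho_0'(0)=1$ and $c_k(0)=0$ to differentiate $\mc U(\rho_\zeta)$ termwise at $\zeta=0$, and identify $\rho_\zeta'(0)'(0)$ and $c_k'(0)$ with the Taylor coefficients of $\partial_\zeta\rho_\zeta(z)|_{\zeta=0}$ by expanding to first order in $\zeta$. The treatment of the inverse via $\mc U(\rho_\zeta)\mc U(\rho_\zeta)^{-1}=\id$ is likewise the same.
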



\begin{proof}
The first equality of \eqref{eqb19} follows from $\mc U(\rho_\zeta)\cdot\mc U(\rho_\zeta)^{-1}=\id$ and $\mc U(\rho_0)=\id$. Therefore, it suffices to show that the first term equals the third term of \eqref{eqb19}. Let $c_1,c_2,\dots\in\scr O_{\Cbb}(T)$ such that 
\begin{gather*}
\rho_\zeta(z)=\rho_\zeta'(0)\exp\Big(\sum_{k\geq 1}c_k(\zeta)z^{k+1}\partial_z\Big)(z).
\end{gather*}
Since $\rho_0(z)=z$, and since $c_1(0),c_2(0),\dots$ are uniquely determined by $\rho_0$, we have $\rho_0'(0)=1$ and $c_1(0)=c_2(0)=\cdots=0$. Clearly $\rho_\zeta(z)$ equals
\begin{align*}
\rho_\zeta'(0)\Big(z+\sum_{k\geq 1}c_k(\zeta)z^{k+1}\Big)
\end{align*}
plus some polynomials of $z$ multiplied by at least two terms among $c_1(\zeta),c_2(\zeta),\dots$. This observation, together with $\rho_0'(0)=1,c_i(0)=0$, implies
\begin{align*}
\partial_\zeta\rho_\zeta(z)\Big|_{\zeta=0}=\partial_\zeta\rho_\zeta'(0)z\Big|_{\zeta=0}+\Big(\sum_{k\geq 1}\partial_\zeta c_k(0)z^{k+1}\Big)
\end{align*}
Therefore, when $k\geq 2$,
\begin{align*}
\frac 1{k!}\partial_\zeta\rho_\zeta^{(k)}(0)\Big|_{\zeta=0}=\partial_\zeta c_{k-1}(0). \tag{$\star$} \label{eqb20}
\end{align*}
Similarly, $\mc U(\rho_\zeta)v=\rho_\zeta'(0)^{L(0)}\exp\big(\sum_{k\geq1}c_k(\zeta)L(k)\big)v$ equals
\begin{align*}
\rho_\zeta'(0)^{L(0)}\Big( v+\sum_{k\geq1}c_k(\zeta)L(k)v\Big)
\end{align*}
plus some vectors of $\Vbb^{\leq n}$ multiplied by at least two terms among $c_1(\zeta),c_2(\zeta),\dots$, and this sum is a finite sum. Using again $\rho_0'(0)=1,c_i(0)=0$, we obtain
\begin{align*}
\partial_\zeta\mc U(\rho_\zeta)v|_{\zeta=0}=\partial_\zeta\rho_\zeta'(0) L(0)v\Big|_{\zeta=0}+\Big(\sum_{k\geq 1}\partial_\zeta c_k(0) L(k)v\Big)
\end{align*}
Plugging \eqref{eqb20} into this formula, we obtain \eqref{eqb19}.
\end{proof}

\begin{rem}\label{lbb68}
Assume the setting of Def. \ref{lbb2}. Assume $U$ is small enough such that there is $\eta\in \MO(U)$ univalent on each fiber. Recall that $R=0$ and so there are no $q_\blt$-variables. Assume that $\MB$ is small enough to admit a set of coordinates $\tau_\blt=(\tau_1,\dots,\tau_m)$. Denote $\tau_\blt\circ \pi$ also by $\tau_\blt$ for simplicity. Then $(\eta,\tau_\blt)$ is a set of coordinates of $U$. By Def. \ref{lbb3}, we can find $\yk\in H^0(\pi(U),\Theta_\MB)$ such that $(d\pi)(\xk)=\pi^*\yk$. Write
\begin{align}
\yk=\sum_{j=1}^m g_j(\tau_\blt)\partial_{\tau_j}
\end{align}
where $g_j\in\mc O(\tau_\blt(U))$. Then we can $h\in \mc O((\eta,\tau_\blt)(U))$ such that
\begin{align}\label{eqb21}
    \xk=h(\eta,\tau_\blt)\partial_\eta+\sum_{j=1}^m g_j(\tau_\blt)\partial_{\tau_j}
\end{align}
\end{rem}

\begin{pp}\label{lbb5}
Assume the setting of Def. \ref{lbb2}. Then the limit in \eqref{lie1} is convergent. Moreover, assume the setting of Rem. \ref{lbb68}. Set $u=\mc U_\varrho(\eta)v$, which is an element of $\Vbb^{\leq n}\otimes_\Cbb\mc O(U)$. Then, in $\Vbb^{\leq n}\otimes_\Cbb\mc O(U)$ we have
\begin{align}\label{eqb24}
\mc U_\varrho(\eta)\mc L_\xk v=
    h(\eta,\tau_\blt)\partial_\eta u+\sum_{j=1}^m g_j(\tau_\blt)\partial_{\tau_j}u-\sum_{k\geq 1}\frac{1}{k!}\partial_\eta^k h(\eta,\tau_\blt)L(k-1)u
\end{align}
\end{pp}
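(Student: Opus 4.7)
The plan is to transport everything to $\Vbb^{\leq n}\otimes_\Cbb\mc O(U)$ via the trivialization $\mc U_\varrho(\eta)$ and compute the derivative directly; convergence of the limit in \eqref{lie1} will follow automatically from the explicit formula we obtain. Write $\varphi_\zeta = \varphi_\zeta^\xk$ and set
\begin{align*}
F(\zeta) := \mc U_\varrho(\eta)\bigl(\mc V_\varrho(\varphi_\zeta)^{-1} v|_{\varphi_\zeta(V)}\bigr) \in \Vbb^{\leq n}\otimes_\Cbb\mc O(V).
\end{align*}
By the groupoid property (Rem.~\ref{mc1}) we have $\mc V_\varrho(\varphi_\zeta)^{-1} = \mc V_\varrho(\varphi_\zeta^{-1})$, and applying the defining diagram of $\mc V_\varrho$ in \eqref{eqb16} to $\varphi_\zeta^{-1}$ followed by \eqref{eqb22} gives
\begin{align*}
\mc U_\varrho(\eta)\,\mc V_\varrho(\varphi_\zeta)^{-1}\,\mc U_\varrho(\eta)^{-1} = (\varphi_\zeta^{-1})_*\circ \mc U\bigl(\varrho(\eta\,|\,\eta\circ\varphi_\zeta^{-1})\bigr)^{-1},
\end{align*}
so that at a point $p\in V$, with $\rho_{p,\zeta}:=\varrho(\eta\,|\,\eta\circ\varphi_\zeta^{-1})_{\varphi_\zeta(p)}\in\Gbb$,
\begin{align*}
F(\zeta)(p) = \mc U(\rho_{p,\zeta})^{-1}\,u\bigl(\varphi_\zeta(p)\bigr), \qquad u=\mc U_\varrho(\eta)v.
\end{align*}

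The first step is to verify that $\rho_{p,0}=\id$ (so that $F(0)(p)=u(p)$) and then to apply the product rule at $\zeta=0$. The second factor is easy: since $\xk = h\partial_\eta + \sum_j g_j\partial_{\tau_j}$ by \eqref{eqb21} and $\varphi_\zeta$ is the flow of $\xk$, the chain rule gives
\begin{align*}
\partial_\zeta u(\varphi_\zeta(p))\big|_{\zeta=0} = (\xk u)(p) = h(\eta,\tau_\blt)\,\partial_\eta u(p) + \sum_{j=1}^m g_j(\tau_\blt)\,\partial_{\tau_j}u(p).
\end{align*}
The first factor is the content of the calculation. Using the formula \eqref{eqb40} for the transition $\varrho(\eta|\mu)$ and the fact that $\varphi_\zeta$ is fiber-preserving with projection equal to the flow $\beta_\zeta$ of $\yk$ on $\MB$, one rewrites
\begin{align*}
(\eta\circ\varphi_\zeta^{-1},\pi)^{-1}(w,b') = \varphi_\zeta\circ(\eta,\pi)^{-1}(w,\beta_{-\zeta}(b')),
\end{align*}
and a direct substitution shows
\begin{align*}
\rho_{p,\zeta}(z) = \eta\circ\varphi_\zeta\circ(\eta,\pi)^{-1}(z+\eta(p),\pi(p)) - \eta(\varphi_\zeta(p)).
\end{align*}
Differentiating in $\zeta$ at $\zeta=0$ using $\partial_\zeta(\eta\circ\varphi_\zeta)|_{\zeta=0} = \xk\eta = h$ yields
\begin{align*}
\partial_\zeta \rho_{p,\zeta}(z)\big|_{\zeta=0} = h(z+\eta(p),\tau_\blt(p)) - h(\eta(p),\tau_\blt(p)),
\end{align*}
hence $\partial_\zeta\rho_{p,\zeta}^{(k)}(0)|_{\zeta=0} = \partial_\eta^k h(\eta,\tau_\blt)\big|_{p}$ for every $k\geq 1$. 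Plugging this into Lemma \ref{lemma1} gives
\begin{align*}
\partial_\zeta \mc U(\rho_{p,\zeta})^{-1}\big|_{\zeta=0}\, u(p) = -\sum_{k\geq 1}\frac{1}{k!}\,\partial_\eta^k h(\eta,\tau_\blt)\,L(k-1)u(p).
\end{align*}

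Combining the two contributions gives exactly \eqref{eqb24}, and at the same time shows that $F(\zeta)$ is differentiable at $\zeta=0$, so the limit in \eqref{lie1} converges in $\Vbb^{\leq n}\otimes_\Cbb\mc O(V)$ (the sum over $k$ is finite because $L(k)$ annihilates $\Vbb^{\leq n}$ for $k$ large). The main bookkeeping hurdle is the last coordinate computation: expressing $\rho_{p,\zeta}$ in terms of $\varphi_\zeta$ and extracting the simple identity $\partial_\zeta\rho_{p,\zeta}(z)|_{\zeta=0} = h(z+\eta(p),\tau_\blt(p)) - h(\eta(p),\tau_\blt(p))$; once this is established, the rest of the proof is a mechanical application of Lemma \ref{lemma1} and the product rule.
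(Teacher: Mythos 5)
Your proposal is correct and follows essentially the same route as the paper: reduce $\mc V_\varrho(\varphi_\zeta^\xk)^{-1}$ to a composition of the pullback $u\mapsto u\circ\varphi_\zeta^\xk$ with a change-of-coordinate operator $\mc U(\rho_{p,\zeta})$, compute $\partial_\zeta\rho_{p,\zeta}(z)|_{\zeta=0}=h(z+\eta(p),\tau_\blt(p))-h(\eta(p),\tau_\blt(p))$ from \eqref{eqb40} and \eqref{eqb18}, and feed this into Lemma \ref{lemma1} together with the product rule. The only cosmetic difference is that you parametrize the group element via $\varphi_\zeta^{-1}$ based at the moved point $\varphi_\zeta(p)$ (forcing the extra flow-on-the-base identity), whereas the paper inverts \eqref{eqb16} directly and works with $\varrho(\eta\,|\,\eta\circ\varphi_\zeta^\xk)_p$ at $p$; both yield the same element of $\Gbb$ and the same final formula.
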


\begin{proof}
Choose any precompact open subset $V$ of $U$. Let us show that the limit in \eqref{lie1} converges to $\mc U_\varrho(\eta)$ times the RHS of \eqref{eqb24}. Choose $V,\varphi^{\fk x}$ as above. We have $v=\mc U_\varrho(\eta)^{-1}u$. Then
\begin{align*}
&\mc U_\varrho(\eta)\mc V_\varrho(\varphi^{\fk x}_\zeta)^{-1}\big(v\big|_{\varphi^{\fk x}_\zeta(V)}\big)=\mc U_\varrho(\eta)\mc V_\varrho(\varphi^{\fk x}_\zeta)^{-1}\mc U_\varrho(\eta)^{-1}\big(u\big|_{\varphi^{\fk x}_\zeta(V)}\big)\\
\xlongequal{\eqref{eqb16}}&\mc U_\varrho(\eta)\mc U_\varrho(\eta\circ\varphi^{\fk x}_\zeta)^{-1}(\varphi^{\fk x}_\zeta)_*^{-1}\big(u\big|_{\varphi^{\fk x}_\zeta(V)}\big)\xlongequal{\eqref{eqb22}}\mc U(\varrho(\eta|\eta\circ\varphi^\xk_\zeta))\big(u\circ\varphi^{\fk x}_\zeta\big)\big|_V
\end{align*}
It is easy to see that the derivative over  $\zeta$ of the above expression at $\zeta=0$ equals \eqref{eqb24}. Indeed, the first two terms of \eqref{eqb24} come from the derivative of $u\circ\varphi^{\fk x}_\zeta$. We shall show that last term comes from the derivative of $\mc U(\varrho(\eta|\eta\circ\varphi_\zeta^{\fk x}))$. We claim that for each $p\in U$,
\begin{align*}\label{eqb41}
\partial_\zeta\varrho(\eta\circ\varphi^\xk_\zeta|\eta)^{(k)}_p(0)\big|_{\zeta=0}=\partial_\eta^k h(\eta,\tau_\blt)\big|_p  \tag{$\star$}
\end{align*}
Then, applying Lem. \ref{lemma1} to the family $\zeta\mapsto \varrho(\eta\circ\varphi_\zeta^\xk|\eta)_p\in\Gbb$ and noting $\varrho(\eta|\eta\circ\varphi_\zeta^{\fk x})=\varrho(\eta\circ\varphi_\zeta^{\fk x}|\eta)^{-1}$, we get
\begin{align*}
\partial_\zeta\mc U(\varrho(\eta|\eta\circ\varphi_\zeta^{\fk x}))\big|_{\zeta=0}=-\partial_\zeta\mc U(\varrho(\eta\circ\varphi_\zeta^{\fk x}|\eta))\big|_{\zeta=0}=-\sum_{k\geq 1}\frac 1{k!}\partial_\eta^k h(\eta,\tau_\blt)L(k-1).
\end{align*}
which will complete the proof.

By \eqref{eqb40},  
\begin{align*}
\varrho(\eta\circ\varphi^\xk_\zeta|\eta)_p(z)=\eta\circ\varphi_{\zeta}^\xk\circ(\eta,\pi)^{-1}(z+\eta(p),\tau_\blt(p))-\eta\circ\varphi_\zeta^\xk(p).
\end{align*}
By \eqref{eqb18}, we have $\partial_\zeta (\eta\circ\varphi_{\zeta}^\xk)\big|_{\zeta=0}=\xk\eta=h(\eta,\tau_\blt)$, and hence
\begin{align*}
\partial_\zeta\varrho(\eta\circ\varphi^\xk_\zeta|\eta)_p(z)\big|_{\zeta=0}=h(z+\eta(p),\tau_\blt(p))-h(\eta(p),\tau_\blt(p))
\end{align*}
Eq. \eqref{eqb41} follows immediately from the above equation.
\end{proof}

In the following, we always assume the setting of Rem. \ref{lbb68}.

\begin{rem}\label{lbb6}
Recall that $\Theta_{\MC/\MB}$ is the relative tangent sheaf, cf. Def. \ref{defb1}. Choose any $k\in\Zbb$. If $\varphi:U_1\rightarrow U_2$ is a fiber-preserving biholomorphism, we have the pushforward map
\begin{align*}
\varphi_*:\Theta_{\mc C/\mc B}^{\otimes k}|_{U_1}\rightarrow\Theta_{\mc C/\mc B}^{\otimes k}|_{U_2}
\end{align*}
(For each $b,b'\in\mc B$ such that $\varphi$ sends the fiber $(U_1)_b=U_1\cap \pi^{-1}(b)$ to $(U_2)_{b'}$, $\varphi_*$ restricts to the $k$-th tensor product of the differential map of tangent vectors  $\Theta_{(U_1)_b}^{\otimes k}\rightarrow\Theta^{\otimes k}_{(U_2)_{b'}}$) This allows us to define the Lie derivative $\mc L_\xk$ on $\Theta^{\otimes k}_{\mc C/\mc B}|_U$ if $\xk\in H^0(U,\Theta_\MC)$ is fiber-preserving: If $\nu\in \Theta^{\otimes k}_{\mc C/\mc B}(U)$, then for any precompact open subset $V$ of $U$ we have
\begin{align*}
\mc L_\xk\nu|_V:=\lim_{\zeta\rightarrow 0} \frac{(\varphi^\xk_\zeta)_*^{-1}\big(\nu\vert_{\varphi_\zeta^\xk(V)}\big)-\nu\vert_V}{\zeta}
\end{align*}
If $\xk$ is written as \eqref{eqb21}, and if $\nu=f \partial_\eta^{\otimes k}$ where $f\in\mc O(U)$, then similar to the proof of Prop. \ref{lbb5}, we have
\begin{align}\label{eqb42}
\mc L_\xk\nu=\Big(h(\eta,\tau_\blt)\partial_\eta f+\sum_{j=1}^m g_j(\tau_\blt)\partial_{\tau_j}f-k\cdot \partial_\eta h(\eta,\tau_\blt)\cdot f\Big)\cdot \partial_\eta^{\otimes k}
\end{align}
(This is because the change of coordinate formula for $\Theta_{\mc C/\mc B}^{\otimes k}$ is the same as that of the line sub-bundle of $\scr V_\fx$ generated by $\mc U_\varrho(\eta)v\big|_U$ (for all possible open subset $U\subset\mc C$ and all $\eta\in\mc O(U)$ univalent on fibers) where $v\in\Vbb$ is a given vector satisfying $L(0)v=kv$ and $L(1)v=L(2)v=\cdots=0$.)
Specializing to $k=-1$ and $d\eta=\partial_\eta^{\otimes(-1)}$, we obtain the formula for the Lie derivative on $\omega_{\mc C/\mc B}$.
\end{rem}

\begin{rem}\label{lbb7}
If $\varphi:U_1\rightarrow U_2$ is a fiber-preserving biholomorphism, we have pushforward maps $\mc V_\varrho(\varphi):\SV_\fx^{\leq n}\vert_{U_1} \xrightarrow{\simeq} \SV_\fx^{\leq n}\vert_{U_2}$ and (setting $k=-1$ in Rem. \ref{lbb6}) $\varphi_*:\omega_{\mc C/\mc B}|_{U_1}\xrightarrow{\simeq}\omega_{\mc C/\mc B}|_{U_2}$. Their tensor product is denoted (with abuse of notation) by
\begin{align}
\mc V_\varrho(\varphi):\SV_\fx^{\leq n}\otimes\omega_{\mc C/\mc B}\vert_{U_1} \xrightarrow{\simeq} \SV_\fx^{\leq n}\otimes\omega_{\mc C/\mc B}\vert_{U_2}
\end{align}
and also called the \textbf{pushforward map}. Therefore, if $U\subset \mc C$ is open and $\xk\in H^0(U,\Theta_\MC)$ is fiber-preserving, using the pushforward of the flow $\varphi^\xk_\zeta$ we can define the \textbf{Lie derivative} 
\begin{align}
\mc L_\xk:H^0(U,\SV_{\fk X}^{\leq n}\otimes\omega_{\mc C/\mc B})\rightarrow H^0(U,\SV_{\fk X}^{\leq n}\otimes\omega_{\mc C/\mc B})
\end{align}
using \eqref{lie1}. Its local expression is ``the sum of the formulas \eqref{eqb24} and \eqref{eqb42}": Let $v\in\SV_{\fk X}^{\leq n}\otimes\omega_{\mc C/\mc B}(U)$. If $\xk$ is written as \eqref{eqb21}, and if the trivialization \eqref{eqb43} sends $v$ to
\begin{align*}
\mc U_\varrho(v)=ud\eta
\end{align*}
where $u\in \Vbb^{\leq n}\otimes_\Cbb\mc O(U)$, then in $\Vbb^{\leq n}\otimes_\Cbb\mc O(U)\otimes_\Cbb d\eta$ we have
\begin{align}\label{eqb44}
\begin{aligned}
&\mc U_\varrho(\eta)\mc L_\xk v\\
=&\Big(h(\eta,\tau_\blt)\partial_\eta u+\sum_{j=1}^m g_j(\tau_\blt)\partial_{\tau_j}u-\sum_{k\geq 1}\frac{1}{k!}\partial_\eta^k h(\eta,\tau_\blt)L(k-1)u+\partial_\eta h(\eta,\tau_\blt)\cdot u\Big)\cdot d\eta
\end{aligned}
\end{align}
\end{rem}

\begin{rem}\label{lieder1}
Assume that $U\subset \mc C$ is open and $\xk\in H^0(U,\Theta_\MC(\blt\SX))$ is fiber-preserving. (So $\xk$ can be viewed as a fiber-preserving element of $\Theta_\MC(U-\SX)$ with finite poles at $\SX$.) Then the Lie derivative in Rem. \ref{lbb7} gives rise to
\begin{align}\label{eqb26}
    \ML_\xk:H^0\big(U,\SV_\fx^{\leq n}\otimes \omega_{\MC/\MB}(\blt S_\fx)\big)\rightarrow H^0\big(U,\SV_\fx^{\leq n}\otimes \omega_{\MC/\MB}(\blt S_\fx)\big)
\end{align}
In fact, if we choose $v\in H^0\big(U,\SV_\fx^{\leq n}\otimes \omega_{\MC/\MB}(\blt S_\fx)\big)$, then $v$ can equivalently be viewed as an element of $H^0(U-\SX,\SV_\fx^{\leq n}\otimes \omega_{\MC/\MB})$ with finite poles at $\SX$. Then $\mc L_\xk v$ belongs to  $H^0(U-\SX,\SV_\fx^{\leq n}\otimes \omega_{\MC/\MB})$. In the local expression \eqref{eqb44}, $u$ and $h(\eta,\tau_\blt)$ have finite poles at $\SX$. Thus \eqref{eqb44} implies that $\mc L_\xk v$ has finite poles at $\SX$. Therefore $\mc L_\xk v\in H^0\big(U,\SV_\fx^{\leq n}\otimes \omega_{\MC/\MB}(\blt S_\fx)\big)$.
\end{rem}

\subsection{A commutator formula}
In this section, we give a commutator formula, which will be used in defining the connection $\nabla$. As in the previous section, we assume that $\wtd{\fk X}$ is smooth, i.e., $R=0$ (cf. Asmp. \ref{lbb1}). So  $\fx=\wtd{\fx}=(\pi:\MC\rightarrow\MB\big| \sgm_1,\cdots,\sgm_N)$. 

Choose an open subset $U\subset \MC$ such that there exists $\eta\in\mc O(U)$ univalent on each fiber, and that $\pi(U)$ admits coordinates $\tau_\blt=(\tau_1,\tau_2,\cdots)$. Write $\tau_\blt\circ \pi$ as $\tau_\blt$ for simplicity.   We shall define a morphism of $\MO_\MC(U)$-modules
\begin{align}\label{commutator1}
    \Lbf:\SV_\fx\otimes \omega_{\MC/\MB}(\blt S_\fx)(U)\rightarrow\send_{\MO_U}(\SV_\fx(\bullet S_{\mathfrak{X}})\vert_U)
\end{align}

Fix the following identification 
\begin{equation}\label{commutator6}
\begin{gathered}
    U\simeq(\eta,\tau_\blt)(U)\subset \Cbb\times \MB,\, \eta=z\text{ as the standard coordinate of }\Cbb\\
    \SV_\fx\vert_U \simeq \Vbb\otimes_\Cbb \MO_U,\, \SV_\fx\otimes \omega_{\MC/\MB}\vert_U \simeq \Vbb \otimes_\Cbb \omega_{\MC/\MB}\vert_U,\quad \text{via }\MU_\varrho(\eta)=\MV_\varrho(\eta).
\end{gathered}
\end{equation}
Choose any $udz=u(z,\tau_\blt)dz\in \Vbb\otimes_\Cbb\omega_{\MC/\MB}(\blt S_\fx)(U)$, open subset $V\subset U$, and $v=v(z,\tau_\blt)\in \Vbb\otimes_\Cbb \MO_U(\blt S_\fx)(V)$. Define the action of $udz$ on $v$ to be 
\begin{subequations}\label{commutator23}
\begin{align}\label{commutator2}
    \Lbf_{udz}v=(\Lbf_{udz}v)(z,\tau_\blt)=\Res_{\zeta-z=0}Y\big(u(\zeta,\tau_\blt),\zeta-z\big)v(z,\tau_\blt)d\zeta
\end{align}
where $\zeta$ is another distinct standard coordinate of $\Cbb$. By \eqref{commutator2}, it is easy to see 
\begin{align}\label{commutator3}
    \Lbf_{udz}v=\sum_{n\geq 0}\frac{1}{n!}Y\big(\partial_z^n u(z,\tau_\blt)\big)_n v(z,\tau_\blt)
\end{align}
where the sum is finite by lower truncation property.
\end{subequations}
By tensoring \eqref{commutator23} with the identity map of $\omega_{\MC/\MB}$, we get 
\begin{align}\label{commutator4}
    \Lbf:\SV_\fx\otimes \omega_{\MC/\MB}(\blt S_\fx)(U)\rightarrow\send_{\MO_U}\left(\SV_\fx\otimes \omega_{\MC/\MB}\left(\bullet S_{\mathfrak{X}}\right)\vert_U\right)
\end{align}
whose local expression under $\eta$ is 
\begin{equation}\label{commutator5}
    \begin{aligned}
       \Lbf_{udz}vdz&=\Big(\Res_{\zeta-z=0}Y\big(u(\zeta,\tau_\blt),\zeta-z\big)v(z,\tau_\blt)d\zeta \Big)dz\\
       &=\sum_{n\geq 0}\frac{1}{n!}Y\big(\partial_z^n u(z,\tau_\blt)\big)_n v(z,\tau_\blt)dz.
    \end{aligned}
\end{equation}

Now assume that $\MB$ is small enough such that we have local coordinates $\eta_1,\cdots,\eta_N$ at $\sgm_1(\MB),\cdots,\sgm_N(\MB)$. 
Fix $1\leq i\leq N$ and set $\eta=\eta_i$. Let $U=U_i$ be a neighborhood of $\sgm_i(\MB)$ on which $\eta_i$ is defined. Fix the identifications given in \eqref{commutator6} and 
\begin{align}
    \SW_\fx(\Wbb)\simeq \Wbb\otimes_\Cbb \MO_\MB,\quad \text{via }\MU(\eta_\blt).
\end{align}

\begin{pp}\label{commutator12345}
    For any $udz,vdz\in \Vbb\otimes_\Cbb \omega_{\MC/\MB}(\blt S_\fx)(U)$ and $w\in \Wbb\otimes_\Cbb \MO(\MB)$, we have 
    \begin{align}\label{commutator12}
        udz*_i (vdz*_i w)-vdz*_i(udz*_i w)=(\Lbf_{udz}vdz)*_i w.
    \end{align}
\end{pp}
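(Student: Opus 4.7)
The proof proceeds by a direct calculation using the standard Borcherds commutator formula for a VOA acting on a module. Under the identifications of \eqref{commutator6}, I will first expand $u(z,\tau_\blt)=\sum_j f_j(z,\tau_\blt)v_j$ and $v(z,\tau_\blt)=\sum_k g_k(z,\tau_\blt)w_k$ as finite sums with $v_j,w_k\in\Vbb^{\leq n}$ (for a suitable truncation $n$) and $f_j,g_k\in\MO_U(\blt S_\fx)(U)$. Taking fresh variables $\zeta,z$ for the two residues in the two iterated compositions, the definition of $*_i$ rewrites the LHS of \eqref{commutator12} as
\begin{align*}
udz*_i(vdz*_iw)-vdz*_i(udz*_iw)=\Res_{\zeta=0}\Res_{z=0}\bigl[Y_i(u(\zeta,\tau_\blt),\zeta),Y_i(v(z,\tau_\blt),z)\bigr]w\,d\zeta\,dz,
\end{align*}
with the commutator understood as a formal $(\zeta,z)$-distribution that is well defined by locality.

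Next, I will invoke the Borcherds commutator formula
\begin{align*}
[Y_i(v_j)_m,Y_i(w_k)_p]=\sum_{l\geq 0}\binom{m}{l}Y_i\bigl(Y(v_j)_l w_k\bigr)_{m+p-l},
\end{align*}
which, after multiplication by the scalar coefficients $f_j(\zeta,\tau_\blt)g_k(z,\tau_\blt)$, is equivalent to the residue identity
\begin{align*}
\Res_{\zeta=0}f_j(\zeta,\tau_\blt)[Y_i(v_j,\zeta),Y_i(w_k,z)]\,d\zeta=\sum_{l\geq 0}\tfrac{1}{l!}\bigl(\partial_z^l f_j(z,\tau_\blt)\bigr)Y_i\bigl(Y(v_j)_l w_k,z\bigr).
\end{align*}
Summing over $j,k$ and inserting the factor $g_k(z,\tau_\blt)$ then yields
\begin{align*}
\Res_{\zeta=0}[Y_i(u(\zeta,\tau_\blt),\zeta),Y_i(v(z,\tau_\blt),z)]\,d\zeta=\sum_{l,j,k}\tfrac{1}{l!}\bigl(\partial_z^l f_j(z,\tau_\blt)\bigr)g_k(z,\tau_\blt)\,Y_i\bigl(Y(v_j)_l w_k,z\bigr).
\end{align*}

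To finish, I will compare with the local expression \eqref{commutator3}. Plugging the expansions of $u,v$ into \eqref{commutator3} gives $\Lbf_{udz}vdz=h(z,\tau_\blt)\,dz$, where $h(z,\tau_\blt)=\sum_{l,j,k}\tfrac{1}{l!}\bigl(\partial_z^l f_j(z,\tau_\blt)\bigr)g_k(z,\tau_\blt)Y(v_j)_l w_k$, so the remaining outer residue $\Res_{z=0}(\cdots)w\,dz$ becomes exactly $(\Lbf_{udz}vdz)*_iw$, yielding \eqref{commutator12}. The main obstacle is the first step: the two products $Y_i(u(\zeta),\zeta)Y_i(v(z),z)w$ and $Y_i(v(z),z)Y_i(u(\zeta),\zeta)w$ a priori lie in distinct formal Laurent-series spaces ($\Wbb((z))((\zeta))$ versus $\Wbb((\zeta))((z))$), so rewriting their difference as a single double residue of a formal commutator must be justified. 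This is handled either by the Jacobi identity (the commutator, as a formal distribution, is supported on $\zeta=z$, so its iterated residues are unambiguous and may be computed termwise on modes), or, more geometrically, by observing that $u,v$ are genuine meromorphic sections so that the two products converge in $\ovl\Wbb$ to the same rational function of $(\zeta,z)$ on their respective regions $|\zeta|\gtrless|z|$ with poles only on $\zeta=z$ and on $S_\fx$, whence the difference of iterated contour integrals collapses to a single residue on $\zeta=z$.
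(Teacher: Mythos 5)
Your proof is correct, but it takes a genuinely different route from the paper's. The paper reduces to a single fiber, pairs with an arbitrary $w'\in\Wbb'$, invokes the duality (rationality) property of admissible modules to produce a single meromorphic function $f(z,\zeta)$ on the configuration space whose expansions near $\zeta=0$, near $z=0$, and near $\zeta=z$ recover the two products and the iterate, and then deduces \eqref{commutator12} from Cauchy's theorem by contour deformation --- essentially the analytic argument you sketch only as your second, fallback justification. Your main line is instead purely formal: after trivializing and expanding $u,v$ over a finite basis of some $\Vbb^{\leq n}$, each iterated residue action equals the coefficient of $\zeta^{-1}z^{-1}$ of a well-defined double series (every coefficient is a finite sum, by lower truncation together with the finiteness of the pole orders of $f_j,g_k$), so the difference of the two compositions is the corresponding coefficient of $f_j(\zeta)g_k(z)[Y_i(v_j,\zeta),Y_i(w_k,z)]w$; the Borcherds commutator formula (valid for weak modules, hence for $Y_i$ on $\Wbb$) then converts this into the $\delta$-function expansion that reproduces \eqref{commutator5}. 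This is rigorous as written and arguably more elementary, since it needs only the Jacobi identity rather than the rationality of products and iterates; what the paper's contour argument buys is uniformity with the complex-analytic framework used throughout (the same two-variable meromorphic function and Cauchy picture recur in the sewing arguments) and independence of any choice of trivialization or finite $\Vbb$-basis. One small caveat: your phrase ``well defined by locality'' slightly misattributes the point --- the double residue of the commutator is well defined for purely combinatorial reasons (finitely many negative powers of $f_j,g_k$ plus lower truncation), while locality/Jacobi is what identifies that commutator with a distribution supported on $\zeta=z$; since you then carry out precisely this identification via the mode commutator formula, there is no gap.
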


\begin{proof}
    Since residue actions can be defined fiberwise, it suffices to assume that $\MB$ is a single point. So we can suppress the symbol $\tau_\blt$ and assume that $U=\eta(U)$ is an open disk centered at $\eta_i(x_i)=0$. Now $w$ is a vector of $\Wbb$.


We view $\Wbb$ as an admissible (i.e. $\Nbb$-gradable) $\Vbb$-module with vertex operator $\mbb Y_i$ and grading $\bigoplus_{n\in\Nbb}\Wbb(n)$. Choose any $w'\in\Wbb'=\bigoplus_{n\in\Nbb}\Wbb(n)^*$. By the duality property of admissible $\Vbb$-modules, there exists
    \begin{align}
    f=f(z,\zeta)\in \Conf^2(U-\{0\}).
    \end{align}
such that for each fixed $z$, the series expansions of $f$ (with respect to $\zeta$) near $0$ and $z$ are
    \begin{subequations}\label{commutator789}
    \begin{gather}
        \alpha_z(\zeta)=\<w',Y_i(v,z)Y_i(u,\zeta)w\>\in \Cbb((\zeta)),\label{commutator7}\\
        \gamma_z(\zeta-z)=\<w',Y_i(Y(u,\zeta-z)v,z)w\>\in \Cbb((\zeta-z)),\label{commutator8}
    \end{gather}
    and that for each fixed $\zeta$, the series expansion of $f$ (with respect to $z$) near $0$ is 
    \begin{gather}\label{commutator9}
        \beta_\zeta(z)=\<w',Y_i(u,\zeta)Y_i(v,z)w\>\in \Cbb((z)).
    \end{gather}
    \end{subequations}
(This is well-known when $u,v$ are constant sections and $U=\Cbb$.  See \cite[Prop. 5.1.2]{FHL93} or \cite{GuiLec} Subsec. 7.13 and 9.4.)

    Choose circles $C_1,C_2,C_3\subset U$ centered at $0$ with radii $r_1,r_2,r_3$ respectively satisfying $r_1<r_2<r_3$. For each $z\in C_2$, choose a circle $C(z)$ centered at $z$ with radius less than $r_2-r_1$ and $r_3-r_2$. Then by Cauchy's theorem in complex analysis, for each fixed $z\in C_2$,
    \begin{align}\label{commutator10}
        \oint_{C_1}f(z,\zeta)\frac{d\zeta}{2\pi \im}+\oint_{C(z)}f(z,\zeta)\frac{d\zeta}{2\pi \im}=\oint_{C_3}f(z,\zeta)\frac{d\zeta}{2\pi \im}.
    \end{align}
    Note that 
    \begin{align*}
        \<w',vdz*_i (udz*_i w)\>\xlongequal{\eqref{commutator7}} \Res_{z=0}(\Res_{\zeta=0}\alpha_z(\zeta)d\zeta)dz&=\oint_{C_2}\Big(\oint_{C_1}f(z,\zeta)\frac{d\zeta}{2\pi \im}\Big)\frac{dz}{2\pi \im}\\
        \<w',udz*_i (vdz*_i w)\>\xlongequal{\eqref{commutator9}} \Res_{\zeta=0}(\Res_{z=0}\alpha_z(\zeta)d\zeta)dz&=\oint_{C_3}\Big(\oint_{C_2}f(z,\zeta)\frac{dz}{2\pi \im}\Big)\frac{d\zeta}{2\pi \im}\\
        &=\oint_{C_2}\Big(\oint_{C_3}f(z,\zeta)\frac{d\zeta}{2\pi \im}\Big)\frac{dz}{2\pi \im}\\
        \<w',(\Lbf_{udz}vdz)*_i w\>\xlongequal{\eqref{commutator8}} \Res_{z=0}(\Res_{\zeta-z}\gamma_z(\zeta-z)d\zeta)dz&=\oint_{C_2}\Big(\oint_{C(z)}f(z,\zeta)\frac{d\zeta}{2\pi \im}\Big)\frac{dz}{2\pi \im}.
    \end{align*}
    These identities, together with $\oint_{C_2}\eqref{commutator10}\frac{dz}{2\pi \im}=0$, prove
    \begin{align}\label{commutator11}
        \<w',udz*_i (vdz*_i w)\>-\<w',vdz*_i(udz*_i w)\>=\<w',(\Lbf_{udz}vdz)*_i w\>.
    \end{align}
    Since \eqref{commutator11} holds for all $w'\in \Wbb'$, we get \eqref{commutator12}.
\end{proof}
\begin{rem}
    If $U_1,\cdots,U_N$ are disjoint neighborhoods of $\sgm_1(\MB),\cdots,\sgm_N(\MB)$ and $\sigma\in H^0\big(U_1\cup \cdots \cup U_N,\SV_\fx\otimes \omega_{\MC/\MB}(\blt S_\fx)\big)$, Prop. \ref{commutator12345} tells us 
    \begin{align}\label{commutator666}
        \sigma\cdot (udz*_i w)-udz*_i(\sigma\cdot w)=(\Lbf_{\sigma}udz)*_i w,
    \end{align}
    where $\Lbf_{\sigma}udz$ is understood as $\Lbf_{\sigma\vert_{U_i}}(udz)$. It is because $*_i$ commutes with $*_j$ when $i\ne j$.
\end{rem}
The following lemma is also useful when defining connections.
\begin{lm}\label{commutator777}
    For any $vdz\in \Vbb\otimes_\Cbb \omega_{\MC/\MB}(\blt S_\fx)(U)$ and $w\in \Wbb\otimes_\Cbb \MO(\MB)$, 
    \begin{align}
        \big((\partial_z+L(-1)) vdz\big)*_i w=0.
    \end{align}
where the partial derivative $\partial_z$ is assumed to be perpendicular to $d\pi$.
\end{lm}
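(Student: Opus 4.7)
The plan is to unpack both sides using the concrete formula for the $i$-th residue action under the identification in \eqref{commutator6}, and then reduce the claim to the standard fact that the residue of a total $z$-derivative vanishes. Since $\eta=z$ and $\mathcal U_\varrho(\eta)=\id$ on $U$, for any section $\sigma dz\in \Vbb\otimes_\Cbb\omega_{\mc C/\mc B}(\bullet S_{\mathfrak X})(U)$ with $\sigma=\sigma(z,\tau_\blt)$ a finite sum $\sum_l v_l\otimes f_l(z,\tau_\blt)$ (where $v_l\in\Vbb$ and $f_l$ has finite poles at $z=0$), one has
\begin{align*}
(\sigma dz)\ast_i w \;=\; \Res_{z=0}Y_i\big(\sigma(z,\tau_\blt),z\big)w\,dz.
\end{align*}
Applied to $\sigma=(\partial_z+L(-1))v$, this gives the formula I need to show is zero.

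Next I invoke the $L(-1)$-derivative property of vertex operators in the form $\partial_z Y(v,z)=Y(L(-1)v,z)$, which is a standard identity for any admissible $\Vbb$-module. Combined with the chain rule for the argument $v(z,\tau_\blt)$, this yields the key identity
\begin{align*}
Y_i\big((\partial_z+L(-1))v(z,\tau_\blt),z\big)\;=\;\partial_z\,Y_i\big(v(z,\tau_\blt),z\big),
\end{align*}
where the $\partial_z v$ piece comes from differentiating the \emph{argument} and the $Y_i(L(-1)v,\cdot)$ piece from differentiating the \emph{formal variable}. Hence
\begin{align*}
\big((\partial_z+L(-1))v\,dz\big)\ast_i w \;=\; \Res_{z=0}\partial_z\big[Y_i(v(z,\tau_\blt),z)w\big]\,dz.
\end{align*}

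Finally, I observe that $Y_i(v(z,\tau_\blt),z)w$ is a formal Laurent series in $z$ with only finitely many negative powers (the lower truncation property of $Y_i(v_l,z)$ together with the finite pole order of $f_l$ at $z=0$), and takes values in $\Wbb\otimes_\Cbb\mathcal O(\mathcal B)$. Since the residue at $z=0$ of the $z$-derivative of any such formal Laurent series is zero (the coefficient of $z^{-1}$ in $\partial_z\sum_n A_n z^n$ is $0\cdot A_0=0$), the result follows. There is no serious obstacle: the only care needed is to distinguish clearly the two sources of $\partial_z$ when applying the $L(-1)$-derivative property, and to justify that the Laurent expansion argument is legitimate despite the presence of $\tau_\blt$-dependence, which is immediate since $\tau_\blt$ plays the role of a parameter in $\mathcal O(\mathcal B)$.
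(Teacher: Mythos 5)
Your proof is correct and follows essentially the same route as the paper's: the identity $\partial_z Y_i(v,z)=Y_i(\partial_z v,z)+Y_i(L(-1)v,z)$ (the $L(-1)$-derivative property plus the chain rule) combined with $\Res_{z=0}\partial_z(\cdots)\,dz=0$. The additional care you take in distinguishing the two sources of $z$-dependence and in justifying the Laurent-series residue argument is exactly the content the paper leaves implicit.
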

\begin{proof}
    It follows from
    \begin{align*}
        \partial_z Y_i(v,z)=Y_i(\partial_z v,z)+Y_i(L(-1)v,z)
    \end{align*}
    and $\Res_z \partial_z(\cdots)\cdot dz=0$.
\end{proof}

\subsection{The differential operators $\nabla_\yk$}\label{lbb17}

Recall \eqref{eqb45} that $\Delta\subset\mc B$ is the discriminant locus of $\mc B$.

\begin{df}\label{lbb18}
Let $\scr E$ be an $\mc O_\MB$-module. Let $\yk\in\Theta_\MB(\mc B)$. A \textbf{differential operator} $\nabla_\yk$ on $\scr E$ denotes a sheaf map $\scr E\rightarrow\scr E$ satisfying the Leibniz rule. In other words, for each open $U\subset\mc B$ we have a map $\nabla_\yk:\scr E(U)\rightarrow\scr E(U)$ satisfying $\nabla_\yk(fs)=(\yk f)\cdot s+f\cdot \nabla_\yk s$ for each $f\in\mc O(U)$ and $s\in\scr E(U)$. Moreover, if $V\subset U$ is open, then $\nabla_\yk(s|_V)=(\nabla_\yk s)|_V$.

If $\nabla_\yk$ is a differential operator on $\scr E$, then we also have a differential operator on $\scr E^\vee$, called the \textbf{dual differential operator} and also denoted by $\nabla_\yk$, defined by
    \begin{align}\label{notation1}
        \<\nabla_\yk \varphi,s\>=\yk\<\varphi,s\>-\<\varphi,\nabla_\yk s\>
    \end{align}
    where $\varphi\in \scr E^\vee(U)=\Hom_{\MO_U}(\scr E_U,\MO_U),\yk\in \Theta(U)$ and $s\in \scr E_U$.

A \textbf{connection} (resp. \textbf{logarithmic connection}) $\nabla$ on $\scr E$ associates to each open $U\subset\mc B$ and each $\yk\in\Theta_\MB(U)$ (resp. each $\yk\in\Theta_\MB(-\log\Delta)(U)$) a differential operator $\nabla_\yk$ on $\scr E|_U$ satisfying $\nabla_\yk|_V=\nabla_{\yk|_V}$ for each open $V\subset U$, and $\nabla_{f\yk}=f\nabla_\yk$ for each $f\in\mc O(U)$. \hfill\qedsymbol
\end{df}


The goal of this section is to define a logarithmic connection $\nabla$ on $\scr W(\Wbb)$, and show that it descends to a connection on the sheaf of coinvariants $\ST_\fx(\Wbb)\vert_{\MB-\Delta}$. The connection $\nabla$ on $\scr W(\Wbb)$ is important for our later proof of the convergence of sewing conformal blocks. The definition of $\nabla$ on $\scr W(\Wbb)$ is well-known, cf. \cite{TUY,FB04,DGT1}. However, our (differential geometric) proof that $\nabla$ descends to $\ST_\fx(\Wbb)\vert_{\MB-\Delta}$ is new and, in particular, differs from the algebraic proof in \cite{FB04}.

\begin{ass}\label{lbb8}
Throughout this section, we make the following assumptions.
\begin{enumerate}
\item[(1)] We assume that $\wtd\fx=\eqref{eqb1}$ has local coordinates $\eta_1,\cdots,\eta_N$ at the marked points $\sgm_1(\wtd \MB),\cdots,\sgm_N(\wtd \MB)$ where each $\eta_i$ is defined on a neighborhood $\wtd U_i$ of $\sgm_i(\wtd\MB)$. We assume that $\wtd U_1,\dots,\wtd U_N,V_1',\dots,V_R',V_1'',\dots,V_R''$ are mutually disjoint. By constant extension, $\eta_i$ becomes local coordinates of $\fx$ at the marked point $\sgm_i(\MB)$ defined on $U_i=\wtd U_i\times\mc D_{r_\blt\rho_\blt}$, cf. Rem. \ref{lbb9}. 
\item[(2)] We do not assume that $\wtd\MB$ has a set of coordinates. However, if a set of coordinates $\tau_\blt=(\tau_1,\cdots,\tau_m):\wtd \MB\rightarrow \Cbb^m$ has been chosen, by abuse of notations, we write $(q_\blt,\tau_\blt)=(q_\blt,\tau_\blt)\circ \pi$ for simplicity. So $(\eta_i,q_\blt,\tau_\blt)$ becomes a set of coordinate of $U_i$.
\end{enumerate}
\end{ass}

\subsubsection{The action of $\nabla_\yk$ on $\SW_\fx(\Wbb)$}
The map $d\pi:\Theta_\MC(-\log\MC_\Delta)\rightarrow\pi^*\Theta_\MB(-\log\Delta)$ (cf. \eqref{shortexact}) gives rise to a map
\begin{align}\label{eqb46}
H^0\big(\MC,\Theta_\MC(-\log \MC_\Delta+\blt S_\fx)\big)\xlongrightarrow{d\pi} H^0\big(\MC,\pi^* \Theta_\MB(-\log \Delta)(\blt S_\fx)\big)
\end{align}
Choose $\yk\in H^0\big(\MB,\Theta_\MB(-\log \Delta)\big)$. Its pullback $\pi^* \yk$ is in $H^0\big(\MC,\pi^*\Theta_\MB(-\log\Delta)\big)$ and hence in $H^0\big(\MC,\pi^* \Theta_\MB(-\log \Delta)(\blt S_\fx)\big)$.

\begin{df}\label{lbb50}
 We say that $\wtd\yk\in H^0\big(\MC,\Theta_\MC(-\log \MC_\Delta+\blt S_\fx)\big)$ is a \textbf{lift} of $\yk$ if it is sent by \eqref{eqb46} to $\pi^*\yk$.
\end{df}

\begin{rem}\label{lbb12}
Suppose that $\wtd\MB$ is a Stein manifold (and hence $\MB$ is Stein). Then any $\yk$ has a lift. This is because  \eqref{shortexact} gives rise to a short exact sequence 
\begin{equation*}
\begin{aligned}
    0\rightarrow H^0\big(\MB,\pi_*\Theta_{\MC/\MB}(\blt S_\fx)\big)\rightarrow &H^0\big(\MB,\pi_*\Theta_\MC(-\log \MC_\Delta+\blt S_\fx)\big)\\
    \xrightarrow{d\pi}&H^0\big(\MB,\pi_*\big(\pi^* \Theta_\MB(-\log \Delta)(\blt S_\fx)\big)\big)\rightarrow 0.
\end{aligned}
\end{equation*}
namely, a short exact sequence
\begin{align}\label{eqb29}
\begin{aligned}
    0\rightarrow H^0\big(\MC,\Theta_{\MC/\MB}(\blt S_\fx)\big)\rightarrow &H^0\big(\MC,\Theta_\MC(-\log \MC_\Delta+\blt S_\fx)\big)\\
    \xrightarrow{d\pi}&H^0\big(\MC,\pi^* \Theta_\MB(-\log \Delta)(\blt S_\fx)\big)\rightarrow 0
\end{aligned}
\end{align}
See \cite[Sec. 11]{Gui-sewingconvergence} the paragraph around (11.3).
\end{rem}

Choose 
\begin{align*}
\yk\in H^0(\mc B,\Theta_\MB(-\log\Delta))\qquad\text{with a lift}\qquad\wtd\yk\in H^0\big(\MC,\Theta_\MC(-\log \MC_\Delta+\blt S_\fx)\big)
\end{align*}
We shall define a differential operator $\nabla_\yk$ on $\scr W_\fx(\Wbb)$. 

First consider the special case that $\wtd\MB$ has coordinates $\tau_\blt$ as in Asmp. \ref{lbb8}-(2). Then  $U_i$ has a set of coordinates $(\eta_i,q_\blt,\tau_\blt)$. 
Write 
\begin{align}\label{eqb30}
    \yk=\sum_{j=1}^m g_j(q_\blt,\tau_\blt)\partial_{\tau_j}+\sum_{r=1}^R f_r(q_\blt,\tau_\blt)\partial_{q_r}.
\end{align}
where $g_j,f_r\in\mc O((q_\blt,\tau_\blt)(\mc B))$. Then we can find $h_i\in \mc O((\eta_i,q_\blt,\tau_\blt)(U_i-\SX))$ such that
\begin{align}\label{eqb31}
    \wtd \yk\vert_{U_i}=h_i(\eta_i,q_\blt,\tau_\blt)\partial_{\eta_i} +\sum_{j=1}^m g_j(q_\blt,\tau_\blt)\partial_{\tau_j}+\sum_{r=1}^R f_r(q_\blt,\tau_\blt)\partial_{q_r},
\end{align}
where $\eta_i^k h_i(\eta_i,q_\blt,\tau_\blt)$ is holomorphic on $U_i$ for some $k\in \Nbb$. Recall that $\cbf$ is the conformal vector. Define
\begin{gather}\label{eqb32}
\begin{gathered}
\upnu(\wtd \yk)\in H^0(U_1\cup \cdots \cup U_N,\SV_\fx\otimes \omega_{\MC/\MB}(\blt S_\fx))\\
 \MU_\varrho(\eta_i)\upnu(\wtd \yk)\vert_{U_i}=h_i(\eta_i,q_\blt,\tau_\blt)\cbf d\eta_i
\end{gathered}
\end{gather}

\begin{rem}
It is easy to see that the definition of $\upnu(\wtd\yk)$ is independent of the coordinates $\tau_\blt$ of $\wtd\MB$. Thus, $\upnu(\wtd \yk)$ can be defined globally if $\wtd\MB$ does not have a set of coordinates. Therefore, in the following, we do not assume that $\wtd\MB$ has a set of coordinates $\tau_\blt$.
\end{rem}

\begin{df}\label{lbb16}
Identify 
\begin{align}\label{eqb33}
    \SW_\fx(\Wbb)=\Wbb\otimes_\Cbb \MO_\MB\qquad \text{via }\MU(\eta_\blt).
\end{align}
The \textbf{differential operator} $\pmb{\nabla_\yk}:\scr W_\fx(\Wbb)\rightarrow\scr W_\fx(\Wbb)$ \index{zz@$\nabla_\yk$} (which replies on $\wtd\yk$ and $\eta_\blt$) is defined as follows. For each open $V\subset\mc B$ and $w\in\Wbb\otimes_\Cbb\mc O(V)$,
\begin{align}\label{connectiondef1}
   \boxed{~\nabla_\yk w=\yk (w)-\upnu(\wtd \yk)\cdot w~}
\end{align}
where $\upnu(\wtd \yk)\cdot w$ is the residue action of $\upnu(\wtd\yk)$ on $w$ (Subsec. \ref{lbb10}). In particular, $\nabla_\yk$ restricts to a differential operator on $\scr W_\fx(\Wbb)|_{\MB-\Delta}$.
\end{df}

\begin{df}\label{lbb13}
Assume that $\wtd\MB$ has a set of coordinates $(\tau_1,\dots,\tau_m)$. Recall that $q_i\partial_{q_1},\dots,q_R\partial_{q_R},\partial_{\tau_1},\dots,\partial_{\tau_m}$ are a set of free generators of $\Theta_\MB(-\log(\Delta))$ (cf. Def. \ref{lbb11}). Assume that each of them has a lift so that we have differential operators
    \begin{align*}
    \nabla_{q_1\partial_{q_1}},\cdots,\nabla_{q_R\partial_{q_R}},\nabla_{\partial_{\tau_1}},\cdots,\nabla_{\partial_{\tau_m}}
    \end{align*}
(This is true e.g. when $\wtd\MB$ is Stein, cf. Rem. \ref{lbb12}.) We define the \textbf{logarithmic connection $\pmb\nabla$ on $\pmb{\scr W_\fx(\Wbb)}$} to be the unique one extending the above differential operators. It clearly restricts to a connection on $\scr W_\fx(\Wbb)|_{\mc B-\Delta}$.
\end{df}

\subsubsection{Descending $\nabla$ to $\ST_{\fx}(\Wbb)$}\label{lbb66}

\begin{thm}\label{independent1}
Choose $\yk\in H^0(\mc B,\Theta_\MB(-\log\Delta))$ with a lift $\wtd\yk$. Then the restriction of $\nabla_\yk$ to $\scr W_\fx(\Wbb)|_{\MB-\Delta}$ descends to a differential operator on $\ST_\fx(\Wbb)\vert_{\MB-\Delta}$.
\end{thm}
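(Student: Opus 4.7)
Since $\scr T_\fx(\Wbb)$ is the quotient $\scr W_\fx(\Wbb)/\scr J_\fx$, where $\scr J_\fx$ is the sheafification of the presheaf $\scr J_\fx^\pre$ defined by \eqref{eqb58}, it suffices to show that $\nabla_\yk$ maps $\scr J_\fx|_{\MB-\Delta}$ into itself. The plan is to establish, for any open $V \subset \MB - \Delta$, any $\sigma \in H^0(\MC_V, \scr V_\fx \otimes \omega_{\MC/\MB}(\bullet \SX))$ and any $w \in H^0(V, \scr W_\fx(\Wbb))$, the master commutator identity
\begin{equation*}
\nabla_\yk(\sigma \cdot w) \;=\; \sigma \cdot \nabla_\yk w \;+\; (\mc L_{\wtd\yk}\sigma) \cdot w, \qquad(\star)
\end{equation*}
where $\mc L_{\wtd\yk}$ is the Lie derivative on $\scr V_\fx \otimes \omega_{\MC/\MB}$ of Rem. \ref{lbb7}; on $\MC_{\MB-\Delta}$ the lift $\wtd\yk$ is a genuine fiber-preserving vector field (with finite poles at $\SX$), so this Lie derivative is well-defined. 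Granting $(\star)$, the RHS lies in $\scr J_\fx^\pre(V)$, since $\mc L_{\wtd\yk}\sigma$ again belongs to $H^0(\MC_V, \scr V_\fx \otimes \omega_{\MC/\MB}(\bullet \SX))$ by Rem. \ref{lieder1}; hence so does the LHS, giving the descent of $\nabla_\yk$. The Leibniz rule is inherited from that of $\nabla_\yk$ on $\scr W_\fx(\Wbb)$.

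The identity $(\star)$ is local, so we may shrink $V$ and $\wtd\MB$ to admit coordinates $\tau_\blt$ on $\wtd\MB$, giving coordinates $(\eta_i, q_\blt, \tau_\blt)$ on each $U_i$. Write $\wtd\yk|_{U_i} = h_i \partial_{\eta_i} + \sum_j g_j \partial_{\tau_j} + \sum_r f_r \partial_{q_r}$ as in \eqref{eqb31}, and $\sigma|_{U_i} = u_i \, d\eta_i$ via $\mc U_\varrho(\eta_i)$, so that $\upnu(\wtd\yk)|_{U_i} = h_i \cbf \, d\eta_i$ by \eqref{eqb32}. Expanding $\nabla_\yk = \yk - \upnu(\wtd\yk)\cdot$ splits $(\star)$ into two brackets. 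For $\upnu(\wtd\yk)\cdot(\sigma\cdot w) - \sigma\cdot(\upnu(\wtd\yk)\cdot w)$, the cross terms $\upnu_i *_i(\sigma *_j w) - \sigma *_j(\upnu_i *_i w)$ with $i \neq j$ vanish by commutativity of residue actions at distinct marked points, while the diagonal terms are handled by the commutator formula of Prop. \ref{commutator12345}, yielding $\sum_i (\Lbf_{\upnu_i}\sigma) *_i w$. For $\yk(\sigma\cdot w) - \sigma\cdot \yk w$, a direct residue computation shows that $\yk$ acts only on the $(q_\blt, \tau_\blt)$-dependence of $u_i$, producing $\sum_i \Res_{\eta_i=0} Y_i\bigl(\sum_j g_j \partial_{\tau_j}u_i + \sum_r f_r \partial_{q_r}u_i,\;\eta_i\bigr) w\, d\eta_i$.

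Summing both contributions and comparing with the residue obtained from the local formula \eqref{eqb44} for $\mc U_\varrho(\eta_i)\mc L_{\wtd\yk}\sigma$, the missing pieces $h_i \partial_{\eta_i} u_i + \partial_{\eta_i} h_i \cdot u_i$ are supplied modulo a total-derivative term $\partial_{\eta_i}(h_i u_i)\, d\eta_i$ in the first argument of $Y_i$, and the latter is absorbed using Lem. \ref{commutator777}, which states that residue actions of sections of the form $((\partial_{\eta_i} + L(-1))v)\, d\eta_i$ on $w$ vanish. The main technical obstacle is the careful bookkeeping of Virasoro mode indices and signs: the commutator formula produces $Y_i(\partial_{\eta_i}^n(h_i \cbf))_n = \partial_{\eta_i}^n h_i \cdot Y_i(\cbf)_n u_i$ which under \eqref{eqb93} give Virasoro modes at indices shifted by one relative to the $L(k-1)$ factors in \eqref{eqb44}, and the minus sign preceding the $\upnu(\wtd\yk)$-bracket must be tracked, so one has to realign the two sums term by term. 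Once $(\star)$ is verified, compatibility of $\nabla_\yk$ with restriction to open subsets ensures that the induced map on $\scr J_\fx^\pre$ sheafifies to a map on $\scr J_\fx$, and passing to the quotient yields the required differential operator on $\scr T_\fx(\Wbb)|_{\MB - \Delta}$.
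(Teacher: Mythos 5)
Your proposal is correct and follows essentially the same route as the paper: the master commutator identity $(\star)$ is precisely the paper's Eq. \eqref{eqb34}, established by combining Prop. \ref{commutator12345} for the $\upnu(\wtd\yk)$-bracket, a direct computation for the $\yk$-bracket, the local Lie-derivative formula \eqref{eqb44}, and Lem. \ref{commutator777} to absorb the total derivative $(\partial_{\eta_i}+L(-1))(h_iu_i)\,d\eta_i$. The only cosmetic difference is that you keep the $\partial_{q_r}$-components of the lift while working over $\MB-\Delta$, whereas the paper first reduces to $R=0$; since the family is smooth there, these are ordinary coordinates and the two reductions coincide.
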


It follows that in the setting of Def. \ref{lbb13}, $\nabla$ descends to a connection on $\scr T_\fx(\Wbb)|_{\mc B-\Delta}$.

\begin{proof}
Since $\fx$ is smooth outside $\Delta$, by replacing $\wtd\fx$ with $\fk X|_{\MB-\Delta}$, it suffices to assume that $R=0$ and hence $\fx=\wtd\fx$. (In particular, $\Delta=\emptyset$.) So there are no $q_\blt$-variables. Assume the identification \eqref{eqb33}. Choose an open subset $V\subset \MB$, a section $w\in \Wbb\otimes \MO(V)$ and 
\begin{align*}
    v\in \pi_*\big(\SV_\fx\otimes \omega_{\MC/\MB}(\blt S_\fx)\big)(V)=H^0\big(\MC_V,\SV_\fx\otimes \omega_{\MC/\MB}(\blt S_\fx)\big).
\end{align*}
Let us show that $[\nabla_\yk,v]=\ML_{\wtd \yk}v$ when acting on $w$, i.e.,
\begin{align}\label{eqb34}
    \nabla_\yk(v\cdot w)=v\cdot \nabla_\yk w+\ML_{\wtd \yk}v\cdot w.
\end{align}
Then, since $\mc L_{\wtd\yk}v$ belongs to $H^0\big(\MC_V,\SV_\fx\otimes \omega_{\MC/\MB}(\blt S_\fx)\big)$ (cf. Rem. \ref{lieder1}), $\nabla_\yk$ preserves the denominator sheaf ${\pi_*\big(\SV_{\fx}\otimes \omega_{\MC/\MB}(\blt S_\fx)\big)\cdot \SW_\fx(\Wbb)}$ in the definition of $\scr T_\fx(\Wbb)$ (cf. Def. \ref{lbb14}). Then the theorem clearly follows.


It suffices to prove the theorem locally with respect to $\mc B$. Therefore, we assume that $\mc B$ admits coordinates $\tau_\blt$, and that $\mc B=V$. Make the following identifications via $\mc U_\varrho(\eta_i)$:
\begin{gather*}
   \SV_\fx\vert_{U_i}= \Vbb\otimes_\Cbb \MO_{U_i}\qquad
   \SV_{\fx}\otimes \omega_{\MC/\MB}\vert_{U_i}= \Vbb\otimes_\Cbb \MO_{U_i} d\eta_i\qquad 
\end{gather*}  
So $v\vert_{U_i}=u_id\eta_i$ for some $u_i\in \Vbb\otimes_\Cbb \MO_{U_i}(\blt S_\fx)(U_i)$. By \eqref{eqb30}, \eqref{eqb31}, and $R=0$, we have
\begin{align*}
\yk=\sum_{j=1}^m g_j\partial_{\tau_j}\qquad \wtd\yk|_{U_i}=h_i\partial_{\eta_i}+\sum_{j=1}^m g_j\partial_{\tau_j}
\end{align*}
where we have abbreviated $g_j(\tau_\blt)$ to $g_j$ and $h_i(\eta_i,\tau_\blt)$ to $h_i$. Then by \eqref{connectiondef1},
\begin{align}
&\nabla_\yk(v*_i w)=\nabla_\yk(u_id\eta_i*_i w)=\sum_j g_j \partial_{\tau_j} (u_id\eta_i*_i w)-\upnu(\wtd \yk)\cdot ((u_id\eta_i)*_i w)\nonumber\\
    \xlongequal{\eqref{commutator666}}&\sum_j g_j\big((\partial_{\tau_j}u_i)d\eta_i *_i w\big)+\sum_jg_j\big((u_i d\eta_i) *_i (\partial_{\tau_j}w)\big)\nonumber\\
    &- u_id\eta_i*_i (\upnu(\wtd \yk)\cdot w)-(\Lbf_{\upnu(\wtd \yk)}u_id\eta_i)*_i w\nonumber\\
    =&v*_i \nabla_\yk w+\sum_j g_j\big((\partial_{\tau_j}u_i)d\eta_i *_i w\big)-(\Lbf_{\upnu(\wtd \yk)}u_id\eta_i)*_i w\label{eqb35}
\end{align}
where $v*_i \nabla_\yk w=(u_id\eta_i)*_i\eqref{connectiondef1}$ equals the sum of the second and the third terms of the second last expression of \eqref{eqb35}. We claim that
\begin{align}\label{eqb36}
(\ML_{\wtd \yk}u_id\eta_i)*_i w=\sum_j g_j\big((\partial_{\tau_j}u_i)d\eta_i *_i w\big)-(\Lbf_{\upnu(\wtd \yk)}u_id\eta_i)*_i w.
\end{align}
where $\ML_{\wtd \yk}u_id\eta_i$ is the Lie derivative of $u_id\eta_i$, cf. Rem. \ref{lbb7}. If \eqref{eqb36} can be proved, then \eqref{eqb34} follows immediately by summing up \eqref{eqb35} over all $1\leq i\leq N$.

By \eqref{eqb32}, we have $\upnu(\wtd\yk)|_{U_i}=h_i\cbf d\eta_i$. Thus, by \eqref{commutator5},
\begin{align*}
    \Lbf_{\upnu(\wtd\yk)}u_id\eta_i=\sum_{k\geq 0}\frac{1}{k!}\partial_{\eta_i}^k h_i\cdot L(k-1)\cdot u_id\eta_i.
\end{align*}
Now \eqref{eqb44} reads
\begin{align*}
\begin{aligned}
    \ML_{\wtd \yk}(u_id\eta_i) =h_i\partial_{\eta_i} u_i d\eta_i+\sum_{j=1}^m g_j\partial_{\tau_j} u_id\eta_i-\sum_{k\geq 1}\frac{1}{k!}\partial_{\eta_i}^k h_i\cdot L(k-1)u_id\eta_i+\partial_{\eta_i}h_i\cdot u_id\eta_i
\end{aligned}
\end{align*}
So $\Lbf_{\upnu(\wtd\yk)}u_id\eta_i+\ML_{\wtd \yk}u_id\eta_i$ equals
\begin{align*}
\big(L(-1)+\partial_{\eta_i}\big)(h_iu)d\eta_i+\sum_{j=1}^m g_j\partial_{\tau_j} u_id\eta_i
\end{align*}
This, together with Lem. \ref{commutator777}, proves \eqref{eqb36}.
\end{proof}

\begin{rem}\label{threeques}
Assume that $R=0$. Suppose that  $\MB$ is Stein (so that the lift $\wtd\yk$ always exists) and have a set of coordinates $\tau_\blt$, the differential operator $\nabla_\yk$ relies on the lift $\wtd\yk$ and the local coordinates $\eta_\blt$. Thus, the connection $\nabla$ on $\scr T_\fx(\Wbb)$ defined by Def. \ref{lbb13}  relies not only on the lifts and on $\eta_\blt$, but also on the coordinates $\tau_\blt$ (since $\nabla$ relies on the choice of free generators $\partial_{\tau_1},\dots,\partial_{\tau_m}$ of $\Theta_\MB$).
\end{rem}

\subsection{Dependence of $\nabla_\yk$ on the lift $\wtd{\yk}$}
\label{lift1}

In this section, we assume that $R=0$. Hence $\fx=\wtd\fx$ and $\Delta=\emptyset$. We assume Asmp. \ref{lbb8}. (In particular, the local coordinates $\eta_\blt$ of $\fx$ are chosen.)

Suppose that $\yk\in H^0(\MB,\Theta_\MB)$ has two lifts $\wtd\yk,\wtd\yk'\in H^0(\MC,\Theta_\MC(\blt\SX))$. Then $\wtd\obf:=\wtd\yk'-\wtd\yk'$ is a lift of the zero vector field $0$.  Since $\wtd\obf$ is tangent to $d\pi$, we have
\begin{align*}
\wtd\obf\in H^0(\MC,\Theta_{\MC/\MB}(\blt\SX))
\end{align*}
(This also follows from \eqref{eqb29} if $\MB$ is Stein. Hence it holds for general $\MB$ by gluing local data.) Conversely, if $\wtd\yk$ is a lift of $\yk$, and if $\wtd\obf\in H^0(\MC,\Theta_{\MC/\MB}(\blt\SX))$, then $\wtd\yk':=\wtd\yk+\wtd\obf$ is clearly also a lift of $\yk$. Therefore, to study the dependence of $\nabla_\yk$ on the lift, it suffices to study the action of $H^0(\MC,\Theta_{\MC/\MB}(\blt\SX))$ on $\scr T_\fx(\Wbb)$.

\begin{df}
Choose an open subset $U\subset \MC$ and  $\eta\in\mc O(U)$ univalent on each fiber. If $f\in \MO(U)$ satisfies that $\partial_\eta f$ is nowhere zero, then the \textbf{Schwarzian derivative of $f$ over $\eta$} is defined to be 
\begin{align}
\Sbf_\eta f=\frac{\partial_\eta^3 f}{\partial_\eta f}-\frac{3}{2}\Big(\frac{\partial_\eta^2 f}{\partial_\eta f}\Big)^2
\end{align}
where the partial derivative $\partial_\eta$ is defined with respect to $(\eta,\pi)$, i.e., it is annihilated by $d\pi$ and restricts to $d/d\eta$ on each fiber. 
\end{df}

\begin{df}
    An open cover $(U_\alpha,\eta_\alpha)_{\alpha\in \FA}$ of $\MC$, where each open set $U_\alpha$ is equipped with a holomorphic function $\eta_\alpha\in \MO(U_\alpha)$ univalent on each fiber, is called a \textbf{projective chart} if for any $\alpha,\beta\in \FA$, we have $\Sbf_{\eta_\beta}\eta_\alpha=0$ on $U_\alpha\cap U_\beta$. Two projective charts are called equivalent if their union is a projective chart. A maximal projective atla (equivalently, an  equivalence class of projective charts) is called a \textbf{projective structure}.
\end{df}

\begin{thm}\label{lbb47}
Suppose that $\MB$ is Stein. Then $\fx$ has at least one projective structure.
\end{thm}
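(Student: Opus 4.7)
The plan is to realize relative projective structures on $\fx$ as global sections of a sheaf-theoretic torsor and use the Stein hypothesis on $\MB$ to show this torsor is trivial.

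First, I would define the sheaf $\mc P$ on $\MC$: for each open $U \subset \MC$, let $\mc P(U)$ be the set of equivalence classes of $\eta \in \MO(U)$ univalent on each (connected component of each) fiber, where $\eta \sim \mu$ iff $\Sbf_\mu \eta = 0$ on $U$ (equivalently, $\eta$ and $\mu$ are related by a fiberwise Möbius transformation). A projective structure on $\fx$ is the same as a global section of $\mc P$. Using the classical cocycle identity for the Schwarzian derivative and the transformation law $\Sbf_\mu \eta \cdot (d\mu)^{\otimes 2} \in \omega_{\MC/\MB}^{\otimes 2}$, I would verify that $\mc P$ is a torsor over the abelian sheaf $\omega_{\MC/\MB}^{\otimes 2}$: the ``difference'' of two local projective charts $\eta, \mu$ is the relative quadratic differential $\Sbf_\mu\eta \cdot (d\mu)^{\otimes 2}$, and any local quadratic differential can be realized as such a Schwarzian by solving the second-order Schwarzian ODE fiberwise with holomorphic dependence on the base. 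Since every point of $\MC$ admits a local coordinate univalent on its fiber (by Asmp.\ \ref{lbb1} and the local description of $\fx$ near the nodes $W_i$), $\mc P$ is locally non-empty, and its isomorphism class gives an obstruction $[\mc P] \in H^1(\MC, \omega_{\MC/\MB}^{\otimes 2})$ whose vanishing is equivalent to the existence of a projective structure.

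Next, I would reduce this $H^1$ via the Leray spectral sequence. Since $\pi$ is proper and $\omega_{\MC/\MB}^{\otimes 2}$ is a line bundle, Grauert's direct image theorem gives that $R^q\pi_*\omega_{\MC/\MB}^{\otimes 2}$ is $\MO_\MB$-coherent for each $q$; combined with $\MB$ Stein and Cartan's Theorem B, this forces $E_2^{p,q} = H^p(\MB, R^q\pi_*\omega_{\MC/\MB}^{\otimes 2}) = 0$ for all $p \geq 1$. Since fibers of $\pi$ are $1$-dimensional, $R^q\pi_* = 0$ for $q \geq 2$, so the Leray spectral sequence collapses at $E_2$ and yields
\[
H^1(\MC, \omega_{\MC/\MB}^{\otimes 2}) \;\simeq\; H^0(\MB, R^1\pi_*\omega_{\MC/\MB}^{\otimes 2}).
\]

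Finally, I would show $[\mc P]$ vanishes as a section of $R^1\pi_*\omega_{\MC/\MB}^{\otimes 2}$. Its restriction to each fiber $\MC_b$ is the classical obstruction to the existence of a projective structure on the compact (possibly nodal) Riemann surface $\MC_b$, which vanishes by uniformization (treating each irreducible component separately for $b \in \Delta$). Since the arithmetic genus of $\MC_b$ is locally constant in $b$, fiberwise Riemann--Roch gives constant $\chi(\omega_{\MC_b}^{\otimes 2})$; combined with the vanishing of $R^2\pi_*$, Grauert's base-change theorem ensures that $R^1\pi_*\omega_{\MC/\MB}^{\otimes 2}$ is locally free and that the base-change map $(R^1\pi_*\omega_{\MC/\MB}^{\otimes 2})|_b \to H^1(\MC_b, \omega_{\MC_b}^{\otimes 2})$ is an isomorphism for every $b$. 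A section of a locally free sheaf that vanishes at every fiber is identically zero, so $[\mc P] = 0$ and a projective structure exists. The main obstacle I anticipate is the careful treatment of $\mc P$ across the discriminant locus $\Delta$: one must either define and verify the torsor/base-change arguments at the nodes using the explicit sewing coordinates $(\xi_i, \varpi_i)$ of Asmp.\ \ref{lbb1} on each of the two branches, or first produce a projective structure on $\fx_{\MB - \Delta}$ and then extend across $\Delta$ by exploiting the standard model of the nodes to patch projective charts on $W_i' \cup W_i''$ together.
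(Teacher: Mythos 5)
Your argument is correct and is essentially the same obstruction-theoretic proof as the one the paper defers to (\cite[Thm.~B.2]{Gui-sewingconvergence}): realize local projective coordinates as a torsor under $\omega_{\MC/\MB}^{\otimes 2}$, identify $H^1(\MC,\omega_{\MC/\MB}^{\otimes 2})$ with $H^0(\MB,R^1\pi_*\omega_{\MC/\MB}^{\otimes 2})$ via Grauert coherence, Cartan's Theorem B and Leray over the Stein base, and kill the class by fiberwise uniformization together with base change. Note that Thm.~\ref{lbb47} is stated under the section's standing assumption $R=0$, so $\fx=\wtd\fx$ is smooth and the concerns in your last paragraph about the discriminant locus and the nodes do not arise.
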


\begin{proof}
See \cite[Thm. B.2]{Gui-sewingconvergence}.
\end{proof}

\begin{df}
Let $\mbf P$ be a projective chart on $\fx$. Choose an open subset $U\subset \MC$ and $\eta\in \MO(U)$ univalent on each fiber. One can check that for each $(V,\mu),(W,\nu)\in\mbf P$ we have $\Sbf_\eta\mu=\Sbf_\eta\nu$ on $U\cap V\cap W$. (See \cite[Rem. 8.3]{Gui-sewingconvergence}.) Thus there is a unique \index{SP@$\Sbf_\eta\mbf P$}
    \begin{align*}
        \Sbf_\eta \mbf P \in \MO(U)
    \end{align*}
such that $\Sbf_\eta\mbf P|_{U\cap V}=\Sbf_\eta\mu|_{U\cap V}$ for each $(V,\mu)\in\mbf P$.  
\end{df}

Recall that $c$ is the central charge of $\Vbb$.

\begin{thm}\label{lift2}
    Suppose that $\fx$ has a projective structure $\mbf P$. Choose an element $\wtd\obf\in H^0\big(\MB,\pi_*\Theta_{\MC/\MB}(\blt S_\fx)\big)$. Let $a_i\in H^0(U_i,\mc O_{\MC}(\blt\SX))$ such that
\begin{align}\label{localexpression1}
    \wtd\obf|_{U_i}=a_i\partial_{\eta_i}
\end{align}
Then the action of $\upnu(\wtd\obf)$ on $\scr W_\fx(\Wbb)(\MB)/\SJ^\pre_\fx(\MB)$ equals the multiplication by  $\#(\wtd\obf)\in\mc O(\MB)$ where
    \begin{align}\label{eqb79}
       \#(\wtd\obf):= \frac{c}{12} \sum_{i=1}^N \Res_{\eta_i=0} \Sbf_{\eta_i}\mbf P \cdot a_id\eta_i
    \end{align}
\end{thm}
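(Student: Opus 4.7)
The plan is to use the projective structure $\mbf P$ to construct a global meromorphic section $\widehat\sigma \in H^0(\MC, \scr V_\fx \otimes \omega_{\MC/\MB}(\blt\SX))$ whose $\eta_i$-trivialization on each $U_i$ differs from $\upnu(\wtd\obf)|_{U_i}$ by an explicit $\idt$-valued term whose residues at $\sgm_i(\MB)$ reproduce $\#(\wtd\obf)$. Because by the definition of $\scr T_\fx(\Wbb)$ any element of $H^0(\MC, \scr V_\fx \otimes \omega_{\MC/\MB}(\blt\SX))$ acts as zero on $\scr W_\fx(\Wbb)(\MB)/\SJ^\pre_\fx(\MB)$, subtracting off this anomaly will yield the claim.

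To build $\widehat\sigma$, for each chart $(V, \mu) \in \mbf P$ I would write $\wtd\obf|_V = b_V \partial_\mu$ and set $\widehat\sigma|_V := b_V\, \mc U_\varrho(\mu)^{-1}(\cbf) \otimes d\mu$. Gluing on an overlap $V \cap V'$ with $\mu, \mu' \in \mbf P$ rests on the Schwarzian transformation law
\begin{align*}
\mc U(\alpha)\, \cbf \;=\; \alpha'(0)^2\, \cbf \;+\; \tfrac{c}{12}\{\alpha, 0\}\, \idt,
\end{align*}
which follows from $\mc U(\alpha) = \alpha'(0)^{L(0)} \exp(\sum_{n>0} c_n L(n))$ and the identities $L(0)\cbf = 2\cbf$, $L(1)\cbf = 0$, $L(2)\cbf = \tfrac{c}{2}\idt$, $L(n)\cbf = 0$ for $n \geq 3$. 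Since $\mu, \mu' \in \mbf P$ forces $\Sbf_{\mu'}\mu = 0$, the $\idt$-term drops out on overlaps, while the remaining derivative factors cancel cleanly against $d\mu' = (\partial_{\mu'}\mu)^{-1} d\mu$ and $b_{V'} = b_V (\partial_{\mu'}\mu)^{-1}$. The resulting global section $\widehat\sigma$ inherits poles only at $\SX$ from $\wtd\obf$, so $\widehat\sigma \cdot w \in \SJ^\pre_\fx(\MB)$.

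To compare $\widehat\sigma$ with $\upnu(\wtd\obf)$ on $U_i$, I would cover $U_i$ by charts $(V, \mu_i) \in \mbf P$ and, on $V \cap U_i$, convert $\widehat\sigma|_V = b_V \mc U_\varrho(\mu_i)^{-1}(\cbf) \otimes d\mu_i$ to the $\eta_i$-trivialization using the same transformation law, together with $b_V = a_i (\partial_{\eta_i}\mu_i)$, $d\mu_i = (\partial_{\eta_i}\mu_i)\, d\eta_i$, and the classical chain-rule identity $(\partial_{\eta_i}\mu_i)^2\, \Sbf_{\mu_i}\eta_i = -\Sbf_{\eta_i}\mu_i$. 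The derivative factors collapse to produce
\begin{align*}
\mc U_\varrho(\eta_i)\, \widehat\sigma|_{U_i} \;=\; a_i\, \cbf\, d\eta_i \;-\; \tfrac{c}{12}\, a_i\, \Sbf_{\eta_i}\mbf P \cdot \idt\, d\eta_i,
\end{align*}
the right-hand side being independent of the chart $\mu_i \in \mbf P$ chosen near $\sgm_i$ since Möbius transitions within $\mbf P$ preserve the Schwarzian. Recognizing $a_i \cbf d\eta_i$ as $\mc U_\varrho(\eta_i)\upnu(\wtd\obf)|_{U_i}$ and using $Y_i(\idt, \eta_i) = \id$, the residue action at each marked point summed over $i$ gives $\upnu(\wtd\obf) \cdot w = \widehat\sigma \cdot w + \#(\wtd\obf) \cdot w$; reducing modulo $\SJ^\pre_\fx(\MB)$ completes the proof. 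The main technical hurdle is bookkeeping the Schwarzian anomaly consistently through the two coordinate changes and through the reparametrization $b_V = a_i (\partial_{\eta_i}\mu_i)$ of the vector field; once in place, the signs reconcile via the Schwarzian chain rule and the calculation is mechanical.
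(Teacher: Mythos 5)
Your proposal is correct, and it follows essentially the same route as the paper: the paper's proof of this theorem is only a citation to \cite[Prop.\ 9.2]{Gui-sewingconvergence}, and that argument is precisely the one you reconstruct — glue $b_V\,\mc U_\varrho(\mu)^{-1}(\cbf)\otimes d\mu$ over the projective charts into a global section of $\scr V_\fx\otimes\omega_{\MC/\MB}(\blt\SX)$ (which kills the class mod $\SJ^\pre_\fx(\MB)$), then compare with $\upnu(\wtd\obf)$ in the $\eta_i$-trivialization, where the Schwarzian anomaly $\tfrac{c}{12}\,a_i\,\Sbf_{\eta_i}\mbf P\cdot\idt\,d\eta_i$ produces $\#(\wtd\obf)$ via $Y_i(\idt,\eta_i)=\id$. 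Your transformation law $\mc U(\alpha)\cbf=\alpha'(0)^2\cbf+\tfrac{c}{12}\{\alpha,0\}\idt$ and the sign bookkeeping through $(\partial_{\eta_i}\mu_i)^2\Sbf_{\mu_i}\eta_i=-\Sbf_{\eta_i}\mu_i$ check out, so your write-up in fact supplies the details the paper leaves to the reference.
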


Note that $\upnu(\wtd\obf)\in H^0(U_1\cup \cdots \cup U_N,\SV_\fx\otimes \omega_{\MC/\MB}(\blt S_\fx))$ is defined by \eqref{eqb32}, i.e., $\mc U_\varrho(\eta_i)\upnu(\wtd\obf)|_{U_i}=a_i\cbf d\eta_i$. So it relies on $\eta_\blt$.

\begin{proof}
When $\Wbb$ is a tensor product of $\Vbb$-modules, this theorem was proved in \cite[Prop. 9.2]{Gui-sewingconvergence}. The same argument applies to the current general case.
\end{proof}

\begin{df}
We say that the local coordinates $\eta_1,\cdots,\eta_N$ of $\fx$ \textbf{admit a projective structure} of $\fx$ if there is a projective structure containing $(U_1,\eta_1),\cdots,(U_N,\eta_N)$.
\end{df}

\begin{co}\label{lbb15}
Assume that  $\eta_1,\cdots,\eta_N$ admit a projective structure of $\fx$. Then for each $\yk\in H^0(\mc B,\Theta_\MB)$ having a lift, the differential operator $\nabla_\yk$ is independent of the choice of lifts when acting of $\scr T_\fx(\Wbb)$. 
\end{co}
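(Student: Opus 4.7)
The plan is to reduce the statement to the vanishing of the scalar $\#(\wtd\obf)$ from Thm.~\ref{lift2}, exploiting the fact that the Schwarzian derivative of a coordinate against itself is zero.

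First, I would set up the comparison of two lifts. Suppose $\wtd\yk, \wtd\yk' \in H^0(\MC, \Theta_\MC(\blt\SX))$ are both lifts of $\yk$. Then their difference $\wtd\obf := \wtd\yk' - \wtd\yk$ lies in the kernel of $d\pi$, i.e., $\wtd\obf \in H^0(\MC, \Theta_{\MC/\MB}(\blt \SX))$. Write $\wtd\yk|_{U_i} = h_i\partial_{\eta_i} + \sum_j g_j \partial_{\tau_j}$ and $\wtd\yk'|_{U_i} = h_i'\partial_{\eta_i} + \sum_j g_j \partial_{\tau_j}$ in local coordinates (they share the same $\yk$-part). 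Then $\wtd\obf|_{U_i} = (h_i' - h_i) \partial_{\eta_i}$, and by the defining formula \eqref{eqb32}, $\upnu$ is additive in the lift, so
\begin{align*}
\upnu(\wtd\yk') - \upnu(\wtd\yk) = \upnu(\wtd\obf).
\end{align*}
Comparing the two definitions of $\nabla_\yk$ given by \eqref{connectiondef1}, I obtain that on $\scr W_\fx(\Wbb)$,
\begin{align*}
\nabla_\yk^{\wtd\yk} - \nabla_\yk^{\wtd\yk'} = \upnu(\wtd\obf)\cdot (-),
\end{align*}
the residue action. Thus, the corollary amounts to showing that $\upnu(\wtd\obf)$ acts as zero on $\scr T_\fx(\Wbb)$.

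Next, I would invoke Thm.~\ref{lift2} with the projective structure $\mbf P$ containing each $(U_i, \eta_i)$. By hypothesis, such a $\mbf P$ exists. Writing $\wtd\obf|_{U_i} = a_i \partial_{\eta_i}$ with $a_i = h_i' - h_i$, the theorem yields that the action of $\upnu(\wtd\obf)$ on $\scr W_\fx(\Wbb)(\MB)/\SJ_\fx^\pre(\MB)$ is the multiplication operator by
\begin{align*}
\#(\wtd\obf) = \frac{c}{12} \sum_{i=1}^N \Res_{\eta_i = 0} \Sbf_{\eta_i}\mbf P \cdot a_i d\eta_i.
\end{align*}
Since $(U_i, \eta_i) \in \mbf P$, by the very definition of $\Sbf_{\eta_i}\mbf P$ we have $\Sbf_{\eta_i}\mbf P|_{U_i} = \Sbf_{\eta_i}\eta_i$, and a direct computation shows $\Sbf_{\eta_i}\eta_i = 0$ (since $\partial_{\eta_i}\eta_i = 1$ and all higher derivatives vanish). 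Hence every residue vanishes, so $\#(\wtd\obf) = 0$.

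Finally, I would verify that this suffices at the sheaf level. For any open $V\subset \MB$, restricting gives $\fx_V$, on which $(U_i \cap \MC_V, \eta_i|_{U_i \cap \MC_V})$ still defines a projective chart belonging to the restricted projective structure, so the same argument shows $\upnu(\wtd\obf)$ acts by zero on $\scr W_{\fx_V}(\Wbb)(V)/\SJ^\pre_{\fx_V}(V)$. Since $\scr T_\fx(\Wbb)$ is the sheafification of the presheaf $V \mapsto \scr W_\fx(\Wbb)(V)/\SJ^\pre_\fx(V)$, and the operator vanishes on each presheaf section compatibly with restriction, it vanishes on $\scr T_\fx(\Wbb)$. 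The main (and essentially only) conceptual step is recognizing that the assumption on $\eta_\blt$ is precisely what forces the Schwarzian obstruction in \eqref{eqb79} to vanish; everything else is bookkeeping.
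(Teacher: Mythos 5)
Your proposal is correct and follows exactly the route the paper intends: the paper's proof of Cor.~\ref{lbb15} is just the remark that it follows from Thm.~\ref{lift2} together with the observation at the start of Sec.~\ref{lift1} that two lifts differ by an element $\wtd\obf$ of $H^0(\MC,\Theta_{\MC/\MB}(\blt\SX))$, and you have filled in precisely those details, including the key point that $(U_i,\eta_i)\in\mbf P$ forces $\Sbf_{\eta_i}\mbf P=\Sbf_{\eta_i}\eta_i=0$ and hence $\#(\wtd\obf)=0$ in \eqref{eqb79}. The sheaf-level bookkeeping at the end is also handled correctly.
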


\begin{proof}
This is clear from Thm. \ref{lift2} and the discussion at the beginning of this section.
\end{proof}

\begin{co}
Assume that  $\eta_1,\cdots,\eta_N$ admit a projective structure of $\fx$. Then there is a connection $\nabla$ on $\scr T_\fx(\Wbb)$ such that for each open $V\subset\mc B$, and for each $\yk\in H^0(V,\Theta_\MB)$ with lift $\wtd\yk\in H^0(\MC_V,\Theta_\MC(\blt\SX))$, $\nabla_\yk$ is the differential operator on $\scr T_\fx(\Wbb)|_V$ defined as in Def. \ref{lbb16}.
\end{co}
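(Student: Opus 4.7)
The plan is to build the connection by a local Stein-cover construction and then patch. First, cover $\mc B$ by Stein open subsets $(V_\alpha)_{\alpha \in A}$. For any open $V \subset \mc B$, any $\yk \in H^0(V, \Theta_\MB)$, and any $\alpha$, the restriction $\yk|_{V \cap V_\alpha}$ admits a lift $\wtd\yk_\alpha \in H^0(\MC_{V \cap V_\alpha}, \Theta_\MC(\blt \SX))$ by Rem.~\ref{lbb12}, since $V \cap V_\alpha$ is Stein. Using this lift, Def.~\ref{lbb16} produces a differential operator on $\scr W_{\fx_{V \cap V_\alpha}}(\Wbb)$; by Thm.~\ref{independent1} it descends to a differential operator $\nabla^\alpha_\yk$ on $\scr T_{\fx_{V \cap V_\alpha}}(\Wbb)$. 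Since $\eta_\blt$ admits a projective structure on $\fx$, Cor.~\ref{lbb15} ensures that $\nabla^\alpha_\yk$ does not depend on the choice of $\wtd\yk_\alpha$.

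Next I would check that the $\nabla^\alpha_\yk$ glue. On the overlap $V \cap V_\alpha \cap V_\beta$, both $\wtd\yk_\alpha$ and $\wtd\yk_\beta$ restrict to lifts of $\yk|_{V \cap V_\alpha \cap V_\beta}$, so Cor.~\ref{lbb15} gives $\nabla^\alpha_\yk|_{V \cap V_\alpha \cap V_\beta} = \nabla^\beta_\yk|_{V \cap V_\alpha \cap V_\beta}$ as morphisms of sheaves on $V \cap V_\alpha \cap V_\beta$. Since sheaf morphisms glue, these local operators assemble into a single differential operator $\nabla_\yk$ on $\scr T_{\fx_V}(\Wbb)$, and compatibility with further restriction ($\nabla_\yk|_W = \nabla_{\yk|_W}$ for open $W \subset V$) follows by the same lift-independence argument.

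Finally, I would verify the connection axioms and the agreement with Def.~\ref{lbb16}. The Leibniz rule follows directly from \eqref{connectiondef1}: on $\scr W_\fx(\Wbb) = \Wbb \otimes_\Cbb \mc O_V$ one has $\yk(fw) = (\yk f)w + f \yk(w)$, and the residue action of $\upnu(\wtd\yk)$ is $\mc O_\MB$-linear in the module variable; hence the Leibniz identity passes to the quotient $\scr T_\fx(\Wbb)$. For $\mc O_V$-linearity in $\yk$, if $\wtd\yk$ lifts $\yk$ then $(\pi^* f)\wtd\yk$ lifts $f\yk$, and the local formula \eqref{eqb32} gives $\upnu((\pi^* f)\wtd\yk) = (\pi^* f)\,\upnu(\wtd\yk)$, whose residue action on $w$ is $f$ times that of $\upnu(\wtd\yk)$; combined with $(f\yk)(w) = f\yk(w)$ this yields $\nabla_{f\yk} = f\nabla_\yk$. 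For the identification with Def.~\ref{lbb16}: when $\yk \in H^0(V,\Theta_\MB)$ has a global lift $\wtd\yk$ on $V$, the restriction $\wtd\yk|_{\MC_{V \cap V_\alpha}}$ is a legitimate choice of $\wtd\yk_\alpha$, so the glued $\nabla_\yk$ agrees on every $V \cap V_\alpha$ with the operator of Def.~\ref{lbb16} attached to $\wtd\yk$, and hence on all of $V$.

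The only substantive input is Cor.~\ref{lbb15}, which in turn rests on the Schwarzian calculation of Thm.~\ref{lift2} and on the projective-structure hypothesis; everything else is routine sheaf-theoretic patching. I therefore do not anticipate any genuine obstacle beyond careful bookkeeping of restrictions.
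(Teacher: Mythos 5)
Your proof is correct and follows essentially the same route as the paper's: define the operator locally on a Stein cover using the existence of lifts (Rem.~\ref{lbb12}), invoke Cor.~\ref{lbb15} for independence of the lift, and glue. The only cosmetic difference is that the paper passes through Def.~\ref{lbb13} (defining $\nabla$ on the free generators $\partial_{\tau_\blt}$ and extending), whereas you work with arbitrary lifts directly and verify the Leibniz rule and $\mathcal O$-linearity in $\mathfrak y$ by hand via $\upnu((\pi^*f)\wtd{\mathfrak y})=(\pi^*f)\upnu(\wtd{\mathfrak y})$ — both verifications are correct and routine.
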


\begin{proof}
Choose any Stein open set $W\subset\MB$ admitting a set of coordinates $\tau_\blt$. We can use Def. \ref{lbb13} and Thm. \ref{independent1} to define a connection $\nabla^W$ on $\scr T_\fx(\Wbb)|_W$ (by first defining the differential operator $\nabla^W_{\partial_{\tau_i}}$ for each $i$ using a lift $\wtd{\partial_{\tau_i}}$). Now, if $\yk=\sum_i g_i\partial_{\tau_i}\in H^0(W,\Theta_\MB)$ (where $g_i\in\mc O(W)$), then  $\nabla^W_\yk$ is clearly equal to the differential operator defined by the lift
$\sum_i g_i\wtd{\partial_{\tau_i}}$. Therefore, by Cor. \ref{lbb15}, $\nabla^W_\yk$ is also equal to the differential operator $\nabla_\yk$ defined by any other lift $\wtd\yk$. Thus, if for any other Stein open set $W'$ with coordinates we define $\nabla^{W'}$ in a similar way, then $\nabla^W=\nabla^{W'}$ on $W\cap W'$. Therefore, by gluing these locally defined connections, we get a (global) connection on $\scr T_\fx(\Wbb)$ satisfying the desired requirement.
\end{proof}

\subsection{Curvature and projective flatness}
Assume again that $R=0$. Assume  Asmp. \ref{lbb8}.  Assume that $\mc B=\wtd\MB$ has a set of coordinates $\tau_\blt=(\tau_1,\dots,\tau_m)$. 

Our goal in this section is to calculate the curvature of the connections constructed in Sec. \ref{lbb17}. Choose $\yk,\zk\in H^0(\mc B,\Theta_\MB)$ admitting lifts $\wtd\yk,\wtd\zk$. Then $\nabla_\yk$ and $\nabla_\zk$ are defined in terms of these lifts and $\eta_\blt$. Then $[\wtd\yk,\wtd\zk]$ gives a lift of $[\yk,\zk]$ (cf. \eqref{eqb49}). We use this lift (together with $\eta_\blt$) to define $\nabla_{[\yk,\zk]}$ as in  \eqref{connectiondef1}. Our goal is to calculate the \textbf{curvature}
\begin{align*}
R(\yk,\zk)=\nabla_\yk \nabla_\zk-\nabla_\zk \nabla_\yk -\nabla_{[\yk,\zk]}
\end{align*}
on $\scr T_\fx(\Wbb)$ and on $\scr T^*_\fx(\Wbb)$. (Note that $\nabla_\yk$ on $\scr T_\fx^*(\Wbb)$ is defined by the dual differential operator of $\nabla_\yk$ on $\scr T_\fx(\Wbb)$, cf. Def. \ref{lbb18}.)

On $U_i$ we can write the lifts in the form of \eqref{eqb31}, namely
\begin{subequations}\label{curvature2}
\begin{gather}
    \wtd \yk|_{U_i}=h_i(\eta_i,\tau_\blt)\partial_{\eta_i}+\sum_j g_j(\tau_\blt) \partial_{\tau_j}\\
    \wtd \zk|_{U_i}=k_i(\eta_i,\tau_\blt)\partial_{\eta_i}+\sum_j l_j(\tau_\blt)\partial_{\tau_j}
\end{gather}
\end{subequations}
since there are no variables $q_1,\dots,q_R$. Thus $\yk=\sum_j g_j(\tau_\blt)\partial_{\tau_j}$ and $\zk=\sum_j l_j(\tau_\blt)\partial_{\tau_j}$.

The following formula for $R(\yk,\zk)$ is essentially Prop. 4.2.2-(2) of Ueno's monograph  \cite{Ueno97} in the setting of affine Lie algebras. (See also Prop. 4.6 of \cite{Ueno08}.) We provide detailed calculations for the reader's convenience. (They will also help the reader understand how the geometric setting in \cite{Ueno97} and \cite{Ueno08} matches ours.)

\begin{thm}
    Let $f\in \MO(\MB)$ be 
    \begin{align}\label{eqb50}
        f=-\frac{c}{12}\sum_{i=1}^N \big(\Res_{\eta_i=0} \partial_{\eta_i}^3 h_i(\eta_i,\tau_\blt)\cdot k_i(\eta_i,\tau_\blt)d\eta_i\big).
    \end{align}
Then we have $R(\yk,\zk)=-f$ on $\scr W_\fx(\Wbb)$ and hence on $\ST_\fx(\Wbb)$. Consequently, we have $R(\yk,\zk)=f$ on $\ST^*_\fx(\Wbb)$.
\end{thm}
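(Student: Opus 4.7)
The plan is to work in the trivialization $\scr W_\fx(\Wbb)=\Wbb\otimes_\Cbb\mc O_\MB$ given by $\mc U(\eta_\blt)$. Expanding $h_i(\eta_i,\tau_\blt)=\sum_k h_{i,k}(\tau_\blt)\eta_i^k$ and using $Y_i(\cbf)_n=L_i(n-1)$, a direct residue computation shows that the action of $\upnu(\wtd\yk)$ on $w\in\Wbb\otimes\mc O(\MB)$ coincides with the action of the operator
\[
A(\wtd\yk):=\sum_{i=1}^N\sum_k h_{i,k}(\tau_\blt)L_i(k-1),
\]
and similarly for $A(\wtd\zk)$. Thus $\nabla_\yk=\yk-A(\wtd\yk)$, where $\yk=\sum_j g_j(\tau_\blt)\partial_{\tau_j}$ acts by derivation on the $\tau$-coefficients. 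Expanding $[\nabla_\yk,\nabla_\zk]$ and subtracting $\nabla_{[\yk,\zk]}$ yields
\[
R(\yk,\zk)=-\yk(A(\wtd\zk))+\zk(A(\wtd\yk))+[A(\wtd\yk),A(\wtd\zk)]+A([\wtd\yk,\wtd\zk]),
\]
where the last term is built from the $\partial_{\eta_i}$-coefficient $m_i=h_i\partial_{\eta_i}k_i-k_i\partial_{\eta_i}h_i+\yk k_i-\zk h_i$ of $[\wtd\yk,\wtd\zk]|_{U_i}$, read off from the local expressions in \eqref{curvature2}.

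The decisive step will be to apply the Virasoro relation $[L(m),L(n)]=(m-n)L(m+n)+\frac{c}{12}(m^3-m)\delta_{m+n,0}$ to $[A(\wtd\yk),A(\wtd\zk)]$. Since $L_i$ commutes with $L_j$ for $i\ne j$, this bracket splits into a sum over $i$ of a non-central Laurent part plus a central scalar. A direct expansion with $m=k-1,n=l-1$ shows that the non-central part equals $-\sum_i\hat A_i(h_i\partial_{\eta_i}k_i-k_i\partial_{\eta_i}h_i)$, where $\hat A_i(f):=\sum_k f_k L_i(k-1)$ for $f=\sum_k f_k\eta_i^k$. On the other hand, decomposing $m_i$ gives
\[
A([\wtd\yk,\wtd\zk])=\sum_i\hat A_i(h_i\partial_{\eta_i}k_i-k_i\partial_{\eta_i}h_i)+\yk(A(\wtd\zk))-\zk(A(\wtd\yk)),
\]
because $\sum_i\hat A_i(\yk k_i)=\yk(A(\wtd\zk))$ and likewise for $\zk h_i$. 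Adding all four contributions to $R(\yk,\zk)$, the non-central Lie-bracket pieces cancel and the $\tau$-derivative terms cancel, leaving only the central scalar from $[A(\wtd\yk),A(\wtd\zk)]$.

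It remains to identify this central scalar with $-f$. Setting $m=k-1$, the constraint $m+n=0$ becomes $k+l=2$, and $m^3-m=k(k-1)(k-2)$. Thus the central part equals $\frac{c}{12}\sum_i\sum_{k+l=2}k(k-1)(k-2)h_{i,k}k_{i,l}$, which by $\partial_{\eta_i}^3 h_i=\sum_k k(k-1)(k-2)h_{i,k}\eta_i^{k-3}$ coincides with $\frac{c}{12}\sum_i\Res_{\eta_i=0}\partial_{\eta_i}^3 h_i\cdot k_i\,d\eta_i=-f$. This proves $R(\yk,\zk)=-f$ on $\scr W_\fx(\Wbb)$. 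The formula descends to $\scr T_\fx(\Wbb)$ because each of $\nabla_\yk,\nabla_\zk,\nabla_{[\yk,\zk]}$ descends by Thm. \ref{independent1} and multiplication by the scalar $-f\in\mc O(\MB)$ respects the quotient. For the dual sheaf, the duality identity $\langle R(\yk,\zk)\phi,s\rangle+\langle\phi,R(\yk,\zk)s\rangle=0$ combined with the scalar nature of $-f$ immediately gives $R(\yk,\zk)=f$ on $\scr T_\fx^*(\Wbb)$.

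The hard part will be the delicate bookkeeping between the total-space Lie bracket on $\MC$ (which contributes the cross-derivative terms $\yk k_i-\zk h_i$) and the Virasoro commutator (which only sees the Laurent expansion in $\eta_i$). These cross terms are precisely what cancels the residual $-\yk(A(\wtd\zk))+\zk(A(\wtd\yk))$ appearing in $[\nabla_\yk,\nabla_\zk]$, and obtaining the correct signs requires careful tracking of the anti-homomorphism sign built into the Virasoro representation of vector fields $f\partial_{\eta_i}\mapsto\hat A_i(f)$.
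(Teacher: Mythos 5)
Your proof is correct, and it reaches the formula $R(\yk,\zk)=-f$ by a route that is genuinely different in presentation from the paper's, though the two rest on the same underlying identity. The paper works at the level of sections of $\scr V_\fx\otimes\omega_{\MC/\MB}(\blt\SX)$: it reduces the commutator of the two residue actions to the Lie-type bracket $\Lbf_{h_i\cbf d\eta_i}k_i\cbf d\eta_i$ via Prop. \ref{commutator12345}, evaluates it using only the structure constants $L(0)\cbf=2\cbf$, $L(1)\cbf=0$, $L(2)\cbf=\tfrac{c}{2}\idt$, and then trades the $L(-1)\cbf$ term for $-\partial_{\eta_i}$ via Lem. \ref{commutator777}. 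You instead pass immediately to Virasoro modes, identifying the residue action of $\upnu(\wtd\yk)$ with $\sum_{i,k}h_{i,k}(\tau_\blt)L_i(k-1)$ and invoking the full Virasoro relation $[L(m),L(n)]=(m-n)L(m+n)+\tfrac{c}{12}(m^3-m)\delta_{m+n,0}$; the non-central part of the bracket then cancels against the $\partial_{\eta_i}$-component of $[\wtd\yk,\wtd\zk]$ (with the anti-homomorphism sign of $f\partial_{\eta_i}\mapsto\sum_k f_kL_i(k-1)$ entering exactly as you say), and the central term yields $\tfrac{c}{12}\sum_{k+l=2}k(k-1)(k-2)h_{i,k}k_{i,l}=\Res_{\eta_i=0}\partial_{\eta_i}^3h_i\cdot k_i\,d\eta_i$, i.e. $-f$. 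I verified the bookkeeping: the $\tau$-derivative terms $-\yk A(\wtd\zk)+\zk A(\wtd\yk)$ from expanding $[\nabla_\yk,\nabla_\zk]$ do cancel against the $\yk k_i-\zk h_i$ contributions inside $A([\wtd\yk,\wtd\zk])$, and all sums are locally finite by lower truncation and the finiteness of the pole orders of $h_i,k_i$. What your route buys is brevity and contact with the classical "projective Virasoro action of the Witt algebra" computation; what the paper's route buys is that it stays inside the sheaf-theoretic machinery it has already built (Prop. \ref{commutator12345} holds for arbitrary $u,v\in\Vbb$, not just $\cbf$) and does not need to import the Virasoro commutation relation as an external fact. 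Your final remarks on descending to $\ST_\fx(\Wbb)$ and dualizing (curvature of the dual connection is minus the transpose, hence $+f$ for a scalar) are also correct.
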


\begin{proof}
Let $Y,Z\in H^0(\bigcup_i U_i,\Theta_\MC)$ be defined by $Y=\sum_j g_j(\tau_\blt)\partial_{\tau_j}$ and $Z=\sum_j l_j(\tau_\blt)\partial_{\tau_j}$. Thus $Y$ and $\yk$ (resp. $Z$ and $\zk$) are different elements although they have the same expression due to our convention of abbreviating $\tau_j\circ\pi$ to $\tau_j$. Now \eqref{curvature2} reads
\begin{gather*}
  \wtd \yk|_{U_i}=h_i(\eta_i,\tau_\blt)\partial_{\eta_i}+Y|_{U_i}\qquad
    \wtd \zk|_{U_i}=k_i(\eta_i,\tau_\blt)\partial_{\eta_i}+Z|_{U_i}  \tag{$ \divideontimes$}\label{eqb47}
\end{gather*}
(Thus, although $\yk$ and $Y$ have the same expression, $Y$ is an element of $H^0(\bigcup_i U_i,\Theta_\MC)$ and is not the same as $\yk$. Similarly, $\zk\in H^0(\bigcup_i U_i,\Theta_\MC)$.) By \eqref{connectiondef1}, we have $\nabla_\yk s=\yk s-\sum_i (h_i\cbf d\eta_i)*_is$ and $\nabla_\zk s=\zk s-\sum_i (k_i \cbf d\eta_i)*_i s$. So
\begin{gather*}
        \nabla_\yk \nabla_\zk s=\yk\zk s-\yk\sum_i (k_i \cbf d\eta_i)*_i s-\sum_i (h_i \cbf d\eta_i)*_i\zk s+\sum_{i,j}(h_i\cbf d\eta_i)*_i(k_j \cbf d\eta_j) *_js\\
\nabla_\zk \nabla_\yk s=\zk\yk s-\zk\sum_i (h_i \cbf d\eta_i)*_i s-\sum_i (k_i \cbf d\eta_i)*_i\yk s+\sum_{i,j}(k_j\cbf d\eta_j)*_j(h_i \cbf d\eta_i)*_i s
\end{gather*}
Since $Y,Z$ are orthogonal to $d\eta_i$, we have 
\begin{gather*}
\yk \big((k_i \cbf d\eta_i)*_i s\big)-(k_i \cbf d\eta_i)*_i\yk s=((Y k_i)\cbf d\eta_i)*_is\\
\zk \big((h_i \cbf d\eta_i)*_i s\big)-(h_i \cbf d\eta_i)*_i\zk s=((Z k_i)\cbf d\eta_i)*_is
\end{gather*}
Clearly, if $i\ne j$, the $*_i$-action of $h_i \cbf d\eta_i$ and the $*_j$-action on $k_j\cbf d\eta_j$ on $s$ commute. By Prop. \ref{commutator12345}, the $*_i$-action of $[h_i \cbf d\eta_i,k_i\cbf d\eta_i]$ on $\ST_\fx(\Wbb)$ equals that of
    \begin{gather*}
        [h_i \cbf d\eta_i,k_i\cbf d\eta_i]=\Lbf_{h_i \cbf d\eta_i}k_i\cbf d\eta_i\xlongequal{\eqref{commutator5}}\sum_{n\geq 0} \frac{1}{n!}(\partial_{\eta_i}^n h_i)k_i L(n-1)\cbf d\eta_i\\
        =h_i k_i L(-1)\cbf d\eta_i+2(\partial_{\eta_i} h_i)k_i \cbf d\eta_i+\frac{c}{12}(\partial_{\eta_i}^3 h_i)k_i \ibf d\eta_i.
    \end{gather*}
since $L(0)\cbf=2\cbf,L(1)\cbf=0,L(2)\cbf=L(2)L(-2)\idt=\frac 12\cbf$.    By Lem. \ref{commutator777}, the action of the above expression equals that of 
    \begin{gather*}
-\partial_{\eta_i}(h_i k_i) \cbf d\eta_i+2(\partial_{\eta_i} h_i)k_i \cbf d\eta_i+\frac{c}{12}(\partial_{\eta_i}^3 h_i)k_i \ibf d\eta_i\\
        =(\partial_{\eta_i}h_i) k_i \cbf d\eta_i-h_i(\partial_{\eta_i}k_i) \cbf d\eta_i+\frac{c}{12}(\partial_{\eta_i}^3 h_i)k_i \ibf d\eta_i
    \end{gather*}

Summing up the above calculations, we get
    \begin{equation*}
    \begin{aligned}\label{curvature3}
        [\nabla_\yk,\nabla_\zk]s=&[\yk ,\zk ]s-\sum_i ((Y k_i) \cbf d\eta_i)*_is+\sum_i ((Z h_i)\cbf d\eta_i)*_is \\
        &+\sum_i((\partial_{\eta_i}h_i) k_i \cbf d\eta_i)*_is-\sum_i(h_i(\partial_{\eta_i}k_i) \cbf d\eta_i)*_is-fs   
        \end{aligned}\tag{$\star$}
        \end{equation*}
        On the other hand, by \eqref{eqb47}, the local expression of $[\wtd \yk,\wtd \zk]$ on $U_i$ equals
        \begin{align}\label{eqb49}
            [\wtd \yk,\wtd \zk]|_{U_i}=(h_i\partial_{\eta_i}k_i-k_i\partial_{\eta_i} h_i+Yk_i-Zh_i)\partial_{\eta_i}+[Y,Z]|_{U_i}
        \end{align}
Now, applying \eqref{connectiondef1} to $[\yk,\zk]$ and its lift $[\wtd\yk,\wtd\zk]$, we have (due to \eqref{eqb49})
\begin{align*}
\nabla_{[\yk,\zk]}s=[\yk,\zk]s-\sum_i \big((h_i\partial_{\eta_i}k_i-k_i\partial_{\eta_i} h_i+Yk_i-Zh_i)\cbf d\eta_i\big)*_is
\end{align*}
which equals $\eqref{curvature3}+fs$. Therefore $R(\yk,\zk)s=-fs$.
\end{proof}

\section{Convergence of Virasoro uniformization and local freeness}

Throughout this chapter, we always assume that $\Vbb$ is $C_2$-cofinite, and $\Wbb$ is a grading restricted $\Vbb^{\otimes N}$-module (equivalently, a finitely admissible $\Vbb^{\times N}$-module) associated to the ordered marked points $\sgm_\blt(\MB)$. We assume $R=0$ so that $\fx=\wtd\fx=(\pi:\MC\rightarrow\MB|\sgm_1,\dots,\sgm_N)$ is smooth.

\subsection{Convergence of formal parallel transports}\label{lbb30}

In this section, we fix $0<r\leq+\infty$ and $\mc D_r=\{z\in\Cbb:|z|<r\}$, and assume that
\begin{align*}
\MB=\mc D_r\times\MB_0
\end{align*}
where $\MB_0$ is a complex manifold. Assume that $\fx$ is equipped with local coordinates, i.e.
\begin{align*}
\fx=(\pi:\MC\rightarrow\MB|\sgm_1,\dots,\sgm_N;\eta_1,\dots,\eta_N )\qquad \eta_i\in\mc O(U_i)
\end{align*}
where $U_1,\dots,U_N$ are mutually disjoint neighborhoods of $\sgm_1(\MB),\dots,\sgm_N(\MB)$.

Let $\MC_0=\pi^{-1}(\{0\}\times\mc B_0)$ so that
\begin{align}\label{eqb65}
\fx_0=(\pi_0:\MC_0\rightarrow\MB_0|\sigma_1,\dots,\sigma_N;\mu_1,\dots,\mu_N )
\end{align}
a family of $N$-pointed compact Riemann surfaces with local coordinates. Here, $\sigma_i,\mu_i$ are the restrictions of $\sgm_i,\eta_i$ to $\fx_0$. Similarly, $\pi_0$ is the restriction of $\pi$.

We always let $q$ be the standard coordinate of $\mc D_r$. Assume that $\MB_0$ has a set of coordinates $\tau_1,\dots,\tau_m$. By constant extension, $(q,\tau_\blt)=(q,\tau_1,\dots,\tau_m)$ is a set of coordinates of $\MB$, i.e., it maps $\MB$ biholomorphically to an open subset of $\Cbb^{m+1}$. As usual, we abbreviate $\tau_j\circ\wtd\pi$ and $\tau_j\circ\pi$ to $\tau_j$ when no confusion arises. Fix identifications
\begin{align*}
\scr W_{\fx_0}(\Wbb)=\Wbb\otimes_\Cbb\mc O_{\MB_0}\quad\text{via }\mc U(\mu_\blt)\qquad\qquad  \scr W_\fx(\Wbb)=\Wbb\otimes_\Cbb\mc O_\MB\quad\text{via }\mc U(\eta_\blt)
\end{align*}

Now $\partial_q$ is an element of $H^0(\mc D_r,\Theta_{\mc D_r})$. By constant extension, $\partial_q$ becomes an element of $H^0(\MB,\Theta_{\MB})$. Suppose that 
\begin{align}\label{eqb95}
\text{$\partial_q$ has a lift }\xk\in H^0(\MC,\Theta_\MC(\blt\SX))
\end{align}
(Recall Rem. \ref{lbb12} that the existence of $\xk$ is automatic when $\MB$ is Stein.) By \eqref{eqb31}, we can find $h_i\in \mc O((\eta_i,q,\tau_\blt)(U_i-\SX))$ such that
\begin{align}\label{eqb63}
\xk|_{U_i}=h_i(\eta_i,q,\tau_\blt)\partial_{\eta_i}+\partial_q
\end{align}
So $h_i$ has finite poles at $\eta_i=0$. The differential operator $\nabla_{\partial_q}$ on $\scr W_\fx(\Wbb)$ is defined by the lift $\xk$ as in Def. \ref{lbb16}. Write
\begin{align*}
h_i(\eta_i,q,\tau_\blt)=\sum_{k\in\Zbb}h_{i,k}(q,\tau_\blt)\eta_i^k\qquad\text{where }h_{i,k}\in \mc O((q,\tau_\blt)(\MB))
\end{align*}
noting that $h_{i,k}=0$ for sufficiently negative $k$. Then for each $w\in\Wbb\otimes_\Cbb\mc O(\MB)$ we have
\begin{subequations}
\begin{equation}\label{uniformization33}
    \nabla_{\partial_q}w=\partial_q w-A(q,\tau_\blt)w\qquad\in\Wbb\otimes_\Cbb\mc O(\MB)
\end{equation}
where
\begin{align}\label{eqb56}
    A(q,\tau_\blt)w=\sum_{i=1}^N \sum_{k\in \Zbb} h_{i,k}(q,\tau_\blt)L_i(k-1)w \qquad\in\Wbb\otimes_\Cbb\mc O(\MB)
\end{align}
\end{subequations}
Note that RHS of \eqref{eqb56} is a finite sum. We write
\begin{align*}
 A(q,\tau_\blt)w=\sum_{n\in \Nbb}A_n(\tau_\blt)w\cdot q^n
\end{align*}
where $A_n(\tau_\blt)w\in\Wbb\otimes_\Cbb\mc O(\MB_0)$. 

\begin{rem}\label{lbb27}
Recall from \cite[Prop. 2.21]{GZ1} that we have an explicit description for conformal blocks associated to $\fx$: An element $\upphi\in H^0(\MB,\scr T^*_\fx(\Wbb))$ is precisely an $\mc O_\MB$-module morphism $\upphi:\Wbb\otimes_\Cbb\mc O_\MB\rightarrow\mc O_\MB$ such that $\upphi|_b\in\scr T_{\fx_b}^*(\Wbb)$ for each $b\in\MB$. 

Note that a morphism $\uppsi:\Wbb\otimes_\Cbb\mc O_\MB\rightarrow\mc O_\MB$ (i.e., an element $\uppsi\in H^0(\MB,(\Wbb\otimes_\Cbb\mc O_\MB)^*)$) is determined by its values $\uppsi(w)$ at the constant sections $w\in\Wbb$. Thus, a morphism $\Wbb\otimes_\Cbb\mc O_\MB\rightarrow\mc O_\MB$ is equivalent to a linear map $\Wbb\rightarrow\mc O(\MB)$, and is also equivalent to an $\mc O(\MB)$-module morphism $\Wbb\otimes_\Cbb\mc O(\MB)\rightarrow \mc O(\MB)$. Therefore:
\begin{itemize}
\item An element $\upphi\in H^0(\MB,\scr T^*_\fx(\Wbb))$ is equivalently a linear map $\Wbb\rightarrow\mc O(\MB)$ such that $\upphi(\cdot)|_b\in\scr T_{\fx_b}^*(\Wbb)$ for each $b\in\MB$.
\end{itemize}
A similar description holds for conformal blocks associated to $\fx_0$. \hfill\qedsymbol
\end{rem}

We shall always view $\Wbb$ as a subspace of $\Wbb\otimes_\Cbb\mc O(\MB)$ by identifying $w$ with $w\otimes 1$.

\begin{df}\label{lbb31}
    Fix an element $\upphi_0\in H^0(\MB_0,\ST_{\fx_0}^*(\Wbb))$, viewed as a linear map $\Wbb\rightarrow\mc O(\MB_0)$. The \textbf{formal parallel transport} of $\upphi_0$ (with respect to the lift $\xk$, cf. \eqref{eqb95}) is the linear map 
    \begin{align*}
        \upphi:\Wbb\rightarrow \mc O(\MB_0)[[q]]
    \end{align*}
extended $\mc O(\MB_0)[[q]]$-linearly to 
   \begin{align}\label{eqb57}
        \upphi:\Wbb\otimes_\Cbb\mc O(\MB_0)[[q]]\rightarrow \mc O(\MB_0)[[q]]
    \end{align}
such that for each $w\in\Wbb$ we have 
\begin{align}\label{eqb53}
\partial_q \upphi(w)=-\upphi(A(q,\tau_\blt)w)\qquad \upphi(w)\big|_{q=0}=\upphi_0(w)
\end{align}
where $A(q,\tau_\blt)w=\eqref{eqb56}$.   
\end{df}

\begin{rem}\label{lbb24}
By Def. \ref{lbb18}, the first relation of \eqref{eqb53} simply says for each $w\in\Wbb$ that 
\begin{align}\label{eqb59}
\partial_q\upphi(w)=\upphi(\nabla_{\partial_q}w)
\end{align}
By Leibniz's rule, \eqref{eqb59} also holds when $w\in \Wbb\otimes\mc O(\MB)\subset\Wbb\otimes\mc O(\MB_0)[[q]]$. So
\begin{align*}
(\nabla_{\partial_q}\upphi)(w)=0
\end{align*}
for all $w\in \Wbb\otimes\mc O(\MB)$, justifying the name ``formal parallel transport".
\end{rem}

\begin{rem}\label{lbb23}
Formal parallel transports always exist and are unique. To see this, we write any linear map $\upphi:\Wbb\rightarrow\mc O(\MB_0)[[q]]$ as
\begin{align*}
\upphi(w)=\sum_{n\in\Nbb}\wht\upphi_n(w)q^n
\end{align*}
where $\wht\upphi_n:\Wbb\rightarrow\mc O(\MB_0)$ is linear, extended $\mc O(\MB_0)$-linearly to a linear map
\begin{align*}
\wht\upphi_n:\Wbb\otimes_\Cbb\mc O(\MB_0)\rightarrow\mc O(\MB_0)
\end{align*}
Then condition \eqref{eqb53} is equivalent to the following relations in $\mc O(\MB_0)$:
\begin{align}\label{eqb54}
 n\wht\upphi_n(w) =-\sum_{l=0}^{n-1} \wht\upphi_l\big( A_{n-l-1}(\tau_\blt)w\big)\qquad(\text{if }n\geq 1),\qquad\qquad \wht\upphi_0(w)=\upphi_0(w)
\end{align}
The existence and uniqueness of $(\wht\upphi_n)_{n\in\Nbb}$ satisfying \eqref{eqb54} is clear. 
\end{rem}

\begin{eg}\label{lbb32}
In Rem. \ref{lbb23}, if each $h_i(\eta_i,q,\tau_\blt)$ is independent of $q$ (and hence can be written as $h_i(\eta_i,\tau_\blt)=\sum_{k\in\Zbb}h_{i,k}(\tau_\blt)$), then \eqref{eqb53} can be solved by
\begin{align}\label{eqb64}
\upphi(w)=\upphi_0\big(e^{-qA(\tau_\blt)}w\big)\qquad\text{ where } A(\tau_\blt)=\sum_{i=1}^N \sum_{k\in\Zbb}h_{i,k}(\tau_\blt)L_i(k-1)
\end{align} 
\end{eg}

\begin{lm}\label{lbb25}
Let $\upphi_0\in H^0(\MB_0,\scr T_{\fx_0}^*(\Wbb))$. Then the formal parallel transport    $\upphi$ is a \textbf{formal conformal block}  associated to $\fx$ and $\Wbb$, namely, viewed as a map \eqref{eqb57}, $\upphi$ vanishes on $\SJ_\fx^\pre(\MB)=\eqref{eqb58}$.
\end{lm}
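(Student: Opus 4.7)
The plan is to prove by induction on $n \in \Nbb$ the following stronger claim: for every product $\sigma \cdot w$ with $\sigma \in H^0(\MC, \scr V_\fx \otimes \omega_{\MC/\MB}(\blt \SX))$ and $w \in \Wbb \otimes_\Cbb \mc O(\MB)$, the $q^n$-coefficient $[q^n]\upphi(\sigma \cdot w) \in \mc O(\MB_0)$ vanishes. Since $\SJ_\fx^\pre(\MB)$ is by definition (cf.~\eqref{eqb58}) the $\Cbb$-span of such products and $\upphi$ is $\Cbb$-linear, this yields the lemma.

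For the base case $n = 0$, I would use that the residue action in Subsec.~\ref{lbb10} is defined fiberwise, so restriction to the subfamily $\{q = 0\}$ commutes with $\cdot$, giving $(\sigma \cdot w)|_{q=0} = \sigma|_{\MC_0} \cdot w|_{\MB_0}$. This element lies in $\SJ_{\fx_0}^\pre(\MB_0)$, and since $\upphi|_{q=0} = \upphi_0$ is a conformal block for $\fx_0$, one gets $[q^0]\upphi(\sigma \cdot w) = \upphi_0\big(\sigma|_{\MC_0} \cdot w|_{\MB_0}\big) = 0$.

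For the induction step, I would first extend the defining identity $\partial_q \upphi(w) = \upphi(\nabla_{\partial_q} w)$ from $w \in \Wbb$ to $w \in \Wbb \otimes_\Cbb \mc O(\MB)$ by the Leibniz rule, as explained in Rem.~\ref{lbb24}. Combined with the pointwise commutator formula \eqref{eqb34} on $\scr W_\fx(\Wbb)$ (established during the proof of Thm.~\ref{independent1}, prior to passing to the quotient), this gives
\[
\partial_q \upphi(\sigma \cdot w) \;=\; \upphi\big(\nabla_{\partial_q}(\sigma \cdot w)\big) \;=\; \upphi\big(\sigma \cdot \nabla_{\partial_q} w\big) + \upphi\big(\mc L_\xk \sigma \cdot w\big).
\]
Now $\nabla_{\partial_q} w = \partial_q w - A(q,\tau_\blt) w$ again lies in $\Wbb \otimes_\Cbb \mc O(\MB)$, and $\mc L_\xk \sigma$ again lies in $H^0(\MC, \scr V_\fx \otimes \omega_{\MC/\MB}(\blt \SX))$ by Rem.~\ref{lieder1}. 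Hence both summands on the right are themselves $\upphi$ applied to products of the same form covered by the induction hypothesis. Assuming the claim at all levels $\leq n$, the $q^n$-coefficients of both summands vanish, whence $(n+1)[q^{n+1}]\upphi(\sigma \cdot w) = [q^n]\partial_q \upphi(\sigma \cdot w) = 0$, closing the induction.

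I expect no serious obstacle beyond careful bookkeeping: one must verify that evaluation at $q = 0$ commutes with the residue action (immediate from the local formulas in Subsec.~\ref{lbb10}), and that the commutator identity \eqref{eqb34} is truly pointwise on $\scr W_\fx(\Wbb)$ rather than merely on the quotient $\scr T_\fx(\Wbb)$ (which is how it was derived via \eqref{eqb35}--\eqref{eqb36}). Both checks are routine.
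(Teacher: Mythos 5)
Your proposal is correct and follows essentially the same route as the paper's proof: both rest on the identity $\partial_q\upphi(w)=\upphi(\nabla_{\partial_q}w)$ from Rem.~\ref{lbb24}, the commutator formula \eqref{eqb34} showing that $\nabla_{\partial_q}$ preserves $\SJ_\fx^\pre(\MB)$ (with $\mc L_\xk\sigma$ staying in $H^0(\MC,\scr V_\fx\otimes\omega_{\MC/\MB}(\blt\SX))$ by Rem.~\ref{lieder1}), and the fact that restriction to $q=0$ carries $\SJ_\fx^\pre(\MB)$ into $\SJ_{\fx_0}^\pre(\MB_0)$, where $\upphi_0$ vanishes. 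The paper merely packages the same computation as a Taylor expansion $\upphi(w)=\sum_n\frac{q^n}{n!}\upphi(\nabla_{\partial_q}^nw)\big|_{q=0}$ rather than as your coefficient-by-coefficient induction.
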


\begin{proof}
Choose $w\in \SJ_\fx^\pre(\MB)\subset \Wbb\otimes\mc O(\MB)$. By Rem. \ref{lbb24} we have
    \begin{align*}
        \upphi(w)=\sum_{n\in \Nbb}\frac{q^n}{n!}\partial_q^n \upphi(w)\big|_{q=0}=\sum_{n\in \Nbb}\frac{q^n}{n!}\upphi(\nabla_{\partial_q}^n w)\big|_{q=0} 
    \end{align*}
By the proof of Thm. \ref{independent1} (more precisely, by \eqref{eqb34}), $\nabla_{\partial_q}^n w\in \SJ_\fx^\pre(\MB)$ for all $n\in \Nbb$. Thus $\nabla^n_{\partial_q}w\big|_{q=0}\in\SJ_{\fx_0}^\pre(\MB_0)$. Since $\upphi_0$ is a conformal block for $\fx_0$ and hence vanishes on $\SJ_{\fx_0}^\pre(\MB_0)$, we have $\upphi(\nabla_{\partial_q}^n w)\big|_{q=0}=\upphi_0\big(\nabla_{\partial_q}^n w\big|_{q=0}\big)=0$. So $\upphi(w)=0$.
\end{proof}

\begin{thm}\label{lbb26}
Let $\upphi_0\in H^0(\MB_0,\scr T_{\fx_0}^*(\Wbb))$ with formal parallel transport $\upphi$. Then for each $w\in\Wbb$, the formal power series $\upphi(w)\in\mc O(\MB_0)[[q]]$ converges a.l.u. on $\mc D_r\times\MB_0=\MB$. This defines a linear map $\upphi:\Wbb\rightarrow\mc O(\MB)$. Moreover, $\upphi$ belongs to $H^0(\MB,\scr T_\fx^*(\Wbb))$.
\end{thm}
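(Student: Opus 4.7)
First, since all the assertions (convergence a.l.u., holomorphy, and the conformal block property) are local on $\MB_0$, I would cover $\MB_0$ by Stein open subsets admitting coordinates $\tau_\blt$, and work on each such piece; the pieces will glue because on each overlap both local solutions satisfy the same ODE with the same initial datum $\upphi_0$, and must therefore agree by uniqueness of the formal parallel transport (Rem. \ref{lbb23}). So I may assume that $\MB_0$ is Stein with coordinates $\tau_\blt$ and hence $\MB=\mc D_r\times\MB_0$ is Stein.

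The plan is then to reduce the problem to a finite system of linear holomorphic ODEs in $q$. By Thm. \ref{lbb21} (which uses $C_2$-cofiniteness essentially), the $\mc O(\MB)$-module $\scr W_\fx(\Wbb)(\MB)/\SJ_\fx^\pre(\MB)$ is finitely generated. Because any element of $\Wbb\otimes_\Cbb\mc O(\MB)$ is a \emph{finite} $\Cbb$-linear combination of elementary tensors $w\otimes f$, I can replace a given finite generating set with finitely many constant generators $w_1,\dots,w_n\in\Wbb$. For each $j$, the element $\nabla_{\partial_q}w_j=-A(q,\tau_\blt)w_j$ lies in $\Wbb\otimes_\Cbb\mc O(\MB)$, so finite generation yields
\begin{align*}
\nabla_{\partial_q}w_j=\sum_{k=1}^n f_{jk}(q,\tau_\blt)w_k+\eta_j
\end{align*}
with $f_{jk}\in\mc O(\MB)$ and $\eta_j\in\SJ_\fx^\pre(\MB)$. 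Applying the $\mc O(\MB_0)[[q]]$-linear extension of $\upphi$, and using $\upphi(\eta_j)=0$ (Lem. \ref{lbb25}) together with $\partial_q\upphi(w_j)=\upphi(\nabla_{\partial_q}w_j)$ (Rem. \ref{lbb24}), the vector $\bigl(\upphi(w_j)\bigr)_{j=1}^n$ of formal power series satisfies the linear system
\begin{align*}
\partial_q\,\upphi(w_j)=\sum_{k=1}^n f_{jk}(q,\tau_\blt)\,\upphi(w_k),\qquad \upphi(w_j)\big|_{q=0}=\upphi_0(w_j)\in\mc O(\MB_0).
\end{align*}

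The hard part is really assembling this ODE system; once it is in hand, standard holomorphic ODE theory produces a genuine holomorphic solution of the system on $\mc D_r\times\MB_0=\MB$ (depending holomorphically on the parameters $\tau_\blt$), and this solution must coincide term-by-term with the formal $\upphi(w_j)$ by uniqueness of formal solutions of the ODE. Hence each $\upphi(w_j)$ converges a.l.u. on $\MB$ to a holomorphic function. For a general $w\in\Wbb$, the same finite generation gives $w=\sum_j g_j(q,\tau_\blt)w_j+\eta$ in $\scr W_\fx(\Wbb)(\MB)$ with $g_j\in\mc O(\MB)$ and $\eta\in\SJ_\fx^\pre(\MB)$; applying $\upphi$ yields the identity $\upphi(w)=\sum_j g_j\,\upphi(w_j)$ in $\mc O(\MB_0)[[q]]$, whose right-hand side converges a.l.u. on $\MB$ by the previous step. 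This furnishes the required linear map $\upphi:\Wbb\rightarrow\mc O(\MB)$. Finally, since $\upphi$ still vanishes on $\SJ_\fx^\pre(\MB)$ (the formal vanishing from Lem. \ref{lbb25} passes to the convergent sums), Thm. \ref{lbb28} (applicable because $\MB$ is Stein) implies $\upphi\in H^0(\MB,\scr T_\fx^*(\Wbb))$, completing the proof after gluing.
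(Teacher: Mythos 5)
Your proposal is correct and follows essentially the same route as the paper: reduce to a finite linear holomorphic ODE system in $q$ via the finite generation of $\scr W_\fx(\Wbb)(\MB)/\SJ_\fx^\pre(\MB)$ (Thm.~\ref{lbb21}), invoke holomorphic ODE theory to upgrade the formal solution to a convergent one, and conclude with Lem.~\ref{lbb25} and Thm.~\ref{lbb28}. The only (harmless) variation is in handling a general $w\in\Wbb$: the paper exhausts $\Wbb$ by the finite-dimensional truncations $\Wbb_{[\leq\lambda_\blt]}$ and runs the ODE argument for a basis of each, whereas you fix one finite set of constant generators and express $w$ as an $\mc O(\MB)$-combination of them modulo $\SJ_\fx^\pre(\MB)$, then use the $\mc O(\MB_0)[[q]]$-linearity of $\upphi$.
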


\begin{proof}
Since the properties to be proved can be checked locally (with respect to $\MB_0$), by shrinking $\MB_0$, we assume that $\MB_0$ is Stein and connected. So $\MB$ is also Stein and connected. Therefore, by Thm. \ref{lbb21}, we can find finitely many elements $s_1,s_2,\cdots\in \Wbb\otimes \MO(\MB)$ generating $\Wbb\otimes \MO(\MB)$ mod $\SJ_\fx^\pre(\MB)$. Fix $\mu_\blt\in \Cbb^N$ such that $s_1,s_2,\cdots\in \Wbb_{[\leq\mu_\blt]}\otimes \MO(\MB)$. Since each $s_i$ is an $\mc O(\MB)$-linear combination of elements of $\Wbb_{[\leq\mu_\blt]}$, one has that $\Wbb_{[\leq\mu_\blt]}$ generates the $\MO(\MB)$-module $\Wbb\otimes\mc O(\MB)$ mod $\SJ_\fx^\pre(\MB)$.

Choose any $\lambda_\blt\geq \mu_\blt$. Since $\Wbb$ is grading-restricted and $\Vbb^{\otimes N}$ is $C_2$-cofinite,   $\Wbb_{[\leq\lambda_\blt]}$ is finite dimensional. Let $(e_j)_{j\in J}$ be a (finite) basis of $\Wbb_{[\leq\lambda_\blt]}$. By Rem. \ref{lbb24}, we have $\partial_q \upphi(e_i)=\upphi(\nabla_{\partial_q}e_i)$. Since $\Wbb_{[\leq\lambda_\blt]}$ generates $\Wbb\otimes\mc O(\MB)$ mod $\SJ_\fx^\pre(\MB)$, we can find $\Omega_{i,j}\in \MO(\MB)$ such that
    \begin{align}
        \nabla_{\partial_q}e_i=\sum_{j\in J}\Omega_{i,j}e_{j}\mod \SJ_\fx^\pre(\MB)
    \end{align}
for all $i,j\in J$.    Thus, by Lem. \ref{lbb25},
    \begin{align}\label{uniformization28}
        \partial_q \upphi(e_i)=\upphi(\nabla_{\partial_q}e_i)=\sum_{j\in J}\Omega_{i,j}\upphi(e_{j}).
    \end{align}
By \eqref{uniformization28}, as an element of $\mc O(\MB_0)[[q]]^J$, $f:=\oplus_{j\in J}\upphi(e_j)$  is a formal solution of the differential equation $\partial_q f=\Omega f$ where $\Omega$ is a $\Cbb^{J\times J}$-valued holomorphic function on $\MB$. Therefore, by \cite[Thm. A.1]{Gui-sewingconvergence}, $\upphi(w)$ converges a.l.u. on $\MB$ whenever $w=e_i$, and hence whenever $w\in\Wbb_{[\leq\lambda_\blt]}$. Since $\lambda_\blt\geq \mu_\blt$ is arbitrary, $\upphi(w)$ converges a.l.u. for all $w\in\Wbb$.

By $\mc O_\MB$-linear extension, we can view $\upphi$ as an $\mc O_\MB$-module morphism $\upphi:\Wbb\otimes\mc O_\MB\rightarrow\mc O_\MB$ (cf. Rem. \ref{lbb27}). By Lem. \ref{lbb25}, $\upphi$ vanishes on $\SJ_\fx^\pre(\MB)$. Therefore, by Thm. \ref{lbb28}, $\upphi$ is a conformal block associated to $\fx$ and $\Wbb$.
\end{proof}

\subsection{Virasoro uniformization by non-autonomous flows}\label{lbb67}

In this section, we explain how Thm. \ref{lbb26} can be interpreted from the perspective of Virasoro uniformization. The material of this section will not be used elsewhere in the series of papers. However, it might have potential applications in the future. It also provides a differential geometric background for the construction of $\nabla$ on $\scr T_{\fx}^*(\Wbb)$ comparable to the algebraic geometric background in \cite{FB04} Sec. 17.3 (especially Rem. 17.3.3).

An \textbf{open annulus} denotes a set of the form $\{z\in\Cbb:r_1<|z|<r_2\}$ where $0<r_1<r_2$. For each $r>0$ we let $r\Sbb^1=\{z\in\Cbb:|z|=r\}$. Consider a family of $N$-pointed compact Riemann surfaces with local coordinates
\begin{align*}
\fx_0=(\pi_0:\MC_0\rightarrow\MB_0|\sigma_1,\dots,\sigma_N;\mu_1,\dots,\mu_N )
\end{align*}
where $\MB_0$ is a connected open subset of $\Cbb^m$ with standard coordinates $\tau_\blt=(\tau_1,\dots,\tau_m)$. We abbreviate $\tau_j\circ\pi_0$ to $\tau_j$ as usual. Let $U_{1,0},\dots,U_{N,0}$ be mutually disjoint neighborhoods of $\sigma_1(\MB_0)\dots,\sigma_N(\MB_0)$ such that $\mu_i\in\mc O(U_{i,0})$, and that
\begin{align*}
(\mu_i,\tau_\blt)(U_{i,0})=\mc D_{p_i}\times\MB_0
\end{align*}
for some $p_1,\dots,p_N>0$.

Fix $r>0$ and
\begin{align*}
\MB=\mc D_r\times \MB_0
\end{align*}
For each $i$, choose a Laurent series in $\mc O(\MB)((z))$: 
\begin{align*}
h_i(z,q,\tau_\blt)=\sum_{k\in\Zbb}h_{i,k}(q,\tau_\blt)z^k\qquad\text{converging a.l.u. on }\mc D_{p_i}^\times\times\MB
\end{align*}
So it is the expansion of an element of $\mc O(\mc D_{p_i}^\times\times\MB)$ with finite poles at $\{0\}\times\MB$. Fix
\begin{align*}
0<r_i<p_i
\end{align*}
By the basics of differential equations, after making $r$ smaller, for each $i$ we can find
\begin{gather*}
\beta^i\in\mc O(\mc A^i\times\MB)\qquad\text{where $\mc A^i$ is an open annulus containing $r_i\Sbb^1$}
\end{gather*}
satisfying the following properties. (We write $\beta^i(z,q,\tau_\blt)$ as $\beta^i_{q,\tau_\blt}(z)$.)
\begin{enumerate}[label=(\arabic*)]
\item For each $(q,\tau_\blt)\in\MB$ we have
\begin{align}\label{eqb60}
\beta^i_{q,\tau_\blt}(\mc A^i)\subset \mc D_{p_i}^\times
\end{align}
Moreover, for each $(z,q,\tau_\blt)\in\mc A^i\times\MB$, we have
\begin{align}\label{eqb62}
\partial_q\beta^i_{q,\tau_\blt}(z)=h\big(\beta^i_{q,\tau_\blt}(z),q,\tau_\blt\big)\qquad \beta^i_{0,\tau_\blt}(z)=z
\end{align}
(Thus, when $q=0$, \eqref{eqb60} reads $\mc A^i\subset \mc D_{p_i}^\times$.)
\item For each $(q,\tau_\blt)\in\MB$, we have $0\notin\beta^i_{q,\tau_\blt}(r_i\Sbb^1)$.
\end{enumerate} 
Note that \eqref{eqb62} simply says that when $\tau_\blt$ is fixed, $q\in\mc D_r\mapsto \beta^i_{q,\tau_\blt}$ is the non-autonomous flow generated by the time-dependent (i.e. $q$-dependent) vector field $h_i\partial_z$. (More precisely, if $q\mapsto \alpha^i_{q,\tau_\blt}$ is the (autonomous) flow in $\Cbb\times\Cbb$ generated by $h_i\partial_z+\partial_q$, then $\alpha^i_{q,\tau_\blt}(z,0)=(\beta^i_{q,\tau_\blt}(z),q)$.) Therefore,  the basic properties of autonomous flows implies:
\begin{enumerate}
\item[(3)] For each $(q,\tau_\blt)\in\mc B$, the map $\beta^i_{q,\tau_\blt}:\mc A^i\rightarrow\Cbb$ is injective (and hence is a biholomorphism onto an open subset of $\Cbb$).
\end{enumerate}

For each $(q,\tau_\blt)$, let $\Gamma^i_{q,\tau_\blt}$ denote the oriented simple closed curve $\beta^i_{q,\tau_\blt}:r_i\Sbb^1\rightarrow\Cbb$. By the Jordan curve theorem, $\Pbb^1-\Gamma^i_{q,\tau_\blt}$ has exactly two components
\begin{align*}
\Pbb^1-\Gamma^i_{q,\tau_\blt}=\Omega^i_{q,\tau_\blt}\sqcup\wtd\Omega^i_{q,\tau_\blt}
\end{align*}
where $\infty\in\wtd\Omega^i_{q,\tau_\blt}$. Define
\begin{align}
O^i=\{(z,q,\tau_\blt)\in\Pbb^1\times\MB:z\in\Omega^i_{q,\tau_\blt}\}\qquad \wtd O^i=\{(z,q,\tau_\blt)\in\Pbb^1\times\MB:z\in\wtd\Omega^i_{q,\tau_\blt}\}
\end{align}

\begin{rem}\label{lbb29}
$O^i$ and $\wtd O^i$ are open subsets of $\Pbb^1\times\MB$. Moreover,  for each $(q,\tau_\blt)\in\MB$ we have $0\in\Omega_{q,\tau_\blt}^i$ (i.e. $0$ is inside $\Gamma^i_{q,\tau_\blt}$).
\end{rem}

\begin{proof}
The continuity of $\beta^i$ implies that $O^i\cup \wtd O^i$ is open. The function
\begin{align*}
\mc W:\quad (z,q,\tau_\blt)\in O^i\cup \wtd O^i\quad\mapsto\quad \frac 1{2\im\pi}\oint\nolimits_{\Gamma^i_{q,\tau_\blt}}\frac {d\zeta}{\zeta-z}
\end{align*}
is continuous and takes values in $\Nbb$. So it must be locally constant. For each $(q,\tau_\blt)\in\MB$, $\mc W(\cdot,q,\tau_\blt)$ is constant on $\Omega^i_{q,\tau_\blt}$ and on $\wtd\Omega^i_{q,\tau_\blt}$. Since $\mc W(\infty,q,\tau_\blt)=0$, we have $\mc W(\cdot,q,\tau_\blt)|_{\wtd\Omega^i_{q,\tau_\blt}}=0$. Since $0\notin\Gamma^i_{q,\tau_\blt}$ for all $(q,\tau_\blt)\in\MB$, and since $\mc W(0,0,\tau_\blt)=1$, we have $\mc W(0,q,\tau_\blt)=1$ for all $q$. So $\mc W(\cdot,q,\tau_\blt)|_{\Omega^i_{q,\tau_\blt}}=1$. It follows that $O^i=\mc W^{-1}(1)$ and $\wtd O^i=\mc W^{-1}(0)$. Thus $O^i$ and $\wtd O^i$ are open. Since $\mc W(0,q,\tau_\blt)=1$, we have $0\in\Omega_{q,\tau_\blt}^i$.
\end{proof}

\begin{rem}
According to Rem. \ref{lbb29}, for each $z\in\mc A^i-r_i\Sbb^1$, the subset of all $(q,\tau_\blt)\in\MB$ such that $\beta^i_{q,\tau_\blt}(z)$ is inside (resp. outside) $\Gamma^i_{q,\tau_\blt}$ is an open subset of $\MB$, and hence is also closed, and must be either $\emptyset$ or $\MB$. Therefore, for each $z\in\mc A^i$ we have
\begin{gather}\label{eqb61}
\begin{gathered}
|z|<r_i\qquad\Longleftrightarrow\qquad \beta^i_{q,\tau_\blt}(z)\text{ is inside $\Gamma^i_{q,\tau_\blt}$ for all }(q,\tau_\blt)\in\MB\\
|z|>r_i\qquad\Longleftrightarrow\qquad \beta^i_{q,\tau_\blt}(z)\text{ is outside $\Gamma^i_{q,\tau_\blt}$ for all }(q,\tau_\blt)\in\MB
\end{gathered}
\end{gather}
\end{rem}

\begin{df}
We now construct a family $\fx=(\pi:\MC\rightarrow\MB|\sgm_1,\dots,\sgm_N;\eta_1,\dots,\eta_N)$, called the \textbf{Virasoro uniformization of $\fx_0$} by the non-autonomous flows $\beta^1,\dots,\beta^N$ (or by $h_1,\dots,h_N$). $\MB$ has already been defined.  For each $(q,\tau_\blt)\in\MB$, let
\begin{align*}
\mc R^i_{q,\tau_\blt}=\beta^i_{q,\tau_\blt}(\mc A^i)\cup\Omega^i_{q,\tau_\blt}
\end{align*}
Then the fiber $\MC_{q,\tau_\blt}$ is defined by (setting $\eps_i=\inf\{|z|:z\in\mc A^i\}$)
\begin{gather*}
\mc C^+_{q,\tau_\blt}=\pi_0^{-1}(\tau_\blt)\Big\backslash \bigcup_{i=1}^N\mu_i^{-1}(\ovl \MD_{\eps_i})\qquad \mc C^-_{q,\tau_\blt}=\bigsqcup_{i=1}^N\mc R^i_{q,\tau_\blt}\\ \MC_{q,\tau_\blt}=(\MC^+_{q,\tau_\blt}\sqcup \MC^-_{q,\tau_\blt})\big/\sim
\end{gather*}
where the gluing $\sim$ is given by the following biholomorphism between open subsets of $\MC^+_{q,\tau_\blt}$ and $\MC^-_{q,\tau_\blt}$:
\begin{align*}
\beta^i_{q,\tau_\blt}\circ\mu_i:\pi^{-1}_0(\tau_\blt)\cap\mu_i^{-1}(\mc A^i)\xlongrightarrow{\simeq}\beta^i_{q,\tau_\blt}(\mc A^i)
\end{align*}
(One needs \eqref{eqb61} to show that $\MC_{q,\tau_\blt}$ is (sequentially) compact Hausdorff.) Assembling these fibers together, one gets $\MC$. The map $\pi:\MC\rightarrow\MB$ is defined by sending each $\MC_{q,\tau_\blt}$ to $(q,\tau_\blt)$.

Recall that $0$ belongs to $\Omega_{q,\tau_\blt}^i$ (Rem. \ref{lbb29}). Then $\sgm_i$ is defined such that $\sgm_i(q,\tau_\blt)$ is $0\in\mc R^i_{q,\tau_\blt}$. Let
\begin{align*}
U_i=\bigcup_{(q,\tau_\blt)\in\MB}\mc R^i_{q,\tau_\blt}
\end{align*}
which is open in $\MC$ and contains $\sgm_i(\MB)$. The local coordinate $\eta_i\in\mc O(U_i)$ at $\sgm_i(\MB)$ is defined such that its restriction to $\mc R^i_{q,\tau_\blt}$ is the standard coordinate $z\mapsto z$. (Therefore, its restriction to $\pi^{-1}_0(\tau_\blt)\cap\mu_i^{-1}(\mc A^i)\subset\MC_{q,\tau_\blt}^+$ is $\eta_i=\beta^i_{q,\tau_\blt}\circ\mu_i$.)  \hfill\qedsymbol
\end{df}

\begin{rem}
Now one can define a lift $\xk\in H^0(\MC,\Theta_\MC(\blt\SX))$ of $\partial_q$ as follows. On the open subset $\dps\MC^+:=\bigcup_{(q,\tau_\blt)\in\MB}\MC^+_{q,\tau_\blt}$ of $\MC$, the vector field $\xk$ is $\partial_q$. In particular, on the open subset $\dps\bigcup_{(q,\tau_\blt)}\big(\pi^{-1}_0(\tau_\blt)\cap\mu_i^{-1}(\mc A^i)\big)$ of $\MC^+$ and under the coordinate system $(\mu_i,q,\tau_\blt)$, the vector field $\xk$ is $\partial_q$. Using \eqref{eqb62} and the change of variable formula for tangent vectors, one easily shows that under the coordinate system $(\eta_i=\beta^i_{q,\tau_\blt}\circ\mu_i,q,\tau_\blt)$ one has $\xk=h_i(\eta_i,q,\tau_\blt)\partial_{\eta_i}+\partial_q$ on this open set. Thus, the construction of $\xk$ is finished if we define $\xk$ on $\dps\bigcup_{(q,\tau_\blt)}\mc R^i_{q,\tau_\blt}$ (under the standard coordinates $(z,q,\tau_\blt)$) to be $h_i(z,q,\tau_\blt)\partial_z+\partial_q$.
\end{rem}

The above construction shows that $\xk$ satisfies \eqref{eqb63} in Sec. \ref{lbb30}, i.e. $\xk|_{U_i}=h_i(\eta_i,q,\tau_\blt)\partial_{\eta_i}+\partial_q$. Therefore, we have shown that the non-autonomous Virasoro uniformization gives rise to a family $\fk X$ and a lift $\xk$ of $\partial_q$ as in Sec. \ref{lbb30}. Identify $\MB_0$ with $\{0\}\times\MB_0$ in $\mc D_r\times\MB_0=\MB$. Then the family $\fx_0$ in Sec. \ref{lbb30} is the restriction of $\fx$ to $\MB_0$. Choose $\upphi_0\in H^0(\MB_0,\scr T_{\fx_0}^*(\Wbb))$. Let $\upphi$ be its formal parallel transport (Def. \ref{lbb31}), called the \textbf{Virasoro uniformization} of $\upphi_0$. Then Thm. \ref{lbb26} says that \textit{the Virasoro uniformization of $\upphi_0$ converges a.l.u. to a conformal block associated to $\fx$ and $\Wbb$}. 

\begin{rem}
Intuitively, the fiber $\fx_{q,\tau_\blt}$ is obtained from $\fx_{0,\tau_\blt}$ by changing each of the local coordinates $\mu_i$ to the parametrization $\beta^i_{q,\tau_\blt}\circ\mu_i$. Therefore, the formula \eqref{eqb53} (equivalently, \eqref{eqb54}) for $\upphi$ can be viewed as the \textbf{change of parametrization formula for conformal blocks}. In particular, if each $\beta^i$ is autonomous (i.e., $h_i$ is independent of $q$), Exp. \ref{lbb32} shows that $\upphi$ can be expressed as the exponential of a sum of Virasoro operators. See \cite{GuiLec} (especially Sec. 13) for more discussions on this topic.
\end{rem}

\subsection{Local freeness for smooth families}\label{lbb69}

We emphasize that in this section, unless otherwise stated, $\fx$ is not assumed to have local coordinates $\eta_\blt$ at $\sgm_\blt(\MB)$.

\begin{thm}\label{lbb35}
The $\MO_\MB$-module $\ST_\fx(\Wbb)$ is a locally free. Hence $\scr T_\fx^*(\Wbb)$ is locally free. 

Moreover, suppose that $\fx$ is equipped with local coordinates $\eta_\blt$ at $\sgm_\blt(\MB)$ so that we can make the identifications
\begin{gather}\label{eqb98}
\begin{gathered}
\SW_\fx(\Wbb)=\Wbb\otimes_\Cbb\MO_\MB\qquad\text{via }\mc U(\eta_\blt)\\
\SW_{\fx_b}(\Wbb)=\Wbb\qquad\text{via }\MU(\eta_\blt|_{\MC_b})
\end{gathered}
\end{gather}
for all $b\in\MB$. If we consider the stalk map
\begin{align}\label{eqb96}
\scr T_{\fx}^*(\Wbb)_b\rightarrow\scr T_{\fx_b}^*(\Wbb)
\end{align}
defined by the restriction maps $H^0(V,\scr T_\fx^*(\Wbb))\rightarrow\scr T_{\fx_b}^*(\Wbb)$ (for all open $V\ni b$) sending each $\upphi:\Wbb\rightarrow\mc O(V)$ to $\upphi(\cdot)|_b$, then \eqref{eqb96} descends to a linear isomorphism
\begin{align}\label{eqb97}
\scr T_\fx^*(\Wbb)\big|_b\xlongrightarrow{\simeq}\scr T_{\fx_b}^*(\Wbb)
\end{align}
Consequently, the function $b\in\MB\mapsto \dim \scr T_{\fx_b}^*(\Wbb)$ is locally constant. 
\end{thm}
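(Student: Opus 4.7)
Both conclusions are local on $\MB$, so by shrinking we may assume $\fx$ is equipped with local coordinates $\eta_\blt$ and $\MB$ is a polydisk $\MD_{r_1}\times\cdots\times\MD_{r_m}$ centered at a fixed point $b_0=0$; in particular $\MB$ is Stein. I will prove both statements simultaneously by induction on $m=\dim\MB$. The case $m=0$ is trivial. For the inductive step, write $\MB=\MD_{r_1}\times\MB_0$ with $\MB_0=\MD_{r_2}\times\cdots\times\MD_{r_m}$, set $q=\tau_1$, and let $\fx_0$ be the restriction of $\fx$ to $\{0\}\times\MB_0$ (equipped with the restricted local coordinates).

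\textbf{Extension step.} By the inductive hypothesis applied to $\fx_0$, the sheaf $\scr T_{\fx_0}(\Wbb)$ is locally free and the stalk-to-fiber map
\begin{align*}
\scr T_{\fx_0}^*(\Wbb)\big|_0\xlongrightarrow{\simeq}\scr T^*_{(\fx_0)_0}(\Wbb)=\scr T^*_{\fx_{b_0}}(\Wbb)
\end{align*}
is an isomorphism. Pick a basis $\upphi_0^{(1)},\dots,\upphi_0^{(d_0)}$ of $\scr T^*_{\fx_{b_0}}(\Wbb)$ where $d_0:=\dim\scr T^*_{\fx_{b_0}}(\Wbb)$. After shrinking $\MB_0$, local freeness of $\scr T_{\fx_0}^*(\Wbb)$ lets us lift these to sections $\wtd\upphi_0^{(j)}\in H^0(\MB_0,\scr T_{\fx_0}^*(\Wbb))$ whose restrictions at $0$ form a basis of $\scr T^*_{\fx_{b_0}}(\Wbb)$. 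Since $\MB$ is Stein, Rem.~\ref{lbb12} gives a lift $\xk\in H^0(\MC,\Theta_\MC(\blt\SX))$ of $\partial_q$. Thm.~\ref{lbb26} (convergence of formal parallel transport) then produces conformal blocks $\upphi^{(j)}\in H^0(\MB,\scr T_\fx^*(\Wbb))$ whose restrictions to the slice $\MB_0$ recover $\wtd\upphi_0^{(j)}$.

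\textbf{Semicontinuity and local freeness.} For each $b\in\MB$, the fiber evaluations $\upphi^{(1)}(\cdot)|_b,\dots,\upphi^{(d_0)}(\cdot)|_b$ are elements of $\scr T^*_{\fx_b}(\Wbb)$. They are linearly independent at $b_0$; since linear independence of finitely many sections of a holomorphic vector bundle (here, conformal blocks tested against a fixed basis of $\Wbb_{[\leq\lambda_\blt]}$ for large $\lambda_\blt$) is an open condition, after further shrinking $\MB$ they remain linearly independent fiberwise. Hence $\dim\scr T^*_{\fx_b}(\Wbb)\geq d_0$ throughout $\MB$, giving lower semicontinuity. Combined with the upper semicontinuity coming from Prop.~\ref{lbb22} applied to the finite-type sheaf $\scr T_\fx(\Wbb)$ (cf.~Cor.~\ref{locallyfree2} and Rem.~\ref{lbb33}), the rank function is locally constant. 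Lem.~\ref{lbb34} then yields local freeness of $\scr T_\fx(\Wbb)$, and dualizing gives local freeness of $\scr T_\fx^*(\Wbb)$. Finally, the $d_0$ sections $\upphi^{(j)}$ show that the stalk map $\scr T_\fx^*(\Wbb)_{b_0}\to\scr T^*_{\fx_{b_0}}(\Wbb)$ is surjective; it descends to a surjection $\scr T_\fx^*(\Wbb)|_{b_0}\twoheadrightarrow\scr T^*_{\fx_{b_0}}(\Wbb)$ between vector spaces of equal dimension $d_0$, hence an isomorphism, completing the induction.

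\textbf{Main obstacle.} The substantive content is the extension step, which rests entirely on the convergence of formal parallel transport (Thm.~\ref{lbb26})---without it one only obtains formal extensions of fiber conformal blocks, and no fiberwise dimension bound can be drawn. The induction on $\dim\MB$ is needed because Thm.~\ref{lbb26} propagates sections along a single $q$-direction at a time; reducing the base dimension by one via the slice $\MB_0$ and appealing to the inductive isomorphism \eqref{eqb97} on $\MB_0$ is what supplies the initial sections $\wtd\upphi_0^{(j)}$ to feed into the parallel transport. Everything after the extension is formal semicontinuity and dimension counting.
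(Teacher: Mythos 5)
Your proof is correct, and it reaches both conclusions by a route that is organized differently from the paper's. The paper splits the theorem into two separate steps: for local freeness it reduces to $\dim\MB=1$ by restricting to complex lines through a point, and then proves rank constancy by running the parallel transport of Thm.~\ref{lbb26} both forward and backward to produce explicit mutually inverse maps $F_p\colon\scr T^*_{\fx_0}(\Wbb)\to\scr T^*_{\fx_p}(\Wbb)$ and $G_0$ (so no semicontinuity argument is needed); surjectivity of the stalk map \eqref{eqb96} is then proved afterwards by a separate coordinate-by-coordinate extension, with injectivity obtained from the dimension count $\dim\scr T_\fx^*(\Wbb)|_b=\dim\scr T_\fx(\Wbb)|_b=\dim\scr T_{\fx_b}(\Wbb)$. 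You instead run a single induction on $\dim\MB$, extend a full basis from the slice $\MB_0$ at once, and get lower semicontinuity of the rank from fiberwise linear independence of the extended sections, pairing it with the upper semicontinuity of Prop.~\ref{lbb22}. Both arguments have the same engine (Thm.~\ref{lbb26}); the paper's version avoids the one point where your write-up is slightly under-justified, namely the claim that fiberwise linear independence of the $\upphi^{(j)}|_b$ is open. At that stage local freeness is not yet known, so you cannot literally speak of sections of a vector bundle; the correct justification, which your parenthetical gestures at, is that by Thm.~\ref{lbb21}, Cor.~\ref{locallyfree2} and Rem.~\ref{lbb33} the images of a basis of $\Wbb_{[\leq\mu_\blt]}$ span every $\scr T_{\fx_b}(\Wbb)$, so independence of the $\upphi^{(j)}|_b$ is detected by the rank of a finite matrix of holomorphic functions and is therefore open. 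With that spelled out, your argument is complete.
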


Note that the second paragraph describes how the space of conformal blocks $\scr T_{\fx_b}^*(\Wbb)$ can be viewed canonically as the fiber of the vector bundle $\scr T_\fx^*(\Wbb)$ at $b$.

\begin{proof}
Step 1. It suffices to assume that $\fx$ has local coordinates and assume the identifications \eqref{eqb98}. Let us prove that $\ST_\fx(\Wbb)$ is locally free. By Lem. \ref{lbb34}, it suffices to prove that the rank function $\Rbf:b\in\MB\mapsto\dim\scr T_{\fx_b}^*(\Wbb)$ is locally constant. Thus we may assume that $\MB$ is the open ball $B_r=\{\tau_\blt\in\Cbb^m:|\tau_\blt|<r\}$ and prove that $\Rbf(a_\blt)=\Rbf(0)$ for each $a_\blt=(a_1,\dots,a_m)\in B_r$. Since $\zeta\in\mc D_{r/|a_\blt|}\mapsto \zeta\cdot a_\blt$ parametrizes a closed $1$-dimensional submanifold of $B_r$ containing $a_\blt$, it suffices to prove that $\Rbf$ is constant on this Riemann surface. Therefore, it suffices to prove that $\Rbf$ is locally constant under the assumption that $\dim\MB=1$.

We want to prove that each point of $\MB$ has a neighborhood on which $\Rbf$ is constant. Thus we may assume that $\fx$ has local coordinates $\eta_\blt$ at $\sgm_\blt(\MB)$, that $\MB=\mc D_{2r}$, and prove that $\Rbf(p)=\Rbf(0)$ for each $p\in\MD_r$. Let $q$ be the standard coordinate of $\MB$. Then by Rem. \ref{lbb12}, $\partial_q$ has a lift $\xk$.  Fix $p\in\MD_r$. By Thm. \ref{lbb26}, for each $\uppsi\in \scr T_{\fx_0}^*(\Wbb)$, we have $F\uppsi\in H^0(\mc D_r,\scr T_\fx^*(\Wbb))$ satisfying the differential equation \eqref{eqb53} (defined by $\xk$) with initial condition $F\uppsi|_{q=0}=\uppsi$. Similarly, let $\mc D_r(p)=\{z\in\Cbb:|z-p|<r\}$, then for each $\upomega\in\scr T_{\fx_p}^*(\Wbb)$ we have $G\upomega\in H^0(\mc D_r(p),\scr T_\fx^*(\Wbb))$ satisfying the same differential equation \eqref{eqb53} but with initial condition $G\upomega|_{q=p}=\upomega$. 

Define linear maps
\begin{align*}
F_p:\scr T_{\fx_0}^*(\Wbb)\rightarrow\scr T_{\fx_p}^*(\Wbb)\qquad\text{ resp.}\qquad G_0:\scr T_{\fx_p}^*(\Wbb)\rightarrow\scr T_{\fx_0}^*(\Wbb)
\end{align*}
by $F_q\uppsi=F\uppsi|_p$ resp. $G_0\upomega=G\upomega|_0$. Then for each $\upomega\in\scr T_{\fx_p}^*(\Wbb)$, clearly $F(G_0\upomega)\in H^0(\mc D_r,\scr T_\fx^*(\Wbb))$ and $G\upomega\in H^0(\mc D_r(p),\scr T_\fx^*(\Wbb))$ satisfy the same differential equation and the same initial condition at $q=0$. So the series expansions of $FG_0\upomega$ and $G\upomega$ at $0$ both satisfy \eqref{eqb54}, and hence are equal. So $FG_0\upomega$ equals $G\upomega$ on $\mc D_r\cap\mc D_r(p)$, and hence at $p$. This shows $F_pG_0\upomega=G\upomega|_p=\upomega$. Thus $F_pG_0=\id$. A similar analysis for $GF_p\uppsi$ and $F\uppsi$ shows that $G_0F_p=\id$. So $F_p$ is a linear isomorphism, and hence $\Rbf(p)=\Rbf(0)$. \\[-1ex]

Step 2. The map \eqref{eqb96} clearly sends $\mk_{\MB,b}\ST^*_\fx(\Wbb)_b$ to zero, and hence descends to \eqref{eqb97}.  Let us prove that \eqref{eqb97} is an isomorphism. Since $\ST_\fx(\Wbb)$ and $\ST^*_\fx(\Wbb)$ are locally free, we have $\dim\ST_\fx^*(\Wbb)|_b=\dim\ST_\fx(\Wbb)|_b$. By \cite[Prop. 2.19]{GZ1}, we have $\dim\ST_\fx(\Wbb)|_b=\dim\ST_{\fx_b}(\Wbb)$. Clearly $\ST_{\fx_b}(\Wbb)$ and its dual space $\ST^*_{\fx_b}(\Wbb)$ have the same dimension. Therefore, the domain of the codomain of \eqref{eqb97} have the same dimension.

Let us prove that \eqref{eqb96} is surjective. Then \eqref{eqb97} is also surjective, and hence is a linear isomorphism. Without loss of generality, we assume that $\MB=\MD_{\eps_1}\times\cdots\times\MD_{\eps_m}$ and $b=0=(0,\dots,0)$. Choose any $\uppsi_0\in\ST^*_{\fx_0}(\Wbb)$. Let $0\leq j\leq m$ and $\MB_j=\MD_{\eps_1}\times\cdots\times\MD_{\eps_j}$. (We understand $\MB_0$ as $\{0\}$.) Suppose that $\uppsi_j:\Wbb\rightarrow\MO(\MB_j)$ is a conformal block (i.e., an element of $H^0(\MB_j,\ST^*_{\fx_{\MB_j}}(\Wbb))$) such that $\uppsi_j|_0=\uppsi_0$. Then, by Thm. \ref{lbb26}, $\uppsi_j$ can be extended to a conformal block $\uppsi_{j+1}:\Wbb\rightarrow\MO(\MB_{j+1})$. It follows that we have $\uppsi\in H^0(\MB,\ST_\fx^*(\Wbb))$ such that $\uppsi|_0=\uppsi_0$. This proves the surjectivity of \eqref{eqb96}.
\end{proof}

\begin{co}
Let $(C;x_1,\dots,x_N)$ be an $N$-pointed compact Riemann surface. Associate $\Wbb$ to $x_\blt$. Then the finite number $\dim\scr T_{(C;x_\blt)}^*(\Wbb)$ depends only on the \textbf{topology of $(C;x_\blt)$}, namely,  the number of components of $C$, the topology of each component of $C$, and the subset of all $i\in\{1,\dots,N\}$ such that $x_i$ is contained in a given component.
\end{co}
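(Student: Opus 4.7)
The plan is to apply Theorem \ref{lbb35} to a connected analytic family realizing the given pointed surfaces as fibers. Specifically, given a topological type as in the statement, I will construct a smooth analytic family $\fx=(\pi:\MC\rightarrow\MB|\sgm_\blt)$ of $N$-pointed compact Riemann surfaces over a connected base $\MB$ such that every pointed Riemann surface of that topological type appears as a fiber. Theorem \ref{lbb35} then says the rank function $b\mapsto\dim\scr T^*_{\fx_b}(\Wbb)$ is locally constant on $\MB$; since $\MB$ is connected, it is constant, which gives the desired topological invariance.

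The construction proceeds componentwise. The topological data is equivalent to a partition $\{1,\ldots,N\}=\bigsqcup_{\alpha=1}^k I_\alpha$ together with a genus $g_\alpha\in\Nbb$ attached to each block. For each $\alpha$, I would invoke classical Teichm\"uller theory to obtain a connected complex manifold $\mc B_\alpha$ carrying a universal analytic family $\fx_\alpha$ of $|I_\alpha|$-pointed genus-$g_\alpha$ compact Riemann surfaces, in which every such pointed surface appears as some fiber (the marked points of $\fx_\alpha$ labelled by the elements of $I_\alpha$). Setting $\MB=\prod_\alpha \mc B_\alpha$, which is connected as a product of connected manifolds, and forming the fiberwise disjoint union of the pullbacks of the $\fx_\alpha$ along the projections $\MB\twoheadrightarrow\mc B_\alpha$, we obtain the desired family $\fx$, whose sections $\sgm_i$ for $i\in I_\alpha$ come from the $\alpha$-th factor. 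Every $(C;x_\blt)$ of the fixed topological type is then realized as the fiber over some point of $\MB$.

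The main potential obstacle is verifying the existence of the component families $\fx_\alpha$ as analytic families in the precise sense required by this paper (i.e., proper holomorphic submersions with disjoint sections and smooth fibers). This is classical: when $2g_\alpha-2+|I_\alpha|>0$ the Teichm\"uller space $\mc T_{g_\alpha,|I_\alpha|}$ is biholomorphic to a bounded domain in $\Cbb^{3g_\alpha-3+|I_\alpha|}$ and carries a universal analytic family with the required properties. The remaining low-dimensional cases (spheres with at most two marked points, and genus-one surfaces with no marked points) admit direct explicit constructions, e.g.\ the Weierstrass family over the upper half-plane. Finally, the standing assumption \eqref{eqb52}, that each component of each fiber contains at least one marked point, holds because the blocks $I_\alpha$ of the given topology are non-empty, this non-emptiness being implicit in the paper's standing framework for conformal blocks.
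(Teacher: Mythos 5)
Your proposal is correct and follows essentially the same route as the paper: the paper's proof likewise realizes all pointed surfaces of the given topological type as fibers of a single family over a product of Teichm\"uller spaces (hence a connected base) and then invokes the local constancy of $b\mapsto\dim\scr T^*_{\fx_b}(\Wbb)$ from Theorem \ref{lbb35}. Your version merely spells out the componentwise construction and the low-dimensional cases that the paper leaves implicit.
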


Thus, $\dim\scr T_\fx^*(\Wbb)$ is independent of the complex structure of $C$, the position of each $x_i$ (as long as it is always on the same component), and the local coordinates at $x_\blt$.

\begin{proof}
All $N$-pointed compact Riemann surfaces (with certain extra structures) that have the same topology as $(C;x_\blt)$ form a family $\fx$ whose base manifold is a product of Teichm\"uller spaces $\mathcal T_{g,n}$ and hence is connected. Therefore, by Thm. \ref{lbb35}, all fibers of $\fx$ have the same dimension of space of conformal blocks.
\end{proof}

\subsection{Application: $\bbs_\fx(\Wbb)$ for a smooth family $\fx$}

The goal of this section is to establish the existence of dual fusion products of $\Wbb$ along a family of compact Riemann surfaces, generalizing the corresponding result from \cite{GZ1} for a single compact Riemann surface. The results of this section will be needed in the third paper of the series to prove the sewing-factorization for families.

We consider a setting different from Asmp. \ref{lbb1}. Choose a family of $(M,N)$-pointed compact Riemann surfaces with local coordinates (recall Def. 2.8 in \cite{GZ1})
$$
\fx=(\tau_\star;\theta_\star\big| \pi:\MC\rightarrow \MB\big| \varsigma_\blt;\eta_\blt)=(\tau_1,\cdots,\tau_M;\theta_1,\cdots,\theta_M\big| \pi:\MC\rightarrow \MB\big| \varsigma_1,\cdots,\varsigma_N).
$$
where $M,N\geq0$. Assume that for each $b\in\MB$, each connected component of $\MC_b$ intersects one of $\sgm_1(\MB),\dots,\sgm_N(\MB)$. 

Recall that $\Vbb$ is $C_2$-cofinite. Associate the grading restricted $\Vbb^{\otimes N}$-module $\Wbb$ to the ordered incoming marked points $\sgm_\blt(\MB)$. When $\MB$ is a single point, we have proved in \cite[Thm. 3.31]{GZ1} that $\Wbb$ has a dual fusion product along $\fx$ in the following sense:

\begin{df}\label{lbb36}
Assume that $\MB$ is a single point. A \textbf{dual fusion product} denotes a pair $(\bbs_\fx(\Wbb),\gimel)$ where $\bbs_\fx(\Wbb)$ is a grading restricted $\Vbb^{\otimes M}$-module associated to $\tau_\star(\MB)$, and $\gimel\in\scr T_\fx^*(\Wbb\otimes\bbs_\fx(\Wbb))$ satisfying the universal property:
\begin{itemize}
\item For each $\upphi\in\scr T_\fx^*(\Wbb\otimes\Mbb)$ (where $\Mbb$ is a grading restricted $\Vbb^{\otimes M}$-module associated to $\tau_\star(\MB)$) there exists a unique $T\in\Hom_{\Vbb^{\otimes M}}(\Mbb,\bbs_\fx(\Wbb))$ such that $\upphi=\gimel\circ(\idt_\Wbb\otimes T)$
\end{itemize}
We abbreviate $(\bbs_\fx(\Wbb),\gimel)$ to $\bbs_\fx(\Wbb)$ when no confusion arises. The contragredient $\Vbb^{\otimes M}$-module of $\bbs_\fx(\Wbb)$ is denoted by $\boxtimes_\fx(\Wbb)$ and called the \textbf{fusion product} of $\Wbb$ along $\fx$.
\end{df}

Here, $\Wbb\otimes\bbs_\fx(\Wbb)$ and $\Wbb\otimes\Mbb$ are both associated to the ordered marked points $\sgm_1(\MB),\dots,\sgm_N(\MB),\tau_1(\MB),\dots,\tau_M(\MB)$. (Note the change in the order of marked points.)

\begin{eg}
When $M=0$, then $\Vbb^{\otimes M}=\Cbb$. A grading restricted $\Cbb$-module is understood as a finite-dimensional $\Cbb$-vector space. Thus, if $\MB$ is a single point, then
\begin{align*}
(\scr T_\fx^*(\Wbb),\gimel)\text{ is a dual fusion product of $\Wbb$ along $\fx$}
\end{align*}
where $\gimel\in \scr T_\fx^*(\Wbb\otimes_\Cbb\scr T_\fx^*(\Wbb))$ is defined by
\begin{align*}
\gimel:\Wbb\otimes\scr T_\fx^*(\Wbb)\rightarrow\Cbb\qquad w\otimes\upphi\mapsto\upphi(w)
\end{align*}
Equivalently, noting the linear isomorphism $\scr T_\fx^*(\Wbb\otimes\scr T_\fx^*(\Wbb))\simeq \scr T_\fx^*(\Wbb)\otimes_\Cbb \scr T_\fx(\Wbb)$, then $\gimel=\sum_i\upphi_i\otimes\wch\upphi^i$ where $(\upphi_i)$ is a (finite) basis of $\scr T_\fx^*(\Wbb)$ with dual basis $(\wch\upphi^i)$ in $\scr T_\fx(\Wbb)$.
\end{eg}

\begin{rem}\label{lbb55}
Assume that $\MB$ is a single point, and let $(\bbs_\fx(\Wbb),\gimel)$ be a dual fusion product. Then $\gimel:\Wbb\otimes\bbs_\fx(\Wbb)\rightarrow\Cbb$ is \textbf{partially injective} in the sense that
\begin{align}\label{eqb67}
\{\xi\in\bbs_\fx(\Wbb):\gimel(w\otimes\xi)=0\text{ for all }w\in\Wbb\}
\end{align}
is zero. This follows from \cite[Ch. 3]{GZ1}, where $\bbs_\fx(\Wbb)$ is constructed as a linear subspace of $\Wbb^*$, and the standard pairing $\Wbb\otimes\Wbb^*\rightarrow\Cbb$ restricts to $\gimel$. There is an equivalent formulation of partial injectivity injectivity. Recall \eqref{eqb66} for the meaning of $P_{\leq\lambda_\blt}$. Note that $\gimel$ gives rise to a linear map
\begin{align*}
\gimel^\sharp:\Wbb\rightarrow\bbs_\fx(\Wbb)^*=\ovl{\boxtimes_\fx(\Wbb)}\qquad w\mapsto \gimel(w\otimes-)
\end{align*}
Then for each $\lambda_\blt\in\Cbb^M$, the map $P_{\leq\lambda_\blt}\circ\gimel^\sharp:\Wbb\rightarrow \boxtimes_\fx(\Wbb)_{[\leq\lambda_\blt]}$ is surjective.
\end{rem}

We now generalize Def. \ref{lbb36} to the case that $\fx$ is a family.

\begin{df}\label{lbb38}
The \textbf{dual fusion product} of $\Wbb$ along $\fx$ is a pair $(\bbs_\fx(\Wbb),\gimel)$ where $\bbs_\fx(\Wbb)$ is a grading restricted $\Vbb^{\otimes M}$-module, $\gimel\in H^0(\MB,\scr T^*_\fx(\Wbb\otimes\bbs_\fx(\Wbb)))$, such that for each $b\in\MB$, the pair $(\bbs_\fx(\Wbb),\gimel|_b)$ is a dual fusion product of $\Wbb$ along $\fx_b$. The contragredient $\Vbb^{\otimes M}$-module of $\bbs_\fx(\Wbb)$ is denoted by $\boxtimes_\fx(\Wbb)$ and called the \textbf{fusion product} of $\Wbb$ along $\fx$.
\end{df}

The following proposition will be a special case of the sewing-factorization theorem for families to be proved in the third paper of this series.

\begin{df}
Let $\Wbb_1,\Wbb_2$ be $\Vbb^{\otimes N}$-modules such that $\Hom_{\Vbb^{\otimes N}}(\Wbb_1,\Wbb_2)<+\infty$ (e.g. when $\Wbb_1,\Wbb_2$ are grading restricted). Then for each complex manifold $X$, the \textbf{holomorphicity} of each map $X\rightarrow \Hom_{\Vbb^{\otimes N}}(\Wbb_1,\Wbb_2)$ can be defined in the obvious way as a vector-valued holomorphic function. 
\end{df}

\begin{pp}\label{lbb39}
Let $(\bbs_\fx(\Wbb),\gimel)$ be a dual fusion product of $\Wbb$ along $\fx$. Then for each grading restricted $\Vbb^{\otimes M}$-module $\Mbb$ and each $\upphi\in H^0(\MB,\scr T_\fx^*(\Wbb\otimes\Mbb))$, there exists a unique holomorphic map $b\in\MB\mapsto T_b\in \Hom_{\Vbb^{\otimes M}}(\Mbb,\bbs_\fx(\Wbb))$ such that $\upphi|_b=\gimel|_b\circ(\idt\otimes T_b)$ for each $b\in\MB$.
\end{pp}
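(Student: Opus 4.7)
The plan is to combine the fiberwise universal property (which yields unique $T_b$ pointwise) with a local coordinate argument that uses the partial injectivity of $\gimel|_b$ to express $T_b$ in terms of the holomorphic data $\upphi$ and $\gimel$.

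First, for each $b \in \MB$, the universal property of the dual fusion product $(\bbs_\fx(\Wbb),\gimel|_b)$ applied to $\upphi|_b \in \scr T_{\fx_b}^*(\Wbb\otimes\Mbb)$ produces a unique $T_b \in \Hom_{\Vbb^{\otimes M}}(\Mbb,\bbs_\fx(\Wbb))$ with $\upphi|_b = \gimel|_b \circ (\idt_\Wbb \otimes T_b)$. This already gives uniqueness of the map $b \mapsto T_b$, so only holomorphicity remains. Since $T_b$ is a $\Vbb^{\otimes M}$-module homomorphism, it commutes with each $L_j(0)$ and hence preserves the joint generalized eigenspace decomposition; in particular, for every $\lambda_\blt \in \Cbb^M$, $T_b$ restricts to a linear map $\Mbb_{[\leq\lambda_\blt]} \to \bbs_\fx(\Wbb)_{[\leq\lambda_\blt]}$ between finite-dimensional spaces. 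It therefore suffices to show that, for every $\lambda_\blt$ and every $m \in \Mbb_{[\leq\lambda_\blt]}$, the map $b \mapsto T_b(m) \in \bbs_\fx(\Wbb)_{[\leq\lambda_\blt]}$ is holomorphic.

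Fix $b_0 \in \MB$ and $\lambda_\blt$. Since the family is equipped with local coordinates at all marked points, we have the identifications $\scr W_\fx(\Wbb\otimes\bbs_\fx(\Wbb)) \simeq \Wbb\otimes\bbs_\fx(\Wbb)\otimes_\Cbb\MO_\MB$ and similarly for $\Wbb\otimes\Mbb$, so the sections $\gimel$ and $\upphi$ are linear maps into $\MO(\MB)$ (cf. Rem. \ref{lbb27}). By the partial injectivity reformulation of Rem. \ref{lbb55}, the map $P_{\leq\lambda_\blt}\circ \gimel^\sharp|_{b_0}:\Wbb \to \boxtimes_{\fx_{b_0}}(\Wbb)_{[\leq\lambda_\blt]}$ is surjective. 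Choose a basis $\{\xi_1,\dots,\xi_n\}$ of $\bbs_\fx(\Wbb)_{[\leq\lambda_\blt]}$ with dual basis $\{\xi_1^*,\dots,\xi_n^*\}$ of $\boxtimes_\fx(\Wbb)_{[\leq\lambda_\blt]}$, and pick $w_1,\dots,w_n \in \Wbb$ with $P_{\leq\lambda_\blt}\gimel^\sharp|_{b_0}(w_j) = \xi_j^*$. Define a holomorphic map of matrices
\begin{align*}
\Phi: \MB \to \Hom_\Cbb\big(\bbs_\fx(\Wbb)_{[\leq\lambda_\blt]},\Cbb^n\big),\qquad \Phi(b)(\xi) = \big(\gimel(w_j\otimes\xi)(b)\big)_{j=1}^n.
\end{align*}
The choice of $w_\blt$ forces $\Phi(b_0)$ to be the identity matrix with respect to $\{\xi_k\}$ and the standard basis of $\Cbb^n$, hence invertible. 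By holomorphy of $\Phi$ (each entry is a value of $\gimel$ on a fixed vector), $\Phi(b)$ remains invertible on an open neighborhood $V\ni b_0$ and $b\mapsto \Phi(b)^{-1}$ is holomorphic on $V$.

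For every $m \in \Mbb_{[\leq\lambda_\blt]}$ and every $b\in V$, the defining relation $\upphi|_b = \gimel|_b\circ(\idt\otimes T_b)$ gives $\Phi(b)(T_b(m)) = (\upphi(w_j\otimes m)(b))_{j=1}^n$. The right-hand side is holomorphic in $b$, so
\begin{align*}
T_b(m) = \Phi(b)^{-1}\big(\upphi(w_1\otimes m),\dots,\upphi(w_n\otimes m)\big)(b)
\end{align*}
is holomorphic on $V$. Since $b_0\in\MB$ and $\lambda_\blt$ were arbitrary, $b\mapsto T_b$ is holomorphic on all of $\MB$. The main (really the only) subtle point is setting up $\Phi$ so that its invertibility is both ensured at $b_0$ by partial injectivity and automatic on a neighborhood by holomorphy; once this is in place, the formula for $T_b(m)$ is just linear algebra.
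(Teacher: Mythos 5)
Your proof is correct, but it takes a genuinely different route from the paper's. The paper defines the linear map $\Psi:S\mapsto\gimel\circ(\idt\otimes S)$ from the finite-dimensional space $\Hom_{\Vbb^{\otimes M}}(\Mbb,\bbs_\fx(\Wbb))$ into $H^0(\MB,\scr T_\fx^*(\Wbb\otimes\Mbb))$, observes that each $\Psi|_b$ is an isomorphism onto $\scr T_{\fx_b}^*(\Wbb\otimes\Mbb)$ by the fiberwise universal property, and then invokes the fiber isomorphism \eqref{eqb97} from Thm. \ref{lbb35} to conclude that $\Psi(S_1),\dots,\Psi(S_n)$ freely generate the $\mc O_\MB$-module $\scr T_\fx^*(\Wbb\otimes\Mbb)$; writing $\upphi=\sum_j f_j\Psi(S_j)$ with $f_j\in\mc O(\MB)$ then gives $T_b=\sum_j f_j(b)S_j$ at once. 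You instead take the pointwise $T_b$ from the universal property as given and prove its holomorphy directly: you use the partial injectivity of $\gimel|_{b_0}$ (Rem. \ref{lbb55}, which does transfer from the explicit construction in \cite{GZ1} to an arbitrary dual fusion product via the canonical isomorphism of Cor. \ref{lbb40}) to build a dual frame $w_1,\dots,w_n$, and you solve for $T_b(m)$ by inverting the holomorphic matrix $\Phi(b)$ near $b_0$. Your argument buys independence from Thm. \ref{lbb35} (hence from the convergence of Virasoro uniformization on which the local freeness rests), needing only the fiberwise universal property, partial injectivity at one point, and elementary linear algebra; the price is a longer local computation and the small bookkeeping step (which you correctly note) that holomorphy of $b\mapsto T_b(m)$ on a generating finite-dimensional graded piece $\Mbb_{[\leq\lambda_\blt]}$ suffices for holomorphy of $b\mapsto T_b$ as a map into the finite-dimensional $\Hom$ space. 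The paper's route is shorter given the machinery already established and additionally exhibits $\Psi(S_\blt)$ as a global free generating set of the sheaf, which is slightly more than the proposition asserts.
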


\begin{proof}
The uniqueness is obvious. As for the existence, consider the linear map
\begin{align*}
\Psi:\Hom_{\Vbb^{\otimes M}}(\Mbb,\bbs_\fx(\Wbb))\rightarrow H^0(\MB,\scr T_\fx^*(\Wbb\otimes\Mbb))\qquad S\mapsto \gimel\circ(\idt\otimes S)
\end{align*}
which (by Def. \ref{lbb38}) restricts to a linear isomorphism
\begin{align*}
\Psi|_b:\Hom_{\Vbb^{\otimes M}}(\Mbb,\bbs_\fx(\Wbb))\rightarrow \scr T_{\fx_b}^*(\Wbb\otimes\Mbb)\qquad S\mapsto\gimel|_b\circ(\idt\otimes S)
\end{align*}
for each $b\in\MB$. Therefore, if $S_1,\dots,S_n$ are a basis of $\Hom_{\Vbb^{\otimes M}}(\Mbb,\bbs_\fx(\Wbb))$, then $\Psi|_b(S_1),\dots,\Psi|_b(S_n)$ are a basis of $\scr T_{\fx_b}^*(\Wbb\otimes\Mbb)$. Thus, in view of the isomorphism \eqref{eqb97}, we see that $\Psi(S_1),\dots,\Psi(S_n)$ are a free generator of the $\mc O_\MB$-module $\scr T_\fx^*(\Wbb\otimes\Mbb)$, i.e., the $\MO_\MB$-module morphism $\MO_\MB^n\rightarrow \scr T_\fx^*(\Wbb\otimes\Mbb)$ sending each $(f_1,\dots,f_n)$ to $\sum_j f_j\Psi(S_j)$ is an isomorphism. Thus $\upphi$ can be uniquely written as $\sum_{j=1}^n f_j\Psi(S_j)$ where $f_j\in\mc O(\MB)$. The existence of $T$ follows from setting $T_b=\sum_j f_j(b)S_j$ for each $b$.
\end{proof}

\begin{co}\label{lbb40}
Let $(\bbs_\fx(\Wbb),\gimel)$ and $(\wtd\bbs_\fx(\Wbb),\wtd\gimel)$ be dual fusion products of $\Wbb$ along $\fx$. Then there exists a unique holomorphic map $b\in\MB\mapsto \Phi_b\in\Hom_{\Vbb^{\otimes M}}(\wtd\bbs_\fx(\Wbb),\bbs_\fx(\Wbb))$ such that for each $b\in\MB$, we have that $\wtd\gimel|_b=\gimel|_b\circ(\idt\otimes \Phi_b)$, and that $\Phi_b$ is a $\Vbb^{\otimes\Mbb}$-module isomorphism.
\end{co}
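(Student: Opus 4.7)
The plan is to produce $\Phi_b$ by applying Proposition \ref{lbb39} once, obtain a candidate inverse by applying it again with the roles of the two dual fusion products swapped, and then use the uniqueness clause of Proposition \ref{lbb39} to show the two maps compose to the identity on each side.

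First I would apply Proposition \ref{lbb39} to the dual fusion product $(\bbs_\fx(\Wbb),\gimel)$ and to the conformal block $\upphi=\wtd\gimel\in H^0(\MB,\scr T_\fx^*(\Wbb\otimes\wtd\bbs_\fx(\Wbb)))$, taking $\Mbb=\wtd\bbs_\fx(\Wbb)$. This produces a unique holomorphic map
\[
b\in\MB\mapsto\Phi_b\in\Hom_{\Vbb^{\otimes M}}\bigl(\wtd\bbs_\fx(\Wbb),\bbs_\fx(\Wbb)\bigr)
\qquad\text{with}\qquad
\wtd\gimel|_b=\gimel|_b\circ(\idt_\Wbb\otimes\Phi_b).
\]
Symmetrically, applying Proposition \ref{lbb39} to the other dual fusion product $(\wtd\bbs_\fx(\Wbb),\wtd\gimel)$ and to $\upphi=\gimel$, I obtain a unique holomorphic map $b\mapsto\Psi_b\in\Hom_{\Vbb^{\otimes M}}(\bbs_\fx(\Wbb),\wtd\bbs_\fx(\Wbb))$ with $\gimel|_b=\wtd\gimel|_b\circ(\idt_\Wbb\otimes\Psi_b)$.

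Composing the two identities gives
\[
\gimel|_b=\wtd\gimel|_b\circ(\idt\otimes\Psi_b)=\gimel|_b\circ\bigl(\idt\otimes(\Phi_b\circ\Psi_b)\bigr),
\]
and similarly $\wtd\gimel|_b=\wtd\gimel|_b\circ(\idt\otimes(\Psi_b\circ\Phi_b))$. On the other hand, the identity morphisms $\idt_{\bbs_\fx(\Wbb)}$ and $\idt_{\wtd\bbs_\fx(\Wbb)}$ obviously satisfy the same relations. Applying the uniqueness clause of Proposition \ref{lbb39} to each of the two dual fusion products (with $\Mbb=\bbs_\fx(\Wbb)$ and $\Mbb=\wtd\bbs_\fx(\Wbb)$ respectively) yields $\Phi_b\circ\Psi_b=\idt_{\bbs_\fx(\Wbb)}$ and $\Psi_b\circ\Phi_b=\idt_{\wtd\bbs_\fx(\Wbb)}$ for every $b\in\MB$. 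Hence each $\Phi_b$ is a $\Vbb^{\otimes M}$-module isomorphism, with inverse $\Psi_b$ depending holomorphically on $b$. Uniqueness of $\Phi_b$ itself is already part of Proposition \ref{lbb39}.

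There is no genuine obstacle here; the only subtle point is making sure the uniqueness clause of Proposition \ref{lbb39} is applied in the correct direction (i.e.\ to the dual fusion product whose universal element stands on the left of the composition), so that both compositions $\Phi_b\circ\Psi_b$ and $\Psi_b\circ\Phi_b$ can be compared with the tautological identity solutions. Once this is done, the statement is an immediate consequence of Proposition \ref{lbb39}.
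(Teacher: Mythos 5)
Your proposal is correct and follows essentially the same route as the paper: apply Proposition \ref{lbb39} in each direction to obtain $\Phi$ and $\Psi$, then use the uniqueness clause (applied to $\gimel|_b=\gimel|_b\circ(\idt\otimes(\Phi_b\circ\Psi_b))$ and its counterpart) to conclude both compositions are the identity. No gaps.
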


We call $\Phi$ the \textbf{canonical isomorphism} from $(\wtd\bbs_\fx(\Wbb),\wtd\gimel)$ to $(\bbs_\fx(\Wbb),\gimel)$.

\begin{proof}
The existence and uniqueness of $\Phi$ satisfying the desired properties, except that $\Phi_b$ is bijective, is clear from Prop. \ref{lbb39}. Similarly, there is a unique $\Psi:\bbs_\fx(\Wbb)\rightarrow\wtd\bbs_\fx(\Wbb)$ satisfying all the properties of a canonical isomorphism except that $\Psi_b$ is bijective. Thus $\gimel_b=\gimel_b\circ(\idt\otimes(\Phi_b\circ\Psi_b))$, which shows that $\Phi_b\circ\Psi_b=\idt$. Similarly $\Psi_b\circ\Phi_b=\idt$. Therefore $\Phi_b$ is bijective. 
\end{proof}

When $\dim\MB>0$, a dual fusion product does not necessarily exist.  However, the following theorem shows that the existence holds locally. Therefore, using the canonical isomorphisms in Cor. \ref{lbb40}, one can glue all $\bbs_{\fx_V}(\Wbb)\otimes_\Cbb \mc O_V$ together (for all open $V\subset\MB$ such that a dual fusion product $\bbs_{\fx_V}(\Wbb)$ exists for $\Wbb$ along $\fx_V$) and get a \textbf{bundle of dual fusion products}. (More precisely, it is an infinite-rank locally free $\mc O_\MB$-module.) We will not explore this topic further in this paper.

\begin{thm}\label{lbb37}
Assume that $\MB=\mc D_r\times\MB_0$ where $\MB_0$ is a Stein manifold and $0<r\leq +\infty$. Let
\begin{align*}
\fx_0=(\tau_\star;\theta_\star|\pi_0:\MC_0\rightarrow\MB_0|\sgm_\blt;\eta_\blt)
\end{align*}
be the restriction of $\fx$ to $\MB_0\simeq\{0\}\times\MB_0$ as in \eqref{eqb65}. Assume that $\Wbb$ has a dual fusion product $(\bbs_{\fx_0}(\Wbb),\gimel_0)$ along $\fx_0$. Then $\Wbb$ has a dual fusion product $(\bbs_\fx(\Wbb),\gimel)$ along $\fx$.
\end{thm}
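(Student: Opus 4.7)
The plan is to set $\bbs_\fx(\Wbb):=\bbs_{\fx_0}(\Wbb)$ as a $\Vbb^{\otimes M}$-module, to construct $\gimel$ as the parallel transport of $\gimel_0$ in the $q$-direction, and to verify the fiberwise universal property via the observation that $\Vbb^{\otimes M}$-module homomorphisms applied to the outgoing factor commute with parallel transport.

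Since $\MB_0$ is Stein, so is $\MB=\mc D_r\times\MB_0$, and Rem. \ref{lbb12} provides a lift $\xk\in H^0(\MC,\Theta_\MC(\blt\SX))$ of $\partial_q$; this determines a differential operator $\nabla_{\partial_q}$ on $\scr W_\fx(-)$ which descends to $\scr T_\fx(-)$ (Thm. \ref{independent1}) and dualizes to $\scr T_\fx^*(-)$. As $\Wbb\otimes\bbs_{\fx_0}(\Wbb)$ is a grading-restricted $\Vbb^{\otimes(N+M)}$-module, Thm. \ref{lbb26}, applied to the $(M+N)$-pointed family $\fx$ with local coordinates $(\eta_\blt,\theta_\star)$, shows that the formal parallel transport of $\gimel_0$ converges a.l.u. and yields
\[
\gimel\in H^0\big(\MB,\scr T_\fx^*(\Wbb\otimes\bbs_{\fx_0}(\Wbb))\big)
\]
with $\gimel|_{\{0\}\times\MB_0}=\gimel_0$; by Rem. \ref{lbb24}, $\nabla_{\partial_q}\gimel=0$.

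The key lemma will be that for every grading-restricted $\Vbb^{\otimes M}$-module $\Mbb$ and every $S\in\Hom_{\Vbb^{\otimes M}}(\Mbb,\bbs_{\fx_0}(\Wbb))$, the section $\Psi_\Mbb(S):=\gimel\circ(\idt_\Wbb\otimes S)$ of $\scr T_\fx^*(\Wbb\otimes\Mbb)$ is again parallel. Splitting the operator $A(q,\tau_\blt)$ of \eqref{eqb56} into the terms with $L_i(k-1)$ for $i\in\{1,\dots,N\}$ (acting on $\Wbb$) and those for $i\in\{N+1,\dots,N+M\}$ (acting on the outgoing factor), the $\Vbb^{\otimes M}$-linearity of $S$ yields $(\idt_\Wbb\otimes S)\circ A^{\Wbb\otimes\Mbb}=A^{\Wbb\otimes\bbs}\circ(\idt_\Wbb\otimes S)$; combined with $\nabla_{\partial_q}\gimel=0$ and the dual-operator formula \eqref{notation1}, this gives $\nabla_{\partial_q}\Psi_\Mbb(S)=0$ by a short calculation on constant sections of $\scr W_\fx(\Wbb\otimes\Mbb)$.

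Finally, to verify the universal property at an arbitrary $b=(q_0,b_0)\in\mc D_r\times\MB_0$, restrict to the slice $\fx^1:=\fx|_{\mc D_r\times\{b_0\}}$. By Thm. \ref{lbb35}, $\scr T_{\fx^1}^*(\Wbb\otimes\Mbb)$ is locally free on $\mc D_r$ with canonical fiber isomorphism \eqref{eqb97}; since $\mc D_r$ is simply connected, the space $P$ of parallel sections has dimension equal to the rank, and evaluation at any point of $\mc D_r$ is a linear isomorphism from $P$ onto the corresponding fiber. By the key lemma $S\mapsto\Psi_\Mbb(S)|_{\mc D_r\times\{b_0\}}$ lands in $P$, and composing with evaluation at $q=0$ recovers the map $S\mapsto\gimel_0|_{b_0}\circ(\idt\otimes S)$, which is an isomorphism by the hypothesis that $(\bbs_{\fx_0}(\Wbb),\gimel_0|_{b_0})$ is a dual fusion product along $\fx_{(0,b_0)}$; hence $S\mapsto\Psi_\Mbb(S)|_{\mc D_r\times\{b_0\}}$ is an isomorphism onto $P$. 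Composing with evaluation at $q_0$ produces the desired isomorphism $\Hom_{\Vbb^{\otimes M}}(\Mbb,\bbs_\fx(\Wbb))\xrightarrow{\simeq}\scr T_{\fx_b}^*(\Wbb\otimes\Mbb)$ given by $S\mapsto\gimel|_b\circ(\idt\otimes S)$. The main obstacle is the key lemma, but it reduces---via the explicit formula \eqref{uniformization33}--\eqref{eqb56} for $\nabla_{\partial_q}$---to the commutation of $S$ with the outgoing Virasoro operators, which is immediate from $\Vbb^{\otimes M}$-linearity.
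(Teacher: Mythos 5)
Your proposal is correct and follows essentially the same route as the paper: set $\bbs_\fx(\Wbb)=\bbs_{\fx_0}(\Wbb)$, obtain $\gimel$ as the convergent parallel transport of $\gimel_0$ via Thm. \ref{lbb26}, observe that $\gimel\circ(\idt\otimes S)$ is parallel because $S$ commutes with the outgoing Virasoro operators in \eqref{eqb56}, and deduce the fiberwise universal property from the bijectivity of parallel transport on the locally free sheaf $\scr T_\fx^*(\Wbb\otimes\Mbb)$ over a one-dimensional slice. The only cosmetic difference is that you phrase the last step through the space of parallel sections and evaluation isomorphisms, whereas the paper transports a given $\upphi_p$ back to $q=0$ and forward again; these are the same argument.
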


\begin{proof}
Let $q$ be the standard coordinate of $\MD_r$. Consider $\partial_q$ as a constant vector field on $\MB$. Set $\bbs_\fx(\Wbb)=\bbs_{\fx_0}(\Wbb)$. Since $\MB_0$ and $\MD_r$ are Stein, $\MB$ is Stein. So by Rem. \ref{lbb12}, $\partial_q$ has a lift $\xk$, by which one can define the differential operator $\nabla_{\partial_q}$ on $\scr W_\fx(\Wbb),\scr T_\fx(\Wbb),\scr T_\fx^*(\Wbb)$. Therefore, since $\gimel_0\in H^0(\MB_0,\scr T_{\fx_0}^*(\Wbb\otimes\bbs_\fx(\Wbb)))$, by Thm. \ref{lbb26}, the formal parallel transport $\gimel$ of $\gimel_0$ converges a.l.u. to an element of $H^0(\MB,\scr T_\fx^*(\Wbb\otimes\bbs_\fx(\Wbb)))$. In other words, $\gimel$ can be viewed as a linear map $\Wbb\rightarrow\mc O(\MB)$ whose restriction to each $(p,b)\in\MB$ is a conformal block associated to $\Wbb$ and $\fx_{(p,b)}$.

We want to show that for each $(p,b)\in\MB$, $(\bbs_\fx(\Wbb),\gimel|_{(p,b)})$ is a dual fusion product of $\Wbb$ along $\fx_{(p,b)}$. For that purpose, it suffices to assume that $\MB_0$ is a single point, say $\{0\}$. Then $\MB=\MD_r\times\{0\}\simeq\MD_r$. Therefore, $p\in\MD_r$ denotes a general element of $\MB$. By Thm. \ref{lbb35}, for each grading restricted $\Vbb^{\otimes M}$-module $\Mbb$ associated to $\tau_\star(\MB)$, $\scr T_\fx^*(\Wbb\otimes\Mbb)$ is a (finite-rank) holomorphic vector bundle on $\MD_r$. Then $\nabla_{\partial_q}$ defines an (obviously flat) connection $\nabla$ on $\scr T_\fx^*(\Wbb\otimes\Mbb)$, and $\gimel$ is parallel under this connection if $\Mbb=\bbs_\fx(\Wbb)$. 

Choose any $\upphi_p\in\scr T_{\fx_p}^*(\Wbb\otimes\Mbb)$. Let $\upphi_0\in\scr T_{\fx_0}^*(\Wbb\otimes \Mbb)$ be the parallel transport of $\upphi_p$ (under $\nabla$) to $0$. Since $(\bbs_\fx(\Wbb),\gimel_0)$ is a dual fusion product along $\fx_0$, there exists $T\in\Hom_{\Vbb^{\otimes M}}(\Mbb,\bbs_\fx(\Wbb))$ such that $\upphi_0=\gimel_0\circ(\idt\otimes T)=\gimel\circ(\idt\otimes T)|_0$. Since $\gimel$ is parallel under $\nabla$ and $T$ intertwines the actions of $\Vbb^{\otimes M}$, it is clear that $\gimel\circ(\idt\otimes T)\in H^0(\MB,\scr T_\fx^*(\Wbb\otimes \Mbb))$ is parallel under $\nabla$. So $\gimel|_p\circ(\idt\otimes T)$ and $\upphi_p$ are both the parallel transport of $\upphi_0$ to $p$. Thus $\gimel|_p\circ(\idt\otimes T)=\upphi_p$.

On the other hand, if $S\in\Hom_{\Vbb^{\otimes M}}(\Mbb,\bbs_\fx(\Wbb))$ also satisfies $\gimel|_p\circ(\idt\otimes S)=\upphi_p$, then $\upphi_0$ is the parallel transport of $\gimel|_p\circ(\idt\otimes S)$ to $0$, and hence $\upphi_0=\gimel|_0\circ(\idt\otimes S)$. Similarly $\upphi_0=\gimel|_0\circ(\idt\otimes T)$, which shows $T=S$ because $(\bbs_\fx(\Wbb),\gimel_0)$ is a dual fusion product. This finishes the proof that $(\bbs_\fx(\Wbb),\gimel|_p)$ is a dual fusion product of $\Wbb$ along $\fx_p$.
\end{proof}

\begin{co}\label{lbb54}
Suppose that $\MB$ is an open polydisk, i.e., it is of the form $\MD_{r_1}\times\cdots\times\MD_{r_m}\subset\Cbb^m$ where $m\in\Nbb$. Then there exists a dual fusion product $(\bbs_\fx(\Wbb),\gimel)$ of $\Wbb$ along $\fx$.
\end{co}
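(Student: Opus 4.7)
The plan is to argue by induction on $m$. For the base case $m=0$, the polydisk $\MB$ is a single point, and the existence of a dual fusion product in this situation is precisely \cite[Thm.~3.31]{GZ1}, as recalled in the discussion surrounding Def.~\ref{lbb36}.

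For the inductive step, assume the result for polydisks of dimension $m-1$. Write $\MB = \MD_{r_m} \times \MB_0$ with $\MB_0 = \MD_{r_1} \times \cdots \times \MD_{r_{m-1}}$, which is an $(m-1)$-dimensional open polydisk and hence in particular a Stein manifold. Let $\fx_0$ denote the restriction of $\fx$ to $\{0\} \times \MB_0 \simeq \MB_0$ as in \eqref{eqb65}, together with the restrictions of $\sgm_\blt, \tau_\star, \eta_\blt, \theta_\star$. Since restriction to a closed submanifold of the base preserves the structure of an $(M,N)$-pointed family with local coordinates, and since each connected component of each fiber of $\fx_0$ inherits from $\fx$ the property of containing one of the incoming marked points, $\fx_0$ satisfies the same hypotheses as $\fx$. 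By the inductive hypothesis applied to the $(m-1)$-dimensional polydisk $\MB_0$, there exists a dual fusion product $(\bbs_{\fx_0}(\Wbb), \gimel_0)$ of $\Wbb$ along $\fx_0$.

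Now apply Thm.~\ref{lbb37} with $r = r_m$ and the Stein base $\MB_0$: the theorem produces a dual fusion product $(\bbs_\fx(\Wbb), \gimel)$ of $\Wbb$ along $\fx$, where the underlying module is $\bbs_{\fx_0}(\Wbb)$ and $\gimel$ is obtained as the formal parallel transport of $\gimel_0$ along $\partial_q$ via a lift $\xk \in H^0(\MC, \Theta_\MC(\blt \SX))$, whose existence is guaranteed by the Stein property of $\MB = \MD_{r_m} \times \MB_0$. This completes the inductive step.

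There is essentially no technical obstacle beyond what has already been resolved: the crux of the argument is the convergence and parallelism of the formal parallel transport of $\gimel_0$, which is the content of Thm.~\ref{lbb26} and Thm.~\ref{lbb37}. The proof is nothing more than iterating Thm.~\ref{lbb37} once for each factor of the polydisk, using the elementary facts that any polydisk is Stein and that a product of two polydisks is again a polydisk.
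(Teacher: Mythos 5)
Your proposal is correct and follows exactly the route the paper takes: the paper's proof is the one-line statement that the corollary "follows immediately from Thm.~\ref{lbb37} and from induction on $m$," and your argument simply spells out that induction, with the base case $m=0$ handled by the single-point existence result from \cite{GZ1} and the inductive step by applying Thm.~\ref{lbb37} to $\MB=\MD_{r_m}\times\MB_0$ with $\MB_0$ the $(m-1)$-dimensional (hence Stein) polydisk. No gaps.
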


\begin{proof}
This follows immediately from Thm. \ref{lbb37} and from induction on $m$.
\end{proof}

\section{Convergence of sewing conformal blocks}

In this chapter, we continue to assume Asmp. \ref{lbb1}, and use freely the notations in Subsec. \ref{lbb44}. In particular,
\begin{align*}
\fx=(\pi:\MC\rightarrow \MB\big| \sgm_\blt)=(\pi:\MC\rightarrow \MB\big|\sgm_1,\cdots,\sgm_N)
\end{align*}
is obtained by sewing the family
\begin{align*}
    \wtd \fx=(\wtd \pi:\wtd \MC\rightarrow \wtd \MB\big|\sgm_\blt\big\Vert \sgm_\blt',\sgm_\blt'')
    =(\wtd \pi:\wtd \MC\rightarrow \wtd \MB\big|\sgm_1,\cdots,\sgm_N\big\Vert \sgm_1',\cdots,\sgm_R',\sgm_1'',\cdots,\sgm_R'')
\end{align*}
satisfying \eqref{eqb52}. Assume  that $\wtd\fx$ has local coordinates $\eta_\blt,\xi_\blt,\varpi_\blt$ at $\sgm_\blt(\wtd\MB),\sgm_\blt'(\wtd\MB),\sgm''_\blt(\wtd\MB)$, and extend $\eta_\blt$ constantly to local coordinates of $\fx$ at $\sgm_\blt(\MB)$ (also denoted by $\eta_\blt$), cf. Asmp. \ref{lbb8}.

We always assume that $\Vbb=\bigoplus_{n\in\Nbb}\Vbb(n)$ is a $C_2$-cofinite VOA, and $\Wbb$ is a grading-restricted $\Vbb^{\otimes N}$-module associated to $\sgm_\blt(\wtd\MB)$ and also to $\sgm_\blt(\MB)$. Let $\Mbb$ be a grading-restricted $\Vbb^{\otimes R}$-module with (automatically grading-restricted) contragredient $\Mbb'$. Associate $\Mbb,\Mbb'$ to $\sgm_\blt'(\wtd\MB)$ and $\sgm_\blt''(\wtd\MB)$ respectively. We use $\mc U(\eta_\blt,\xi_\blt,\varpi_\blt)$ and $\mc U(\eta_\blt)$ to make the identifications
\begin{align}\label{eqb85}
\scr W_{\wtd\fx}(\Wbb\otimes\Mbb\otimes\Mbb')=\Wbb\otimes\Mbb\otimes\Mbb'\otimes_\Cbb\MO_{\wtd\MB}\qquad  \scr W_\fx(\Wbb)=\Wbb\otimes_\Cbb\MO_\MB
\end{align}

\subsection{Sewing conformal blocks}

\subsubsection{The decomposition $L_j(0)=\Ljss+\Ljni$}

Recall the grading $\Mbb=\bigoplus_{\lambda_\blt\in\Cbb^R}\Mbb_{[\lambda_\blt]}$, cf. \eqref{eqb68}.   Since $\Mbb$ is grading-restricted, by the first part of the proof of \cite[Thm. A.14]{GZ1}, there exists a finite subset $E\subset\Cbb^R$ such that
\begin{gather}\label{eqb72}
\Mbb=\bigoplus_{\lambda_\blt\in E+\Nbb^R}\Mbb_{[\lambda_\blt]}\qquad\text{and}\qquad \dim\Mbb_{[\lambda_\blt]}<+\infty\text{ for each }\lambda_\blt
\end{gather}
For each $1\leq j\leq R$, we let $\Ljss\in\End(\Mbb)$ be defined such that $\Ljss\big|_{\Mbb_{[\lambda_\blt]}}=\lambda_j$, and let $\Ljni=L_j(0)-\Ljss$. 

Since for each $v\in\Vbb$ and $1\leq i,j\leq R$ and $n\in\Zbb$ we have on $\Mbb$ that
\begin{align*}
[L_j(0),Y_i(v)_n]=[\Ljss,Y_i(v)_n]=\delta_{i,j}Y_i((L_j(0)-n-1)v)_n
\end{align*}
we see that $\Ljni\in\End_{\Vbb^{\otimes R}}(\Mbb)$ when $\Ljni$ acts on $\Mbb$. In particular, we have $[\Ljni,L_i(0)]=0$. So $\Ljni$ preserves each generalized eigenspace of $L_i(0)$, i.e.
\begin{align*}
[\Ljni,\Liss]=0\qquad\text{and hence}\qquad[\Ljni,\Lini]=0
\end{align*}
Thus $\Mbb_{[\lambda_\blt]}$ is $\Ljni$-invariant. Therefore, on the finite-dimensional space $\Mbb_{[\lambda_\blt]}$,
\begin{align*}
L_j(0)\big|_{\Mbb_{[\lambda_\blt]}}=\Ljss\big|_{\Mbb_{[\lambda_\blt]}}+\Ljni\big|_{\Mbb_{[\lambda_\blt]}}
\end{align*}
is the (unique) Jordan-Chevalley decomposition of $L_j(0)\big|_{\Mbb_{[\lambda_\blt]}}$. Hence $\Ljni\big|_{\Mbb_{[\lambda_\blt]}}$ is nilpotent. Since $\Mbb$ is finitely-generated as a $\Vbb^{\otimes R}$-module (\cite{Hua-projectivecover}), it follows that
\begin{align}\label{eqb70}
\Ljni\text{ is nilpotent on }\Mbb
\end{align}

\subsubsection{The element $q_\blt^{L_\blt(0)}\btr\otimes \btl$ of $(\Mbb\otimes\Mbb')\{q_\blt\}[\log q_\blt]$}

We treat $q_1,\dots,q_R$ and $\log q_1,\dots,\log q_R$ as mutually commuting and independent formal variables. Each $A\in\End(\Mbb)$ acts on $(\Mbb\otimes\Mbb')\{q_\blt\}$ resp. $(\Mbb\otimes\Mbb')\{q_\blt\}[\log q_\blt]$ by acting on the coefficient before each power of $q_\blt$ and $\log q_\blt$. Define
\begin{align*}
\btr\otimes\btl\in(\Mbb'\otimes\Mbb)^*\qquad\bk{\btr\otimes\btl,m'\otimes m}=\bk{m',m}
\end{align*}
for all $m\in\Mbb,m'\in\Mbb'$. For each $A\in\End(\Mbb)$ such that $A^\tr\in\End(\Mbb')$ exists (i.e. $\bk{Am,m'}=\bk{m,A^\tr m'}$ for all $m\in\Mbb,m'\in\Mbb'$), we have a linear map
\begin{align*}
A:(\Mbb'\otimes\Mbb)^*\rightarrow (\Mbb'\otimes\Mbb)^*\qquad (A\phi)(m'\otimes m)=\phi(A^\tr m'\otimes m)
\end{align*}
for each $\phi\in(\Mbb'\otimes\Mbb)^*$. We set
\begin{align*}
\btr\otimes A^\tr\btl:=A\btr\otimes\btl
\end{align*}

Note that $\Mbb\otimes\Mbb'$ is canonically a subspace of $(\Mbb'\otimes\Mbb)^*$. Consider $P_{\lambda_\blt}$, the projection of $\Mbb$ onto $\Mbb_{[\lambda_\blt]}$. Then one checks easily that
\begin{align}\label{eqb69}
P_{\lambda_\blt}\btr\otimes A^\tr\btl=\sum_{\alpha\in\fk A_{\lambda_\blt}} m_{(\lambda_\blt,\alpha)}\otimes A^\tr \wch m_{(\lambda_\blt,\alpha)}\qquad\in \Mbb\otimes\Mbb'
\end{align}
where $(m_{(\lambda_\blt,\alpha)})_{\alpha\in\fk A_{\lambda_\blt}}$ is any (finite) basis of $\Mbb_{[\lambda_\blt]}$ with dual basis $(\wch m_{(\lambda_\blt,\alpha)})_{\alpha\in\fk A_{\lambda_\blt}}$.

\begin{df}
For each $A\in\End(\Mbb)$ such that $A^\tr\in\End(\Mbb')$ exists, define
\begin{gather*}
q_\blt^\Lbss\btr\otimes A^\tr\btl=\sum_{\lambda_\blt\in\Cbb^R} q_\blt^{\lambda_\blt}\cdot P_{\lambda_\blt}\btr\otimes A^\tr\btl\qquad \in(\Mbb\otimes\Mbb')\{q_\blt\}
\end{gather*}
where $q_\blt^{\lambda_\blt}=q_1^{\lambda_1}\cdots q_R^{\lambda_R}$. Thus, its evaluation with each $m'\otimes m\in\Mbb'\otimes\Mbb$ is
\begin{align*}
\bk{A^\tr q_\blt^{\Lbss}m',m}=\bk{q_1^{L_1(0)_{\mathrm s}}\cdots q_R^{L_R(0)_{\mathrm s}}m',Am}
\end{align*}
\end{df}

\begin{df}
Define a linear map
\begin{gather*}
q_\blt^{\Lbni}:\Mbb\otimes\Mbb'\rightarrow(\Mbb\otimes\Mbb')[\log q_\blt] \qquad q_\blt^{\Lbni}(m\otimes m')=(q_\blt^{\Lbni}m\otimes m')
\end{gather*}
where
\begin{align*}
q_\blt^{\Lbni}m=\sum_{k_\blt\in\Nbb^R}(\log q_1)^{k_1}\cdots(\log q_R)^{k_R}\cdot\frac{L_1(0)_{\mathrm n}^{k_1}}{k_1!}\cdots\frac{L_R(0)_{\mathrm n}^{k_R}}{k_R!} m
\end{align*}
By \eqref{eqb70}, the order of power of each $\log q_i$ in $q_\blt^{\Lbni}m$ has an upper bound independent of the choice of $m\in\Mbb$. Thus, we can define
\begin{gather}\label{eqb71}
\begin{gathered}
q_\blt^{\Lbni}:(\Mbb\otimes\Mbb')\{q_\blt\}\rightarrow (\Mbb\otimes\Mbb')\{q_\blt\}[\log q_\blt]\\[0.5ex] \sum_{\lambda_\blt\in\Cbb^R}\chi_{\lambda_\blt}\cdot q_\blt^{\lambda_\blt}\mapsto \sum_{\lambda_\blt\in\Cbb^R}(q_\blt^{\Lbni}\chi_{\lambda_\blt})\cdot q_\blt^{\lambda_\blt}
\end{gathered}
\end{gather}
Finally, for each $A\in\End(\Mbb)$ such that $A^\tr\in\End(\Mbb')$ exists, define
\begin{gather*}
q_\blt^{L_\blt(0)} \btr\otimes A^\tr\btl\in(\Mbb\otimes\Mbb')\{q_\blt\}[\log q_\blt]\qquad q_\blt^{L_\blt(0)} \btr\otimes A^\tr\btl=q_\blt^{\Lbni}\big(q_\blt^{\Lbss}\btr\otimes A^\tr\btl\big)
\end{gather*}
In particular, $q_\blt^{L_\blt(0)} \btr\otimes\btl$ is an element of $(\Mbb\otimes\Mbb')\{q_\blt\}[\log q_\blt]$. 
\end{df}

Note that by \eqref{eqb69} we have
\begin{subequations}\label{eqb73}
\begin{gather}
Aq_\blt^{L_\blt(0)} \btr\otimes\btl=\sum_{\lambda_\blt\in\Cbb^R}q_\blt^{\lambda_\blt}\sum_{\alpha\in\fk A_{\lambda_\blt}} Aq_\blt^{\Lbni}m_{(\lambda_\blt,\alpha)}\otimes \wch m_{(\lambda_\blt,\alpha)}\\
q_\blt^{L_\blt(0)} \btr\otimes A^\tr\btl=\sum_{\lambda_\blt\in\Cbb^R}q_\blt^{\lambda_\blt}\sum_{\alpha\in\fk A_{\lambda_\blt}} q_\blt^{\Lbni}m_{(\lambda_\blt,\alpha)}\otimes A^\tr \wch m_{(\lambda_\blt,\alpha)}
\end{gather}
\end{subequations}

\begin{rem}\label{lbb43}
Since $\Ljni\in\End_{\Vbb^{\otimes R}}(\Mbb)$, the map \eqref{eqb71} intertwines the action of $Y_i(v)_n$ for each $1\leq i\leq R,v\in\Vbb,n\in\Zbb$.
\end{rem}

\begin{rem}\label{lbb42}
By \eqref{eqb72}, each $\Mbb$ is a finite direct sum of submodules that are $\Lbss$-simple. Now assume that $\Mbb$ is \pmb{$\Lbss$}\textbf{-simple}, i.e., the set $E$ in \eqref{eqb72} can be chosen to be a single point set $\{\kappa_\blt\}$. Define
\begin{align}
\wtd L_j(0)=\Ljss-\kappa_j
\end{align}
So the eigenvalues of $\wtd L_j(0)$ are in $\Nbb$. Then $\wtd L_\blt(0)$ makes $\Mbb$ a finitely-admissible $\Vbb^{\times R}$-module.
\end{rem}

\subsubsection{Sewing conformal blocks}\label{lbb61}

For each $\uppsi\in \ST_{\wtd \fx}^*(\Wbb\otimes \Mbb\otimes \Mbb')(\wtd \MB)$ and $w\in \Wbb$, define 
\begin{align}
    \MS\uppsi (w)=\uppsi\big(w\otimes q_\blt^{L_\blt(0)}\btr\otimes \btl\big)\in \MO(\wtd \MB)\{q_\blt\}[\log q_\blt].
\end{align}
$\MS\uppsi$ is called \textbf{the sewing of \pmb{$\uppsi$}} along pairs of points $\sgm_\blt'(\wtd\MB),\sgm_\blt''(\wtd\MB)$. Write 
\begin{align}
    \MS\uppsi(w)=\sum_{n_\blt\in \Cbb^R,l_\blt\in \Nbb^R} \MS\uppsi(w)_{n_\blt,l_\blt} q_\blt^{n_\blt}(\log q_\blt)^{l_\blt}
\end{align}
Then $\mc S\uppsi:\Wbb\rightarrow \MO(\wtd \MB)\{q_\blt\}[\log q_\blt]$ can be extended in an obvious way to an $\mc O(\wtd\MB)[[q_\blt]]$-module morphism
\begin{align}\label{eqb83}
\mc S\uppsi:\Wbb\otimes\mc O(\wtd\MB)[[q_\blt]]\rightarrow \MO(\wtd \MB)\{q_\blt\}[\log q_\blt]
\end{align}
In particular, $\mc S\uppsi$ can be defined on $\Wbb\otimes\mc O(\MB)$.

\begin{df}\label{lbb51}
    We say $\MS \uppsi$ \textbf{converges a.l.u.} if for each $w\in\Wbb$ and each compact subsets $K\subset \wtd\MB$ and $Q\subset \MD_{r_\blt \rho_\blt}^\times$, there exists $C>0$ such that 
    \begin{align*}
        \sum_{n_\blt\in \Cbb^R}|\MS\uppsi(w)_{n_\blt,l_\blt}(b)|\cdot |q_\blt^{n_\blt}|\leq C
    \end{align*}
holds for any $b\in K$, $q_\blt=(q_1,\cdots,q_R)\in Q$, and $l_\blt\in \Nbb^R$.
\end{df}

Note that the a.l.u. convergence of $\MS\uppsi$ is slightly stronger than the condition that for each $w\in\Wbb$, the series of functions $\sum_{n_\blt\in \Cbb^R,l_\blt\in \Nbb^R} \MS\uppsi(w)_{n_\blt,l_\blt} q_\blt^{n_\blt}(\log q_\blt)^{l_\blt}$ converge a.l.u. on $\MB-\Delta=\MD_{r_\blt\rho_\blt}^\times\times\wtd\MB$.

\subsection{The sewing of a conformal block is a formal conformal block}       

If $1\leq k\leq R$, we set $q_{\blt\setminus k}=(q_1,\dots,q_{k-1},q_{k+1},\dots,q_R)$.
 
\begin{pp}\label{multisew7}
    Let $\scr A$ be a unital commutative $\Cbb$-algebra. For any $u\in \Vbb$, $1\leq k\leq R$ and $f\in \scr A[[\xi_k,\varpi_k,q_{\blt\backslash k}]]$, the following elements of $(\Mbb\otimes \Mbb'\otimes \scr A)\{q_\blt\}[\log q_\blt]$ are equal:
    \begin{equation}\label{multisew2}
    \begin{aligned}
        &\Res_{\xi_k=0}~Y_{\Mbb,k}(\xi_k^{L(0)}u,\xi_k)q_\blt^{L_\blt(0)}\btr\otimes \btl \cdot f\big(\xi_k,\frac{q_k}{\xi_k},q_{\blt\backslash k}\big)\frac{d\xi_k}{\xi_k}\\
        =&\Res_{\varpi_k=0}~q_\blt^{L_\blt(0)}\btr\otimes Y_{\Mbb',k}(\varpi_k^{L(0)}\MU(\upgamma_1)u,\varpi_k) \btl\cdot f\big(\frac{q_k}{\varpi_k},\varpi_k,q_{\blt\backslash k}\big)\frac{d\varpi_k}{\varpi_k}
    \end{aligned}
    \end{equation}
\end{pp}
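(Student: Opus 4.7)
The proof will reduce \eqref{multisew2} to a single \emph{master identity}:
\begin{equation}\label{master0}
Y_{\Mbb,k}(\xi_k^{L(0)}u,\xi_k)\cdot\bigl(q_\blt^{L_\blt(0)}\btr\otimes\btl\bigr)=q_\blt^{L_\blt(0)}\btr\otimes Y_{\Mbb',k}(\varpi_k^{L(0)}\MU(\upgamma_1)u,\varpi_k)\btl,
\end{equation}
interpreted formally with $\xi_k\varpi_k=q_k$. Granting \eqref{master0}, both sides of \eqref{multisew2} become residues of a common integrand (the RHS of \eqref{master0} times $f$): one in $\xi_k$ with $\varpi_k=q_k/\xi_k$, the other in $\varpi_k$ with $\xi_k=q_k/\varpi_k$. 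Their equality then reduces to the monomial identity $\Res_{\xi_k=0}\xi_k^a(q_k/\xi_k)^b\,d\xi_k/\xi_k=q_k^b\delta_{a,b}=\Res_{\varpi_k=0}(q_k/\varpi_k)^a\varpi_k^b\,d\varpi_k/\varpi_k$, applied term by term. Lower truncation of $Y_{\Mbb',k}(\cdot,\varpi_k)$ combined with $f\in\scr A[[\xi_k,\varpi_k,q_{\blt\setminus k}]]$ ensures that only finitely many $(\xi_k,\varpi_k)$-monomials contribute to each fixed $q_\blt$-weight, legitimizing the formal residue manipulation.

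I would prove \eqref{master0} in three steps. First, the standard VOA conjugation $a^{L_k(0)}Y_{\Mbb,k}(v,z)a^{-L_k(0)}=Y_{\Mbb,k}(a^{L(0)}v,az)$ with $a=q_k$ (which is available with the full $L_k(0)$, since each $\Ljni\in\End_{\Vbb^{\otimes R}}(\Mbb)$ commutes with every $Y_{\Mbb,k}(v,z)$) lets me move $q_k^{L_k(0)}$ past the vertex operator, producing $q_k^{L_k(0)}\cdot Y_{\Mbb,k}(\varpi_k^{-L(0)}u,\varpi_k^{-1})(\btr\otimes\btl)$. Second, the pairing identity $Y_{\Mbb,k}(v,z)(\btr\otimes\btl)=\btr\otimes Y_{\Mbb,k}(v,z)^\tr\btl$ combined with the contragredient formula $Y_{\Mbb,k}(v,z)^\tr=Y_{\Mbb',k}(\MU(\upgamma_{z^{-1}})^{-1}v,z^{-1})$ transfers the $Y$-action to the $\Mbb'$ side. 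Third, I verify the algebraic identity
\begin{equation*}
\MU(\upgamma_{\varpi_k})^{-1}\varpi_k^{-L(0)}=\varpi_k^{L(0)}\MU(\upgamma_1)
\end{equation*}
by direct computation from $\MU(\upgamma_z)=e^{zL(1)}(-z^{-2})^{L(0)}$, $\MU(\upgamma_1)=e^{L(1)}(-1)^{L(0)}$, and $\varpi_k^{L(0)}L(1)\varpi_k^{-L(0)}=\varpi_k^{-1}L(1)$ (consequence of $[L(0),L(1)]=-L(1)$); both sides reduce to $e^{\varpi_k^{-1}L(1)}(-\varpi_k)^{L(0)}$.

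The principal obstacle is careful bookkeeping of the ambient formal completion: \eqref{master0} involves fractional $q_\blt$-powers coming from $\Lbss$, polynomial $\log q_\blt$ factors coming from $\Lbni$, and Laurent series in $\xi_k\sim q_k/\varpi_k$. What makes this tractable is the Jordan--Chevalley decomposition $L_j(0)=\Ljss+\Ljni$ of \eqref{eqb70}: the nilpotent parts $\Ljni$ act as module endomorphisms and therefore pass transparently through every VOA manipulation (cf. Rem. \ref{lbb43}), while the semisimple parts $\Ljss$ contribute only the scalar exponent $\lambda_j$ on each generalized weight subspace $\Mbb_{[\lambda_\blt]}$, which interacts cleanly with the $L(0)$-conjugation formulas. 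Once this bookkeeping is verified, the term-by-term matching of $q_\blt^{\lambda_\blt}(\log q_\blt)^{l_\blt}$-coefficients across \eqref{master0} is automatic.
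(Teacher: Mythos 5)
Your proposal is correct and follows essentially the same route as the paper: the paper reduces to the $\Lbss$-simple case, cites the core transpose identity from Step 1 of the proof of Prop.~2.32 in the first paper of the series, and then twists by $q_\blt^{\Lbni}$ using that the nilpotent parts are $\Vbb^{\otimes R}$-module endomorphisms --- which is exactly the $L(0)$-conjugation, contragredient-transpose, and $\MU(\upgamma_{\varpi_k})^{-1}\varpi_k^{-L(0)}=\varpi_k^{L(0)}\MU(\upgamma_1)$ computation you carry out explicitly, followed by the same two-variable residue swap. The only difference is that you work out in full the details the paper delegates to the cited reference.
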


\begin{rem}
Using \eqref{eqb73}, one easily sees that
\begin{gather*}
Y_{\Mbb,k}(\xi_k^{L(0)}u,\xi_k)q_\blt^{L_\blt(0)}\btr\otimes \btl\qquad\in (\Mbb\otimes\Mbb')((\xi_k))\{q_\blt\}[\log q_\blt]\\
q_\blt^{L_\blt(0)}\btr\otimes Y_{\Mbb',k}(\varpi_k^{L(0)}\MU(\upgamma_1)u,\varpi_k) \btl\qquad \in(\Mbb\otimes\Mbb')((\varpi_k))\{q_\blt\}[\log q_\blt]
\end{gather*}
Multiplying these two elements respectively by
\begin{gather*}
f\big(\xi_k,\frac{q_k}{\xi_k},q_{\blt\backslash k}\big)\quad \in \scr A((\xi_k))[[q_\blt]]\qquad\text{and}\qquad f\big(\frac{q_k}{\varpi_k},\varpi_k,q_{\blt\backslash k}\big)\quad\in \scr A((\varpi_k))[[q_\blt]]
\end{gather*}
they become elements of
\begin{gather*}
(\Mbb\otimes\Mbb'\otimes \scr A)((\xi_k))\{q_\blt\}[\log q_\blt]\qquad\text{resp.}\qquad (\Mbb\otimes\Mbb'\otimes \scr A)((\varpi_k))\{q_\blt\}[\log q_\blt]
\end{gather*}
Thus, the residues in \eqref{multisew2} make sense.
\end{rem}

\begin{proof}[Proof of Prop. \ref{multisew7}]
Without loss of generality, we assume $\Mbb$ is $\Lbss$-simple. When $L_\blt(0)$ is replaced by $\wtd L_\blt(0)$ and $R=2$, Eq. \eqref{multisew2} holds in $(\Mbb\otimes \Mbb'\otimes \scr A)[[q_\blt]]$ by Step 1 of the proof of \cite[Prop. 2.32]{GZ1}; for general $R$ the proof is similar. Since $\wtd L_j(0)$ and $\Ljss$ differ by a scalar, Eq. \eqref{multisew2} holds in $(\Mbb\otimes \Mbb'\otimes \scr A)\{q_\blt\}$ when $L_\blt(0)$ is replaced by $\Lbss$. Multiplying both sides of this result by the map $q_\blt^{\Lbni}=\eqref{eqb71} $ and noting Rem. \ref{lbb43}, we obtain \eqref{multisew2}. (See also the proof of \cite[Lem. 10.2]{Gui-sewingconvergence}.)
\end{proof}

\begin{pp}\label{lbb48}
    If $\uppsi\in H^0\big(\wtd \MB,\ST_{\wtd \fx}^*(\Wbb\otimes \Mbb\otimes \Mbb')\big)$, then $\MS \uppsi$ is a formal conformal block, i.e., $\MS\uppsi=\eqref{eqb83}$ vanishes on $\SJ_\fx^\pre(\MB)$ (defined in \eqref{eqb58}).
\end{pp}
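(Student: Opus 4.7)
By the $\mc O(\wtd\MB)[[q_\blt]]$-linearity of the extended map $\mc S\uppsi$ and the decomposition $\sigma\cdot w=\sum_{i=1}^N\sigma*_i w$, I would reduce to showing that for each $w\in\Wbb$ (a constant section) and each $\sigma\in H^0(\MC,\scr V_\fx^{\leq n}\otimes\omega_{\MC/\MB}(k\SX))$ (with some fixed $n,k\in\Nbb$), the identity $\sum_i\mc S\uppsi(\sigma*_i w)=0$ holds. The first step is to assemble the $q_\blt^{n_\blt}$-Taylor coefficients of $\sigma$ into global sections
$$\wtd\sigma_{n_\blt}\in H^0\big(\wtd\MC,\scr V_{\wtd\fx}^{\leq n}\otimes\omega_{\wtd\MC/\wtd\MB}(\blt S_{\wtd\fx})\big)\qquad(n_\blt\in\Nbb^R).$$
Away from the sewing region, $\sigma$ is the pullback of a $q_\blt$-holomorphic family of sections on $\wtd\MC$ (with poles at $\sgm_\blt(\wtd\MB)$), so the Taylor coefficient is immediate. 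On each $W_j$, using Def.\ \ref{lbb45} write $\mc U_\varrho(\xi_j)\sigma|_{W_j'}=\xi_j^{L(0)-1}u_j(\xi_j,\varpi_j,q_{\blt\setminus j},\tau_\blt)\,d\xi_j$ with $u_j\in\Vbb^{\leq n}\otimes\mc O(W_j)$; substituting $\varpi_j=q_j/\xi_j$ and extracting the $q_\blt^{n_\blt}$-coefficient produces a section on $V_j'$ with pole of finite order at $\sgm_j'(\wtd\MB)$. A symmetric calculation on $W_j''$ agrees with this via the sewing identifications, so the local pieces glue to yield $\wtd\sigma_{n_\blt}$.

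Next, the conformal block property applied to $\uppsi$ and $\wtd\sigma_{n_\blt}$ gives, for every $m\in\Mbb,m'\in\Mbb'$,
$$\sum_{i=1}^N\uppsi\big((\wtd\sigma_{n_\blt}*_i w)\otimes m\otimes m'\big)+\sum_{j=1}^R\uppsi\big(w\otimes(\wtd\sigma_{n_\blt}*_{\sgm_j'}m)\otimes m'\big)+\sum_{j=1}^R\uppsi\big(w\otimes m\otimes(\wtd\sigma_{n_\blt}*_{\sgm_j''}m')\big)=0.$$
I would multiply by $q_\blt^{n_\blt}$, sum over $n_\blt\in\Nbb^R$, and substitute $m\otimes m'$ by the formal element $q_\blt^{L_\blt(0)}\btr\otimes\btl$, interpreting everything coefficient-wise in $(\Mbb\otimes\Mbb')\{q_\blt\}[\log q_\blt]$. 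This is well defined because each $\Mbb_{[\lambda_\blt]}$ is finite-dimensional and each $L_j(0)_{\mathrm n}$ is nilpotent on $\Mbb$ (cf.\ \eqref{eqb70}). The first sum then recovers exactly $\sum_i\mc S\uppsi(\sigma*_i w)=\mc S\uppsi(\sigma\cdot w)$, which is the target quantity.

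For each $j$, the collected generating series for the residues at $\sgm_j'$ and at $\sgm_j''$ become respectively
$$\Res_{\xi_j=0}Y_{\Mbb,j}\big(\xi_j^{L(0)}u_j(\xi_j,q_j/\xi_j,q_{\blt\setminus j}),\xi_j\big)\,q_\blt^{L_\blt(0)}\btr\otimes\btl\cdot\tfrac{d\xi_j}{\xi_j}$$
and
$$-\Res_{\varpi_j=0}q_\blt^{L_\blt(0)}\btr\otimes Y_{\Mbb',j}\big(\varpi_j^{L(0)}\mc U(\upgamma_1)u_j(q_j/\varpi_j,\varpi_j,q_{\blt\setminus j}),\varpi_j\big)\btl\cdot\tfrac{d\varpi_j}{\varpi_j},$$
the minus sign arising from the restriction $-\varpi_j^{-1}d\varpi_j$ of the generator of $\omega_{\MC/\MB}|_{W_j}$ to $W_j''$ described in Rem.\ \ref{lbb4}. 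By Prop.\ \ref{multisew7} (applied after a basis decomposition of $u_j$ in $\Vbb^{\leq n}$ so that its scalar coefficients play the role of $f$ in that proposition), these two expressions are equal, so their sum cancels. This leaves $\mc S\uppsi(\sigma\cdot w)=0$, which is the desired conclusion. The main obstacle I foresee is the careful global construction of $\wtd\sigma_{n_\blt}$ with correct pole orders and the bookkeeping of the sign and $\mc U(\upgamma_1)$ conventions at the two sides of each sewn pair; beyond this, the argument is essentially the generating-series incarnation of Prop.\ \ref{multisew7}.
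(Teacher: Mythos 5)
Your proposal is correct and follows essentially the same route as the paper's proof: expand the global section of $\SV_\fx\otimes\omega_{\MC/\MB}(\blt S_\fx)$ as a $q_\blt$-power series whose coefficients are global sections on $\wtd\MC$ with finite poles at $\sgm_\blt',\sgm_\blt''$, apply the invariance of $\uppsi$ to each coefficient, and invoke Prop.\ \ref{multisew7} to cancel the paired residue contributions against $q_\blt^{L_\blt(0)}\btr\otimes\btl$. The bookkeeping of the sign from $-\varpi_j^{-1}d\varpi_j$ and of $\mc U(\upgamma_1)$ that you flag as the main obstacle is handled in the paper exactly as you describe, via Def.\ \ref{lbb45} and Rem.\ \ref{lbb4}.
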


When $\wtd\MB$ is a single point, this proposition becomes a special case of \cite[Prop. 2.32]{GZ1}. Although the following proof is similar to the one of \cite[Prop. 2.32]{GZ1}, we have included it here because the setting and the notations in the proof will be useful for the discussions in the third paper of our series.

\begin{proof}
Step 1. Choose $v\in H^0\big(\MC,\SV_{\fx}\otimes \omega_{\MC/\MB}(\blt S_\fx)\big)$. We claim that outside the nodes we have power series expansion
    \begin{align}\label{multisew3}
        v=\sum_{n_\blt \in \Nbb^R} v_{n_\blt}q_\blt^{n_\blt}\qquad\text{where }v_{n_\blt}\in H^0\big(\wtd \MC,\SV_{\wtd \fx}\otimes \omega_{\wtd\MC/\wtd \MB}(\blt S_{\wtd \fx})\big)
    \end{align}
We use freely the notations in Subsec. \ref{lbb44}. Choose a precompact open subset $\wtd U$ of $\wtd \MC$ disjoint from $\sgm_\blt'(\wtd\MB)$ and $\sgm_\blt''(\wtd \MB)$. Choose small enough $\eps_\blt,\kappa_\blt$ such that $\wtd U\times \MD_{\eps_\blt \kappa_\blt}$ is an open subset of $\wtd \MC\times \MD_{r_\blt \rho_\blt}-\bigcup_{i=1}^R (F_i'\cup F_i'')$. Then $\pi:\MC\rightarrow \MB$, when restricted to $\wtd U\times \MD_{\eps_\blt\kappa_\blt}$, becomes $\wtd \pi\times \id:\wtd U\times \MD_{\eps_\blt\kappa_\blt}\rightarrow\wtd\MB\times\MD_{\eps_\blt\kappa_\blt}$. Thus $v|_{\wtd U\times \MD_{\eps_\blt \kappa_\blt}}$ can be viewed as a global section of $\SV_{\wtd \fx\times \MD_{r_\blt \rho_\blt}}\otimes \omega_{\wtd \MC\times \MD_{\eps_\blt \kappa_\blt}/\wtd \MB\times \MD_{\eps_\blt \kappa_\blt}}(\blt S_\fx)$, and hence an element in $H^0\big(\wtd U,\SV_{\wtd \fx}\otimes \omega_{\wtd \MC/\wtd \MB}(\blt S_{\wtd \fx})\big)[[q_\blt]]$. The coefficient before $q_\blt^{n_\blt}$ defines $v_{n_\blt}|_{\wtd U}$ which are clearly compatible for different $\wtd U$. Therefore, by considering all such $\wtd U$, we obtain a section $v_{n_\blt}$ of $\SV_{\wtd \fx}\otimes \omega_{\wtd \MC/\wtd \MB}(\blt S_{\wtd \fx})$ on $\wtd \MC-\bigcup_{i=1}^R \big(\sgm_i'(\wtd \MB)\cup \sgm_i''(\wtd \MB)\big)$.

To finish proving \eqref{multisew3},  it remains to prove that each $v_{n_\blt}$ has finite poles at $\sgm_i'(\wtd \MB)$ and $\sgm_i''(\wtd \MB)$ for each $i$. By the description of $\SV_\fx$ and $\omega_{\MC/\MB}$ in Def. \ref{lbb45} and Rem. \ref{lbb4}, $v|_{W_i-\Sigma}$ is a finite sum of elements whose restrictions to $W_i'$ (resp. $W_i''$) under trivializations $\MU_\varrho(\xi_i)$ (resp. $\MU_\varrho(\varpi_i)$) are 
    \begin{align}\label{multisew4}
        f(\xi_i,q_i/\xi_i,q_{\blt\backslash i})\xi_i^{L(0)}u\cdot\frac{d\xi_i}{\xi_i} \qquad \text{resp.}\qquad -f(q_i/\varpi_i,\varpi_i,q_{\blt\backslash i})\varpi_i^{L(0)}\MU(\upgamma_1)u\cdot \frac{d\varpi_i}{\varpi_i}
    \end{align}
    where $u\in \Vbb$ and $f\in \MO(W_i)$. (Here, we have suppressed the local coordinates $\tau_\blt$ of $\wtd\MB$ in the parentheses of $f$. The choice of $\tau_\blt$ is clearly irrelevant.) By taking power series expansion of \eqref{multisew4}, we see the term before $q_\blt^{n_\blt}$ has poles of orders at most $n_i+1$ at $\xi_i=0$ (resp. $\varpi_i=0$). The same can be said about $v_{n_\blt}$.\\[-1ex]

Step 2. By Prop. \ref{multisew7} and \eqref{multisew4}, in $(\Mbb\otimes\Mbb'\otimes \mc O(\wtd\MB))\{q_\blt\}[\log q_\blt]$  we have
    \begin{align}\label{multisew5}
        \sum_{n_\blt \in \Nbb^R} \big(v_{n_\blt}\cdot q_\blt^{L_\blt(0)}\btr\otimes \btl+q_\blt^{L_\blt(0)}\btr\otimes v_{n_\blt}\btl\big)q_\blt^{n_\blt}=0
    \end{align}
    Since $v_{n_\blt}\in H^0\big(\wtd \MC,\SV_{\wtd \fx}\otimes \omega_{\wtd\MC/\wtd \MB}(\blt S_{\wtd \fx})\big)$ and $\uppsi$ is a conformal block, we have 
    \begin{align}\label{multisew6}
        \uppsi\big(v_{n_\blt}\cdot w\otimes q_\blt^{L_\blt (0)}\btr\otimes \btl+w\otimes v_{n_\blt}\cdot q_\blt^{L_\blt (0)}\btr\otimes \btl+w\otimes  q_\blt^{L_\blt (0)}\btr\otimes v_{n_\blt}\cdot \btl\big)=0
    \end{align}
    Therefore, for each $w\in\Wbb$ we have
    \begin{align*}
        &\MS\uppsi(v\cdot w)=\sum_{n_\blt\in \Nbb^R}\uppsi\big(v_{n_\blt}\cdot w\otimes q_\blt^{L_\blt(0)}\btr\otimes \btl\big)q_{\blt}^{n_\blt}\\
        &\xlongequal{\eqref{multisew5}}\sum_{n_\blt\in \Nbb^R}\uppsi\big(v_{n_\blt}\cdot w\otimes q_\blt^{L_\blt(0)}\btr\otimes \btl+w\otimes v_{n_\blt}\cdot q_\blt^{L_\blt (0)}\btr\otimes \btl+w\otimes  q_\blt^{L_\blt (0)}\btr\otimes v_{n_\blt}\cdot \btl\big)q_{\blt}^{n_\blt}
    \end{align*}
which equals $0$ by \eqref{multisew6}. Since any element of $\SJ_\fx^\pre(\MB)$ is an $\mc O(\MB)$-linear combination of such $v\cdot w$, we conclude that $\mc S\uppsi$ vanishes on $\SJ_\fx^\pre(\MB)$.
\end{proof}

\subsection{Convergence of sewing conformal blocks}

The following theorem specializes to \cite[Thm. 13.1]{Gui-sewingconvergence} in the special case that $\Mbb$ is a tensor product of grading-restricted $\Vbb$-modules. In fact, the proof for that special case applies almost directly to the present general situation. However, in \cite{Gui-sewingconvergence}, the proof of Thm. 13.1 was only outlined in Sec. 13, with the detailed proof provided solely for the case that $R=1$ in Sec. 11. Therefore, we now provide the detailed proof of Thm. \ref{lbb49} for arbitrary $R$ and arbitrary grading-restricted $\Vbb^{\otimes R}$-module $\Mbb$.

Recall that $\Delta=\eqref{eqb45}$ is the discriminant locus of $\fx$, and hence $\MB-\Delta=\MD_{r_\blt\rho_\blt}^\times\times\wtd\MB$. We let $\wht\MD_{r_\blt\rho_\blt}^\times$ be the universal cover of $\MD_{r_\blt\rho_\blt}^\times$. Let
\begin{align}\label{eq1}
\wht\MB^\times=\wht\MD_{r_\blt\rho_\blt}^\times\times\wtd\MB
\end{align}

\begin{thm}\label{lbb49}
Let $\uppsi\in H^0\big(\wtd \MB,\ST_{\wtd \fx}^*(\Wbb\otimes \Mbb\otimes \Mbb')\big)$. Then $\MS\uppsi$ converges a.l.u. (in the sense of Def. \ref{lbb51}) to a linear map $\Wbb\rightarrow \MO(\wht\MB^\times)$, which belongs to $H^0\big(\wht\MB^\times,\ST_\fx^*(\Wbb)\big)$ (cf. Rem. \ref{lbb27}).
\end{thm}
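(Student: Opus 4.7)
The plan is to reduce Thm.~\ref{lbb49} to the convergence theorem for formal parallel transports (Thm.~\ref{lbb26}), applied iteratively in the $R$ sewing directions on the universal cover $\wht\MB^\times$. First, all assertions may be verified locally on $\wtd\MB$, so I would shrink $\wtd\MB$ to a Stein open subset admitting coordinates $\tau_\blt$ and having finitely many components. Then $\MB=\MD_{r_\blt\rho_\blt}\times\wtd\MB$ is Stein, so by Rem.~\ref{lbb12} each vector field $q_i\partial_{q_i}$ on $\MB$ admits a lift $\wtd{q_i\partial_{q_i}}\in H^0(\MC,\Theta_\MC(-\log\MC_\Delta+\blt S_\fx))$, giving logarithmic differential operators $\nabla_{q_i\partial_{q_i}}$ on $\scr W_\fx(\Wbb)$ via Def.~\ref{lbb16}. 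By the argument of Thm.~\ref{independent1} (which applies verbatim in the logarithmic setting since $\wtd{q_i\partial_{q_i}}$ lies in $\Theta_\MC(\blt S_\fx)$ over $\MB-\Delta$), these operators descend to differential operators on $\scr T_\fx(\Wbb)$.

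Second, I would pull back the problem to the universal cover. On $\wht\MB^\times$ use coordinates $t_i=\log q_i$, so $q_i\partial_{q_i}=\partial_{t_i}$, and $\MS\uppsi(w)$ becomes a formal series in non-negative powers of $q_i=e^{t_i}$ (up to the finitely many shifts coming from the spectrum of $L_j(0)_{\mathrm s}$ via \eqref{eqb72}) whose coefficients are polynomial in $t_\blt$. By Thm.~\ref{lbb21} (which I would apply after suitable shrinking), the quotient $\scr W_\fx(\Wbb)(\MB)/\SJ_\fx^{\pre}(\MB)$ is a finitely generated $\mc O(\MB)$-module; as in the proof of Thm.~\ref{lbb26}, I may choose a finite-dimensional subspace $\Wbb_{[\leq\lambda_\blt]}$ with basis $e_1,\dots,e_n$ whose images generate this quotient, and express $\nabla_{q_i\partial_{q_i}}e_j$ as an $\mc O(\MB)$-linear combination of $e_\blt$ modulo $\SJ_\fx^{\pre}(\MB)$. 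Proposition~\ref{lbb48} (saying $\MS\uppsi$ vanishes on $\SJ_\fx^\pre(\MB)$) combined with the analogue of the intertwining $\partial_q\upphi=\upphi\circ\nabla_{\partial_q}$ of Rem.~\ref{lbb24} shows that the vector-valued formal series $f(q_\blt,\tau_\blt)=\bigoplus_j \MS\uppsi(e_j)$ satisfies a first-order linear system of holomorphic ODEs $q_i\partial_{q_i}f=\Omega_i(q_\blt,\tau_\blt)f$ on $\MB$ with $\Omega_i$ having entries in $\mc O(\MB)$.

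Third, I would apply the convergence result \cite[Thm.~A.1]{Gui-sewingconvergence} (as was done in the proof of Thm.~\ref{lbb26}), this time to the regular-singular system above, to conclude that each $\MS\uppsi(e_j)$ converges a.l.u.\ on $\wht\MB^\times$. Since every $w\in\Wbb$ lies in some $\Wbb_{[\leq\lambda_\blt]}$ with $\lambda_\blt\geq\mu_\blt$, and since the same generator-argument applies to any such larger weight, the a.l.u.\ convergence extends to all $w\in\Wbb$, giving Def.~\ref{lbb51}. Finally, to promote the limit $\MS\uppsi:\Wbb\to\mc O(\wht\MB^\times)$ to a conformal block, I would appeal to Thm.~\ref{lbb28} (valid since $\fx$ has local coordinates): Prop.~\ref{lbb48} shows $\MS\uppsi$ vanishes on $\SJ_\fx^\pre(\MB)$ as a formal series, and absolute convergence together with the Stein property of $\wht\MB^\times$ (a product of universal covers of punctured disks with a Stein manifold) upgrades this vanishing to the genuine sheaf-theoretic condition for a global section of $\scr T^*_\fx(\Wbb)$.

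The main obstacle is bookkeeping rather than conceptual: one must carefully separate the polynomial-in-$\log q_\blt$ part (arising from the nilpotent $L_j(0)_{\mathrm n}$ through \eqref{eqb71}) from the $q_\blt^{n_\blt}$ part, and verify that the ODE system $q_i\partial_{q_i}f=\Omega_i f$ governs \emph{each} monomial $(\log q_\blt)^{l_\blt}$-coefficient uniformly, so that the a.l.u.\ bound of Def.~\ref{lbb51} (uniform over $l_\blt\in\Nbb^R$) is actually achieved. This uniformity follows because the nilpotent shifts $L_j(0)_{\mathrm n}$ are bounded on each $\Wbb_{[\leq\lambda_\blt]}$, so the differential system splits as a finite iterated extension whose convergence radius is controlled by the single semisimple system on $\Wbb_{[\leq\lambda_\blt]}$.
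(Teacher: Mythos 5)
There is a genuine gap at the heart of your second step. You assert that ``Proposition~\ref{lbb48} combined with the analogue of the intertwining $\partial_q\upphi=\upphi\circ\nabla_{\partial_q}$ of Rem.~\ref{lbb24}'' yields the linear system $q_i\partial_{q_i}f=\Omega_i f$. But Rem.~\ref{lbb24} is not a theorem about arbitrary formal conformal blocks: it holds for the formal parallel transport because that object is \emph{defined} by the ODE \eqref{eqb53}. The sewing $\MS\uppsi$ is defined instead by contracting against $q_\blt^{L_\blt(0)}\btr\otimes\btl$, and the identity relating $q_k\partial_{q_k}\MS\uppsi(w)$ to $\MS\uppsi(\nabla_{q_k\partial_{q_k}}w)$ is precisely what must be \emph{proved}. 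Concretely, $q_k\partial_{q_k}$ applied to $q_\blt^{L_\blt(0)}\btr\otimes\btl$ inserts $L_k(0)$ at the sewing points, and one must show that this insertion agrees with the residue action of the global section $\upnu(\wtd\yk)$ coming from a lift $\wtd\yk$ of $q_k\partial_{q_k}$. This requires: expanding $\wtd\yk$ near the nodes into the series $\sum_{n_\blt}\wtd\yk^\perp_{n_\blt}q_\blt^{n_\blt}$ with each $\wtd\yk^\perp_{n_\blt}$ a global relative vector field on $\wtd\MC$ with finite poles (Steps 1--2 of the paper's proof), the identity \eqref{multi8} matching the two residue actions at $\sgm_j'$ and $\sgm_j''$ via Prop.~\ref{multisew7} and the relation $a^j+b^j=\delta_{j,k}$ (Step 4), and a projective structure together with Thm.~\ref{lift2} to handle the fact that each $\wtd\yk^\perp_{n_\blt}$ is a lift of the \emph{zero} vector field and hence acts on $\uppsi$ by the scalar $\#(\wtd\yk^\perp_{n_\blt})$ (Step 3).

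A symptom of this gap is that your system $q_i\partial_{q_i}f=\Omega_i f$ is missing the projective term: the correct equation is $q_k\partial_{q_k}\MS\uppsi(w)=\MS\uppsi(\nabla_{q_k\partial_{q_k}}w)+g_k\cdot\MS\uppsi(w)$ where $g_k=\sum_{n_\blt}\#(\wtd\yk^\perp_{n_\blt})q_\blt^{n_\blt}$, and one must separately prove (Step 5 of the paper, using the Schwarzian-derivative formula \eqref{eqb79} and contour integrals) that this series converges to an element of $\MO(\MB)$. Without that convergence the coefficient matrix of your ODE system is only a formal power series and \cite[Thm.~A.1]{Gui-sewingconvergence} does not apply. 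Your closing remark that ``the main obstacle is bookkeeping rather than conceptual'' therefore inverts the situation: the $\log q_\blt$-uniformity you worry about is handled directly by the cited ODE theorem, whereas establishing the differential equation itself (with its projective correction) is the conceptual core of the argument. The remaining parts of your plan --- localizing on $\wtd\MB$, invoking Thm.~\ref{lbb21} to get finitely many generators, and concluding via Thm.~\ref{lbb28} and Prop.~\ref{lbb48} --- do match the paper's Step~6.
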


\begin{proof}
Step 1. The theorem can be proved locally with respect to $\wtd\MB$. Thus, we may assume that $\wtd\MB$ is an open ball in $\Cbb^m$ with standard coordinates $\tau_\blt$. Therefore, by Thm. \ref{lbb47}, $\wtd\fx$ has a projective structure $\mbf P$.  We abbreviate each $\tau_j\circ\pi$ to $\tau_j$.   Moreover, once we can prove the a.l.u. convergence, then the fact that $\mc S\uppsi$ belongs to $H^0\big(\wht\MB^\times,\ST_\fx^*(\Wbb)\big)$ will follow from Prop. \ref{lbb48} and Thm. \ref{lbb28}. 

Fix $1\leq k\leq R$. By Rem. \ref{lbb12}, we have a lift $\wtd\yk\in H^0\big(\MC,\Theta_\MC(-\log \MC_\Delta+\blt S_\fx)\big)$ for $q_k\partial_{q_k}$ and use $\wtd\yk$ to define the differential operator $\nabla_{q_k\partial_{q_k}}$, cf. Def. \ref{lbb16}. Let 
    \begin{align*}
        \Gamma=\bigcup_{i=1}^R\big(\sgm_i'(\wtd \MB)\cup \sgm_i''(\wtd\MB)\big)
    \end{align*}
    Choose any precompact open subset $\wtd U\subset \wtd \MC-\Gamma$ equipped with a fiberwise univalent $\eta\in \MO(\wtd U)$. We can find a sufficiently small sub-polydisk $\MD_{\varepsilon_\blt\kappa_\blt}\subset\MD_{r_\blt\rho_\blt}$ such that $\wtd U\times \MD_{\varepsilon_\blt \kappa_\blt}\simeq \MD_{\varepsilon_\blt \kappa_\blt}\times \wtd U$ is an open subset of $\wtd \MC\times \MD_{r_\blt \rho_\blt}-\bigcup_{i=1}^R \big(F_i'\cup F_i''\big)$. (We will freely switch the order of product of manifolds.)  After extending $\eta$ constantly to a fiberwise univalent function on $U:=\MD_{\varepsilon_\blt \kappa_\blt}\times \wtd U$, as in \eqref{eqb31}, we have
\begin{subequations}\label{eqb80}
    \begin{align}\label{eqb80a}
        \wtd \yk |_U=h(q_\blt,\eta,\tau_\blt)\partial_\eta +q_k\partial_{q_k}
    \end{align}
where $h\in\mc O((q_\blt,\eta,\tau_\blt)(U-\SX))$ has finite poles at $(q_\blt,\eta,\tau_\blt)(\SX)=\MD_{\eps_\blt\kappa_\blt}\times (\eta,\tau_\blt)(S_{\wtd\fx})$. Write
\begin{align}\label{eqb80b}
h=\sum_{n_\blt \in \Nbb^R} h_{n_\blt}(\eta,\tau_\blt)q_\blt^{n_\blt}
\end{align}
where each $h_{n_\blt}\in \mc O((\eta,\tau_\blt)(\wtd U-S_{\wtd\fx}))$ has finite poles at $(\eta,\tau_\blt)(S_{\wtd\fx})$, and set
    \begin{align}\label{eqb80c}
        \wtd \yk_{n_\blt}^\perp =h_{n_\blt}(\eta,\tau_\blt)\partial_\eta\qquad\in H^0(\wtd U,\Theta_{\wtd \MC/\wtd \MB}(\blt S_{\wtd \fx}))
    \end{align}
\end{subequations}
Then $\yk_{n_\blt}^\perp$ is independent of the choice of $\eta$, and hence can be extended to an element of $H^0\big(\wtd \MC-\Gamma,\Theta_{\wtd \MC/\wtd \MB}(\blt S_{\wtd\fx})\big)$. To see this, suppose we have another $\mu\in \MO(\wtd U)$ univalent on each fiber, extended constantly to $U$. So $\partial_{q_j}\mu=0$ for $1\leq j\leq R$, and hence \eqref{eqb80a} implies $\wtd \yk |_U=h(q_\blt,\eta,\tau_\blt)\cdot \partial_\eta \mu\cdot \partial_\mu +q_k\partial_{q_k}$. Note that $\partial_\eta\mu$ is constant over $q_\blt$. So if we define $\wtd\yk_{n_\blt}^\perp$ using $\mu$, then $\wtd \yk_{n_\blt}^\perp=h_{n_\blt}(\eta,\tau_\blt)\cdot \partial_\eta\mu\cdot \partial_\mu$, which coincides with \eqref{eqb80c}.\\[-1ex]

Step 2. In this step, we calculate $\wtd\yk_{n_\blt}^\perp$ on each $V_j',V_j''$. Our result \eqref{eqb77} will imply
\begin{align}\label{eqb78}
\wtd\yk_{n_\blt}^\perp\in H^0\big(\wtd \MC, \Theta_{\wtd\MC/\wtd\MB}(\blt S_{\wtd \fx})\big)
\end{align}
i.e., that $\wtd\yk_{n_\blt}^\perp$ has finite poles at $\Gamma$. We use the notations in Asmp. \ref{lbb8}. By the description of $d\pi$ in Subsec. \ref{lbb46}, for each $1\leq i\leq N$ and $1\leq j\leq R$, we can write
    \begin{subequations}\label{eqb76}
    \begin{gather}
        \wtd\yk|_{U_i}=h^i(q_\blt,\eta_i,\tau_\blt)\partial_{\eta_i}+q_k\partial_{q_k}\\
        \wtd \yk|_{W_j}=a^j(\xi_j,\varpi_j,q_{\blt\backslash j},\tau_\blt)\xi_j \partial_{\xi_j}+b^j(\xi_j,\varpi_j,q_{\blt\backslash j},\tau_\blt)\varpi_j\partial_{\varpi_j}+(1-\delta_{j,k})q_k\partial_{q_k}\label{eqb76b}
    \end{gather}      
    \end{subequations}
where $h^i\in\mc O((q_\blt,\eta_i,\tau_\blt)(U-\SX))$ has finite poles at $(q_\blt,\eta,\tau_\blt)(\SX)$, and $a^j,b^j\in \MO(W_j)$ (where $W_j=\eqref{eqWi1}$ has standard coordinates $(\xi_j,\varpi_j,q_{\blt\backslash j},\tau_\blt)$) satisfy
\begin{align}
a^j+b^j=\delta_{j,k}
\end{align}
The tangent vectors $\partial_{\xi_j},\partial_{\varpi_j},\partial_{q_k}$ are defined using the coordinates $(\xi_j,\varpi_j,q_{\blt\backslash j},\tau_\blt)$. 

When restricted to $W_j'=\eqref{eqWi2}$ resp. $W_j''=\eqref{eqWi3}$ and using the coordinates $(\xi_j,q_j/\xi_j,q_{\blt\backslash j},\tau_\blt)$ resp. $(q_j/\varpi_j,\varpi_j,q_{\blt\backslash j},\tau_\blt)$ to define the tangent vectors $\partial_{\xi_j},\partial_{q_k}$ resp. $\partial_{\varpi_j},\partial_{q_k}$, Eq. \eqref{eqb76b} becomes
   \begin{subequations}\label{multi3}
   \begin{gather}
       \wtd \yk|_{W_j'}=a^j(\xi_j,q_j/\xi_j,q_{\blt\backslash j},\tau_\blt)\xi_j \partial_{\xi_j}+q_k\partial_{q_k}\\
        \wtd \yk|_{W_j''}=b^j(q_j/\varpi_j,\varpi_j,q_{\blt\backslash j},\tau_\blt)\varpi_j \partial_{\varpi_j}+q_k\partial_{q_k}
   \end{gather}
   \end{subequations}
which are calculated from the change of coordinate formula for tangent vectors
\begin{gather*}
\left\{
\begin{array}{l}
\partial_{\xi_j}=\partial_{\xi_j}+\varpi_j\partial_{q_j}\\
\partial_{\varpi_j}=\xi_j\partial_{q_j}
\end{array}
\right.
\text{ on }W_j'\qquad
\left\{
\begin{array}{l}
\partial_{\xi_j}=\varpi_j\partial_{q_j}\\
\partial_{\varpi_j}=\partial_{\varpi_j}+\xi_j\partial_{q_j}
\end{array}
\right.
\text{ on }W_j''
\end{gather*}
due to $q_j=\xi_j\varpi_j$. Write
  \begin{gather*}
      a^j(\xi_j,\varpi_j,q_{\blt\backslash j},\tau_\blt)=\sum_{m,n\in \Nbb}a^j_{m,n}(q_{\blt\backslash j},\tau_\blt)\xi_j^m \varpi_j^n\\
      b^j(\xi_j,\varpi_j,q_{\blt\backslash j},\tau_\blt)=\sum_{m,n\in \Nbb}b^j_{m,n}(q_{\blt\backslash j},\tau_\blt)\xi_j^m \varpi_j^n
  \end{gather*}
Then the RHS below converge a.l.u. on $W_j$ to the LHS:
\begin{subequations}\label{multi4}
\begin{gather}
      a^j(\xi_j,q_j/\xi_j,q_{\blt\backslash j},\tau_\blt)=\sum_{n\geq 0,l\geq -n}a^j_{l+n,n}(q_{\blt\backslash j},\tau_\blt)\xi_j^l q_j^n\\
      b^j(q_j/\varpi_j,\varpi_j,q_{\blt\backslash j},\tau_\blt)=\sum_{m\geq 0,l\geq -m}b^j_{m,l+m}(q_{\blt\backslash j},\tau_\blt)\varpi_j^l q_j^m
  \end{gather}
\end{subequations}
By substituting \eqref{multi4} into \eqref{multi3}, we obtain
\begin{gather*}
    \label{multi6}\wtd \yk|_{W_j'}=\sum_{n\geq 0,l\geq -n}a^j_{l+n,n}(q_{\blt\backslash j},\tau_\blt)\xi_j^{l+1}q_j^n\partial_{\xi_j}  +q_k\partial_{q_k}\\
        \label{multi7}\wtd \yk|_{W_j''}=\sum_{m\geq 0,l\geq -m}b^j_{m,l+m}(q_{\blt\backslash j},\tau_\blt)\varpi_j^{l+1}q_j^m\partial_{\varpi_j}  +q_k\partial_{q_k}
\end{gather*}
Write
\begin{subequations}\label{jthexpansion}
\begin{gather}
    a^j_{l+n_j,n_j}(q_{\blt\backslash j},\tau_\blt)=\sum_{n_{\blt\backslash j}\in\Nbb^{R-1}}a^j_{l+n_j,n_j,n_{\blt\backslash j}}(\tau_\blt) q_{\blt\backslash j}^{n_{\blt\backslash j}}\\
    b^j_{n_j,l+n_j}(q_{\blt\backslash j},\tau_\blt)=\sum_{n_{\blt\backslash j}\in\Nbb^{R-1}}b^j_{n_j,l+n_j,n_{\blt\backslash j}}(\tau_\blt)q_{\blt\backslash j}^{n_{\blt\backslash j}}
\end{gather}
\end{subequations}
Then in $H^0(V_j'-S_{\wtd\fx},\Theta_{\wtd \MC/\wtd \MB})$ resp. $H^0(V_j''-S_{\wtd\fx},\Theta_{\wtd \MC/\wtd \MB})$ we have
\begin{subequations}\label{eqb77}
\begin{gather}
\wtd\yk_{n_\blt}^\perp\big|_{V_j'}=\sum_{l\geq -n_j}a^j_{l+n_j,n_j,n_{\blt\setminus j}}(\tau_\blt)\xi_j^{l+1}\partial_{\xi_j} \\
\wtd\yk_{n_\blt}^\perp\big|_{V_j''}=\sum_{l\geq -n_j}b^j_{n_j,l+n_j,n_{\blt\setminus j}}(\tau_\blt)\varpi_j^{l+1}\partial_{\varpi_j}
\end{gather}
\end{subequations}
where $\partial_{\xi_j}$ resp. $\partial_{\varpi_j}$ are with respect to the coordinates $(\xi_j,\tau_\blt)$ resp. $(\varpi_j,\tau_\blt)$ of $V_j'$ resp. $V_j''$ (cf. \eqref{eqb4}). So $\wtd\yk_{n_\blt}^\perp$ has poles of orders at most $n_j-1$ at $\sgm_j'(\wtd\MB)$ and $\sgm_j''(\wtd\MB)$.\\[-1ex]

Step 3. By \eqref{eqb78}, $\wtd\yk_{n_\blt}^\perp$ is a lift of the zero tangent field of $\wtd\MB$. Therefore
\begin{gather*}
\upnu(\wtd \yk_{n_\blt}^\perp)\in H^0\big(\wtd U_1\cup \cdots\cup \wtd U_N\cup V_1'\cup V_1'' \cup \cdots \cup V_R'\cup V_R'',  \scr V_{\wtd\fx}\otimes\omega_{\wtd\MC/\wtd\MB}(\blt S_{\wtd\fx})\big)\\
\upnu(\wtd\yk)\in H^0\big(U_1\cup\cdots\cup U_N,\scr V_\fx\otimes\omega_{\MC/\MB}(\blt\SX)\big)
\end{gather*}
can be defined by \eqref{eqb32}. The goal of this step is to prove \eqref{multi9}.

Note \eqref{eqb80} for the relation between $\wtd\yk$ and $\wtd\yk_{n_\blt}^\perp$. Choose any $w\in \Wbb\subset\Wbb\otimes \MO(\MB)$. By Def. \ref{lbb16}, in $\Wbb\otimes(\MO(\wtd\MB)[[q_\blt]])$ we have
\begin{align*}
\nabla_{q_k\partial_{q_k}}w=-\upnu(\wtd\yk)w=-\sum_{n_\blt\in\Nbb^R}\upnu(\wtd \yk_{n_\blt}^\perp)w\cdot q_\blt^{n_\blt}
\end{align*}
Therefore, by Thm. \ref{lift2}, in $(\Wbb\otimes\Mbb\otimes\Mbb'\otimes\MO(\wtd\MB))\{q_\blt\}[\log q_\blt]$ we have
\begin{align*}
    &\upnu(\wtd \yk_{n_\blt}^\perp)w\otimes q_\blt^{L_\blt(0)}\btr\otimes \btl+w\otimes \upnu(\wtd \yk_{n_\blt}^\perp) q_\blt^{L_\blt(0)}\btr\otimes \btl+w\otimes q_\blt^{L_\blt(0)}\btr\otimes  \upnu(\wtd \yk_{n_\blt}^\perp) \btl\\
=&\#(\wtd \yk_{n_\blt}^\perp)\cdot w\otimes q_\blt^{L_\blt(0)}\btr\otimes \btl+ \text{ an element of }\SJ^\pre_{\wtd\fx}(\wtd\MB)\{q_\blt\}[\log q_\blt]
\end{align*}
Note that $\#(\wtd \yk_{n_\blt}^\perp)\in\mc O(\wtd\MB)$. Since $\uppsi$ vanishes on $\SJ^\pre_{\wtd\fx}(\wtd\MB)$, in $\mc O(\wtd\MB)\{q_\blt\}[\log q_\blt]$ we have
\begin{equation}\label{multi9}
\begin{aligned}
    &\sum_{n_\blt\in \Nbb^R}q_\blt^{n_\blt}\uppsi\big(w\otimes \upnu(\wtd \yk_{n_\blt}^\perp) q_\blt^{L_\blt(0)}\btr\otimes \btl+w\otimes q_\blt^{L_\blt(0)}\btr\otimes  \upnu(\wtd \yk_{n_\blt}^\perp) \btl\big)\\
    =&\MS \uppsi (\nabla_{q_k\partial_{q_k}}w)+\sum_{n_\blt\in \Nbb^R}\#(\wtd \yk_{n_\blt}^\perp) q_\blt^{n_\blt}\cdot \MS\uppsi(w)
\end{aligned}
\end{equation}

Step 4. Let us prove in $(\Mbb\otimes\Mbb'\otimes\MO(\wtd\MB))\{q_\blt\}[\log q_\blt]$ that 
\begin{align}\label{multi8}
    \sum_{n_\blt\in \Nbb^R}q_\blt^{n_\blt}\big( \upnu(\wtd \yk_{n_\blt}^\perp) q_\blt^{L_\blt(0)}\btr\otimes \btl+ q_\blt^{L_\blt(0)}\btr\otimes  \upnu(\wtd \yk_{n_\blt}^\perp) \btl\big)=L_k(0)q_\blt^{L_\blt(0)}\btr\otimes \btl
\end{align}
This will imply that for each $w\in\Wbb$, we have in $\mc O(\wtd\MB)\{q_\blt\}[\log q_\blt]$ that
\begin{equation}\label{eqb82}
\begin{aligned}
   & q_k\partial_{q_k}\MS \uppsi(w)=\uppsi(w\otimes q_k \partial_{q_k}q_\blt^{L_\blt(0)}\btr\otimes \btl)=\uppsi(w\otimes L_k(0)q_\blt^{L_\blt(0)}\btr\otimes \btl)\\
    \xlongequal{\eqref{multi8}}&\sum_{n_\blt\in \Nbb^R}q_\blt^{n_\blt}\uppsi\big(w\otimes \upnu(\wtd \yk_{n_\blt}^\perp) q_\blt^{L_\blt(0)}\btr\otimes \btl+w\otimes q_\blt^{L_\blt(0)}\btr\otimes  \upnu(\wtd \yk_{n_\blt}^\perp) \btl\big)\\
    \xlongequal{\eqref{multi9}}&\MS \uppsi (\nabla_{q_k\partial_{q_k}}w)+\sum_{n_\blt\in \Nbb^R}\#(\wtd \yk_{n_\blt}^\perp) q_\blt^{n_\blt}\cdot \MS\uppsi(w)
\end{aligned}
\end{equation}
To prove \eqref{multi8}, note that by \eqref{eqb77}, the $j$-th residue actions $*_j$ of $\upnu(\wtd \yk_{n_\blt}^\perp)$ on $\Mbb,\Mbb'$ are 
\begin{align*}
    &\Res_{\xi_j=0}\sum_{l\geq -n_j}a^j_{l+n_j,n_j,n_{\blt\backslash j}}(\tau_\blt)Y_{\Mbb,j}(\cbf,\xi_j)\xi_j^{l+1}d\xi_j \\
    &\Res_{\varpi_j=0}\sum_{l\geq -n_j}b^j_{n_j,l+n_j,n_{\blt\backslash j}}(\tau_\blt)Y_{\Mbb',j}(\cbf,\varpi_j)\varpi_j^{l+1}d\varpi_j
\end{align*}
These formulas, together with \eqref{jthexpansion}, show that the LHS of \eqref{multi8} equals
\begin{align*}
    &\sum_{n_j\geq 0}\sum_{j=1}^R \Res_{\xi_j=0}\sum_{l\geq -n_j}a^j_{l+n_j,n_j}(q_{\blt\backslash j},\tau_\blt)q_j^{n_j}Y_{\Mbb,j}(\cbf,\xi_j)q_\blt^{L_\blt(0)}\btr\otimes \btl \xi_j^{l+1}d\xi_j\\
    &+\sum_{n_j\geq 0}\sum_{j=1}^R \Res_{\varpi_j=0}\sum_{l\geq -n_j}b^j_{n_j,l+n_j}(q_{\blt\backslash j},\tau_\blt)q_j^{n_j}q_\blt^{L_\blt(0)}\btr\otimes Y_{\Mbb',j}(\cbf,\varpi_j)\btl \varpi_j^{l+1}d\varpi_j\\
\xlongequal{\eqref{multi4}}&\sum_{j=1}^R \Res_{\xi_j=0}~a^j(\xi_j,q_j/\xi_j,q_{\blt\backslash j},\tau_\blt)Y_{\Mbb,j}(\cbf,\xi_j)q_\blt^{L_\blt(0)}\btr\otimes \btl \xi_jd\xi_j\\
    &+\sum_{j=1}^R \Res_{\varpi_j=0}~b^j(q_j/\varpi_j,\varpi_j,q_{\blt\backslash j},\tau_\blt)q_\blt^{L_\blt(0)}\btr\otimes Y_{\Mbb',j}(\cbf,\varpi_j)\btl \varpi_j d\varpi_j
\end{align*}
This result, together with $a^j+b^j=\delta_{j,k}$ and the formula
\begin{align*}
        &\Res_{\xi_j=0}~Y_{\Mbb,j}(\xi_j^{2}\cbf,\xi_j)q_\blt^{L_\blt(0)}\btr\otimes \btl \cdot b^j(\xi_j,q_j/\xi_j,q_{\blt\backslash j},\tau_\blt)\frac{d\xi_j}{\xi_j}\\
        =&\Res_{\varpi_j=0}~q_\blt^{L_\blt(0)}\btr\otimes Y_{\Mbb',j}(\varpi_j^{2}\cbf,\varpi_j) \btl\cdot b^j(q_j/\varpi_j,\varpi_j,q_{\blt\backslash j},\tau_\blt)\frac{d\varpi_j}{\varpi_j}
    \end{align*}
(due to Prop. \ref{multisew7} and the fact that $\MU(\upgamma_1)\cbf=\cbf$), implies \eqref{multi8}.\\[-1ex]

Step 5. In this step, we prove that
\begin{align}
    g_k:=\sum_{n_\blt \in \Nbb^R}\#(\wtd \yk_{n_\blt}^\perp)q_\blt^{n_\blt}\in \MO(\wtd \MB)[[q_\blt]]
\end{align}
belongs to $\MO(\MB)$. Recall \eqref{eqb4} that $V_i'=\MD_{r_i}\times\wtd\MB$ and $V_i''=\MD_{\rho_i}\times\wtd\MB$. Define $h^i_{n_\blt}$ from $h^i$ as in \eqref{eqb80}. By Thm. \ref{lift2} and \eqref{eqb77}, we have
\begin{align*}
\#(\wtd\yk_{n_\blt}^\perp)=\frac c{12}\Big(\sum_{1\leq j\leq R} A_{j,n_\blt}+\sum_{1\leq j\leq R} B_{j,n_\blt}+\sum_{1\leq i\leq N}C_{i,n_\blt}\Big)
\end{align*}
where
\begin{gather*}
A_{j,n_\blt}=\sum_{l\geq-n_j}\Res_{\xi_j=0}~ \Sbf_{\xi_j}\mbf P(\xi_j,\tau_\blt)\cdot a^j_{l+n_j,n_j,n_{\blt\setminus j}}(\tau_\blt) \xi_j^{l+1} d\xi_j\\
B_{j,n_\blt}=\sum_{l\geq-n_j}\Res_{\varpi_j=0}~ \Sbf_{\varpi_j}\mbf P(\varpi_j,\tau_\blt)\cdot b^j_{n_j,l+n_j,n_{\blt\setminus j}}(\tau_\blt)\varpi_j^{l+1} d\varpi_j\\
C_{i,n_\blt}=\Res_{\eta_i=0}~ \Sbf_{\eta_i} \mbf P(\eta_i,\tau_\blt)\cdot h^i_{n_\blt}(\eta_i,\tau_\blt)d\eta_i
\end{gather*}

Choose anticlockwise circles $\gamma_j'$ inside $\MD_{r_j}$ and $\gamma_j''$ inside $\MD_{\rho_j}$ around the origins with radii $0<\epsilon_j'<r_j$ resp. $0<\epsilon_j''<\rho_j$. Let
\begin{subequations}\label{eqb81}
\begin{gather}
    \label{eqb81a}A_j=\frac{1}{2\im\pi}\oint\nolimits_{\gamma_j'} \Sbf_{\xi_j}\mbf P(\xi_j,\tau_\blt)\cdot a^j(\xi_j,q_j/\xi_j,q_{\blt\backslash j},\tau_\blt) \cdot \xi_j d\xi_j\\
    \label{eqb81b}B_j=\frac{1}{2\im\pi}\oint\nolimits_{\gamma_j''} \Sbf_{\varpi_j}\mbf P(\varpi_j,\tau_\blt)\cdot b^j(q_j/\varpi_j,\varpi_j,q_{\blt\backslash j},\tau_\blt)\cdot \varpi_j d\varpi_j\\
    \label{eqb81c}C_i=\Res_{\eta_i=0} \Sbf_{\eta_i} \mbf P(\eta_i,\tau_\blt)\cdot h^i(q_\blt,\eta_i,\tau_\blt)d\eta_i
\end{gather} 
\end{subequations}
where in \eqref{eqb81a} and \eqref{eqb81b} we assume that $|q_j|/\rho_j<\epsilon_j'$ resp. $|q_j|/r_j<\epsilon_j''$ (cf. \eqref{eqb84}). Clearly $C_i\in\MO(\MB)$. It is not hard to see that the definitions of $A_j,B_j$ are consistent for different choices of $\epsilon_j',\epsilon_j''$. Therefore, by choosing $\epsilon_j',\epsilon_j''$ sufficiently close to $r_j,\rho_j$ respectively, Eq. \eqref{eqb81a} and \eqref{eqb81b} define $A_j,B_j\in\MO(\MB)$. It is also not hard to see that $\sum_{n_\blt} A_{j,n_\blt}q_\blt^{n_\blt}$, $\sum_{n_\blt} B_{j,n_\blt}q_\blt^{n_\blt}$, $\sum_{n_\blt} C_{j,n_\blt}q_\blt^{n_\blt}$ converge a.l.u. on $\MB$ to $A_j$, $B_j$, $C_j$ respectively. (See the proof of \cite[Prop. 11.12]{Gui-sewingconvergence}.) Note that the factor $\frac 1{2\im\pi}$ for \eqref{eqb81a} and \eqref{eqb81b} is missing in Prop. 11.12 and also in (13.8) of \cite{Gui-sewingconvergence}.) Thus $g_k$ converges a.l.u. to an element of $\MO(\MB)$, i.e., $\frac{c}{12}\Big(\sum_{1\leq j\leq R} A_j+\sum_{1\leq j\leq R} B_j+\sum_{1\leq i\leq N}C_i\Big)$.\\[-1ex]

Step 6. This final step is similar to the proof of Thm. \ref{lbb26}. By Thm. \ref{lbb21}, we can find finitely many elements $s_1,s_2,\dots\in\Wbb\otimes\MO(\MB)$ generating $\Wbb\otimes\MO(\MB)$ mod $\SJ^\pre_\fx(\MB)$. Fix $\mu_\blt\in\Cbb^N$ such that $s_1,s_2,\dots\in\Wbb_{[\leq\mu_\blt]}\otimes\MO(\MB)$. Thus $\Wbb_{[\leq\mu_\blt]}$ generates the $\MO(\MB)$-module $\Wbb\otimes\MO(\MB)$ mod $\SJ^\pre_\fx(\MB)$.

Choose any $\lambda_\blt\geq\mu_\blt$. Let $(e_j)_{j\in J}$ be a basis of the (finite-dimensional) vector space $\Wbb_{[\leq\lambda_\blt]}$. By \eqref{eqb82} in Step 4, for each $i\in J$ and $1\leq k\leq R$ we have
\begin{align*}
q_k\partial_q \MS\uppsi(e_i)=\MS\uppsi(\nabla_{q_k\partial_{q_k}}e_i)+g_k\cdot\MS\uppsi(e_i)
\end{align*}
where $g_k\in\MO(\MB)$ by Step 5. Since $\Wbb_{[\leq\lambda_\blt]}$ generates the $\MO(\MB)$-module $\Wbb\otimes\MO(\MB)$ mod $\SJ^\pre_\fx(\MB)$, and since $\nabla_{q_k\partial_{q_k}}e_i\in \Wbb\otimes\MO(\MB)$ (cf. Def. \ref{lbb16}), we can find $\Omega^k_{i,j}\in\MO(\MB)$ such that
\begin{align*}
\nabla_{q_k\partial_{q_k}}e_i=\sum_{j\in J}\Omega_{i,j}^ke_j \mod\ \SJ^\pre_\fx(\MB)
\end{align*}
for all $i,j\in J$. Therefore, by Prop. \ref{lbb48}, 
\begin{align*}
q_k\partial_{q_k}\MS\uppsi(e_i)=\sum_{j\in J}\Omega_{i,j}^k\MS\uppsi(e_j)+g_k\cdot \MS\uppsi(e_i)
\end{align*}
Thus, as an element of $\MO(\wtd\MB)[[q_\blt]][\log q_\blt]^J$, $f:=\oplus_{j\in J}\MS\uppsi(e_j)$ is a formal solution of the differential equation of the system of differential equations $q_k\partial_{q_k}f=\Lambda^kf$ (for all $1\leq k\leq R$) where $\Lambda^k$ is the $\Cbb^{J\times J}$-valued holomorphic functions on $\MB$ whose $i\times j$-th entry is $\Omega_{i,j}^k+\delta_{i,j}g_k$. Therefore, by \cite[Thm. A.1]{Gui-sewingconvergence}, $\MS\uppsi(w)$ converges a.l.u.  whenever $w=e_i$, and hence whenever $w\in\Wbb_{[\leq\lambda_\blt]}$. Since $\lambda_\blt\geq \mu_\blt$ is arbitrary, $\MS\uppsi(w)$ converges a.l.u. for all $w\in\Wbb$. 
\end{proof}

\begin{rem}\label{lbb56}
\textit{Thm. \ref{lbb49} holds if \eqref{eqb52} is replaced by the following weaker assumption: Each connected component of each fiber $\wtd\fx_b$ of $\wtd\fx$ (where $b\in\wtd\MB$) intersects  $\sgm_\blt(\wtd\MB)\cup\sgm'_\blt(\wtd\MB)\cup\sgm''_\blt(\wtd\MB)$, and each connected component of each smooth fiber $\fx_b$ of $\fx$ (where $b\in\MB-\Delta$) intersects $\sgm_\blt(\MB)$.}
\end{rem}

The primary purpose of this assumption is that, when defining sheaves of conformal blocks for smooth families, we always require each connected component of every fiber to intersect the marked points.

\begin{proof}
It suffices to prove that for each $b_0\in\wtd\MB$ and $0<\wtd r_i<r_i,0<\wtd\rho_i<\rho_i$, there is a neighborhood $W\subset\wtd\MB$ of $b_0$ such that Thm. \ref{lbb49} holds when restricted to the open subset $W\times \MD_{\wtd r_\blt\wtd\rho_\blt}$ of $\MB$. Therefore, by shrinking $r_i,\rho_i$ to each $\wtd r_i,\wtd \rho_i$, and by shrinking $\wtd\MB$ to a small enough neighborhood of $b_0$, we assume that some incoming marked points (i.e. sections) $\sigma_1,\dots,\sigma_K$ of $\wtd\fx$ can be added such that $\sigma_1(\wtd\MB),\dots,\sigma_K(\wtd\MB)$ are mutually disjoint and are also disjoint from each $\sgm_i(\wtd\MB)$ and $V_j'=\MD_{r_i}\times\wtd\MB$ and $V_j''=\MD_{\rho_i}\times\wtd\MB$, and each component of each $\wtd\fx_b$ intersects one of $\sigma_\varstar(\wtd\MB)$. 

Let $\wtd\fx'=(\wtd \pi:\wtd \MC\rightarrow \wtd \MB\big|\sgm_\blt,\sigma_\varstar\big\Vert \sgm_\blt',\sgm_\blt'')$ be the new family obtained from $\wtd\fx$ by adding $\sigma_\varstar$. Associate the vacuum module $\Vbb$ to each $\sigma_k$. By the propagation of conformal blocks (cf. \cite[Sec. 2.5]{GZ1}) which is applicable by our weaker assumption, there exists an element
\begin{gather*}
\wr\uppsi\in H^0(\wtd\MB,\scr T^*_{\wtd\fx'}(\Wbb\otimes\Vbb^{\otimes K}\otimes\Mbb\otimes\Mbb'))\\
\wr\uppsi(w\otimes\idt^{\otimes K}\otimes m\otimes m')=\uppsi(w\otimes m\otimes m')
\end{gather*}
for all $w\in\Wbb,m\in\Mbb,m'\in\Mbb'$. Then $\wtd\fx'$ satisfies \eqref{eqb52}. We sew $\wtd\fx'$ and get $\fx'$. By  Thm. \ref{lbb49}, the sewing $\MS{\wr\uppsi}$ converges a.l.u. to a conformal block associated to $\fx'$ outside the discriminant locus $\Delta$. Thus
\begin{align*}
\MS\uppsi(w)=\MS{\wr\uppsi}(w\otimes\idt^{\otimes K})
\end{align*}
converges a.l.u.; the limit is a conformal block associated to $\fx$ outside $\Delta$, again by the propagation of conformal blocks \footnote{Namely, if a module for the marked point is $\Vbb$, then inserting the vacuum vector $\idt$ to this marked point produces a conformal block associated to the Riemann surfaces with that marked point removed. This easy property does not require the condition that each connected component intersects the marked points. Therefore, we do not assume that each component of each fiber of $\fx|_{\MB-\Delta}$ intersects $\sgm_\blt(\MB)$.}. This proves Thm. \ref{lbb49}.
\end{proof}

The following Thm. \ref{lbb52} says roughly that the sewing of a conformal block is parallel ``in the direction of sewing" up to a projective term that can be calculated explicitly. In a future paper, we will use Thm. \ref{lbb52} to explain the appearance of $-\frac c{24}$ in the modular invariance formulas in \cite{Zhu-modular-invariance,Miy-modular-invariance,Hua-differential-genus-1,Hua-modular-C2}.

\begin{thm}\label{lbb52}
Let $\uppsi\in H^0\big(\wtd \MB,\ST_{\wtd \fx}^*(\Wbb\otimes \Mbb\otimes \Mbb')\big)$. Assume that $\wtd\MB$ is an open subset of $\Cbb^m$ with standard coordinates $\tau_\blt$. Fix $1\leq k\leq R$, assume that $\wtd\yk\in H^0(\MC,\Theta_\MC(-\log\MC_\Delta+\blt\SX))$ is a lift of $q_k\partial_{q_k}$ (cf. Def. \ref{lbb50}). We use $\wtd\yk$ to define $\nabla_{q_k\partial_{q_k}}$ on $\scr T_\fx^*(\Wbb)$ being the dual of   $\nabla_{q_k\partial_{q_k}}$ on $\scr T_\fx(\Wbb)$ defined by Def. \ref{lbb16}. Assume that $\mbf P$ is a projective structure on $\wtd\fx$. For each $1\leq j\leq R$ and $1\leq i\leq N$, let  $a^j,b^j,h^j$ be defined by \eqref{eqb76}, and let $A_j,B_j,C_i$ be defined by \eqref{eqb81}. Let
\begin{align*}
    g_k=\frac{c}{12}\Big(\sum_{1\leq j\leq R} A_j+\sum_{1\leq j\leq R} B_j+\sum_{1\leq i\leq N}C_i\Big)
\end{align*}
(which is in $\MO(\MB)$). Then $\MS\uppsi\in H^0\big(\wht\MB^\times,\ST_\fx^*(\Wbb)\big)$ satisfies
\begin{align*}
\nabla_{q_k\partial_{q_k}}\MS\uppsi=g_k\cdot \MS\uppsi
\end{align*}
\end{thm}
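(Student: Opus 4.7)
The plan is to extract this identity directly from the proof of Theorem \ref{lbb49}, where all the heavy computation has already been carried out. By Definition \ref{lbb18}, the dual differential operator satisfies
\begin{align*}
(\nabla_{q_k\partial_{q_k}}\MS\uppsi)(w)=q_k\partial_{q_k}\big(\MS\uppsi(w)\big)-\MS\uppsi(\nabla_{q_k\partial_{q_k}}w)
\end{align*}
for every constant section $w\in\Wbb$ of $\scr W_\fx(\Wbb)=\Wbb\otimes_\Cbb\MO_\MB$. Since both $\MS\uppsi$ and its image under $\nabla_{q_k\partial_{q_k}}$ are determined by their values on constant sections $w\in\Wbb$ (and are $\mc O$-linear), it suffices to show
\begin{align*}
q_k\partial_{q_k}\MS\uppsi(w)-\MS\uppsi(\nabla_{q_k\partial_{q_k}}w)=g_k\cdot\MS\uppsi(w)
\end{align*}
as holomorphic functions on $\wht\MB^\times$.

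This is precisely equation \eqref{eqb82} of Step~4 in the proof of Theorem \ref{lbb49}, combined with the convergence statement of Step~5. More concretely, first I would recall from Step~1--Step~3 of that proof the construction of the vertical lift components $\wtd\yk_{n_\blt}^\perp\in H^0(\wtd\MC,\Theta_{\wtd\MC/\wtd\MB}(\blt S_{\wtd\fx}))$ via the expansion \eqref{eqb80} of $\wtd\yk|_U=h(q_\blt,\eta,\tau_\blt)\partial_\eta+q_k\partial_{q_k}$. Applying Theorem \ref{lift2} (which exchanges the action of a vertical lift for multiplication by $\frac{c}{12}$ times the Schwarzian-weighted residue) and using the fact that $\uppsi$ annihilates $\SJ^\pre_{\wtd\fx}(\wtd\MB)$, one derives the key commutation identity \eqref{multi9}. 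The crucial input of Step~4 is then equation \eqref{multi8}, namely
\begin{align*}
\sum_{n_\blt\in\Nbb^R}q_\blt^{n_\blt}\big(\upnu(\wtd\yk_{n_\blt}^\perp)q_\blt^{L_\blt(0)}\btr\otimes\btl+q_\blt^{L_\blt(0)}\btr\otimes\upnu(\wtd\yk_{n_\blt}^\perp)\btl\big)=L_k(0)q_\blt^{L_\blt(0)}\btr\otimes\btl,
\end{align*}
whose proof uses Proposition \ref{multisew7} together with $a^j+b^j=\delta_{j,k}$ and $\MU(\upgamma_1)\cbf=\cbf$. Combining \eqref{multi8} with \eqref{multi9} gives \eqref{eqb82} as an identity in $\mc O(\wtd\MB)\{q_\blt\}[\log q_\blt]$.

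Finally, I would invoke Step~5 of the proof of Theorem \ref{lbb49}, which rewrites each of the three sums $\sum_{n_\blt}\#(\wtd\yk_{n_\blt}^\perp)_{j\text{-th piece}}\,q_\blt^{n_\blt}$ as contour integrals \eqref{eqb81a}, \eqref{eqb81b} and the residue \eqref{eqb81c}, showing a.l.u.\ convergence on all of $\MB$ to $\frac{c}{12}(A_j, B_j, C_i)$ respectively. Thus $\sum_{n_\blt}\#(\wtd\yk_{n_\blt}^\perp)q_\blt^{n_\blt}\to g_k\in\MO(\MB)$ a.l.u. Together with the a.l.u.\ convergence of $\MS\uppsi(w)$ already established by Theorem \ref{lbb49}, equation \eqref{eqb82} passes to a genuine identity of holomorphic functions on $\wht\MB^\times$, which is exactly the desired conclusion.

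The only genuinely nontrivial step here is \eqref{multi8}, and this has already been dispatched in the proof of Theorem \ref{lbb49}; no new analytic or algebraic ingredient is required. The task therefore reduces to carefully citing Steps~3--5 of that proof and translating the scalar-valued differential equation satisfied by $\MS\uppsi(w)$ into the assertion that $\nabla_{q_k\partial_{q_k}}\MS\uppsi=g_k\cdot\MS\uppsi$ via the definition of the dual connection.
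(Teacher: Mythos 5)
Your proposal is correct and follows exactly the route the paper takes: the paper's own proof of Theorem \ref{lbb52} simply states that the result is clear from the proof of Theorem \ref{lbb49}, in particular Eq. \eqref{eqb82} and Step 5, which is precisely the combination you spell out (the dual-connection identity \eqref{notation1} reduces the claim to \eqref{eqb82}, and Step 5 supplies the a.l.u.\ convergence of $\sum_{n_\blt}\#(\wtd\yk_{n_\blt}^\perp)q_\blt^{n_\blt}$ to $g_k\in\MO(\MB)$). Your write-up is just a more explicit unpacking of the same argument.
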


We repeat that $\wtd\yk$ exists due to Rem. \ref{lbb12}, and that $\mbf P$ exists due to Thm. \ref{lbb47}. Recall Def. \ref{lbb18} for the meaning of dual differential operators.

\begin{proof}
This is clear from the proof of Thm. \ref{lbb49}, especially Eq. \eqref{eqb82} and Step 5.
\end{proof}

\subsection{Application: convergence of higher genus pseudo-$q$-traces}\label{lbb63}

In this section, we assume for simplicity that $R=1$. We write $\sgm_1',\sgm_1''$ as $\sgm',\sgm''$. Write $r_1=r,\rho_1=\rho$. Associate a grading restricted $\Vbb^{\otimes N}$-module $\Wbb$ to $\sgm_\blt(\wtd\MB)$. Thus $\MB=\MD_{r\rho}\times\wtd\MB$. Choose a grading restricted $\Vbb$-module $\Mbb$ with contragredient $\Mbb'$. Associate $\Mbb'$ and $\Mbb$ to $\sgm'(\wtd\MB)$ and $\sgm''(\wtd\MB)$ respectively. $\Wbb$ is also associated to the marked points $\sgm_\blt(\MB)$ of the sewn family $\fx$. Recall from the beginning of this chapter that the local coordinates $\eta_\blt,\xi_1=\xi,\varpi_1=\varpi$ (at $\sgm_\blt(\wtd\MB),\sgm'(\wtd\MB),\sgm''(\wtd\MB)$) are chosen, and the identifications \eqref{eqb85} are assumed.

\subsubsection{The pseudo-trace $\Tr^\omega$ on $\End^0_A(\Mbb)$}

Note that $\dim\End_\Vbb(\Mbb)<+\infty$. We fix a unital $\Cbb$-subalgebra $A$ of $\End_\Vbb(\Mbb)^\mathrm{op}$ such that $\Mbb$ is projective as a right $A$-module. We fix a symmetric linear functional (\textbf{SLF}) $\omega$ on $A$. Namely, 
\begin{align*}
\omega:A\rightarrow\Cbb
\end{align*}
is a linear map satisfying $\omega(ab)=\omega(ba)$ for all $a,b\in A$. 

As usual, $\End_A(\Mbb)$ denotes the set of linear maps $T:\Mbb\rightarrow\Mbb$ satisfying $(Tm)a=T(m a)$ for all $m\in\Mbb,a\in A$. We let
\begin{gather*}
\End^0(\Mbb):=\bigcup_{\lambda\in\Cbb} \End(\Mbb_{[\leq\lambda]})=\{T\in\End(\Mbb):T=P_{\leq\lambda}TP_{\leq\lambda}\text{ for some }\lambda\in\Cbb\}\\
\End^0_A(\Mbb):=\End^0(\Mbb)\cap\End_A(\Mbb)=\bigcup_{\lambda\in\Cbb}\End_A(\Mbb_{[\leq\lambda]})
\end{gather*}
Since each $\Mbb_{[\leq\lambda]}$ is finite-dimensional, by the pseudo-trace (also called Hattori-Stallings trace) construction (cf. \cite{Ari10} and the references therein), $\omega$ defines canonically an SLF $\Tr^\omega$ on $\End_A(\Mbb_{[\leq\lambda]})$, restricting to the one on $\End_A(\Mbb_{[\leq\mu]})$ whenever $\Re\mu\leq\Re\lambda$. We thus obtain an SLF
\begin{align*}
\Tr^\omega:\End_A^0(\Mbb)\rightarrow\Cbb
\end{align*}
called the \textbf{pseudo-trace of \pmb{$\omega$} on \pmb{$\End_A^0(\Mbb)$}}. (In fact, the results in this section hold more generally when $\Tr^\omega$ is replaced by any SLF on $\End_A^0(\Mbb)$. Thus, one can forget about the fact that it arises from $\omega$.) Recall that $\ovl{\Mbb}$ is the algebraic completion of $\Mbb$.

\subsubsection{The pseudo-sewing $\MS^\omega\upphi$ of a conformal block $\upphi$}

\begin{df}
Let $\upphi\in H^0(\wtd\MB,\scr T_{\wtd\fx}^*(\Wbb\otimes\Mbb'\otimes\Mbb))$. So $\upphi$ can be viewed as a linear map $\Wbb\otimes\Mbb'\otimes\Mbb\rightarrow \MO(\wtd\MB)$. Define a map \pmb{$\upphi^\sharp$} by
\begin{gather*}
\upphi^\sharp: \wtd\MB\rightarrow \Hom_\Cbb(\Wbb,\Hom_\Cbb(\Mbb,\ovl\Mbb))
\end{gather*}
such that for each $b\in\wtd\MB$ and $w\in\Wbb$, the linear map $\upphi^\sharp_b(w)\in\Hom(\Mbb,\ovl\Mbb)$ satisfies
\begin{gather}\label{eqb88}
\upphi^\sharp_b(w):\Mbb\rightarrow\ovl\Mbb\qquad \bk{\upphi^\sharp_b(w)m,m'}=\upphi(w\otimes m'\otimes m)\big|_b
\end{gather}
for all $m\in\Mbb,m'\in\Mbb'$. We say that $\upphi$ \textbf{commutes with \pmb{$A$}} if for each $b\in\wtd\MB,w\in\Wbb$ and $\lambda,\mu\in\Cbb$, the following linear map 
\begin{align}\label{eqb86}
P_\lambda\circ \upphi^\sharp_b(w)\circ P_\mu:\Mbb_{[\mu]}\rightarrow\Mbb_{[\lambda]}
\end{align}
commutes with the action of $A$ (and hence belongs to $\End^0_A(\Mbb)$).
\end{df}

In the following, we fix $\upphi\in H^0(\wtd\MB,\scr T_{\wtd\fx}^*(\Wbb\otimes\Mbb'\otimes\Mbb))$ commuting with $A$. $\Tr^\omega$ can be defined on \eqref{eqb86}. Recall that $L(0)_{\mathrm n}$ preserves each $\Mbb_{[\lambda]}$ (because it commutes with $L(0)$). Note that $q^{L(0)}$ and $P_\lambda$ commute with $A$. Thus, for each $b\in\wtd\MB,w\in\Wbb$, we can define
\begin{align}\label{eqb90}
\begin{aligned}
\MS^\omega\upphi(w)\big|_b:=&\sum_{\lambda,\mu\in\Cbb} \Tr^\omega\big(q^{L(0)}P_\lambda\circ \upphi^\sharp_b(w)\circ P_\mu\big)\\
=&\sum_{\lambda\in\Cbb} q^\lambda\cdot\Tr^\omega \big(q^{L(0)_{\mathrm n}}P_\lambda\circ \upphi^\sharp_b(w)\circ P_\lambda\big)
\end{aligned}
\end{align}
which is in $\Cbb\{q\}[\log q]$. This gives a linear map
\begin{align*}
\MS^\omega\upphi:\Wbb\rightarrow\MO(\wtd\MB)\{q\}[\log q]\qquad w\mapsto\Tr^\omega\upphi^\sharp(w)
\end{align*}
We call $\MS^\omega\upphi$ the \textbf{pseudo-sewing} (or the pseudo-$q$-trace) of $\upphi$ with respect to $\omega$. 

The \textbf{a.l.u. convergence} of $\MS^\omega\upphi$ is understood as in Def. \ref{lbb51}. In the rest of this section, our goal is to prove the following theorem. Recall $\MB-\Delta=\MD_{r\rho}^\times\times\wtd\MB$. Recall \eqref{eq1}.

\begin{thm}\label{lbb53}
$\MS^\omega\upphi$ converges a.l.u. to a linear map $\Wbb\rightarrow \MO(\wht\MB^\times)$, which belongs to $H^0\big(\wht\MB^\times,\ST_\fx^*(\Wbb)\big)$ (cf. Rem. \ref{lbb27}).
\end{thm}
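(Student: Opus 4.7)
The strategy, as outlined in Section \ref{lbb60}, is to realize $\MS^\omega\upphi$ as a Segal sewing of Thm. \ref{lbb49} applied to a disjoint-union family that routes the self-sewing at $(\sgm',\sgm'')$ through an auxiliary sphere. Set $\Xbb:=\End^0_A(\Mbb)$. Under the canonical identification $\End^0(\Mbb)\simeq\Mbb\otimes\Mbb'$ of $\Vbb^{\otimes 2}$-modules (with $Y_1$ acting on the codomain $\Mbb$ and $Y_2$ on the domain $\Mbb'$), the subspace $\Xbb$ is a $\Vbb^{\otimes 2}$-submodule because $A\subset\End_\Vbb(\Mbb)^{\opp}$ acts by $\Vbb$-module endomorphisms, and grading-restrictedness of $\Xbb$ follows from that of $\Mbb$. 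The contragredient $\Xbb'$ is then a $\Vbb^{\otimes 2}$-quotient of $\Mbb'\otimes\Mbb=(\Mbb\otimes\Mbb')'$ via the surjection $\pi:\Mbb'\otimes\Mbb\twoheadrightarrow\Xbb'$ dual to $\Xbb\hookrightarrow\Mbb\otimes\Mbb'$.

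I will then produce two auxiliary conformal blocks. First, $\Tr^\omega:\Xbb\to\Cbb$ is a conformal block on the $2$-pointed sphere $\fn:=(\Pbb^1\,|\,\infty,0;1/z,z)$ with $\Xbb$ at the ordered marked points: invariance under the residue actions at $\infty$ and $0$ reduces, exactly as in \cite[Sec.~10]{Gui-sewingconvergence}, to the cyclicity identity for the pseudo-trace, which follows from the SLF property of $\omega$. Second, since $\upphi$ commutes with $A$, the bilinear map $(m',m)\mapsto\upphi(w\otimes m'\otimes m)|_b$ vanishes on $\ker\pi$ for every $b$ and $w$, so $\upphi$ descends to a linear map $\wtd\upphi:\Wbb\otimes\Xbb'\to\mc O(\wtd\MB)$; because $\pi$ is a $\Vbb^{\otimes 2}$-module surjection, $\wtd\upphi$ is a conformal block on $\wtd\fx$ associated to $\Wbb$ at $\sgm_\blt(\wtd\MB)$ and $\Xbb'$ simultaneously at $(\sgm'(\wtd\MB),\sgm''(\wtd\MB))$.

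Now form the auxiliary disjoint-union family $\wtd\fy:=\wtd\fx\sqcup(\fn\times\wtd\MB)$ over $\wtd\MB$, with two sewing pairs $(\tau'_1,\tau''_1):=(\infty_\fn,\sgm')$ and $(\tau'_2,\tau''_2):=(0_\fn,\sgm'')$, and local coordinates inherited from $\fn$ and from the data of $\wtd\fx$. Associate $\Wbb$ to $\sgm_\blt(\wtd\MB)$, the $\Vbb^{\otimes 2}$-module $\Xbb$ simultaneously to $(\tau'_1,\tau'_2)=(\infty_\fn,0_\fn)$, and $\Xbb'$ simultaneously to $(\tau''_1,\tau''_2)=(\sgm',\sgm'')$. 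The family $\wtd\fy$ fails \eqref{eqb52} because the $\fn$-component carries no incoming marked point, but the weakened hypothesis of Rem.~\ref{lbb56} is satisfied. Then $\wtd\upphi\otimes\Tr^\omega\in H^0(\wtd\MB,\scr T^*_{\wtd\fy}(\Wbb\otimes\Xbb\otimes\Xbb'))$, and Thm.~\ref{lbb49} (with $R=2$, via Rem.~\ref{lbb56}) yields a.l.u. convergence of $\MS(\wtd\upphi\otimes\Tr^\omega)$ on the universal cover of $\MD^\times_{r_1\rho_1}\times\MD^\times_{r_2\rho_2}\times\wtd\MB$ to a conformal block on the doubly-sewn family $\fy$.

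Finally, the operation of sewing $\fn$ into $\wtd\fx$ at $\sgm',\sgm''$ with parameters $q_1,q_2$ is biholomorphic, compatibly with the local coordinates $\eta_\blt$, to the self-sewing of $\wtd\fx$ with composite parameter $q=q_1q_2$; hence $\fy_{(q_1,q_2,b)}$ is canonically identified with $\fx_{(q_1q_2,b)}$. Using a basis of $\Xbb_{[\lambda,\mu]}=\Hom_A(\Mbb_{[\mu]},\Mbb_{[\lambda]})$ with dual basis in $\Xbb'_{[\lambda,\mu]}$, a direct computation with the sewing formula \eqref{eq2} shows that $\MS(\wtd\upphi\otimes\Tr^\omega)$ in fact takes values in $\mc O(\wtd\MB)\{q_1q_2\}[\log(q_1q_2)]$ and satisfies
\[
\MS(\wtd\upphi\otimes\Tr^\omega)\big|_{q_1q_2=q}=\MS^\omega\upphi.
\]
The a.l.u. convergence of the left-hand side then gives a.l.u. convergence of $\MS^\omega\upphi$ on $\wht\MB^\times$ and realizes its limit as an element of $H^0(\wht\MB^\times,\scr T^*_\fx(\Wbb))$. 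The main obstacle will be this last identification and equality: tracking the change-of-coordinate operators at the two nodes of $\fy$ under $\xi_k\varpi_k=q_k$ and verifying that contraction of the $\Xbb$-basis against the dual $\Xbb'$-basis in the Segal sewing reconstitutes $\Tr^\omega$ applied to $q^{L(0)}P_\lambda\circ\upphi^\sharp_b(w)\circ P_\mu$, with the two tensor components of $\Xbb$ contributing the factors $q_1^{L_1(0)}$ and $q_2^{L_2(0)}$ whose product equals $q^{L(0)}$ on $\End^0_A(\Mbb)_{[\lambda,\mu]}$.
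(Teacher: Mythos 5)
Your proposal follows essentially the same route as the paper: realize $\Tr^\omega$ as a conformal block on $\fn\times\wtd\MB$ for the $\Vbb^{\otimes2}$-module $\End^0_A(\Mbb)$ (the paper's Prop. \ref{lbb57}), descend $\upphi$ to a conformal block for $\Wbb\otimes\End^0_A(\Mbb)'$ (Prop. \ref{lbb58}), sew the disjoint union $\wtd\fx\sqcup(\fn\times\wtd\MB)$ along the two pairs via Thm. \ref{lbb49} and Rem. \ref{lbb56}, and identify $\MS(\upphi\otimes\Tr^\omega)\big|_{q_1q_2=q}$ with $\MS^\omega\upphi$ (the paper's Thm. \ref{lbb59}). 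The final identification you flag as the main obstacle is exactly the content of \eqref{eqb89}--\eqref{eqb90} and is a direct computation, so the argument is complete and correct.
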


\subsubsection{$\Tr^\omega$ is a conformal block associated to the $\Vbb^{\otimes2}$-module $\End^0_A(\Mbb)$}

Recall \eqref{eqb87} for the meaning of $\mc U(\upgamma_z)$. Notice the linear isomorphism
\begin{align*}
\Mbb\otimes\Mbb'\xlongrightarrow{\simeq}\End^0(\Mbb) \qquad m\otimes m'\mapsto m\cdot \bk{m',-}
\end{align*}
Pushing forward the $\Vbb^{\otimes2}$-module structure of $\Mbb\otimes\Mbb'$, we see that $\End^0(\Mbb)$ is a grading-restricted $\Vbb^{\otimes 2}$-module whose module structure is determined by the fact that for each $v\in\Vbb,T\in\End^0(\Mbb)$, the following relation holds in $\End^0(\Mbb)[[z^{\pm1}]]$:
\begin{align}\label{eqb92}
Y(v\otimes \idt,z) T=Y_\Mbb(v,z)\circ T\qquad Y(\idt\otimes v,z)T=T\circ Y_\Mbb (\mc U(\upgamma_z)v,z^{-1})
\end{align}
Clearly $\End^0_A(\Mbb)$ is a (grading-restricted) $\Vbb^{\otimes2}$-submodule of $\End^0(\Mbb)$.

\begin{rem}
Define a $2$-pointed sphere with local coordinates
\begin{align*}
\fn=(\Pbb^1\Vert\infty,0;1/z,z)
\end{align*}
where $z$ denotes the standard coordinate of $\Cbb$. Let $\Xbb$ be a grading restricted $\Vbb^{\otimes2}$-module. Let $\uppsi:\Xbb\rightarrow\Cbb$ be a linear map. Then clearly $\uppsi$ belongs to $\scr T^*_{\fn}(\Xbb)$ iff for each $v\in\Vbb$ and $\chi\in\Xbb$, the following relation holds in $\Cbb[[z^{\pm1}]]$:
\begin{align}\label{eqb91}
\uppsi\big( Y(\mc U(\upgamma_z)v\otimes \idt,z^{-1}) \chi\big)=\uppsi\big(Y(\idt\otimes v,z)\chi\big)
\end{align}
This description is independent of whether $\Xbb$ is associated to $(0,\infty)$ or $(\infty,0)$ (since $\mc U(\upgamma_{z^{-1}})\mc U(\upgamma_z)=\id$).
\end{rem}

\begin{pp}\label{lbb57}
Let $\fn\times\wtd\MB$ be the constant extension of $\fn$ by $\wtd\MB$, i.e., it is the family of $2$-pointed spheres with local coordinates
\begin{align*}
\fn\times\wtd\MB=(\Pbb^1\times\wtd\MB\rightarrow\wtd\MB|\infty,0;1/z,z)
\end{align*}
where $\Pbb^1\times\wtd\MB\rightarrow\wtd\MB$ is the projection onto the $\wtd\MB$-component, and $0,\infty$ mean the constant sections $b\mapsto (0,b)$ and $b\mapsto (\infty,b)$. 

Associate $\End_A^0(\Mbb)$ to the marked points $(\infty,0)$ where the first component (i.e. $Y(-\otimes \idt,z)$) is associated to $\infty$ and the second one (i.e. $Y(\idt\otimes-,z)$) is associated to $0$. Then the linear functional $\Tr^\omega:\End^0_A(\Mbb)\rightarrow\Cbb$, viewed as a linear map $\Tr^\omega:\End^0_A(\Mbb)\rightarrow \MO(\wtd\MB)$ by enlarging the codomain, is an element of $H^0\big(\wtd\MB,\scr T^*_{\fn\times\wtd\MB}(\End^0_A(\Mbb))\big)$.
\end{pp}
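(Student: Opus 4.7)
My plan is to reduce the statement to the cyclicity property of the pseudo-trace $\Tr^\omega$. First, by Rem.~\ref{lbb27}, a linear map $\uppsi:\End^0_A(\Mbb)\to\mc O(\wtd\MB)$ belongs to $H^0(\wtd\MB,\scr T^*_{\fn\times\wtd\MB}(\End^0_A(\Mbb)))$ iff for each $b\in\wtd\MB$ the restriction $\uppsi(\cdot)|_b$ is a conformal block associated to $\fn$. Since $\Tr^\omega$ is a constant function on $\wtd\MB$ (i.e., takes values in $\Cbb\subset\mc O(\wtd\MB)$), this reduces the problem to showing $\Tr^\omega\in\scr T^*_\fn(\End^0_A(\Mbb))$.

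Using the characterization in Eq.~\eqref{eqb91}, this amounts to verifying that, for each $v\in\Vbb$ and $T\in\End^0_A(\Mbb)$, the following equality holds in $\Cbb[[z^{\pm1}]]$ (after substituting $\mc U(\upgamma_z)v\in\Vbb[z^{\pm1}]$, which is a finite Laurent polynomial in $z$ for homogeneous $v$):
\[
\Tr^\omega\bigl(Y(\mc U(\upgamma_z)v\otimes\idt,\,z^{-1})T\bigr)=\Tr^\omega\bigl(Y(\idt\otimes v,\,z)T\bigr).
\]
Applying the explicit $\Vbb^{\otimes 2}$-module structure \eqref{eqb92} to both sides simultaneously, the left-hand side becomes $\Tr^\omega\bigl(Y_\Mbb(\mc U(\upgamma_z)v,z^{-1})\circ T\bigr)$ while the right-hand side becomes $\Tr^\omega\bigl(T\circ Y_\Mbb(\mc U(\upgamma_z)v,z^{-1})\bigr)$. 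Thus the desired identity is simply the coefficient-wise cyclicity $\Tr^\omega(S\circ T)=\Tr^\omega(T\circ S)$ for $S=Y_\Mbb(u)_n$ ranging over the coefficients obtained from $Y_\Mbb(\mc U(\upgamma_z)v,z^{-1})$.

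To make sense of and justify this cyclicity, I would argue as follows. Since $A\subset\End_\Vbb(\Mbb)^\opp$, each vertex operator coefficient $Y_\Mbb(u)_n\in\End(\Mbb)$ commutes with the right action of $A$; and since $T\in\End^0_A(\Mbb)$ has finite range (i.e.\ $T=P_{\leq\lambda}TP_{\leq\lambda}$ for some $\lambda$), both compositions $Y_\Mbb(u)_n\circ T$ and $T\circ Y_\Mbb(u)_n$ are $A$-linear and have finite range, hence lie in $\End^0_A(\Mbb)$. Choosing $\nu$ large enough so that the relevant compositions factor through $\Mbb_{[\leq\nu]}$ (which is possible because $Y_\Mbb(u)_n$ shifts weights by a bounded amount when $u$ is homogeneous and $n$ is fixed), the identity reduces to the standard symmetry of the Hattori–Stallings/pseudo-trace on the finite-dimensional algebra $\End_A(\Mbb_{[\leq\nu]})$, which follows from $\omega(ab)=\omega(ba)$ on $A$ (see \cite{Ari10}).

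The main (though modest) obstacle is bookkeeping: one must confirm that the infinite formal sums $Y_\Mbb(\mc U(\upgamma_z)v,z^{-1})\circ T$ and $T\circ Y_\Mbb(\mc U(\upgamma_z)v,z^{-1})$ are well-defined as formal Laurent series with coefficients in $\End^0_A(\Mbb)$, and that the cyclicity passes through the residue/expansion apparatus coefficient by coefficient. Once this is arranged, the proof is complete: each homogeneous $v$ is treated separately; the general case follows by linearity; and the equality of formal Laurent series in $z$ is exactly \eqref{eqb91}, verifying that $\Tr^\omega$ is a conformal block on $\fn$ and hence, by the constancy argument of the first paragraph, on $\fn\times\wtd\MB$.
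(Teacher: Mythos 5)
Your proposal is correct and follows essentially the same route as the paper's proof: reduce to a pointwise statement via Rem.~\ref{lbb27}, translate the conformal-block condition \eqref{eqb91} through the module structure \eqref{eqb92} into the cyclicity identity $\Tr^\omega(Y_\Mbb(u,z)\circ T)=\Tr^\omega(T\circ Y_\Mbb(u,z))$, and deduce that from the symmetry of the pseudo-trace after cutting down to a finite-dimensional $\End_A(\Mbb_{[\leq\nu]})$ using $T=P_{\leq\lambda}TP_{\leq\lambda}$. The only cosmetic difference is that the paper states the cyclicity directly for arbitrary $v$ (absorbing the invertible substitution $v\mapsto\mc U(\upgamma_z)v$, $z\mapsto z^{-1}$), whereas you carry $\mc U(\upgamma_z)v$ through explicitly; the content is identical.
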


\begin{proof}
Since the property of being a conformal block can be checked pointwise (cf. Rem. \ref{lbb27}), it suffices to prove that $\Tr^\omega:\End^0_A(\Mbb)\rightarrow\Cbb$ belongs to $\scr T_\fn^*(\End_A^0(\Mbb))$. This, in view of \eqref{eqb92} and \eqref{eqb91}, is equivalent to proving that in $\Cbb[[z^{\pm1}]]$ we have
\begin{align*}
\Tr^\omega (Y_\Mbb(v,z)\circ T)=\Tr^\omega(T\circ Y_\Mbb(v,z))
\end{align*}
for each $v\in\Vbb$ and $T\in\End^0_A(\Mbb)$. Take $\lambda\in\Cbb$ such that $T=P_{\leq\lambda}TP_{\leq\lambda}$. Then the above equation becomes
\begin{align*}
\Tr^\omega (P_{\leq\lambda}Y_\Mbb(v,z)P_{\leq\lambda}\circ T)=\Tr^\omega(T\circ P_{\leq\lambda} Y_\Mbb(v,z)P_{\leq\lambda})
\end{align*}
which holds because $\Tr^\omega$ is an SLF and $P_{\leq\lambda}Y_\Mbb(v,z)P_{\leq\lambda}$ belongs to $\End^0_A(\Mbb)[z^{\pm1}]$.
\end{proof}

\subsubsection{$\upphi$ is a conformal block associated to $\Wbb\otimes_\Cbb\End_A^0(\Mbb)'$}

Let $b\in\wtd\MB$ and $w\in\Wbb$. For each $\lambda,\mu\in\Cbb$, the map $P_\lambda\circ \upphi^\sharp_b(w)\circ P_\mu=\eqref{eqb86}$ belongs to the weight-$(\lambda,\mu)$ subspace
\begin{align*}
\End^0_A(\Mbb)_{[\lambda,\mu]}=P_\lambda\circ \End^0_A(\Mbb)\circ P_\mu=\Hom_A(\Mbb_{[\mu]},\Mbb_{[\lambda]})
\end{align*}
of $\End^0_A(\Mbb)$. Therefore, $\upphi^\sharp_b$ can be viewed as a linear map
\begin{gather*}
\upphi^\sharp_b:\Wbb\longrightarrow \ovl{\End_A^0(\Mbb)}=\prod_{\lambda,\mu\in\Cbb}\End^0_A(\Mbb)_{[\lambda,\mu]}
\end{gather*}
Thus $\upphi_b$ can be viewed as a linear functional $\Wbb\otimes\End_A^0(\Mbb)'\rightarrow\Cbb$, where
\begin{align*}
\End_A^0(\Mbb)'=\bigoplus_{\lambda,\mu\in\Cbb}\Hom_A(\Mbb_{[\mu]},\Mbb_{[\lambda]})^*
\end{align*}
is the contragredient $\Vbb^{\otimes2}$-module of $\End_A^0(\Mbb)$. Clearly $\upphi_b$ is still a conformal block associated to $\wtd\fx_b$. Thus:

\begin{pp}\label{lbb58}
$\upphi$ is naturally an element of $H^0\big(\wtd\MB,\scr T^*_{\wtd\fx}(\Wbb\otimes\End_A^0(\Mbb)')\big)$.
\end{pp}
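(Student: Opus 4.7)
The plan is to exhibit $\upphi$ as the pullback of a linear map $\wtd\upphi : \Wbb\otimes \End_A^0(\Mbb)'\to \MO(\wtd\MB)$ along a canonical $\Vbb^{\otimes 2}$-module epimorphism $\idt_\Wbb\otimes q:\Wbb\otimes\Mbb'\otimes\Mbb \twoheadrightarrow \Wbb\otimes\End_A^0(\Mbb)'$, and then to verify that $\wtd\upphi$ is a conformal block fiberwise via Rem. \ref{lbb27}.

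First I would set up the algebraic picture. Under the canonical identification $\End^0(\Mbb)\cong \Mbb\otimes\Mbb'$ discussed before Prop. \ref{lbb57} (i.e. $m\otimes m'\mapsto m\cdot\bk{m',-}$), the weight-$(\lambda,\mu)$ subspace is $\Hom_\Cbb(\Mbb_{[\mu]},\Mbb_{[\lambda]})\cong \Mbb_{[\lambda]}\otimes\Mbb'_{[\mu]}$, and $\End_A^0(\Mbb)$ is a grading-restricted $\Vbb^{\otimes 2}$-submodule. Taking contragredients, the inclusion dualizes to a $\Vbb^{\otimes 2}$-module epimorphism
\[
q:\Mbb'\otimes\Mbb=\End^0(\Mbb)'\twoheadrightarrow \End_A^0(\Mbb)',
\]
whose kernel $K$ is, by construction, the annihilator of $\End_A^0(\Mbb)$ in $\Mbb'\otimes\Mbb$ under the natural pairing $\bk{T,n'\otimes n}=\bk{Tn,n'}$. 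Crucially, $K$ is graded: $K=\bigoplus_{\lambda,\mu}K_{[\lambda,\mu]}$ where $K_{[\lambda,\mu]}=\Mbb'_{[\lambda]}\otimes\Mbb_{[\mu]}$ meets the annihilator of the finite-dimensional space $\End_A^0(\Mbb)_{[\lambda,\mu]}=\Hom_A(\Mbb_{[\mu]},\Mbb_{[\lambda]})$.

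Next I would show that $\upphi$ kills $\Wbb\otimes K$. Unwinding the definition \eqref{eqb88}, for $w\in\Wbb$, $m\in\Mbb_{[\mu]}$, $m'\in\Mbb'_{[\lambda]}$ and $b\in\wtd\MB$,
\[
\upphi(w\otimes m'\otimes m)\big|_b=\bk{\upphi^\sharp_b(w)m,m'}=\bk{P_\lambda \upphi^\sharp_b(w) P_\mu,\,m'\otimes m}.
\]
By the ``$\upphi$ commutes with $A$" hypothesis, $P_\lambda\upphi^\sharp_b(w)P_\mu$ lies in $\End_A^0(\Mbb)_{[\lambda,\mu]}$. Since any element of $K$ splits (by the grading above) into weight-homogeneous pieces each annihilating $\End_A^0(\Mbb)_{[\lambda,\mu]}$, summing the pairings gives $\upphi(w\otimes\kappa)|_b=0$ for $\kappa\in K$. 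Holomorphicity in $b$ is automatic, so $\upphi$ factors through $\idt_\Wbb\otimes q$ and yields a unique linear $\wtd\upphi:\Wbb\otimes\End_A^0(\Mbb)'\to \MO(\wtd\MB)$ with $\wtd\upphi\circ(\idt_\Wbb\otimes q)=\upphi$.

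Finally, I would verify $\wtd\upphi\in H^0\bigl(\wtd\MB,\scr T^*_{\wtd\fx}(\Wbb\otimes\End_A^0(\Mbb)')\bigr)$. By Rem. \ref{lbb27} it suffices to check that each $\wtd\upphi|_b$ lies in $\scr T^*_{\wtd\fx_b}(\Wbb\otimes\End_A^0(\Mbb)')$. Because $q$ is $\Vbb^{\otimes 2}$-equivariant (with the first factor of $\Mbb'\otimes\Mbb$, assigned to $\sgm'(\wtd\MB)$, descending to the first factor of $\End_A^0(\Mbb)'$, and likewise for the second at $\sgm''(\wtd\MB)$), the residue actions of any section of $\scr V_{\wtd\fx_b}\otimes\omega_{\wtd\MC_b}(\blt S_{\wtd\fx_b})$ at $\sgm'(b)$ and $\sgm''(b)$ on $\End_A^0(\Mbb)'$ are the $q$-images of those on $\Mbb'\otimes\Mbb$, while the residue actions at $\sgm_\blt(b)$ on $\Wbb$ are unchanged. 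Thus for any such $\sigma$ and any $w\otimes\xi$ with $\xi=q(\xi')$,
\[
\wtd\upphi|_b\bigl(\sigma\cdot(w\otimes\xi)\bigr)=\upphi|_b\bigl(\sigma\cdot(w\otimes\xi')\bigr)=0,
\]
the last equality being the conformal block property of $\upphi|_b$. The only real obstacle is the bookkeeping in the second paragraph, namely checking that the kernel $K$ is genuinely weight-graded and that the pairing $\bk{T,n'\otimes n}=\bk{Tn,n'}$ is the correct one making $\End^0(\Mbb)'=\Mbb'\otimes\Mbb$ as $\Vbb^{\otimes 2}$-modules; once this identification is nailed down, everything else is a formal consequence of equivariance.
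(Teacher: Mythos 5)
Your proposal is correct and is essentially the paper's own argument in dual form: the paper observes that since $\upphi$ commutes with $A$, each $P_\lambda\circ\upphi^\sharp_b(w)\circ P_\mu$ lands in $\End^0_A(\Mbb)_{[\lambda,\mu]}$, so $\upphi^\sharp_b$ maps into $\ovl{\End^0_A(\Mbb)}=\prod_{\lambda,\mu}\End^0_A(\Mbb)_{[\lambda,\mu]}$ and hence $\upphi_b$ descends to a functional on $\Wbb\otimes\End^0_A(\Mbb)'$, with the conformal-block property declared clear. Your factoring-through-the-quotient formulation (killing the annihilator $K$ and invoking equivariance of $q$) is the same observation stated dually, merely spelling out the paper's final ``clearly'' step in more detail.
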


\subsubsection{Reducing pseudo-sewing to sewing}

Let $\wtd{\fk Y}:=\wtd\fx\sqcup(\fn\times\wtd\MB)$. We sew $\wtd\fy$ along two pairs of marked points. The first pair is $(\sgm',\infty)$ with sewing radii $r,1$. The second pair is $(\sgm'',0)$ with sewing radii $\rho,1$. The result of sewing (cf. Subsec. \ref{lbb44}) gives a family $\fy$ with base manifold $\MD_r\times\MD_\rho\times\wtd\MB$.

By Prop. \ref{lbb57} and \ref{lbb58},
\begin{gather*}
\upphi\otimes\Tr^\omega:\Wbb\otimes\End^0_A(\Mbb)'\otimes\End^0_A(\Mbb)\rightarrow \MO(\wtd\MB)
\end{gather*}
is a conformal block associated to $\wtd\fy$. Its sewing is
\begin{gather}\label{eqb89}
\begin{gathered}
\MS(\upphi\otimes\Tr^\omega):\Wbb\rightarrow \MO(\wtd\MB)\{q_1,q_2\}[\log q_1,\log q_2]\\
w\mapsto \sum_{\lambda,\mu\in\Cbb}\Tr^\omega \big(q_1^{L(0)}P_\lambda\upphi^\sharp(w)P_\mu q_2^{L(0)}\big)
\end{gathered}
\end{gather} 
The RHS of \eqref{eqb89} is clearly in $\MO(\wtd\MB)\{q_1q_2\}[\log(q_1q_2)]$, i.e., it is of the form $f(q_1q_2)$ for some $f\in\MO(\wtd\MB)\{q\}[\log q]$.

\begin{thm}\label{lbb59}
As linear maps $\Wbb\rightarrow\MO(\wtd\MB)\{q\}[\log q]$ we have
\begin{align*}
\MS^\omega\upphi=\MS(\upphi\otimes\Tr^\omega)\big|_{q_1q_2=q}
\end{align*}
\end{thm}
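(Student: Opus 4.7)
The identity is a formal equality in $\mc O(\wtd\MB)\{q\}[\log q]$ obtained from $\mc O(\wtd\MB)\{q_1,q_2\}[\log q_1,\log q_2]$ by formally substituting $q_1q_2=q$ (and $\log q_1+\log q_2=\log q$), so no new convergence argument is needed. The plan is to unfold the sewing formula \eqref{eq2} for $\MS(\upphi\otimes\Tr^\omega)$, transfer the nilpotent parts of $L_1(0),L_2(0)$ from the $\End_A^0(\Mbb)'$-side to the $\End_A^0(\Mbb)$-side via a Casimir-type duality identity, and then apply cyclicity and off-diagonal vanishing of $\Tr^\omega$ to collapse the result to $\MS^\omega\upphi$.

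For each $(\lambda,\mu)\in\Cbb^2$, choose a basis $\{T_{(\lambda,\mu,\beta)}\}_\beta$ of the finite-dimensional space $\End^0_A(\Mbb)_{[\lambda,\mu]}=\Hom_A(\Mbb_{[\mu]},\Mbb_{[\lambda]})$, with dual basis $\{\check T_{(\lambda,\mu,\beta)}\}_\beta$ in $\End_A^0(\Mbb)'_{[\lambda,\mu]}$. Since $\wtd\fy=\wtd\fx\sqcup(\fn\times\wtd\MB)$ is disconnected, $\upphi\otimes\Tr^\omega$ factors, and \eqref{eq2} applied to $\wtd\fy$ reads
\begin{align*}
\MS(\upphi\otimes\Tr^\omega)(w)\big|_b=\sum_{\lambda,\mu,\beta}\upphi\big(w\otimes q_1^{L_1(0)}q_2^{L_2(0)}\check T_{(\lambda,\mu,\beta)}\big)\big|_b\cdot\Tr^\omega(T_{(\lambda,\mu,\beta)}).
\end{align*}
Split $L_i(0)=L_i(0)_\srm+L_i(0)_\nrm$; the semisimple parts contribute $q_1^\lambda q_2^\mu$. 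On $\End_A^0(\Mbb)$, the descriptions \eqref{eqb92} give $L_1(0)_\nrm T=L(0)_\nrm\circ T$ and $L_2(0)_\nrm T=T\circ L(0)_\nrm$, while on $\End_A^0(\Mbb)'$ the nilpotent operators $L_i(0)_\nrm$ act as the transposes under the duality pairing. The standard Casimir identity $\sum_i A^T e_i^*\otimes e_i=\sum_i e_i^*\otimes Ae_i$ then gives, on each weight piece,
\begin{align*}
\sum_\beta q_1^{L_1(0)_\nrm}q_2^{L_2(0)_\nrm}\check T_{(\lambda,\mu,\beta)}\otimes T_{(\lambda,\mu,\beta)}=\sum_\beta\check T_{(\lambda,\mu,\beta)}\otimes\big(q_1^{L(0)_\nrm}\circ T_{(\lambda,\mu,\beta)}\circ q_2^{L(0)_\nrm}\big).
\end{align*}
Pairing with $\upphi\otimes\Tr^\omega$ and using $\sum_\beta\upphi(w\otimes\check T_{(\lambda,\mu,\beta)})|_b\cdot T_{(\lambda,\mu,\beta)}=P_\lambda\upphi^\sharp_b(w)P_\mu$ from \eqref{eqb88} yields
\begin{align*}
\MS(\upphi\otimes\Tr^\omega)(w)\big|_b=\sum_{\lambda,\mu}q_1^\lambda q_2^\mu\,\Tr^\omega\big(q_1^{L(0)_\nrm}P_\lambda\upphi^\sharp_b(w)P_\mu q_2^{L(0)_\nrm}\big).
\end{align*}

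Because $L(0)_\nrm$ commutes with every $P_\lambda$ and $q_1^{L(0)_\nrm}$ commutes with $q_2^{L(0)_\nrm}$ (both are polynomials in $L(0)_\nrm$, nilpotent on each $\Mbb_{[\lambda]}$ by \eqref{eqb70}), cyclicity of the SLF $\Tr^\omega$ yields $q_1^{L(0)_\nrm}q_2^{L(0)_\nrm}=q^{L(0)_\nrm}$ with $q=q_1q_2$, so each trace becomes $\Tr^\omega(q^{L(0)_\nrm}P_\lambda\upphi^\sharp_b(w)P_\mu)$. The off-diagonal vanishing $\Tr^\omega(T)=0$ for $T\in\Hom_A(\Mbb_{[\mu]},\Mbb_{[\lambda]})$ with $\lambda\ne\mu$ (immediate from cyclicity: $\Tr^\omega(T)=\Tr^\omega(TP_\lambda)=0$ since $TP_\lambda=0$ when $\lambda\ne\mu$) kills the $\lambda\ne\mu$ terms. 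Using $q_1^\lambda q_2^\lambda=q^\lambda$ we conclude
\begin{align*}
\MS(\upphi\otimes\Tr^\omega)(w)\big|_b\Big|_{q_1q_2=q}=\sum_\lambda q^\lambda\,\Tr^\omega\big(q^{L(0)_\nrm}P_\lambda\upphi^\sharp_b(w)P_\lambda\big),
\end{align*}
which agrees with $\MS^\omega\upphi(w)|_b$ by \eqref{eqb90}. The main technical step is the Casimir transfer: one has to verify that the contragredient action of $L_i(0)_\nrm$ on $\End_A^0(\Mbb)'$ is genuinely the transpose of $L_i(0)_\nrm$ on $\End_A^0(\Mbb)$ (so that the identity $\sum A^T e_i^*\otimes e_i=\sum e_i^*\otimes Ae_i$ is applicable), and that each exponential $q_i^{L_i(0)_\nrm}$ restricts to a finite polynomial in $\log q_i$ on every weight space. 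Both are guaranteed by the nilpotence of $L(0)_\nrm$ on the finite-dimensional pieces $\Mbb_{[\lambda]}$, so all manipulations take place inside the well-defined space $\End_A^0(\Mbb)\{q_1,q_2\}[\log q_1,\log q_2]$ and are purely formal.
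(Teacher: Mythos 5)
Your proof is correct and follows the same route as the paper, whose entire argument is ``clear from \eqref{eqb90} and \eqref{eqb89}'': you are simply supplying the details the authors leave implicit, namely the unwinding of the sewing formula \eqref{eq2} into the displayed form \eqref{eqb89} via the dual-basis (Casimir) transfer, followed by the diagonal collapse under cyclicity of $\Tr^\omega$. The only cosmetic quibble is that the identity $q_1^{L(0)_\nrm}q_2^{L(0)_\nrm}=q^{L(0)_\nrm}$ comes from the exponential law for the nilpotent $L(0)_\nrm$ together with $\log q_1+\log q_2=\log q$, not from cyclicity, which is needed only to move $q_2^{L(0)_\nrm}$ past $P_\lambda\upphi^\sharp_b(w)P_\mu$.
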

\begin{proof}
This is clear from \eqref{eqb90} and \eqref{eqb89}.
\end{proof}

\begin{proof}[\textbf{Proof of Thm. \ref{lbb53}}]
By Thm. \ref{lbb49} and Rem. \ref{lbb56}, $\MS(\upphi\otimes\Tr^\omega)$ converges a.l.u. on $\MD_r^\times\times\MD_\rho^\times\times\wtd\MB$ to a conformal block associated to $\fy$ outside its discriminant locus. Therefore, by Thm. \ref{lbb59}, $\MS^\omega\upphi$ converges a.l.u. on $\MD_{r\rho}^\times\times\wtd\MB$. 

Moreover, for each fixed $q\in\MD_{r\rho}^\times$ and $\arg q$, we find $q_1\in\MD_r^\times,q_2\in\MD_\rho^\times$ with arguments such that $q=q_1q_2$ and $\arg q=\arg q_1+\arg q_2$. Then for each $b\in\MB$, the fiber $\fx_{(b,q)}$ is canonically equivalent to $\fy_{(b,q_1,q_2)}$. Thus $\MS^\omega\upphi|_{(b,q)}$ is a conformal block associated to $\fx_{(b,q)}$. Since the property of being a conformal block can be checked pointwise (cf. Rem. \ref{lbb27}), we conclude that $\MS^\omega\upphi$ is a conformal block associated to $\fx$ outside $\Delta=\{0\}\times\wtd\MB$. This finishes the proof of Thm. \ref{lbb53}.
\end{proof}

\footnotesize
	\bibliographystyle{alpha}
    \bibliography{voa}

\begin{thebibliography}{HLZ12d}

\bibitem[AN13]{AN-pseudo-trace}
Yusuke Arike and Kiyokazu Nagatomo.
\newblock Some remarks on pseudo-trace functions for orbifold models associated
  with symplectic fermions.
\newblock {\em Int. J. Math.}, 24(2):1350008, 29, 2013.

\bibitem[Ari10]{Ari10}
Yusuke Arike.
\newblock Some remarks on symmetric linear functions and pseudotrace maps.
\newblock {\em Proc. Japan Acad. Ser. A Math. Sci.}, 86(7):119--124, 2010.

\bibitem[DGT21]{DGT1}
Chiara Damiolini, Angela Gibney, and Nicola Tarasca.
\newblock Conformal blocks from vertex algebras and their connections on
  {$\overline{\mathcal M}_{g, n}$}.
\newblock {\em Geom. Topol.}, 25(5):2235--2286, 2021.

\bibitem[DGT24]{DGT2}
Chiara Damiolini, Angela Gibney, and Nicola Tarasca.
\newblock On factorization and vector bundles of conformal blocks from vertex
  algebras.
\newblock {\em Ann. Sci. \'Ec. Norm. Sup\'er. (4)}, 57(1):241--292, 2024.

\bibitem[FBZ04]{FB04}
Edward Frenkel and David Ben-Zvi.
\newblock {\em Vertex algebras and algebraic curves}, volume~88 of {\em
  Mathematical Surveys and Monographs}.
\newblock American Mathematical Society, Providence, RI, second edition, 2004.

\bibitem[FHL93]{FHL93}
Igor~B. Frenkel, Yi-Zhi Huang, and James Lepowsky.
\newblock On axiomatic approaches to vertex operator algebras and modules.
\newblock {\em Mem. Amer. Math. Soc.}, 104(494):viii+64, 1993.

\bibitem[Fio16]{Fio-genus-1}
Francesco Fiordalisi.
\newblock Logarithmic intertwining operators and genus-one correlation
  functions.
\newblock {\em Commun. Contemp. Math.}, 18(6):46, 2016.
\newblock Id/No 1650026.

\bibitem[Gui21]{Gui-permutation}
Bin Gui.
\newblock Genus-zero permutation-twisted conformal blocks for tensor product
  vertex operator algebras: The tensor-factorizable case.
\newblock arXiv:2111.04662, 2021.

\bibitem[Gui23]{GuiLec}
Bin Gui.
\newblock Lectures on vertex operator algebras and conformal blocks.
\newblock arXiv:2305.03822v1, 2023.

\bibitem[Gui24]{Gui-sewingconvergence}
Bin Gui.
\newblock Convergence of sewing conformal blocks.
\newblock {\em Commun. Contemp. Math.}, 26(3):65, 2024.
\newblock Id/No 2350007.

\bibitem[GZ23]{GZ1}
Bin Gui and Hao Zhang.
\newblock Analytic conformal blocks of {$C_2$}-cofinite vertex operator
  algebras {I}: Propagation and dual fusion products.
\newblock arXiv:2305.10180, 2023.

\bibitem[HLZ12a]{HLZ2}
Yi-Zhi Huang, James Lepowsky, and Lin Zhang.
\newblock Logarithmic tensor category theory, ii: Logarithmic formal calculus
  and properties of logarithmic intertwining operators.
\newblock arXiv:1012.4196, 2012.

\bibitem[HLZ12b]{HLZ3}
Yi-Zhi Huang, James Lepowsky, and Lin Zhang.
\newblock Logarithmic tensor category theory, iii: Intertwining maps and tensor
  product bifunctors.
\newblock arXiv:1012.4197, 2012.

\bibitem[HLZ12c]{HLZ4}
Yi-Zhi Huang, James Lepowsky, and Lin Zhang.
\newblock Logarithmic tensor category theory, iv: Constructions of tensor
  product bifunctors and the compatibility conditions.
\newblock arXiv:1012.4198, 2012.

\bibitem[HLZ12d]{HLZ5}
Yi-Zhi Huang, James Lepowsky, and Lin Zhang.
\newblock Logarithmic tensor category theory, v: Convergence condition for
  intertwining maps and the corresponding compatibility condition.
\newblock arXiv:1012.4199, 2012.

\bibitem[HLZ12e]{HLZ6}
Yi-Zhi Huang, James Lepowsky, and Lin Zhang.
\newblock Logarithmic tensor category theory, vi: Expansion condition,
  associativity of logarithmic intertwining operators, and the associativity
  isomorphisms.
\newblock arXiv:1012.4202, 2012.

\bibitem[HLZ12f]{HLZ7}
Yi-Zhi Huang, James Lepowsky, and Lin Zhang.
\newblock Logarithmic tensor category theory, vii: Convergence and extension
  properties and applications to expansion for intertwining maps.
\newblock arXiv:1110.1929, 2012.

\bibitem[HLZ12g]{HLZ8}
Yi-Zhi Huang, James Lepowsky, and Lin Zhang.
\newblock Logarithmic tensor category theory, viii: Braided tensor category
  structure on categories of generalized modules for a conformal vertex
  algebra.
\newblock arXiv:1110.1931, 2012.

\bibitem[HLZ14]{HLZ1}
Yi-Zhi Huang, James Lepowsky, and Lin Zhang.
\newblock Logarithmic tensor category theory for generalized modules for a
  conformal vertex algebra, {I}: introduction and strongly graded algebras and
  their generalized modules.
\newblock In {\em Conformal field theories and tensor categories}, Math. Lect.
  Peking Univ., pages 169--248. Springer, Heidelberg, 2014.

\bibitem[Hua97]{Hua97}
Yi-Zhi Huang.
\newblock {\em Two-dimensional conformal geometry and vertex operator
  algebras}, volume 148 of {\em Progress in Mathematics}.
\newblock Birkh\"{a}user Boston, Inc., Boston, MA, 1997.

\bibitem[Hua05a]{Hua-differential-genus-0}
Yi-Zhi Huang.
\newblock Differential equations and intertwining operators.
\newblock {\em Commun. Contemp. Math.}, 7(3):375--400, 2005.

\bibitem[Hua05b]{Hua-differential-genus-1}
Yi-Zhi Huang.
\newblock Differential equations, duality and modular invariance.
\newblock {\em Commun. Contemp. Math.}, 7(5):649--706, 2005.

\bibitem[Hua09]{Hua-projectivecover}
Yi-Zhi Huang.
\newblock Cofiniteness conditions, projective covers and the logarithmic tensor
  product theory.
\newblock {\em J. Pure Appl. Algebra}, 213(4):458--475, 2009.

\bibitem[Hua24]{Hua-modular-C2}
Yi-Zhi Huang.
\newblock Modular invariance of (logarithmic) intertwining operators.
\newblock {\em Comm. Math. Phys.}, 405(5):Paper No. 131, 82, 2024.

\bibitem[McR21]{McR-equiv}
Robert McRae.
\newblock Twisted modules and {$G$}-equivariantization in logarithmic conformal
  field theory.
\newblock {\em Comm. Math. Phys.}, 383(3):1939--2019, 2021.

\bibitem[Miy04]{Miy-modular-invariance}
Masahiko Miyamoto.
\newblock Modular invariance of vertex operator algebras satisfying
  {$C_2$}-cofiniteness.
\newblock {\em Duke Math. J.}, 122(1):51--91, 2004.

\bibitem[Miy15]{Miy-C2-orbifold}
Masahiko Miyamoto.
\newblock {{\(C_2\)}}-cofiniteness of cyclic-orbifold models.
\newblock {\em Commun. Math. Phys.}, 335(3):1279--1286, 2015.

\bibitem[NT05]{NT-P1_conformal_blocks}
Kiyokazu Nagatomo and Akihiro Tsuchiya.
\newblock Conformal field theories associated to regular chiral vertex operator
  algebras. {I}: {Theories} over the projective line.
\newblock {\em Duke Math. J.}, 128(3):393--471, 2005.

\bibitem[Seg88]{Segal-CFT1}
G.~B. Segal.
\newblock The definition of conformal field theory.
\newblock Differential geometrical methods in theoretical physics, {Proc}. 16th
  {Int}. {Conf}., {NATO} {Adv}. {Res}. {Workshop}, {Como}/{Italy} 1987, {NATO}
  {ASI} {Ser}., {Ser}. {C} 250, 165-171 (1988)., 1988.

\bibitem[Seg04]{Segal-CFT2}
Graeme Segal.
\newblock The definition of conformal field theory.
\newblock In {\em Topology, geometry and quantum field theory. Proceedings of
  the 2002 Oxford symposium in honour of the 60th birthday of Graeme Segal,
  Oxford, UK, June 24--29, 2002}, pages 421--577. Cambridge: Cambridge
  University Press, 2004.

\bibitem[TUY89]{TUY}
Akihiro Tsuchiya, Kenji Ueno, and Yasuhiko Yamada.
\newblock Conformal field theory on universal family of stable curves with
  gauge symmetries.
\newblock Integrable systems in quantum field theory and statistical mechanics,
  {Proc}. {Symp}., {Kyoto}/{Jap}. and {Kyuzeso}/{Jap}. 1988, {Adv}. {Stud}.
  {Pure} {Math}. 19, 459-566 (1989)., 1989.

\bibitem[Uen97]{Ueno97}
Kenji Ueno.
\newblock Introduction to conformal field theory with gauge symmetries.
\newblock In {\em Geometry and physics ({A}arhus, 1995)}, volume 184 of {\em
  Lecture Notes in Pure and Appl. Math.}, pages 603--745. Dekker, New York,
  1997.

\bibitem[Uen08]{Ueno08}
Kenji Ueno.
\newblock {\em Conformal field theory with gauge symmetry}, volume~24 of {\em
  Fields Institute Monographs}.
\newblock American Mathematical Society, Providence, RI; Fields Institute for
  Research in Mathematical Sciences, Toronto, ON, 2008.

\bibitem[Zhu96]{Zhu-modular-invariance}
Yongchang Zhu.
\newblock Modular invariance of characters of vertex operator algebras.
\newblock {\em J. Amer. Math. Soc.}, 9(1):237--302, 1996.

\end{thebibliography}

\noindent {\small \sc Yau Mathematical Sciences Center, Tsinghua University, Beijing, China.}

\noindent {\textit{E-mail}}: binguimath@gmail.com\qquad bingui@tsinghua.edu.cn\\

\noindent {\small \sc Yau Mathematical Sciences Center and Department of Mathematics, Tsinghua University, Beijing, China.}

\noindent {\textit{E-mail}}: zhanghao1999math@gmail.com \qquad h-zhang21@mails.tsinghua.edu.cn

\end{document}